\newtheorem{thm}{Theorem}[section]
\newtheorem{lem}[thm]{Lemma}
\newtheorem{cor}[thm]{Corollary}
\newtheorem{rem}[thm]{Remark}
\def\Xint#1{\mathchoice
    {\XXint\displaystyle\textstyle{#1}}%
     {\XXint\textstyle\scriptstyle{#1}}%
     {\XXint\scriptstyle\scriptscriptstyle{#1}}%
     {\XXint\scriptstyle\scriptscriptstyle{#1}}%
	\!\int}
\def\XXint#1#2#3{{\setbox0=\hbox{$#1{#2#3}{\int}$}
	\vcenter{\hbox{$#2#3$}}\kern-.5\wd0}}
\newcommand{ \mint }{ \,\Xint- }
\newcommand{\Lra}{\Longrightarrow}
\newcommand{\br}{\mathbb{R}}
\newcommand{\bn}{\mathbb{N}}
\newcommand{\sskip}{\smallskip}
\newcommand{\dotcup}{ \mathop{\dot{\cup}} }
\newcommand{\bb}{\mathbb{B}}
\newcommand{\bg}{\mathbb{G}}
\newcommand{\bh}{\mathbb{H}}
\theoremstyle{definition}
\newtheorem{defn}[thm]{Definition}
\newtheorem{prop}[thm]{Proposition}
\numberwithin{equation}{section}
\begin{document}
	
\title[piecewise smoothness]{Piecewise smoothness for linear elliptic systems with piecewise smooth coefficients}
\everymath{\displaystyle}

\author{Youchan Kim}
\address{Department of Mathematics, University of Seoul, Seoul 02504, Republic of Korea}
\email{youchankim@uos.ac.kr}

\keywords{Linear elliptic systems, Smoothness and regularity, composite materials}
\subjclass[2020]{35J47, 35B65, 74A40}

\maketitle

\begin{abstract}
Li and Vogelius, and Li and Nirenberg obtained piecewise
$C^{1,\gamma}$-regularity for linear elliptic problems with piecewise $C^{\gamma}$-coefficients which come
from composite materials. In this paper, we obtain piecewise $C^{m+1,\gamma}$-regularity for linear
elliptic systems with piecewise $C^{m,\gamma}$-coefficients which also come from composite
materials. This answers the open problem suggested by Li and Vogelius, and Li and Nirenberg.
\end{abstract}

\section{Introduction and main result}

\subsection{Introduction}

We consider linear elliptic systems arising from composite materials such as fiber reinforced materials. Assume that $\Omega \subset \mathbb{R}^{2}$ is an open bounded $C^{m,\gamma}$-domain, and $\{ \Omega^{k} \}_{k=1}^{k_{0}}$ are the mutually disjoint $C^{m,\gamma}$-subdomains in $\Omega$ which are allowed to touch each other. We denote $\Omega^{0} = \Omega \backslash \cup_{k=1}^{k_{0}} \Omega^{k}$. Let $u$ be a weak solution of an linear elliptic equation
\begin{equation*}
D_{\alpha} \left[  A^{\alpha \beta}_{ij} D_{\beta}u^{j} \right] =   D_{\alpha} F_{\alpha}^{i} 
\quad \text{in} \quad \Omega,
\end{equation*}
where $A^{\alpha \beta}_{ij} \in C^{m,\gamma}(\Omega^{k})$ and $F_{\alpha}^{i}  \in C^{m,\gamma}(\Omega^{k})$ for any $1 \leq \alpha,\beta \leq n$, $1 \leq i,j \leq N$ and $k \in [0,k_{0}] $. With our assumption, $A^{\alpha \beta}_{ij}$ and $F_{\alpha}^{i}$ are allowed to be discontinuous across the boundary of the subdomains $\{ \partial \Omega^{k} \}_{k=1}^{k_{0}}$. In this paper, we prove that the weak solution $u$ is locally piecewise $C^{m+1,\gamma}$, meaning that $u$ is $C^{m+1,\gamma}(\tilde{\Omega} \cap \Omega^{k})$ for any compact subset $\tilde{\Omega} \subset \subset \Omega$ and $k \in [0,k_{0}]$. 

An application of our result is the following. Suppose that the fibers and resin of a fiber reinforced plastic(FRP) is assumed to be linearly elastic, and described by a linear elliptic equation. Then due to the discontinuity of the coefficients or elasticity, the gradient of the weak solution might be discontinuous on the boundary of each materials. Nevertheless, our result shows that the weak solution on each materials(fiber, resin) can be smooth, even if the fibers are allowed to touch each other. To prove our result, we generalize the difference quotient method to curves instead of traditional fixed directions.

One of long standing problem for the regularity theory of composite material is establishing a  new regularity theory when the subdomains $\{ \Omega^{k} \}_{k=1}^{k_{0}}$ are allowed to touch each other. Under this assumption, the flattening argument could not be applied at the points where two subdomains touch each other, and the difficulty arises. With this geometric assumption, local Lipschitz regularity can be  derived as in \cite{BEVM1,LYNL1,LYVM1}. But better regularity than Lipschitz regularity is not feasible, because discontinuity of the gradient might occur on the boundary of subdomains. However, from the observations made in the previous literatures, one can consider so called piecewise regularity results.  For example, if the distance between the boundary of subdomains are bigger than some positive constant, then one may use the flattening argument to prove local piecewise $C^{m,\gamma}$-regularity. But this estimate depends on the distance between the boundary of subdomains and could not be applied for composite materials. When two subdomains are allowed to touch each other, which might happen for composite materials, the best result known in the previous literatures is piecewise $C^{1,\gamma}$-regularity by Vogelius and Li \cite{LYVM1}, and Li and Nirenberg \cite{LYNL1}. In that papers, they used observation that in small scales, the boundaries of two touching subdomains are almost flat near the touching point, and such geometric structures could be viewed as linear laminates. Indeed, \cite{LYVM1} and \cite{LYNL1} proved that the solution is $C^{1,\gamma}(\tilde{\Omega} \cap \Omega^{k})$ for any compact subset $\tilde{\Omega} \subset \Omega$ and  $k \in [0,k_{0}]$ by using the regularity theory of linear laminates such as Chipot, Kinderlehrer and Vergara-Caffarelli \cite{CMKDVU1}, and in fact their estimates does not depend on the distance between the boundary of subdomains. However, the method in \cite{LYVM1} and \cite{LYNL1} could not be extended to higher derivatives $C^{m,\gamma}$ with $m \geq 2$, and in \cite{LYNL1} they suggest an open question that piecewise $C^{m,\gamma}$-regularity even holds for $m \geq 2$. On the other-hand, regularity results for higher derivatives have been obtained for special geometric structures with dimension $2$, when the domain $\Omega$ is a disk, and two subdomains are also disks. In that case, \cite{LYVM1} proved $C^{m,\gamma}$ for $m \geq 2$ by using conformal mapping theory, but unfortunately their method could not be extended to higher dimensions. We also refer to \cite{DHZH1} for a similar result by using Green functions. With the observation in this paragraph, it has been expected that the conjecture about piecewise $C^{m,\gamma}$-regularity for $m \geq 2$ seems true, and in this paper we prove that this conjecture indeed holds. 

To prove our main result, we generalize the difference quotient method. As far as we are concerned, the difference quotient method have been applied for a fixed direction, but we generalize this method by applying to curves. In fact, we use so called a `flow' to obtain higher regularity, because the computation of difference quotient method is complicated for the higher regularity and we will use an approximation argument for obtaining higher regularity. 

If a curve does not across the boundary of subdomains, then the coefficients remain regular along that curve, even if the coefficients are discontinuous between different subdomains. So we choose a suitable flow so that if we restrict the domain of the flow to some subdomain $\Omega^{k}$, then the range of that flow still remains in the same subdomain $\Omega^{k}$. To explain our new method, let's consider our model case.
Let $\Omega = [-2,2] \times [-4,4] \subset \mathbb{R}^{2}$, $\varphi_{+}(x_{2}) = \left( x^{2} \right)^{2}$ and $\varphi_{-}(x^{2}) = - \left( x^{2} \right)^{2}$ where $x = \left( x^{1},x^{2} \right)$. Then $\varphi_{+}$ and $\varphi_{-}$ touches each other at the origin $(0,0)$. We choose a function $\sigma : [-1,1] \times [-1,1] \times [-1,1] \rightarrow [-4,4]$ such that
\begin{equation*}
\sigma(x,t) = \frac{[\varphi_{+}(x^{2}+t) - \varphi_{+}(x^{2})][x_{1} - \varphi_{-}(x^{2})]}
{\varphi_{+}(x^{2}) - \varphi_{-}(x^{2})}
+ \frac{[\varphi_{-}(x^{2}+t) - \varphi_{-}(x^{2})][\varphi_{+}(x^{2}) - x_{1} ]}
{\varphi_{+}(x^{2}) - \varphi_{-}(x^{2})}.
\end{equation*}
Then $t \mapsto x + (\sigma(x,t),t)$ can be viewed as a flow in $Q_{1}^{0} : = \{ (x_{1},x^{2}) \in [-1,1] \times [-1,1]: \varphi_{-}(x^{2}) \leq x_{1} \leq \varphi_{+}(x^{2}) \}$, and we find that $\partial_{t}[f(x+(\sigma,t))]_{t=0}$ is the first difference quotient or the first derivative along the curve $t \mapsto \sigma(x,t)$. 

However, this approach only works for piecewise $C^{2,\gamma}$-regularity. The reason is that if two subdomains touches each other, then the first derivative of two boundaries coincide at that touching point, but the higher derivatives might  not coincide at that touching point. To explain, we calculate $2$-nd derivative of $h \in C^{\infty}(\mathbb{R}^{2})$ along the curves $\{ \left( \varphi_{+}(x^{2}),x^{2} \right) : x^{2} \in [-1,1] \}$ and $\{ \left( \varphi_{-}(x^{2}),x^{2} \right) : x^{2} \in [-1,1] \}$ at the origin by using the following calculation:
\begin{equation*}\begin{aligned}
\partial_{t}^{2} [h(\varphi(t),t)]
& = \varphi'(x^{2}+t)^{2} D_{11} h(\varphi(t),t) +
2 \varphi'(x^{2}+t)^{2} D_{12} h(\varphi(t),t) \\
&\quad +  D_{22} h(\varphi(t),t) + \varphi''(x^{2}+t) D_{1} h(\varphi(t),t).
\end{aligned}\end{equation*}
Since $\varphi''_{+}(0) = 2 \not = \varphi''_{-}(0) = -2$ and $\varphi'_{+}(0) = \varphi'_{-}(0)=0$, $D_{1}h(0,0) \not =0$ implies $\partial_{t}^{2} [h(\varphi_{+}(t),t)] \big|_{t=0} \not = \partial_{t}^{2} [h(\varphi_{-}(t),t)] \big|_{t=0}$. On the other-hand, it is possible to obtain $C^{2,\gamma}$ by just using the approach in the previous paragraph because $\partial_{t} [h(\varphi_{+}(t),t)] \big|_{t=0} = \partial_{t} [h(\varphi_{-}(t),t)] \big|_{t=0}$.

By our model case, we see that there are jumps of the higher order derivatives on the boundary of subdomains. In fact, this jumps appears even if $h$ is smooth which means that the jumps of the higher order derivatives are mainly due to the geometry of the composite domains. Because of this reason, we obtained geometric property of composite domains in \cite{JYKY1,KYSP2}. In \cite{JYKY1}, the author and Jang prove that the boundary of the subdomains are almost parallel if they are sufficiently close, even if the subdomains are fractal, say Reifenberg flat domains. Also \cite{KYSP2} answers an question for the typical examples of the composite domains. If $C^{1,\gamma}$-domains satisfy certain inclusion conditions, they form a composite domains.

To overcome the difficulty for handling the jumps of the higher order derivatives on the boundary of subdomains, we extend functions defined in a subdomain. We briefly explain this idea by an example. Let $u$ be a function which might have jumps on the boundary of subdomains. Let $u^{k}$ be an extension of $u$ from $\Omega^{k}$ to $\Omega$ such that $u^{k}$ is regular in $\Omega \setminus \Omega^{k}$. Then we see that $\sum_{k} u^{k}$ has no jumps on the boundary of the subdomains. So by considering $\sum_{k} u^{k}$ instead of $u$, jumps of the function is removed and the function became easier for obtaining piecewise regularity. The value of $u$ and $\sum_{k} u^{k}$ might not be the same in $\Omega$, but $ u - \sum_{k} u^{k}$ is piecewise regular because $u^{k}$ is an extension of $u$ in $\Omega^{k}$ and defined to be regular in $\Omega \setminus \Omega^{k}$. If a suitable perturbations could be done to $\sum_{k} u^{k}$ in each subdomains, then one can obtain piecewise regularity of $ u $. For partial differential equations, the functions can be differentiated. But the ideas in the previous paragraphs still work for our problem.

The results and the key idea of this paper was announced at `International Conference on
Partial Differential Equations Related to Material Science', May 6-9, 2021. 

\subsection{Main result}

We introduce the following notations in this paper.
\begin{enumerate}
\item A $(n-1)$-dimensional point is denoted as $x' = (x^{2}, \cdots, x^{n})$.

\item $Q_{r}'(y') = \left \{ x' = (x^{2}, \cdots , x^{n})  \in \mathbb{R}^{n-1} : \max_{2 \leq i \leq n} |x^{i}-y^{i}| < r \right \}$ is the open cube in $\mathbb{R}^{n-1}$ with center $y'$ and size $r$. Also we denote $Q_{r}' = Q_{r}'(0')$.
\item $Q_{r}(y)  =  \left \{ x \in \mathbb{R}^{n} : \max_{1 \leq i \leq n} |x^{\alpha}-y^{\alpha}| < r \right \} = (y^{1}-r,y^{1}+r) \times Q_{r}'(y')$ is the open cube in $\mathbb{R}^{n}$  with center $y$ and size $r$.  Also we denote $Q_{r} = Q_{r}(0)$.
\item For a function $g(x)$ in $\mathbb{R}^n$, 
$$
(g)_{U} = \mint_U g(x) \, dx = \frac{1}{|U|} \int_{U} g(x) \, dx,
$$
where $U$ is an open subset in $\mathbb{R}^n$ and $|U|$ is the $n$-dimensional Lebesgue measure of $U$.

\item $h \big|_{U}$ is the restriction of $h$ in $U$.

\item For an open set $U \subset \br^{n}$, $h \in C^{l} \left( \overline{U} \right)$ means that $Du, \cdots, D^{l}u$ exist in $U$ and are uniformly continuous in $U$.
\end{enumerate}

To handle the index in higher order derivatives, we use the following notations.
\begin{enumerate}

\item $p=(p_{1},\cdots,p_{n}) \in  \br^{n}$ denotes $n$-dimensional indices satisfying that $p_{1}, \cdots, p_{n} \geq 0$.

\item $p'=(p_{2},\cdots,p_{n}) \in  \br^{n-1}$ and $q'=(q_{2},\cdots,q_{n}) \in  \br^{n-1}$ denotes $(n-1)$-dimensional indices satisfying that $p_{2}, \cdots, p_{n} \geq 0$ and $q_{2} , \cdots, q_{n} \geq 0$.

\item For any $p'=(p_{2},\cdots,p_{n})$ and $q'=(q_{2},\cdots,q_{n}) $, we write
\begin{equation*}
p' \leq q'
\quad \text{ if } \quad
p_{i} \leq q_{i}
\quad \text{ for all } 
i \in \{ 2, \cdots, n\},
\end{equation*}
and
\begin{equation*}
p' < q'
\quad \text{ if } \quad
p' \leq q'
\quad \text{ and } \quad
p' \not = q'.
\end{equation*}
\item For any $p'=(p_{2},\cdots,p_{n})$ and $q'=(q_{2},\cdots,q_{n})$ with $p' \geq  q' \geq 0'$, we define
\begin{equation*}
\Big( \begin{array}{c} p' \\ q' \end{array} \Big)
=  \Big( \begin{array}{c} p_{2} \\ q_{2} \end{array} \Big) \cdots  \Big( \begin{array}{c} p_{2} \\ q_{2} \end{array} \Big).
\end{equation*}
\item For any $p_{1} \geq 0$ and $p'=(p_{2},\cdots,p_{n})$ with $p' \geq 0'$, we define
\begin{equation*}
D^{p_{1},p'} = D_{1}^{p_{1}} \cdots D_{n}^{p_{n}},
\quad D^{p'} = D_{2}^{p_{2}} \cdots D_{n}^{p_{n}}
\quad \text{and} \quad 
|p'| = p_{2} + \cdots + p_{n}.
\end{equation*}
\end{enumerate}

We introduce the notations for representing the subregions and the boundary of the subregions. In composite materials,  there exists a coordinate system such that the boundaries of the subregions become almost flat graphs in sufficiently small scale, see for instance \cite{KYSP1}.  So from now on, we assume that the boundaries of the regions are represented by the graphs in $Q_{7}$. We assume that the cube $Q_{7}$ is divided into the components or the subregions by using $C^{1,\gamma}$-graph functions. In $Q_{7}$, the component or the subregions will be denoted as $Q_{7}^{k}(z)$ with the set $K = \{ k_{-}, \cdots, k_{+} \}$. Also let  $ K_{+}  = \{ k_{-}+1, \cdots, k_{+} \}$ be the $C^{1,\gamma}$-graph functions. We remark that there can be only one element in $K$.  To  coincide with the previous definition in \cite{KYSP1}, we define composite domain by using $C^{1,\gamma}$-graph functions as follows.

\begin{defn}\label{composite cube}
For the sets $K = \{ k_{-}, k_{-}+1, \cdots, k_{+} \}$, $K_{+} = K \cup \left\{ k_{+} +1 \right \}$ and the graph functions $\varphi_{k} \in C^{1,\gamma} \left( Q_{r}'(z') \right)$ $(k \in K_{+})$, $\left( Q_{r}(z) , \left\{ \varphi_{k} : k \in K_{+} \right\} \right)$ is called a composite cube if
\begin{equation*}\label{}
\varphi_{k}(x')  \leq  \varphi_{k+1}(x')
\qquad \left(  x \in Q_{r}'(z), \, k \in K \right),
\end{equation*}
and
\begin{equation*}\label{}
Q_{r}(z) = \dotcup_{k \in K} Q_{r}^{k}(z),
\end{equation*}
where
\begin{equation}\label{composite cube_strip} 
Q_{r}^{k}(y) : = \big\{ (x^{1},x') \in Q_{\rho}(y) : \varphi_{k}(x') < x^{1} \leq \varphi_{k+1}(x') \big \}
\qquad ( k \in K ).
\end{equation}
Here, $\dotcup_{k} U_{k} $ denotes disjoint union meaning that $\dotcup_{k} U_{k} $ is the union of the sets $\{ U_{k} : k\in K \}$ and that $\{ U_{k} : k\in K \}$ are mutually disjoint.
\end{defn}

\begin{rem}
For the graphs $ \left\{ (\varphi_{k}(x'),x') : x' \in Q_{r}'(z') \right\}$ $(k \in K_{+})$ in Definition \ref{composite cube}, the top  $ \left\{ (\varphi_{k_{+}+1}(x'),x') : x' \in Q_{r}'(z') \right\}$  and the bottom  $ \left\{ (\varphi_{k_{-}}(x'),x') : x' \in Q_{r}'(z') \right\}$  ones should be outside of $Q_{r}(z)$, because other-wise we have that $Q_{r}(z) \not = \dotcup_{k \in K} Q_{r}^{k}(z)$.
\end{rem}

For the composite cubes inside the cube, we use the following natural definition.

\begin{defn}
For the composite cube $\left( Q_{r}(z) , \left\{ \varphi_{k} : k \in K_{+} \right\} \right)$, we denote
\begin{equation*}\label{} 
Q_{\rho}^{k}(y) : = \big\{ (x^{1},x') \in Q_{r}(y) : \varphi_{k}(x') < x^{1} \leq \varphi_{k+1}(x') \big \}
\qquad \qquad ( k \in K ),
\end{equation*}
for any $Q_{\rho}(y) \subset Q_{r}(z)$.
\end{defn}

In view of \cite{KYSP1}, there exists a coordinate system such that the graphs are almost flat. So we also assume that
\begin{equation}\label{gradient_graphs}
\| D_{x'}\varphi_{k} \|_{L^{\infty}(Q_{7}')} \leq \frac{1}{20n}
\qquad \big( k \in  K_{+}  \big).
\end{equation}

For a technical reason, we will focus on the set $Q_{5}$. Let  $ K_{-}  (\subset K)$ be the set of  graph functions which intersects $Q_{5}$. Then we see that
\begin{equation*}\label{}
 K_{-}  \subsetneq  K \subsetneq  K_{+} .
\end{equation*}
From  the definition of $K_{-}$, we discover that
\begin{equation}\label{GS180}
k \in K_{-}
\qquad \Longleftrightarrow \qquad
Q_{5} \cap \left \{  \big( \varphi_{k}(x'), x' \big) : x' \in Q_{5}' \right\} \not = \emptyset,
\end{equation}
which implies that
\begin{equation}\label{GS185}
k \in K_{-}  
\qquad \Longleftrightarrow \qquad
Q_{5}^{k} \not = \emptyset 
\quad \text{and} \quad
Q_{5}^{k-1} \not = \emptyset.
\end{equation}
It follows from \eqref{gradient_graphs} and \eqref{GS180} that
\begin{equation}\label{GS190}
\left \{  \big( \varphi_{k}(x'), x' \big) : x' \in Q_{5}' \right\} 
\subset \partial Q_{6}^{k} \cap \partial Q_{6}^{k-1} 
(\subset Q_{6})
\qquad \big( k \in  K_{-}  \big).
\end{equation}

\begin{rem}
With the assumption that  $a_{ij}, u, F_{i} \in C^{m,\gamma} \left ( Q_{7}^{k} \right )$ $(k \in K)$, we will prove that $u \in C^{m+1,\gamma} \left( Q _{3}^{k} \right )$ $(k \in K)$. The most of the calculations in this paper are performed in $Q_{5}$ but $Q_{5}^{k}$ $(k \in K)$ might not be disconnected. So with \eqref{GS190}, we consider an extended domain $Q_{7}$ to overcome this difficulty.
\end{rem}

For $A^{\alpha \beta,k}_{ij} : \br^{n} \to \br$ $(1 \leq \alpha,\beta \leq n,  \ 1 \leq i,j \leq N, \ k \in K)$, assume that
\begin{equation}\label{ell_com}
\lambda |\zeta|^{2} 
\leq \sum_{1 \leq i,j \leq N} \sum_{1 \leq \alpha, \beta \leq n} A^{\alpha \beta,k}_{ij}(x) \zeta_{\alpha}^{i} \zeta_{\beta}^{j}
\qquad \qquad 
(x \in \br^{n}, ~ \zeta \in \br^{Nn}, ~ k \in K),
\end{equation}
\begin{equation}\label{growth_com}
\left| A^{\alpha \beta,k}_{ij}(x) \right|
\leq \Lambda 
\qquad \qquad 
\left( x \in \br^{n}, ~ 1 \leq \alpha,\beta \leq n,  ~ 1 \leq i,j \leq N, ~ k \in K \right)
\end{equation}
and
\begin{equation}\label{cof_com}
A^{\alpha \beta,k}_{ij} \in C^{m,\gamma}(\br^{n})
\qquad \qquad
\left( 1 \leq \alpha,\beta \leq n,  ~ 1 \leq i,j \leq N, ~ k \in K \right),
\end{equation}
for some $\Lambda \geq \lambda >0$ and $\gamma \in (0,1/4]$. Set $A^{\alpha \beta}_{ij} : Q_{7} \to \br$ as 
\begin{equation}\label{newell}
A^{\alpha \beta}_{ij} = \sum_{k \in K} A^{\alpha \beta,k}_{ij} \chi_{ Q_{7}^{k} } 
\end{equation}
for any $1 \leq \alpha,\beta \leq n$, $1 \leq i,j \leq N$ and $k \in K$. Then by \eqref{ell_com}, \eqref{growth_com} and \eqref{cof_com},
\begin{equation}\label{ell}
\lambda |\zeta|^{2} 
\leq  \sum_{1 \leq i,j \leq N} \sum_{1 \leq \alpha, \beta \leq n} A^{\alpha \beta}_{ij}(x) \zeta_{i} \zeta_{j}
\qquad \qquad 
\left( x \in \br^{n}, ~ \zeta \in \br^{Nn} \right),
\end{equation}
\begin{equation}\label{growth}
\left| A^{\alpha \beta}_{ij}(x) \right|
\leq \Lambda 
\qquad \qquad 
\left( x \in \br^{n}, ~ 1 \leq \alpha,\beta \leq n,  \ 1 \leq i,j \leq N \right)
\end{equation}
and
\begin{equation}\label{cof}
A^{\alpha \beta}_{ij} \in C^{m,\gamma} \left( Q_{7}^{k} \right)
\qquad \qquad
\left( 1 \leq \alpha,\beta \leq n,  ~ 1 \leq i,j \leq N, ~ k \in K \right).
\end{equation}

Let $\left( Q_{7}(z) , \left\{ \varphi_{k} : k \in K_{+} \right\} \right)$ be a compostie cube with the the assumption that 
\begin{equation}\label{graph_con}
\varphi_{k} \in C^{m+1,\gamma} \left( Q_{7}' \right)
\qquad (k \in K_{+}).
\end{equation}
Then let  $u \in W^{1,2}(Q_{7})$ be a weak solution of 
\begin{equation}\label{main equation}
\sum_{1 \leq \alpha \leq n} D_{\alpha} \left[ \sum_{1 \leq j \leq N} \sum_{1 \leq \beta \leq n} A^{\alpha \beta}_{ij}D_{\beta}u^{j} \right] = \sum_{1 \leq \alpha \leq n}  D_{\alpha} F_{\alpha}^{i} 
\quad \text{in} \quad
Q_{7},
\end{equation}
with the assumption that 
\begin{equation}\label{data_con}
u , \, F \in C^{m,\gamma} \left( Q_{7}^{k} \right)
\qquad (k \in K).
\end{equation}
In this paper, we might omit the summation by using Einstein notation.

To prove piece-wise H\"{o}lder continuity of $D^{m+1}u$, we will first show in Theorem \ref{main theorem of U} that the following $U^{p'} : Q_{5} \to \br^{Nn}$ is H\"{o}lder continuous in $Q_{5}$. For any $p' \geq 0'$ with $|p'|=m$,  set $U^{p'} : Q_{5} \to \br^{Nn}$ as 
\begin{equation*}\label{} 
U_{1}^{i,p'}
=G_{1}^{i,p'} + \sum_{1 \leq \alpha \leq n} \pi_{\alpha} \left[  \sum_{0' \leq q' \leq p' } 
\Big( \begin{array}{c} p' \\ q' \end{array} \Big)
\pi^{q'} D^{(|q'|,p'-q')}  \left[ \sum_{1 \leq j \leq N } \sum_{1 \leq \beta \leq n} A^{\alpha \beta}_{ij}D_{\beta}u^{j} - F_{\alpha}^{i} \right] \right]
\end{equation*}
with
\begin{equation*}\label{} 
U_{\beta}^{i,p'} = G_{\beta}^{i,p'} +  \sum_{ 0' \leq q' \leq p' } 
\Big( \begin{array}{c} p' \\ q' \end{array} \Big)
\pi^{q'}
\left[ D^{ (|q'|,p'-q') }  D_{\beta}u^{i} + \pi_{\beta} D^{ (|q'|,p'-q') }  D_{1}u^{i} \right]
\end{equation*}
for any  $ 2 \leq \beta \leq n$ and $1 \leq i \leq N$, where $G^{p'} : Q_{5} \to \br^{Nn}$ is defined as
\begin{equation*}\label{}
G_{1}^{i,p'} =  \sum_{ k \in  K_{-}  } \sum_{1 \leq \alpha \leq n} \ \bg_{\alpha}^{p',k} \left[ U_{\alpha}^{i} \big|_{k-1} - U_{\alpha}^{i} \big|_{k} \right]  H_{k}  
\end{equation*}
and
\begin{equation*}\label{}
G_{\beta}^{i,p'} = \sum_{k \in K_{-}} \bh_{\beta}^{p',k} \left [ u \big|_{k-1} - u \big|_{k} \right ] H_{k} 
\end{equation*}
for any  $ 2 \leq \beta \leq n$ and $1 \leq i \leq N$. Here,  $H_{k} : Q_{7} \to \br$ $\big( k \in K \big)$ is Heavyside function that
\begin{equation*}\label{}
H_{k}(x)  = \left\{\begin{array}{rl}
1 &  \text{ if  } x \in Q_{7}^{l} ~ \text{ with } ~ l \geq k, \\
0 &  \text{ if  } x \in Q_{7}^{l} ~ \text{ with } ~ l < k.
\end{array}\right.\end{equation*}
Also with \eqref{GS190}, the linear operators$\ \bg_{\alpha}^{p',k} : C^{m-1,\gamma} \left( \partial Q_{6}^{k} \cap \partial Q_{6}^{k-1} \right) \to C^{\gamma} \left( Q_{5}' \right) $  $(k \in K_{-})$ and $\bh_{\alpha}^{p',k} : C^{m,\gamma} \left( \partial Q_{6}^{k} \cap \partial Q_{6}^{k-1} \right) \to C^{\gamma} \left( Q_{5}' \right) $  $(k \in K_{-})$ 
are defined as
\begin{equation*}\begin{aligned}\label{}
\ \bg_{\alpha}^{p',k}[h](x') 
& = \sum_{|\xi| \leq |p'|-1} \left[ \pi_{\alpha}  P_{\xi}^{ p' }  D_{\xi}  h \right] \big( \varphi_{k}(x'),x' \big) \\
& \quad + \sum_{ 0' < q' \leq p' } \left( \begin{array}{c} p' \\ q' \end{array} \right) D^{q'} D_{i}\varphi_{k}(x') \, D^{p'-q'}  \left[ h\big(  \varphi_{k}(x'), x' \big) \right],
\end{aligned}\end{equation*}
and 
\begin{equation*}\begin{aligned}\label{}
\bh_{\alpha}^{p',k}[h](x') &=  \sum_{0' \leq q' \leq p' } 
\Big( \begin{array}{c} p' \\ q' \end{array} \Big) 
D_{\alpha} \left[ \pi^{q'} \big( \varphi_{k}(x') , x' \big) \right]
D^{ ( |q'|, p'-q' ) } h \big( \varphi_{k}(x') , x' \big) \\
& \quad+   \sum_{|\xi| \leq |p'|-1}  D_{\alpha} \left[ P_{\xi}^{p'} \big[ \varphi_{k}(x'), x' \big] \, D_{\xi}h \big( \varphi_{k}(x') , x' \big) \right],
\end{aligned}\end{equation*}
for any $ 1 \leq \alpha \leq n$ and  $k \in K$, where the notation 
\begin{equation*}\label{}
u \big|_{k} \in C^{m,\gamma} \left( \overline{Q_{5}^{k}} \right)
\qquad \text{and} \qquad 
U_{\alpha}^{i} \big|_{k} \in C^{m-1,\gamma} \left( \overline{Q_{5}^{k}} \right)
\end{equation*}
denote the extension of 
\begin{equation*}\label{}
u \in C^{m,\gamma} \left( Q_{7}^{k} \right) 
\qquad \text{and}  \qquad
U_{\alpha}^{i} = \sum_{ 1 \leq j \leq N} \sum_{ 1 \leq \beta \leq n}  A_{\alpha \beta}^{ij} D_{\beta}u^{j} - F_{\alpha}^{i} \in C^{m-1,\gamma} \left( Q_{7}^{k} \right)
\end{equation*}
for any $1 \leq \alpha \leq n$ and $1 \leq i \leq N$, which exist by the assumption \eqref{data_con}.

\begin{rem}
By the definition, $\bg_{\alpha}^{p',k}[h](x')$ and $\bh_{\alpha}^{p',k}[h](x')$ depends only on $x'$-variables. So $G^{p'}$ depends only on $x'$-variables in $Q_{5}^{k}$ and are  H\"{o}lder continuous in each $Q_{5}^{k}$ $(k \in K)$ and might have big jumps on $\partial Q_{5}^{k} \cap \partial Q_{5}^{k+1}$ $(k \in K_{-})$. 

We will prove  that $U^{p'}$, $\pi_{\alpha}$ and $\pi^{q'}$ are H\"{o}lder continuous in $Q_{5}$. So one can only show that $D^{m+1}u$ is piece-wise H\"{o}lder continuous due to vector-valued the function $G^{p'}$ which might have big jumps on $\partial Q_{5}^{k} \cap \partial Q_{5}^{k+1}$ $(k \in K_{-})$. 
\end{rem}

In the previous \cite{KYSP1}, we discovered a term related to the gradient of the weak solution which is H\"{o}lder continuous. In this paper, we obtain a corresponding  result related to higher order derivatives. 

\begin{thm}\label{main theorem of U}
Let  $u$ be a weak solution of \eqref{main equation}. Then for any $p' \geq 0'$ with $|p
|=m$, we have that $U^{p'} \in C^{\gamma}\left( Q_{3} \right)$ with the estimate
\begin{equation*}\begin{aligned}\label{} 
\big\| U^{p'} \big\|_{L^{\infty}(Q_{r}(z))}^{2}\leq cr^{-2} E^{2},
\end{aligned}\end{equation*}
\begin{equation*}\label{}
\big\| U^{p'} \big\|_{L^{\infty}(Q_{r}(z))}^{2}
\leq c \left[ \mint_{Q_{2r}(z)} \big| U^{p'} \big|^{2} \, dx   + E^{2} \right]
\end{equation*}
and 
\begin{equation*}\label{}
\big[ U^{p'} \big]_{C^{\gamma}(Q_{r}(z))}^{2} 
\leq c \left[  \frac{1}{r^{2\gamma} }
\mint_{Q_{2r}(z)} \left| U^{p'}  - \big( U^{p'} \big)_{Q_{2r}(z)} \right|^{2} dx 
+  \sum_{|q'|=m}  \big\| U^{q'} \big \|_{L^{\infty}(Q_{2r}(z))}^{2} + E^{2} \right],
\end{equation*}
for any $Q_{2r}(z) \subset Q_{5}$ where $E  = \sum_{ k \in K }  \left[ \| u \|_{C^{m,\gamma} (Q_{7}^{k}) } +  \| F \|_{C^{m,\gamma} (Q_{7}^{k}) } \right]$.
\end{thm}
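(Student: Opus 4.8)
The plan is to derive the estimates for $U^{p'}$ by setting up a Caccioppoli-type iteration that couples $U^{p'}$ to the lower-order objects $U^{q'}$ with $|q'| < m$ and to the solution $u$ itself, all of which are already known to be piecewise $C^{m,\gamma}$ by the hypotheses \eqref{data_con} and \eqref{cof}. The key observation, which I would exploit first, is that $U^{p'}$ is built so that subtracting the geometric correction terms $G^{p'}$ removes the jumps of the differentiated fluxes and differentiated solution across $\partial Q_5^k \cap \partial Q_5^{k+1}$: the operators $\bg_\alpha^{p',k}$ and $\bh_\alpha^{p',k}$ are precisely the boundary traces of the chain-rule expansions that appear when one differentiates $\sum_k (\cdot) \chi_{Q_7^k}$ in the $x'$-directions along the graphs $\varphi_k$. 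Thus the first step is an algebraic one: verify that $U_1^{i,p'}$ and $U_\beta^{i,p'}$ satisfy, in the weak sense on $Q_5$, an elliptic system of the same structural form as \eqref{main equation} but with right-hand side $D_\alpha \tilde F_\alpha^i$ where $\tilde F$ collects (a) the $C^\gamma$ data coming from $F$, the coefficients $A^{\alpha\beta}_{ij}$, and their derivatives up to order $m$, and (b) the contributions of $G^{p'}$ and of the auxiliary functions $\pi_\alpha$, $\pi^{q'}$ — all of which are controlled by $E$ together with $\|U^{q'}\|_{L^\infty}$ for $|q'|\le m$. This is the generalized difference-quotient computation along the flow $t\mapsto x+(\sigma(x,t),t)$ advertised in the introduction, carried to order $m$.

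Once that system is in hand, the three displayed estimates follow from a standard local regularity package for linear elliptic systems with $L^\infty$ coefficients satisfying the ellipticity \eqref{ell}–\eqref{growth}. Concretely: the first bound $\|U^{p'}\|_{L^\infty(Q_r(z))}^2 \le c r^{-2} E^2$ I would get from the De Giorgi–Nash–Moser $L^\infty$ estimate applied after a crude $W^{1,2}$ bound (which itself comes from testing \eqref{main equation} and the differentiated system against cutoffs, absorbing the $E$-terms). The second bound is the local boundedness estimate $\|U^{p'}\|_{L^\infty(Q_r)}^2 \le c(\fint_{Q_{2r}}|U^{p'}|^2 + E^2)$, again De Giorgi–Nash–Moser but now keeping the $L^2$ average on the right. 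The third, the Hölder seminorm bound, is a Campanato-type estimate: freeze the coefficients at a point, compare $U^{p'}$ with the solution of the constant-coefficient homogeneous system, use the $C^{1,\gamma}$ (indeed $C^\infty$) interior estimate for that comparison solution, and iterate on dyadic cubes; the excess decays geometrically up to the $E^2$ and $\sum_{|q'|=m}\|U^{q'}\|_{L^\infty(Q_{2r})}^2$ error, which is exactly the form stated. The restriction $\gamma \in (0,1/4]$ and the flatness \eqref{gradient_graphs} are what make the coordinate change and the constant-coefficient comparison legitimate near touching points.

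The main obstacle I anticipate is the first step — showing that $U^{p'}$ genuinely solves a clean elliptic system on all of $Q_5$, including across the interfaces. Away from $\partial Q_5^k \cap \partial Q_5^{k+1}$ this is just repeated differentiation of \eqref{main equation} in tangential directions and is bookkeeping. At an interface, and especially at a touching point of two graphs, one must check that the distributional derivatives $D^{(|q'|,p'-q')}[A^{\alpha\beta}_{ij}D_\beta u^j - F^i_\alpha]$ do not produce singular (surface-measure) terms once $G^{p'}$ is subtracted; equivalently, that the jump of the naive differentiated flux across each graph is exactly cancelled by the jump built into $G^{p'}$ via $\bg_\alpha^{p',k}$ and $\bh_\beta^{p',k}$ and the Heaviside functions $H_k$. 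This is where the precise design of $\bg$, $\bh$, $\pi_\alpha$, $\pi^{q'}$ matters, and where the fact that first derivatives of the graphs agree at touching points (so that $|p'|=1$ worked in \cite{KYSP1}) no longer suffices and the extension-of-functions device from the introduction is needed. I would handle this by working first on a single composite cube where the graphs are disjoint (flattening reduces it to the Euclidean case), establishing the identity there, and then passing to the general configuration by the stability of the estimates in the distance between graphs — exactly the point emphasized in the introduction that the constants do not degenerate as that distance tends to zero.

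Once the differentiated system and its right-hand side are identified, the remaining arguments are routine and I would only sketch them, citing the interior $L^\infty$, local boundedness, and Campanato estimates for \eqref{ell}–\eqref{growth}, and then assembling the three displays and the claimed membership $U^{p'}\in C^\gamma(Q_3)$ by covering $Q_3$ with cubes $Q_{2r}(z)\subset Q_5$.
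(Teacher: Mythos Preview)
Your plan has a genuine gap at the ``first step.'' You assume that, after the jump-cancellation algebra, $U^{p'}$ (or $\tilde u^{p'}$) satisfies a \emph{clean} elliptic system on $Q_5$ with right-hand side controlled by $E$ and $\|U^{q'}\|_{L^\infty}$. The paper shows this is not what actually happens. The weak formulation one obtains (Lemma~\ref{PETS900}) carries unavoidable error terms of the form
\[
c\,r^{1/4}\Big(\epsilon|D\eta|^2 + \epsilon^{-2}|\eta|^2 + \|D^{m+1}u\|_{L^\infty(Q_r(z))}^2 + E^2\Big),
\]
because the geometric weights $\pi^{q'}$ are only $C^{1/2}$ across touching interfaces: $|D\pi|$ blows up like $|\varphi_{k+1}-\varphi_k|^{-1/2}$ (Lemma~\ref{GSS700}), and integrating by parts against $D[\pi^{q'}]$ produces these residuals (Lemma~\ref{GSS800}). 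So the right-hand side of the differentiated system depends on the very top-order quantity $\|D^{m+1}u\|_{L^\infty}$ you are trying to bound, and no amount of bookkeeping removes this circularity.

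The paper closes this loop by (i) working under the a priori hypothesis $u\in C^{m+1,\gamma}(Q_7^k)$, justified by the approximation $\varphi_{k+1}-\varphi_k\ge\delta>0$ in \eqref{GS130}; (ii) deriving estimates in which $\|D^{m+1}u\|_{L^\infty(Q_{2r}(z))}^2$ appears on the right only with a small factor $r^{2\gamma}$ (Lemma~\ref{PMTS850}); and (iii) absorbing that term by the Han--Lin/Giaquinta iteration over nested cubes (Lemmas~\ref{PMTS1000} and \ref{PMTS1100}). Your outline contains no analogue of steps (i)--(iii), so as written the argument would not close. A secondary issue: you invoke De~Giorgi--Nash--Moser for the $L^\infty$ bounds, but $U^{p'}$ is $\mathbb{R}^{Nn}$-valued and the coefficients are a genuine system; the paper instead gets pointwise control by combining the Caccioppoli inequality of Lemma~\ref{PMTS200}, the comparison with a reference solution $v$ (Lemma~\ref{PMTS300}), and the excess-decay result for the reference problem imported from \cite{KYSP1} (Lemma~\ref{PMTS500}).
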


$D^{m+1}u$ and $U^{p'}$ will be compared by using the following proposition.
\begin{prop}\label{prop_compare}
For any $z \in Q_{5}$, we have that 
\begin{equation*}\label{} 
| D^{m+1}u(z) |
\leq c \left[ E + \sum_{|p'| = m}  \big| U^{p'}(z) \big|   \right]
\leq c \left[ E + | D^{m+1}u(z) | \right].
\end{equation*}
In additional, for any $w, z \in Q_{5}^{k}$ $(k \in K)$, we have that
\begin{equation*}\begin{aligned}
\big| D^{m+1}u(w) - D^{m+1}u(z)\big| 
& \leq c \sum_{|p'| = m} \left[  \big| U^{p'}(w) - U^{p'}(z)\big| 
+  |w-z|^{\gamma} \big| U^{p'}(z) \big| \right]  \\
& \quad + c |w-z|^{\gamma} \sum_{k \in K} \left[  \| u \|_{C^{m,\gamma} (Q_{7}^{k}) }  +  \| F \|_{C^{m,\gamma} (Q_{7}^{k}) }  \right].
\end{aligned}\end{equation*}
\end{prop}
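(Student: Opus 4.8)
The proposition is, at bottom, a bookkeeping statement together with one structural inversion: it asserts that $D^{m+1}u$ and the family $\{U^{p'}\}_{|p'|=m}$ determine one another up to the lower-order quantity $E$, and similarly for their $\gamma$-oscillations on a single subregion. Throughout I would work in the subregion $Q_7^{k}$ containing the point in question, where $A^{\alpha\beta}_{ij}$ is $C^{m,\gamma}$ and $D^{m+1}u$ is classically defined, treating the estimate as an a priori one (the general case being recovered by the approximation argument). The plan has three steps: (i) split each $U^{p'}$ into a principal part, linear in $D^{m+1}u$ with coefficients bounded by the data, plus a remainder bounded by $cE$; (ii) invert the principal part, using the equation and ellipticity; (iii) difference the resulting representation on a fixed subregion to obtain the oscillation bound.

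For step (i) I would expand every component of $U^{p'}$ by the Leibniz rule. In $U_\beta^{i,p'}$ with $2\le\beta\le n$ each summand is one of the bounded quantities $\pi^{q'},\ \pi_\beta\pi^{q'}$ times a derivative of $u$; in $U_1^{i,p'}$ each summand is $\pi_\alpha\pi^{q'}$ times a derivative of $A^{\alpha\beta}_{ij}D_\beta u^j-F_\alpha^i$, which after Leibniz is a bounded coefficient (a derivative of $A^{\alpha\beta}_{ij}$, all bounded since $A\in C^{m,\gamma}$) times a derivative of $u$, plus a derivative of $F$. Collecting the summands whose $u$-derivative has order exactly $m+1$ defines a linear map $\mathcal L^{p'}$ with $U^{p'}=\mathcal L^{p'}(D^{m+1}u)+R^{p'}$ and $|\mathcal L^{p'}(D^{m+1}u)|\le c|D^{m+1}u|$, the constant depending only on $n,N,m,\gamma,\lambda,\Lambda$, the $C^{m,\gamma}$-norms of the $A^{\alpha\beta,k}_{ij}$, the $C^{m+1,\gamma}$-norms of the $\varphi_k$, and the uniform bounds on $\pi_\alpha,\pi^{q'}$. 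Every remaining summand contains a derivative of $u$ of order $\le m$ or a derivative of $F$ of order $\le m$, hence $|R^{p'}|\le cE$, once one also uses $\|G^{p'}\|_{L^\infty(Q_5)}\le cE$, which holds because the extension operators $u\mapsto u|_{k}$, $U_\alpha^i\mapsto U_\alpha^i|_{k}$ and the operators $\bg_\alpha^{p',k},\bh_\alpha^{p',k}$ are bounded between the spaces specified in their definitions. This already yields $\sum_{|p'|=m}|U^{p'}(z)|\le c[E+|D^{m+1}u(z)|]$, which is the final inequality of the first display.

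For step (ii) I would show that the total principal map $L=(\mathcal L^{p'})_{|p'|=m}$ is bounded below, with constant depending only on the data. Structurally, the components coming from $U_\beta^{i,p'}$, $2\le\beta\le n$, are — modulo lower-order terms and the jump $G_\beta^{i,p'}$ — differential operators of order $m+1$ applied to $u$ whose top-order part involves only the $n-1$ almost-flat flow directions $e_\beta+\pi_\beta e_1$, and so pin down exactly the portion of $D^{m+1}u$ obtained by differentiating in those directions alone; the remaining, transversal derivatives (those of higher $x^1$-order) are supplied by the equation, which in $Q_7^{k}$ can be written $A^{11}_{ij}D_{11}u^j=(\text{$u$-derivatives of strictly smaller }x^1\text{-order})+(\text{terms bounded by }cE)$, with $(A^{11}_{ij})$ invertible and $\|(A^{11})^{-1}\|\le\lambda^{-1}$ by \eqref{ell}. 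Differentiating this identity by $D^{(a,b')}$ for $a+|b'|\le m-1$, and using $U_1^{i,p'}$ to close the remaining equations, an induction on the $x^1$-order of the derivative — solving, within each fixed order, a further small linear system in which the flow-direction operators appear — expresses every component of $D^{m+1}u(z)$ in terms of $\{U^{p'}(z)\}$ and the $cE$-terms, which is the reverse inequality $|D^{m+1}u(z)|\le c[E+\sum_{|p'|=m}|U^{p'}(z)|]$. The hard part will be exactly this inversion: no single family $U^{p'}$ sees all of $D^{m+1}u$ when $m\ge1$, so one must interleave the tangential information carried by the $U_\beta^{i,p'}$, the transversal information from the differentiated equation, and strong ellipticity, and keep every constant in the resulting chain uniform in the (compactly ranging) frozen coefficients at $z$.

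For step (iii), fix $w,z\in Q_5^{k}$. Because both points lie in the same subregion, $u,F,A,\varphi_k,\pi_\alpha,\pi^{q'}$ and $G^{p'}\big|_{Q_5^{k}}$ are all $C^\gamma$ there, with seminorms $\le cE$ (respectively $\le c$). Subtracting the representation from step (ii) at $w$ and at $z$, the difference $D^{m+1}u(w)-D^{m+1}u(z)$ becomes a sum of three kinds of terms: differences $U^{p'}(w)-U^{p'}(z)$ multiplied by bounded factors; differences of coefficients, of size $\le c|w-z|^\gamma$, multiplied by a derivative of $u$ at $z$ of order $\le m+1$, which by steps (i)--(ii) is $\le c|w-z|^\gamma\big(\sum_{|p'|=m}|U^{p'}(z)|+E\big)$; and differences of the remainders, of size $\le c|w-z|^\gamma E$. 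Summing these gives the second displayed estimate.
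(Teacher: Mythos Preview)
Your proposal is correct and follows the same overall strategy as the paper: split $U^{p'}$ into a principal part linear in $D^{m+1}u$ plus a remainder controlled by $E$, invert, and then difference. The organization of the inversion step differs, though, and the paper's device makes your ``hard part'' transparent.

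For step (ii) the paper does not stay in $x$-coordinates. At each $z\in Q_5$ it introduces the pointwise shear
\[
\Psi_z(x)=\big(x^1-z^1-\pi'(z)\cdot(x'-z'),\,x'-z'\big),\qquad \hat u(y)=u(\Phi_z(y)),
\]
which sends the flow directions $e_\beta+\pi_\beta(z)e_1$ to the coordinate directions $e_\beta$ (Lemmas \ref{HODS200}--\ref{HODS250}). One then has the exact identity $D_{y^\beta}D_{y'}^{p'}\hat u(\Psi_z(w))=\bb_\beta^{p'}[z,w]$ (Lemma \ref{HODS300}), so the tangential components $U_\beta^{i,p'}-G_\beta^{i,p'}$ with $\beta\ge2$ are \emph{literally} the derivatives $D_{y'}^{p'+e_\beta}\hat u$ at $z$, and no ``small linear system in which the flow-direction operators appear'' needs to be solved. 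The single remaining $y^1$-order-one quantity $\tilde u_1^{p'}=D_{y^1}D_{y'}^{p'}\hat u$ is extracted from $U_1^{i,p'}$ by the algebraic Lemma \ref{HODS100}, whose invertibility constant is the ellipticity in the direction $\pi=(-1,\pi')$,
\[
\sum_{i,j}\sum_{\alpha,\beta}A^{\alpha\beta}_{ij}\pi_\alpha\pi_\beta\,\zeta_i\zeta_j\ge\lambda|\zeta|^2
\]
(see \eqref{HOD140}), i.e.\ the transformed $\hat A^{11}_{ij}=\sum_{s,t}A^{st}_{ij}\pi_s\pi_t$ rather than the $A^{11}_{ij}$ you invoke. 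The higher $y^1$-order derivatives then come from the transformed equation and $\hat A^{11}$-ellipticity by the clean induction of Lemma \ref{HODS400}. Your plan to reduce in $x$-coordinates via $A^{11}$ and then close with $U_1^{i,p'}$ is equivalent (since $D_{y^1}=D_{x^1}$), but it mixes two ellipticity directions and leaves the invertibility of the final reduced system implicit; the coordinate change is what eliminates that bookkeeping. Step (iii) is handled in the paper by the parallel Lemmas \ref{HODS600}--\ref{HODS900}, which are exactly your differencing argument carried out in the $y$-frame.
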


\begin{thm}\label{main theorem}
For a weak solution $u$ of \eqref{main equation}, we have that $u \in C^{m+1,\gamma}\left( Q_{3}^{k} \right)$ $(k \in K)$ and 
$D^{m+1}u \in L^{\infty}(Q_{3}^{k})$ $(k \in K)$ with the estimate
\begin{equation*}\begin{aligned}\label{} 
\frac{1}{|Q_{r}|} \int_{Q_{ r }^{k}(z) }  \left| D^{m+1}u \right|^{2}  \, dx 
\leq cr^{-2} E^{2},
\end{aligned}\end{equation*}
\begin{equation*}\label{}
\left\| D^{m+1}u \right\|_{L^{\infty}(Q_{r}^{k}(z))}^{2}
\leq c \left[ \sum_{l \in K} \frac{1}{|Q_{2r}|} \int_{Q_{2r}^{l}(z)} \big| D^{m+1}u \big|^{2} \, dx   + E^{2} \right]
\end{equation*}
and 
\begin{equation*}\label{}
\big[ D^{m+1}u \big]_{C^{\gamma}(Q_{r}^{k}(z))}^{2} 
\leq \frac{c}{r^{2\gamma}}  \left[  \sum_{l \in K} \frac{1}{|Q_{2r}|}  \int_{Q_{2r}^{l}(z)} \big| D^{m+1}u \big|^{2} \, dx  
+ E^{2}  \right],
\end{equation*}
for any $Q_{2r}(z) \subset Q_{5}$ and $k \in K$, where $E  = \sum_{ k \in K }  \left[ \| u \|_{C^{m,\gamma} (Q_{7}^{k}) } +  \| F \|_{C^{m,\gamma} (Q_{7}^{k}) } \right]$.
\end{thm}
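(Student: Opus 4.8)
The proof of Theorem \ref{main theorem} is a direct consequence of Theorem \ref{main theorem of U} and Proposition \ref{prop_compare}, so the plan is essentially to \emph{transport} the three estimates for $U^{p'}$ to the corresponding estimates for $D^{m+1}u$, using the pointwise comparison inequalities. The key observation is that $G^{p'}$ is precisely the obstruction: it is $C^{\gamma}$ in each $Q_{5}^{k}$ but jumps across $\partial Q_{5}^{k}\cap\partial Q_{5}^{k+1}$, which is why one only obtains \emph{piecewise} H\"{o}lder continuity of $D^{m+1}u$ even though $U^{p'}$ is genuinely H\"{o}lder continuous on all of $Q_{5}$. Since we restrict all estimates to a single $Q_{r}^{k}(z)$, the jump term never appears, and within such a set $G^{p'}$, $\pi_{\alpha}$ and $\pi^{q'}$ are controlled by $E$ and the $C^{m,\gamma}(Q_{7}^{k})$-norms of the graph functions (which are absorbed into the constant $c$).

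\textbf{Step 1 (membership and $L^{\infty}$ bound).} First I would note $u\in C^{m+1,\gamma}(Q_{3}^{k})$: by \eqref{data_con} we already know $u\in C^{m,\gamma}(Q_{7}^{k})$, and Theorem \ref{main theorem of U} gives $U^{p'}\in C^{\gamma}(Q_{3})$ for every $p'$ with $|p'|=m$; feeding this into the second inequality of Proposition \ref{prop_compare} (applied with $w,z$ in a fixed $Q_{3}^{k}$) shows $D^{m+1}u$ is H\"{o}lder continuous on $Q_{3}^{k}$, hence $u\in C^{m+1,\gamma}(Q_{3}^{k})$ and $D^{m+1}u\in L^{\infty}(Q_{3}^{k})$. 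For the first (Caccioppoli-type) estimate, integrate the square of the first inequality in Proposition \ref{prop_compare} over $Q_{r}^{k}(z)$, divide by $|Q_{r}|$, and use the first estimate of Theorem \ref{main theorem of U}, namely $\|U^{p'}\|_{L^{\infty}(Q_{r}(z))}^{2}\le cr^{-2}E^{2}$; since $r\le 5$ the constant term $E^{2}$ is absorbed into $cr^{-2}E^{2}$, giving $|Q_{r}|^{-1}\int_{Q_{r}^{k}(z)}|D^{m+1}u|^{2}\,dx\le cr^{-2}E^{2}$.

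\textbf{Step 2 (local sup bound).} Taking the $L^{\infty}(Q_{r}^{k}(z))$-norm in the first inequality of Proposition \ref{prop_compare} gives $\|D^{m+1}u\|_{L^{\infty}(Q_{r}^{k}(z))}^{2}\le c[E^{2}+\sum_{|p'|=m}\|U^{p'}\|_{L^{\infty}(Q_{r}(z))}^{2}]$, and then the second estimate of Theorem \ref{main theorem of U} bounds each $\|U^{p'}\|_{L^{\infty}(Q_{r}(z))}^{2}$ by $c[\,\mint_{Q_{2r}(z)}|U^{p'}|^{2}\,dx+E^{2}]$. Finally, on $Q_{2r}(z)=\dotcup_{l\in K}Q_{2r}^{l}(z)$ the \emph{right-hand} inequality of Proposition \ref{prop_compare} (i.e.\ $\sum_{|p'|=m}|U^{p'}|\le c[E+|D^{m+1}u|]$) converts $\mint_{Q_{2r}(z)}|U^{p'}|^{2}\,dx$ into $c[\sum_{l\in K}|Q_{2r}|^{-1}\int_{Q_{2r}^{l}(z)}|D^{m+1}u|^{2}\,dx+E^{2}]$, yielding the claimed bound.

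\textbf{Step 3 (H\"{o}lder seminorm).} This is the step that needs a little care. For $w,z\in Q_{r}^{k}(z_{0})$ use the second inequality of Proposition \ref{prop_compare}; the term $|w-z|^{\gamma}\sum_{k}[\|u\|_{C^{m,\gamma}}+\|F\|_{C^{m,\gamma}}]=|w-z|^{\gamma}E$ contributes $r^{\gamma}E$ to $[D^{m+1}u]_{C^{\gamma}}$, which after squaring and multiplying by $r^{-2\gamma}$ is $\le cr^{-2\gamma}E^{2}$ (again $r\le 5$). The term $\sum|w-z|^{\gamma}|U^{p'}(z)|$ is handled by the $L^{\infty}$ bound of Step 2. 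The main piece is $\sum_{|p'|=m}|U^{p'}(w)-U^{p'}(z)|$, which is bounded by $|w-z|^{\gamma}\sum[U^{p'}]_{C^{\gamma}(Q_{r}(z_{0}))}$; the third estimate of Theorem \ref{main theorem of U} controls each $[U^{p'}]_{C^{\gamma}(Q_{r}(z_{0}))}^{2}$ by $c[r^{-2\gamma}\mint_{Q_{2r}}|U^{p'}-(U^{p'})_{Q_{2r}}|^{2}\,dx+\sum_{|q'|=m}\|U^{q'}\|_{L^{\infty}(Q_{2r})}^{2}+E^{2}]$. The oscillation integral $\mint_{Q_{2r}}|U^{p'}-(U^{p'})_{Q_{2r}}|^{2}\,dx$ must now be re-expressed in terms of $D^{m+1}u$: since $|U^{p'}-(U^{p'})_{Q_{2r}}|^{2}\le c\,\mint_{Q_{2r}}|U^{p'}(x)-U^{p'}(y)|^{2}\,dy$ pointwise, and on each piece $Q_{2r}^{l}$ one has $|U^{p'}(x)-U^{p'}(y)|\le c[|D^{m+1}u(x)-D^{m+1}u(y)|+r^{\gamma}E]$ up to the jump of $G^{p'}$ \emph{which is itself bounded on $Q_{2r}^{l}$ by a constant times $E$} (a constant depending on the $C^{m,\gamma}$-norms of the graphs), one gets $\mint_{Q_{2r}}|U^{p'}-(U^{p'})_{Q_{2r}}|^{2}\,dx\le c[\sum_{l\in K}|Q_{2r}|^{-1}\int_{Q_{2r}^{l}(z_{0})}|D^{m+1}u|^{2}\,dx+r^{2\gamma}E^{2}]$, where crucially $|D^{m+1}u|^{2}$ absorbs $|(D^{m+1}u)_{Q_{2r}^{l}}|^{2}$. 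Combining everything, multiplying through by $r^{-2\gamma}$ and absorbing the lower-order $L^{\infty}$ terms of Step 2 into $\sum_{l}|Q_{2r}|^{-1}\int_{Q_{2r}^{l}}|D^{m+1}u|^{2}\,dx+E^{2}$ gives the third estimate.

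\textbf{Main obstacle.} The only nontrivial point is the bookkeeping in Step 3: one must verify that the jump terms carried by $G^{p'}$ across $\partial Q_{5}^{k}\cap\partial Q_{5}^{k+1}$, which prevent a global H\"{o}lder bound, are nonetheless uniformly bounded by $cE$ when restricted to a single component $Q_{2r}^{l}$ — this is exactly the content of the remark that $G^{p'}$ is H\"{o}lder continuous in each $Q_{5}^{k}$ with norm controlled by the data and the graph functions (via the operators $\bg_{\alpha}^{p',k}$, $\bh_{\alpha}^{p',k}$ acting on $u|_{k}$, $U_{\alpha}^{i}|_{k}$ whose $C^{m,\gamma}$, $C^{m-1,\gamma}$ norms are bounded by $E$). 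Everything else is a mechanical propagation of the three inequalities of Theorem \ref{main theorem of U} through the two inequalities of Proposition \ref{prop_compare}, together with the elementary fact that for $Q_{2r}\subset Q_{5}$ one has $r\le 5$ so that $E^{2}\le cr^{-2}E^{2}$ and $E^{2}\le cr^{-2\gamma}E^{2}$.
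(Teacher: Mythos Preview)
Your proposal is correct and follows exactly the route the paper takes: the paper's own proof of Theorem~\ref{main theorem} is a three-line appeal to Theorem~\ref{main theorem of U} and Proposition~\ref{prop_compare}, and your Steps~1--3 simply spell out this transport in detail. Your Step~3 is slightly overcomplicated --- the oscillation term $\mint_{Q_{2r}}|U^{p'}-(U^{p'})_{Q_{2r}}|^{2}\,dx$ can be bounded crudely by $c\mint_{Q_{2r}}|U^{p'}|^{2}\,dx$ and then converted via the right-hand inequality of the first line of Proposition~\ref{prop_compare}, avoiding the pointwise-difference and jump bookkeeping entirely.
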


\begin{rem}
As in \cite{KYSP1}, one can in fact find a vector-valued function related to $(m+1)$-th derivatives   which $C^{\gamma}(Q_{2})$. This function has big jumps on the boundary of the subregions. 
\end{rem}

\section{Geometric settings and the related properties}

With the assumption \eqref{graph_con} for some $m \geq 0$ and $\gamma \in (0,1/4]$, let $ \left( Q_{7}, \left\{ \varphi_{k} : k \in K_{+} \right \} \right)$ be a composite cube. For the approximation argument, we assume that
\begin{equation}\label{GS130}
\varphi_{k}(x') + \delta \leq \varphi_{k+1}(x')
\qquad \big(  x \in Q_{7}', \, k \in K \big),
\end{equation} 
for  some $\delta>0$. The desired estimate does not depend on $\delta>0$ and one can use an approximation argument to handle the case that  $\delta=0$. Since $ \left( Q_{7}, \left\{ \varphi_{k} : k \in K_{+} \right \} \right)$ is a composite cube, recall from Definition \ref{composite cube} that
\begin{equation}\label{GS150}
Q_{7} = \dotcup_{k \in K} Q_{7}^{k}. 
\end{equation}

\subsection{Time derivatives of the flow}
To estimate the higher derivatives for linear elliptic systems in composite cubes, we generalize the difference quotient from lines(or the fixed directions) to general curves. To do it, we set the flow $\psi(x,t')$ as follows. For  the graph functions, $\left\{ \varphi_{k} : k \in K_{+} \right \}$, set $T_{k} : Q_{7} \to [0,1]$ ($k \in K$) as
\begin{equation}\label{GS220} 
T_{k}(x^{1},x') = \frac{x^{1} - \varphi_{k}( x')}{\varphi_{k+1}(x') - \varphi_{k}(x')} 
\quad \text{in} \quad Q_{7}^{k}.
\end{equation}

\begin{rem}
For any $l > k$, $T_{k} = 1$ in $Q_{7}^{l}$ and for any $l<k$, $T_{k}=0$ in $Q_{7}^{l}$. So with \eqref{GS130}, $T_{k}$ linearly increases from $\left( \varphi_{k}(x'),x' \right)$ to $\left( \varphi_{k+1}(x'),x' \right)$ in $Q_{7}^{k}$. 
\end{rem}

For any $k \in K$, define $\psi : Q_{6} \times Q_{1}'  \to \br$ so that
\begin{equation}\label{GS230}
\psi(x,t') 
= \big[ \varphi_{k+1} (x'+ t') -\varphi_{k+1}( x') \big] \cdot T_{k}(x)
+ \big [\varphi_{k} ( x'+ t') - \varphi_{k}( x') \big] \cdot \big[ 1-T_{k}(x) \big]
\end{equation}
in $Q_{6}^{k} \times Q_{1}'$. 
If the flow starts at $x \in Q_{6}^{k}$ then the flow $ x + \left( \psi(x,t'), t'  \right)$ at time $t' \in Q_{1}'$ still remains at the same region $Q_{7}^{k}$ as in the following lemma.

\begin{lem}\label{GSS250}
For any $(x,t')  \in Q_{6}^{k} \times Q_{1}'$, we have that 
\begin{equation*}\label{}
 x + \left( \psi(x,t'), t'  \right)  = \left( x^{1} + \psi(x,t'), x'+t' \right) \in Q_{7}^{k}.
\end{equation*}
\end{lem}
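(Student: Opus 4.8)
The plan is to reduce the lemma to a single bookkeeping identity. Write $y := x + (\psi(x,t'),t') = (y^1,y')$ with $y^1 = x^1+\psi(x,t')$ and $y' = x'+t'$. By \eqref{composite cube_strip} it suffices to check (a) $y \in Q_7$ and (b) $\varphi_k(y') < y^1 \le \varphi_{k+1}(y')$. First I would dispose of the $(n-1)$ transversal coordinates: since $x \in Q_6^k \subset Q_6$ we have $|x^i| < 6$ for $2 \le i \le n$ and $|t^i| < 1$, so $|y^i| = |x^i+t^i| < 7$; in particular $y' \in Q_7'$, the values $\varphi_k(y')$, $\varphi_{k+1}(y')$ make sense, and the whole segment $\{x'+st' : s \in [0,1]\}$ stays in $Q_7'$.

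The key step is the algebraic identity
\begin{equation*}
x^1 + \psi(x,t') = \big(1-T_k(x)\big)\,\varphi_k(x'+t') + T_k(x)\,\varphi_{k+1}(x'+t'),
\end{equation*}
which says precisely that the flow preserves the ``height parameter'' $T_k$. To obtain it, rewrite \eqref{GS220} as $x^1 = (1-T_k(x))\varphi_k(x') + T_k(x)\varphi_{k+1}(x')$, add $\psi(x,t')$ in the form \eqref{GS230}, and observe that the two $\varphi_j(x')$-contributions ($j=k,k+1$) cancel. With this identity in hand, (b) is immediate: $x \in Q_6^k$ forces $\varphi_k(x') < x^1 \le \varphi_{k+1}(x')$, so the denominator in \eqref{GS220} is positive by \eqref{GS130} and $T_k(x) \in (0,1]$; since moreover $\varphi_k(y') < \varphi_{k+1}(y')$ by \eqref{GS130}, the convex combination above lies strictly above $\varphi_k(y')$ and at or below $\varphi_{k+1}(y')$.

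For (a) it remains to bound $y^1$. Subtracting $x^1 = (1-T_k)\varphi_k(x') + T_k\varphi_{k+1}(x')$ from the identity gives $y^1 - x^1 = (1-T_k)[\varphi_k(x'+t')-\varphi_k(x')] + T_k[\varphi_{k+1}(x'+t')-\varphi_{k+1}(x')]$, hence $|y^1-x^1| \le \max_{j \in \{k,k+1\}} |\varphi_j(x'+t')-\varphi_j(x')|$. Integrating $D_{x'}\varphi_j$ along the segment from $x'$ to $x'+t'$ (contained in $Q_7'$) and using $|t^i| < 1$ together with \eqref{gradient_graphs} yields $|\varphi_j(x'+t')-\varphi_j(x')| < 1$, so $|y^1| \le |x^1| + 1 < 7$. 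Thus $y \in Q_7$, and with (b) and \eqref{composite cube_strip} we conclude $y \in Q_7^k$. There is no serious obstacle here; the only thing to get right is the cancellation in the displayed identity — the very reason $\psi$ is defined as in \eqref{GS230} — after which the statement follows from \eqref{gradient_graphs} and the gap condition \eqref{GS130}.
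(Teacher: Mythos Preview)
Your proof is correct and follows essentially the same route as the paper: both arguments derive the identity $x^{1}+\psi(x,t') = (1-T_{k}(x))\,\varphi_{k}(x'+t') + T_{k}(x)\,\varphi_{k+1}(x'+t')$ (the paper writes the equivalent rational expression in $x^{1}$), use it together with $T_{k}(x)\in(0,1]$ to get $\varphi_{k}(y')< y^{1}\le \varphi_{k+1}(y')$, and bound $|y^{1}|$ via \eqref{gradient_graphs}. Your convex-combination formulation is a slightly cleaner way to phrase the same computation.
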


\begin{proof}
For any $(x,t') \in Q_{6} \times Q_{1}'$, we have from \eqref{gradient_graphs} that 
\begin{equation*}\begin{aligned}\label{}
& \left| \left( x^{1} + \psi(x,t')  \right) \right| \\
& \quad \leq |x^{1}| + \big| \varphi_{k+1} (x'+ t') -\varphi_{k+1}( x') \big| \cdot T_{k}(x)
+ \big| \varphi_{k} ( x'+ t') - \varphi_{k}( x') \big| \cdot \big[ 1-T_{k}(x) \big] \\
& \quad \leq |x^{1}| + |t'|  < 7,
\end{aligned}\end{equation*}
which implies that 
\begin{equation}\label{GS255}
\left \{  x + \left( \psi(x,t'), t'  \right) : (x,t')  \in Q_{6}^{k} \times Q_{1}' \right \} \subset Q_{7}.
\end{equation}
Also from \eqref{GS220} and \eqref{GS230}, one can check that for any  $(x,t') \in Q_{6}^{k} \times Q_{1}'$,
\begin{equation*}\begin{aligned}\label{}
& x^{1} + \psi(x,t') \\
& \quad = \frac{ [\varphi_{k+1}(x'+t') - \varphi_{k}(x'+t')] x^{1} + \varphi_{k}(x'+t') \varphi_{k+1}(x') - \varphi_{k+1}(x'+t') \varphi_{k}(x') }{\varphi_{k+1}(x') - \varphi_{k}(x')}.
\end{aligned}\end{equation*}
If $x \in Q_{6}^{k}$ then $\varphi_{k}(x') < x^{1} \leq \varphi_{k+1}(x')$. So for any  $(x,t') \in Q_{6}^{k} \times Q_{1}'$, we find that $\varphi_{k}(x'+t') < x^{1} + \psi(x,t') \leq \varphi_{k+1}(x'+t')$. With \eqref{GS255}, this proves the lemma.
\end{proof}

Fix $p' = (p_{2}, \cdots, p_{n}) \geq 0'$ with $|p'|=m$. Then we consider the following $m$-th order time derivative of $h$ with respect to the flow $\psi(x,t')$ :
\begin{equation}\begin{aligned}\label{GS265} 
\partial_{t'}^{p'}h \big|_{t'=0'}  : = \partial_{t^{2}}^{p_{2}} \cdots \partial_{t^{n}}^{p_{n}}  \big[ h \big( x + (\psi(x,t'),t') \big) \big] \big|_{t'=0'}
~ \text{ in  } ~ Q_{6}.
\end{aligned}\end{equation}
We calculate \eqref{GS265} as follows.

\smallskip

For any $i \in \{ 1, \cdots, n \}$, to denote the first time derivative of the flow, set
\begin{equation}\label{GS260}
\pi_{\alpha}(x)  
: = D_{i}\varphi_{k+1}( x') \cdot T_{k}(x)
+ D_{i}\varphi_{k}( x')  \cdot [1-T_{k}(x)]
\end{equation}
for any $x \in Q_{7}^{k}$ $(k \in K)$. Then we find from \eqref{GS230} that
\begin{equation}\label{}
\pi_{\alpha}(x)  = \partial_{t^{i}} \psi(x,t) \big|_{t=0} 
\qquad ( x \in Q_{6}).
\end{equation}
In addition, we set
\begin{equation}\label{GS410}
\pi'  = ( \pi_{2}, \cdots, \pi_{n} )
\quad \text{and} \quad 
\pi = (-1,\pi') = ( -1, \pi_{2}, \cdots, \pi_{n} )
~ \text{ in } ~  Q_{7}.
\end{equation}
Also we denote the power of the first time derivative of the flow as
\begin{equation}\label{GS270} 
\pi^{q'} : = \pi_{2}^{q_{2}} \cdots \pi_{n}^{q_{n}}
~ \text{ in } ~ Q_{7}.
\end{equation}

\begin{lem}\label{GSS275}
For any $p' \geq 0'$ with $|p'|=m$, there exists a polynomial $\tilde{P}_{\xi}^{ p' } : \br^{\tiny  \left( \begin{array}{c} n-1 \\ 1 \end{array} \right)  \times
\left( \begin{array}{c} n \\ 2 \end{array} \right) \times \cdots \times  \left( \begin{array}{c} n+m-2 \\ m \end{array} \right) } \to \br$ such that
\begin{equation}\begin{aligned}\label{GS280} 
\partial_{t'}^{p'}h \big|_{t'=0'}  
= \sum_{0' \leq q' \leq p' } \Big( \begin{array}{c} p' \\ q' \end{array} \Big)
\pi^{q'} D^{ (|q'|,p'-q') } h 
+ \sum_{|\xi| \leq |p'|-1} P_{\xi}^{ p' } \, D_{\xi} h 
~ \text{ in } ~ Q_{6},
\end{aligned}\end{equation}
where $P_{\xi}^{ p' } : Q_{7} \to \br$ is defined as
\begin{equation*}\label{} 
P_{\xi}^{ p' } = \tilde{P}_{\xi}^{ p' } \left [ T_{k} D_{x'}\varphi_{k+1}  + [1-T_{k}] \, D_{x'}\varphi_{k}  , \cdots, T_{k} \, D_{x'}^{m} \varphi_{k+1}  + [1-T_{k}] \, D_{x'}^{m} \varphi_{k}  \right ]  
~ \text{ in } ~ Q_{7}^{k}
\end{equation*}
for any $k \in K$.
\end{lem}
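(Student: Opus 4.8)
The plan is to prove \eqref{GS280} by induction on $|p'|=m$, differentiating the chain rule repeatedly and keeping careful track of which terms carry the "top-order" factors $\pi^{q'}$ and which get absorbed into the remainder $\sum_{|\xi|\le|p'|-1}P_\xi^{p'}D_\xi h$. First I would record the base case $|p'|=1$: for $p'=e_i$ (the multi-index with a single $1$ in slot $i$), the chain rule applied to $t^i \mapsto h(x+(\psi(x,t'),t'))$ gives
\begin{equation*}
\partial_{t^i}\big[h(x+(\psi,t'))\big]\big|_{t'=0'}
= \big(\partial_{t^i}\psi\big)\big|_{t'=0'}\, D_1 h + D_i h
= \pi_i D_1 h + D_i h,
\end{equation*}
using $\partial_{t^i}\psi|_{t'=0'}=\pi_i$ from \eqref{GS260}. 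Recalling $\pi=(-1,\pi')$ from \eqref{GS410}, the factor $\pi^{e_i}=\pi_i$ appears as claimed with the single polynomial $\tilde P^{e_i}$ trivial (remainder empty since $|\xi|\le 0$ forces $\xi=0$, and in fact even that term vanishes here).

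For the inductive step, I would assume \eqref{GS280} holds for some $p'$ with $|p'|=m-1$ and apply $\partial_{t^i}$ (for some fixed $i$, yielding $p'+e_i$) to both sides, then evaluate at $t'=0'$. Differentiating the top-order sum $\sum_{q'}\binom{p'}{q'}\pi^{q'}D^{(|q'|,p'-q')}h$ produces two kinds of terms: (a) the derivative hits $D^{(|q'|,p'-q')}h$, which by the chain rule contributes $\big(\sum_\alpha \pi_\alpha D_\alpha + D_i\big)D^{(|q'|,p'-q')}h$ — here the $\alpha=1$ piece raises $|q'|$ by one with an extra $\pi_1=-1$ factor, the $\alpha=i$ piece among $2\le\alpha\le n$ raises $q_i$ by one with an extra $\pi_i$ factor (so $\pi^{q'}\to\pi^{q'+e_i}$), and the $D_i$ piece just shifts $p'\to p'+e_i$; after reindexing the sum and using Pascal's rule $\binom{p'}{q'}+\binom{p'}{q'-e_i}=\binom{p'+e_i}{q'}$ these reassemble into $\sum_{0'\le q'\le p'+e_i}\binom{p'+e_i}{q'}\pi^{q'}D^{(|q'|,(p'+e_i)-q')}h$; and (b) the derivative hits $\pi^{q'}$, producing $\partial_{t^i}[\pi^{q'}(x+(\psi,t'))]|_{t'=0'}$, which by the chain rule is a polynomial in $\pi$ and in $D_{x'}^{\le m}\varphi_k$, $D_{x'}^{\le m}\varphi_{k+1}$, $T_k$ times a $D_\xi h$ with $|\xi|=|q'|\le m-1$. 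Differentiating the remainder sum $\sum_{|\xi|\le m-2}P_\xi^{p'}D_\xi h$ likewise yields $\partial_{t^i}[P_\xi^{p'}(x+(\psi,t'))]\,D_\xi h$ (lower order, $|\xi|\le m-2$) plus $P_\xi^{p'}\cdot(\sum_\alpha\pi_\alpha D_\alpha+D_i)D_\xi h$ with $|\xi|\le m-2$, hence new indices of order $\le m-1$. All of (b) and the remainder-derivative terms land in $\sum_{|\xi|\le m-1}P_\xi^{p'+e_i}D_\xi h$.

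The key structural point I would need to verify carefully is that every coefficient produced along the way — in particular $\partial_{t^i}[\pi^{q'}]|_{t'=0'}$ and $\partial_{t^i}[P_\xi^{p'}]|_{t'=0'}$ — is again of the prescribed form: a fixed polynomial $\tilde P$ evaluated at the convex-combination tuple $\big(T_kD_{x'}^j\varphi_{k+1}+[1-T_k]D_{x'}^j\varphi_k\big)_{1\le j\le m}$. This follows because $\pi_\alpha$, $T_k$, and $\psi$ are each (from \eqref{GS220}, \eqref{GS230}, \eqref{GS260}) explicit expressions in $x^1$, the $\varphi$'s and their first $x'$-derivatives, built so that applying $\partial_{t^i}$ and setting $t'=0'$ only raises the order of $x'$-derivatives of the $\varphi$'s that appear, never exceeding order $m$ after $m$ differentiations, and the $x^1$ and $\varphi_k(x')-$denominators cancel exactly as in the identity already recorded in the proof of Lemma \ref{GSS250}; the polynomial $\tilde P_\xi^{p'}$ is then obtained by bookkeeping, independent of $h$ and of which region $k$ we are in. The main obstacle is purely combinatorial: organizing the multi-index reindexing so that the top-order terms close up under Pascal's rule while confirming that no term of order $m$ in $D h$ escapes into the remainder and no term of order $<m$ is mistakenly kept in the top-order sum. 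Once that bookkeeping is pinned down, the induction closes and the lemma follows.
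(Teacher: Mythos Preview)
Your inductive strategy is viable but differs from the paper's proof, which does not induct at all: the paper applies the multivariable Fa\`a di Bruno expansion in one step, writing
\[
\partial_{t'}^{p'}\big[h\big(x+(\psi,t')\big)\big]\Big|_{t'=0'}
= \sum_{0'\le q'\le p'}\binom{p'}{q'}\Big(\partial_{t^2}\psi\big|_{t'=0'}\Big)^{q_2}\cdots\Big(\partial_{t^n}\psi\big|_{t'=0'}\Big)^{q_n}D^{(|q'|,p'-q')}h
\;+\;\text{(lower order)},
\]
and then computes directly from \eqref{GS230} that $\partial_{t'}^{r'}\psi(x,t')\big|_{t'=0'}=T_k\,D_{x'}^{r'}\varphi_{k+1}(x')+[1-T_k]\,D_{x'}^{r'}\varphi_k(x')$. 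The first-order case $r'=e_i$ gives $\pi_i$, yielding the leading sum; the higher $r'$ exhibit the convex-combination structure of $P_\xi^{p'}$ at once. The binomials appear from the multinomial chain rule rather than from iterated Pascal, which makes the argument shorter.

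Your induction has two points to fix. First, the chain rule you wrote is wrong: differentiating $g\big(x^1+\psi(x,t'),\,x'+t'\big)$ in $t^i$ and setting $t'=0'$ gives $\pi_i\,D_1g+D_ig$, \emph{not} $\big(\sum_\alpha\pi_\alpha D_\alpha+D_i\big)g$; no $\pi_1=-1$ factor appears, and there are no $\pi_jD_j$ terms for $j\ne i$. With the correct two-term expression, $\pi^{q'}\cdot\pi_i\,D_1D^{(|q'|,p'-q')}h$ reindexes to $q''=q'+e_i$ and $\pi^{q'}D_iD^{(|q'|,p'-q')}h$ stays at $q''=q'$, and then Pascal's rule $\binom{p'}{q''-e_i}+\binom{p'}{q''}=\binom{p'+e_i}{q''}$ does close the top-order sum as you intended. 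Second, to apply $\partial_{t^i}$ to the level-$p'$ identity you need it \emph{before} evaluation at $t'=0'$; the clean way is to carry the inductive hypothesis with $\psi_j(x,t')=\partial_{t^j}\psi(x,t')$ in place of $\pi_j$ and $h$ evaluated at $\Phi_{t'}(x)$, then set $t'=0'$ only at the end (equivalently, use the flow semigroup $\Phi_{s'}\circ\Phi_{t'}=\Phi_{s'+t'}$, which holds since $T_k\circ\Phi_{t'}=T_k$, so that $\pi_j(\Phi_{t'}(x))=\psi_j(x,t')$ as you implicitly assumed). With those repairs your argument goes through.
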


\begin{proof}
By the definition of $\partial_{t'}^{p'}h \big|_{t'=0'}$ in \eqref{GS265}, we obtain that 
\begin{equation*}\begin{aligned}\label{} 
& \partial_{t'}^{p'}h \big|_{t'=0'}  \\
&\,  = \partial_{t^{2}}^{p_{2}} \cdots \partial_{t^{n}}^{p_{n}}  \big[ h \big( x + (\psi(x,t'),t') \big) \big] \big|_{t'=0'} \\
&\, = \sum_{0' \leq q' \leq p' } \Big( \begin{array}{c} p_{2} \\ q_{2} \end{array} \Big) \cdots \Big( \begin{array}{c} p_{n} \\ q_{n} \end{array} \Big)
\big( \partial_{t^{2}} \psi \big|_{t'=0'} \big)^{q_{2}} \cdots \big( \partial_{t^{n}} \psi \big|_{t'=0'} \big)^{q_{n}} 
D_{2}^{p_{2}-q_{2}}D_{1}^{q_{2}} \cdots D_{n}^{p_{n}-q_{n}}D_{1}^{q_{n}}  h \\
&\, \quad + \sum_{|\xi| \leq p_{2}+\cdots+p_{n}-1} \tilde{P}_{\xi}^{ p_{2}, \cdots, p_{n} } \big( \partial_{t^{2}} \psi \big|_{t'=0'}, \cdots, \partial_{t^{n}} \psi \big|_{t'=0'}, \cdots, \partial_{t^{2}}^{p_{2}} \cdots \partial_{t^{n}}^{p_{n}} \psi \big|_{t'=0'} \big) D_{\xi} h 
\end{aligned}\end{equation*}
in $Q_{6}$, for some polynomial $\tilde{P}_{\xi}^{ p_{2}, \cdots, p_{n} } : \br^{\tiny  \left( \begin{array}{c} n-1 \\ 1 \end{array} \right)  \times
\left( \begin{array}{c} n \\ 2 \end{array} \right) \times \cdots \times  \left( \begin{array}{c} n+m-2 \\ m \end{array} \right) } \to \br$.  The number of the choice for the first derivative is $ \left( \begin{array}{c} (n-1) + 1 - 1\\ 1 \end{array} \right)$, the second derivative is $ \left( \begin{array}{c} (n-1) + 2  - 1 \\ 2 \end{array} \right)$, $\cdots$, the $m$-th derivative is $ \left( \begin{array}{c} (n-1) + m  - 1 \\ m \end{array} \right)$.  We simplify the above equality. Recall that
\begin{equation*}
D_{x'}^{q'} = D_{x^{2}}^{q_{2}} \cdots D_{x^{n}}^{q_{n}},
\qquad
|p'| = p_{2} + \cdots + p_{n}
\qquad \text{and} \qquad 
\Big( \begin{array}{c} p' \\ q' \end{array} \Big)
=  \Big( \begin{array}{c} p_{2} \\ q_{2} \end{array} \Big) \cdots  \Big( \begin{array}{c} p_{2} \\ q_{2} \end{array} \Big).
\end{equation*} 
From \eqref{GS260} and  \eqref{GS270}, we obtain that for any $p' \geq q' \geq 0'$,
\begin{equation*}\begin{aligned}\label{}
&\sum_{0' \leq q' \leq p' } \Big( \begin{array}{c} p_{2} \\ q_{2} \end{array} \Big) \cdots \Big( \begin{array}{c} p_{n} \\ q_{n} \end{array} \Big)
\big( \partial_{t^{2}} \psi \big|_{t'=0'} \big)^{q_{2}} \cdots \big( \partial_{t^{n}} \psi \big|_{t'=0'} \big)^{q_{n}} 
D_{2}^{p_{2}-q_{2}}D_{1}^{q_{2}} \cdots D_{n}^{p_{n}-q_{n}}D_{1}^{q_{n}}  h \\
& \quad = \sum_{0' \leq q' \leq p' } \Big( \begin{array}{c} p' \\ q' \end{array} \Big) 
\pi^{q'} D^{ (|q'|,p'-q') } h.
\end{aligned}\end{equation*}
For any  $q' \geq 0$, we  find from  \eqref{GS230} that
\begin{equation*}\label{}
\partial_{t'}^{q'}  \psi(x,t') 
= D_{x'}^{q'}  \varphi_{k+1} (x'+ t')  \, T_{k}(x) 
+ D_{x'}^{q'}  \varphi_{k} (x'+ t')  \,  [1-T_{k}(x)]
\end{equation*}
in  $Q_{6}^{k} \times Q_{1}'$, which implies that ,
\begin{equation*}\label{}
\partial_{t'}^{q'}  \psi(x,t') \big|_{t'=0'}
= D_{x'}^{q'}  \varphi_{k+1} (x')  \, T_{k}(x) 
+ D_{x'}^{q'}  \varphi_{k} (x')  \,  [1-T_{k}(x)]
\end{equation*}
for any $x \in Q_{6}^{k}$. So by letting $\tilde{P}_{\xi}^{p'} : = \tilde{P}_{\xi}^{p_{2}, \cdots, p_{n}} $, the lemma follows from \eqref{GS230}.
\end{proof}

\subsection{Decay estimate of the graph functions and \texorpdfstring{$\pi'$}{pi'} in \eqref{GS410} }

To handle the two non-crossing graph functions $\varphi_{k}$ $(k \in  K_{+} )$, we use following result in \cite{KYSP1}, which naturally holds from our geometric settings (also see \cite[Section 5]{LYVM1} or \cite[Section 4]{LYNL1}).

\begin{lem}[\cite{KYSP1}]\label{GSS300}
Suppose that $\varphi_{k}, \varphi_{l} : C^{1,\gamma}
(Q_{r+\rho}') \to \mathbb{R}$ satisfy that 
\begin{equation*}
[ D_{x'}\varphi_{k} ]_{C^{\gamma}(Q_{r+\rho}')}, [ D_{x'}\varphi_{l} ]_{C^{\gamma}(Q_{r+\rho}')} \leq c_{1},
\end{equation*}
\begin{equation*}
\| \varphi_{k} \|_{L^{\infty}(Q_{r+\rho}')}, \| \varphi_{l} \|_{L^{\infty}(Q_{r+\rho}')} \leq c_{2}, 
\end{equation*}
and
\begin{equation*}
\varphi_{k} \leq \varphi_{l}
\text{ in } Q_{r+\rho}'.
\end{equation*}
Then we have that
\begin{equation}\label{GS330}
|D_{x'}\varphi_{l}(x') - D_{x'}\varphi_{k}(x')|
\leq 3\rho^{-1} \left( \rho^{1+\gamma} c_{1} + 2c_{2} \right)^{\frac{1}{\gamma+1}} [\varphi_{l}(x') - \varphi_{k}(x')]^{\frac{\gamma}{\gamma+1}} 
\end{equation}
for any $x' \in Q_{r}'$.
\end{lem}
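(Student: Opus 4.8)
The plan is to reduce the two-graph statement to a one-dimensional Taylor-type estimate for a single nonnegative function, applied along the direction of steepest descent; this is the standard device behind the analogous lemmas in \cite{LYVM1,LYNL1,KYSP1}. Set $w := \varphi_{l} - \varphi_{k}$ on $Q_{r+\rho}'$. By hypothesis $w \geq 0$, while $\|w\|_{L^{\infty}(Q_{r+\rho}')} \leq 2c_{2}$ and $[D_{x'}w]_{C^{\gamma}(Q_{r+\rho}')} \leq [D_{x'}\varphi_{l}]_{C^{\gamma}(Q_{r+\rho}')} + [D_{x'}\varphi_{k}]_{C^{\gamma}(Q_{r+\rho}')} \leq 2c_{1}$. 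Since the claimed inequality is exactly $|D_{x'}w(x')| \leq 3\rho^{-1}\left( \rho^{1+\gamma}c_{1} + 2c_{2} \right)^{1/(\gamma+1)} w(x')^{\gamma/(\gamma+1)}$, it suffices to prove this for an arbitrary fixed $x' \in Q_{r}'$; if $w(x') = 0$ then $x'$ is an interior minimum of $w$, so both sides vanish and we may assume $w(x') > 0$.

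Fix $x' \in Q_{r}'$ with $D_{x'}w(x') \neq 0$ (otherwise there is nothing to prove), and let $e \in \br^{n-1}$ be the unit vector $e = -D_{x'}w(x')/|D_{x'}w(x')|$. For $t \in [0,\rho]$ and $s \in [0,t]$ the point $x'+se$ lies in $Q_{r+\rho}'$, because each of its coordinates differs from that of $x'$ by at most $|se| \leq \rho$. Writing $w(x'+te) = w(x') + \int_{0}^{t} D_{x'}w(x'+se)\cdot e \, ds$ and using
\[
D_{x'}w(x'+se)\cdot e \leq D_{x'}w(x')\cdot e + [D_{x'}w]_{C^{\gamma}} \, s^{\gamma} = -|D_{x'}w(x')| + 2c_{1} s^{\gamma},
\]
together with $w \geq 0$ and $\int_{0}^{t} s^{\gamma}\,ds \leq t^{1+\gamma}$, we obtain $0 \leq w(x') - |D_{x'}w(x')|\, t + 2c_{1} t^{1+\gamma}$, that is,
\[
|D_{x'}w(x')| \leq \frac{w(x')}{t} + 2c_{1} t^{\gamma} \qquad (0 < t \leq \rho).
\]

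It remains to choose $t$. Put $M := \rho^{1+\gamma}c_{1} + 2c_{2}$; since $w(x') \leq 2c_{2} \leq M$, the choice $t := \rho \, (w(x')/M)^{1/(1+\gamma)}$ satisfies $0 < t \leq \rho$, so it is admissible. With this $t$ the first term on the right equals $\rho^{-1} M^{1/(1+\gamma)} w(x')^{\gamma/(1+\gamma)}$, and for the second term one uses $c_{1} \leq \rho^{-(1+\gamma)} M$ to get $2c_{1} t^{\gamma} = 2c_{1}\rho^{\gamma}(w(x')/M)^{\gamma/(1+\gamma)} \leq 2\rho^{-1} M^{1/(1+\gamma)} w(x')^{\gamma/(1+\gamma)}$; adding the two bounds gives the asserted estimate with constant $3$. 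There is no genuine obstacle in this argument: the only points requiring a little care are the reduction to the single nonnegative function $w$ (absorbing the two seminorm and sup-norm bounds into the constants $2c_{1}$, $2c_{2}$) and the check that the near-optimal scale $t = \rho(w(x')/M)^{1/(1+\gamma)}$ respects the constraint $t \leq \rho$, which is precisely where the hypothesis $w(x') \leq 2c_{2}$ is used.
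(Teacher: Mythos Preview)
Your proof is correct. The paper does not supply its own proof of this lemma; it is quoted from \cite{KYSP1} and used as a black box. Your argument---reducing to the nonnegative function $w=\varphi_{l}-\varphi_{k}$, expanding along the direction of steepest descent, and optimizing the resulting bound $|D_{x'}w(x')|\leq w(x')/t+2c_{1}t^{\gamma}$ over $t\in(0,\rho]$ with the scale $t=\rho(w(x')/M)^{1/(1+\gamma)}$---is exactly the standard device behind this type of estimate (as in \cite{LYVM1,LYNL1}), and all the steps check out, including the verification that the chosen $t$ satisfies $t\leq\rho$ and that the segment $x'+se$ stays inside $Q_{r+\rho}'$.
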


Let $\left( Q_{7} , \left \{ \varphi_{k} : k \in K_{+} \right \} \right )$ be a composite cube. By applying Lemma \ref{GSS300} to $Q_{7}$, we obtain from \eqref{graph_con} and \eqref{GS130} that
\begin{equation}\label{GS350}
|D_{x'}\varphi_{k}(x') - D_{x'}\varphi_{l}(x')| 
\leq c |\varphi_{k}(x') - \varphi_{l}(x')|^{\frac{1}{2}}
\qquad  \big( x' \in Q_{6}' , ~ k,l \in K_{+} \big).
\end{equation}
Also we have from  \eqref{graph_con} that
\begin{equation}\label{GS400}
|D_{x'}\varphi_{k}(x') - D_{x'}\varphi_{k}(y')|
\leq c |x'-y'|^{\frac{1}{2}}
\qquad  \left( x',y' \in Q_{6}', ~ k \in K_{+} \right).
\end{equation}
Then we have the following lemma from \cite{KYSP1}.

\begin{lem}[\cite{KYSP1}]\label{GSS600}
For a composite cube $\left( Q_{7} , \left \{ \varphi_{k} : k \in K_{+} \right \} \right )$ and the corresponding $\pi'$ in \eqref{GS410}, we have that
\begin{equation*}
\big| \pi'(y) - \pi'(z) \big| \leq c |y-z|^{\frac{1}{2}}
\qquad (y,z \in Q_{6}).
\end{equation*}
\end{lem}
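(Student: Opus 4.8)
The plan is to prove H\"older continuity of $\pi'$ by examining its definition \eqref{GS260}, namely $\pi_{\alpha}(x) = D_{\alpha}\varphi_{k+1}(x')\,T_{k}(x) + D_{\alpha}\varphi_{k}(x')\,[1-T_{k}(x)]$ in each strip $Q_{7}^{k}$. On a fixed strip, the estimate is elementary: $T_{k}$ is a smooth, bounded function (its $x^1$-derivative is $1/[\varphi_{k+1}(x') - \varphi_{k}(x')]$, and its $x'$-derivatives involve $D_{x'}\varphi_{k}$, $D_{x'}\varphi_{k+1}$), so $T_{k}$ would be Lipschitz on $Q_{6}^{k}$ with a constant depending on $\delta$ from \eqref{GS130}, while $D_{x'}\varphi_{k}, D_{x'}\varphi_{k+1}$ are $C^{1/2}$ by \eqref{GS400}; hence $\pi'$ restricted to each $Q_{6}^{k}$ is $C^{1/2}$. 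The real content — and the reason the estimate is $\delta$-independent — is controlling what happens when $y$ and $z$ lie in different strips, i.e., across the graphs $\{x^1 = \varphi_{k}(x')\}$, which may be arbitrarily close together.

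The key step is the cross-strip estimate, and this is where \eqref{GS350} does the work. First I would reduce to the case where the segment from $y$ to $z$ crosses exactly one graph, say $\{x^1 = \varphi_{k}(x')\}$ separating $Q_{6}^{k-1}$ (below) from $Q_{6}^{k}$ (above); the general case follows by summing over the crossed graphs, since by \eqref{GS190} and the flatness assumption \eqref{gradient_graphs} only finitely many graphs lie between $y$ and $z$ and each contributes a gap comparable to a portion of $|y-z|$. On the graph $x^1 = \varphi_{k}(x')$ we have $T_{k} = 1$ viewed from $Q_{6}^{k}$ side (so $\pi_{\alpha} = D_{\alpha}\varphi_{k+1}(x')$ there as a limit) and $T_{k-1} = 0$ viewed from $Q_{6}^{k-1}$ side (so $\pi_{\alpha} = D_{\alpha}\varphi_{k-1}(x')$ there). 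Wait — more carefully, the limiting value of $\pi_\alpha$ from above on the graph $\{x^1=\varphi_k(x')\}$ is $D_\alpha\varphi_k(x')\cdot 1 + D_\alpha\varphi_{k+1}(x')\cdot 0$... let me recompute: in $Q_7^{k}$ we have $T_k(\varphi_k(x'),x')=0$ so the limit from above is $D_\alpha\varphi_k(x')$; in $Q_7^{k-1}$ we have $T_{k-1}(\varphi_k(x'),x')=1$ so the limit from below is also $D_\alpha\varphi_k(x')$. So $\pi'$ is in fact \emph{continuous} across each graph, with common value $D_{x'}\varphi_{k}(x')$ on $\{x^1=\varphi_k(x')\}$. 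This continuity is the crucial structural fact.

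Given continuity across graphs, the H\"older estimate for $y \in Q_6^{k-1}$, $z \in Q_6^{k}$ follows by inserting the crossing point: let $w$ be a point on $\{x^1 = \varphi_{k}(x')\}$ on the segment $[y,z]$ (or, if the segment does not meet it, one slides $x'$ suitably — but by convexity of the relevant sets and \eqref{gradient_graphs} the segment does cross the graph), and estimate $|\pi'(y) - \pi'(w)|$ using the within-strip-closure bound on $\overline{Q_6^{k-1}}$ and $|\pi'(w) - \pi'(z)|$ using the within-strip-closure bound on $\overline{Q_6^{k}}$. The only subtlety is that the within-strip Lipschitz bound degenerates as $\delta \to 0$; this is handled by the standard trick already implicit in \eqref{GS350}: when $y$ is within distance $\sim \delta$ of the graph, $T_k$ may vary fast, but then $|D_{x'}\varphi_{k+1}(x') - D_{x'}\varphi_{k}(x')|$ is simultaneously controlled by $|\varphi_{k+1}(x') - \varphi_k(x')|^{1/2}$, which near a touching point is small, so the product $T_k \cdot [D_{x'}\varphi_{k+1} - D_{x'}\varphi_{k}] + D_{x'}\varphi_k$ — i.e., $\pi_\alpha$ itself — varies like $|y-z|^{1/2}$ regardless of how $T_k$ oscillates. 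Concretely, one writes $\pi_\alpha(x) = D_\alpha\varphi_k(x') + T_k(x)[D_\alpha\varphi_{k+1}(x') - D_\alpha\varphi_k(x')]$ and estimates the difference of the second term by bounding $|T_k|\le 1$ together with \eqref{GS350}, plus the difference of the $D_\alpha\varphi_k(x')$ term by \eqref{GS400}; the $\delta$-dependent derivative of $T_k$ never enters. I expect the main obstacle to be the bookkeeping for a segment crossing several graphs at once: one must show the contributions of $|D_{x'}\varphi_{k+j+1} - D_{x'}\varphi_{k+j}|^{?}$-type terms telescope and are each dominated by $(|y-z|)^{1/2}$, which uses that the successive graphs are ordered and all within $O(|y-z|)$ in the $x^1$-direction along the segment; this is exactly the geometric input that \eqref{GS350} and \eqref{GS190} supply, and was already carried out in \cite{KYSP1}, so I would cite that argument and only indicate the adaptation.
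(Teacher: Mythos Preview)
Your proposal is correct and aligns with the paper's approach: the paper's proof is the single line ``the lemma follows by applying \eqref{GS350} and \eqref{GS400} to \cite[Lemma 2.3]{KYSP1},'' and you have identified precisely these ingredients, ending with the same citation. Your additional observations---the continuity of $\pi'$ across the graphs (both one-sided limits equal $D_{x'}\varphi_{k}(x')$ on $\{x^{1}=\varphi_{k}(x')\}$) and the decomposition $\pi_{\alpha}=D_{\alpha}\varphi_{k}+T_{k}\,[D_{\alpha}\varphi_{k+1}-D_{\alpha}\varphi_{k}]$---are exactly the mechanism behind the cited lemma, so your sketch simply unpacks what the paper leaves to \cite{KYSP1}.
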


\begin{proof}
The lemma follows by applying \eqref{GS350} and \eqref{GS400} to \cite[Lemma 2.3]{KYSP1}.
\end{proof}

\subsection{Estimate of the integrals related to \texorpdfstring{$\pi$}{pi} in \eqref{GS410} } 

To use the weak formation, we will apply integration by parts formula. To do it, we obtain  the estimates related to $D\pi$ in Lemma \ref{GSS800} and \eqref{GS890} where $\pi$ is defined in \eqref{GS410}.

\begin{lem}\label{GSS700}
With the assumptions \eqref{GS130} and \eqref{gradient_graphs}, we have that
\begin{equation*}\label{}
\big| D\pi(x) \big| \leq c \Big[ 1 +   |\varphi_{k+1}(x') - \varphi_{k}(x')|^{-\frac{1}{2}} \Big]
\qquad\qquad  \left ( x \in Q_{6}^{k} , ~ k \in K \right ).
\end{equation*}
\end{lem}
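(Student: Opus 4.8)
The plan is to differentiate the explicit formula \eqref{GS260} for each component $\pi_{\alpha}$ directly, and to control the derivative of the weight $T_{k}$ using the decay estimate \eqref{GS350}. Recall from \eqref{GS260} that in $Q_{6}^{k}$,
\[
\pi_{\alpha}(x) = D_{\alpha}\varphi_{k+1}(x')\, T_{k}(x) + D_{\alpha}\varphi_{k}(x')\,[1-T_{k}(x)]
= D_{\alpha}\varphi_{k}(x') + \big[ D_{\alpha}\varphi_{k+1}(x') - D_{\alpha}\varphi_{k}(x') \big]\, T_{k}(x),
\]
where for $\alpha = 1$ one reads $D_{1}\varphi_{k} = 0$ and $\pi_{1} = -1$, so $D\pi_{1} = 0$ and only $2 \le \alpha \le n$ matter. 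Differentiating, one gets two types of terms: first, $T_{k}\, D^{2}\varphi_{k+1} + (1-T_{k})\, D^{2}\varphi_{k}$, which is bounded by a constant since $\varphi_{k}, \varphi_{k+1} \in C^{1,\gamma}$ (indeed $C^{m+1,\gamma}$) and $0 \le T_{k} \le 1$; and second, the term $\big[ D_{\alpha}\varphi_{k+1}(x') - D_{\alpha}\varphi_{k}(x') \big]\, DT_{k}(x)$, which is the one requiring care.

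The key step is therefore to estimate $\big[ D_{x'}\varphi_{k+1} - D_{x'}\varphi_{k} \big]\, D T_{k}$. From the definition \eqref{GS220}, $T_{k}(x^{1},x') = \frac{x^{1} - \varphi_{k}(x')}{\varphi_{k+1}(x') - \varphi_{k}(x')}$, so $D_{1} T_{k} = \frac{1}{\varphi_{k+1} - \varphi_{k}}$ and
\[
D_{x'} T_{k} = \frac{-D_{x'}\varphi_{k}(x')}{\varphi_{k+1} - \varphi_{k}} - \big( x^{1} - \varphi_{k}(x') \big) \frac{D_{x'}\varphi_{k+1} - D_{x'}\varphi_{k}}{(\varphi_{k+1} - \varphi_{k})^{2}}.
\]
Using $0 \le x^{1} - \varphi_{k}(x') \le \varphi_{k+1}(x') - \varphi_{k}(x')$ for $x \in Q_{6}^{k}$, together with $|D_{x'}\varphi_{k}| \le \frac{1}{20n}$ from \eqref{gradient_graphs}, one bounds $|DT_{k}|$ by $c\big[ (\varphi_{k+1} - \varphi_{k})^{-1} + |D_{x'}\varphi_{k+1} - D_{x'}\varphi_{k}|\,(\varphi_{k+1} - \varphi_{k})^{-1} \big]$. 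Now multiplying by the factor $|D_{x'}\varphi_{k+1} - D_{x'}\varphi_{k}|$ and invoking \eqref{GS350}, which gives $|D_{x'}\varphi_{k+1}(x') - D_{x'}\varphi_{k}(x')| \le c\,|\varphi_{k+1}(x') - \varphi_{k}(x')|^{1/2}$, the problematic term is bounded by
\[
c\,|\varphi_{k+1} - \varphi_{k}|^{1/2} (\varphi_{k+1} - \varphi_{k})^{-1} + c\,|\varphi_{k+1} - \varphi_{k}|\,(\varphi_{k+1} - \varphi_{k})^{-1} \le c\big[ 1 + |\varphi_{k+1}(x') - \varphi_{k}(x')|^{-1/2} \big],
\]
as claimed. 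Collecting this with the bounded contribution from the second derivatives of the graph functions yields the lemma.

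The main obstacle, and the whole point of the estimate, is that naively $|DT_{k}|$ blows up like $(\varphi_{k+1} - \varphi_{k})^{-2}$ near a touching point, which would be non-integrable and useless; the gain comes precisely from pairing the singular factor $DT_{k}$ with the compensating factor $D_{x'}\varphi_{k+1} - D_{x'}\varphi_{k}$ that appears in front of it in $D\pi$, and using the geometric decay \eqref{GS350} (equivalently Lemma \ref{GSS300}) to convert that difference of gradients into a half-power of the gap $\varphi_{k+1} - \varphi_{k}$, improving the exponent from $-2$ to $-\tfrac12$. One should also check the boundary/overlap issue: $T_{k}$ and hence $\pi$ are defined piecewise over the $Q_{6}^{k}$, and the estimate is stated on each $Q_{6}^{k}$ separately, so no matching across $\partial Q_{6}^{k}$ is needed here — continuity of $\pi'$ across the interfaces is handled separately in Lemma \ref{GSS600}.
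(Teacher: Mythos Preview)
Your proof is correct and follows essentially the same approach as the paper: differentiate the explicit formula for $\pi_{\alpha}$, observe that the second-derivative terms $T_{k}D^{2}\varphi_{k+1}+(1-T_{k})D^{2}\varphi_{k}$ are bounded, and control the remaining singular term $[D_{\alpha}\varphi_{k+1}-D_{\alpha}\varphi_{k}]\,DT_{k}$ using the geometric estimate \eqref{GS350}. The paper carries out the same computation but expands $T_{k}$ fully in terms of $x^{1},\varphi_{k},\varphi_{k+1}$ before differentiating; your organization through $T_{k}$ is slightly more compact but otherwise identical in substance. (One harmless inaccuracy: the naive bound on $|DT_{k}|$ for $x\in Q_{6}^{k}$ is already $(\varphi_{k+1}-\varphi_{k})^{-1}$, not $(\varphi_{k+1}-\varphi_{k})^{-2}$, since $0\le x^{1}-\varphi_{k}\le \varphi_{k+1}-\varphi_{k}$ on $Q_{6}^{k}$; this does not affect the argument.)
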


\begin{proof}
Since $\pi = (\pi_{1}, \cdots, \pi_{n})$ and $\pi_{1}=-1$, we estimate $D\pi_{\alpha}$ $(i  = 2, \cdots, n )$. Fix $i \in \{2, \cdots, n \}$ and $k \in K$. By Definition \ref{composite cube},
\begin{equation}\label{GS720} 
\varphi_{k}(x') \leq x^{1} \leq \varphi_{k+1}(x')
\qquad \left ( x \in Q_{6}^{k} \right ).
\end{equation}
With \eqref{GS220} and \eqref{GS260}, we find from \eqref{GS350} that
\begin{equation}\label{}
|D_{1}\pi_{\alpha}|
= \bigg| \frac{ D_{i}\varphi_{k+1} - D_{i}\varphi_{k} }{\varphi_{k+1} - \varphi_{k}} \bigg|  
\leq \frac{c}{|\varphi_{k+1} - \varphi_{k}|^{\frac{1}{2}}}
~ \text{ in } ~ Q_{6}^{k}.
\end{equation}
It only remains to estimate $D_{j}\pi_{\alpha}$ $( j = 2, \cdots, n)$ in $Q_{6}^{k}$. By \eqref{GS220} and \eqref{GS260}
\begin{equation*}\begin{aligned}\label{}
\pi_{\alpha} (x)
& = \frac{ D_{i}\varphi_{k+1}(x')[x^{1} - \varphi_{k}(x')]} { \varphi_{k+1}(x') - \varphi_{k}(x')}
+ \frac{ D_{i}\varphi_{k}(x')[\varphi_{k+1}(x') - x^{1}]} { \varphi_{k+1}(x') - \varphi_{k}(x')},
\end{aligned}\end{equation*}
for any $x  \in Q_{6}^{k}$, which implies that 
\begin{equation*}\begin{aligned}\label{}
D_{j} \pi_{\alpha} (x)
& = \frac{ D_{ij}\varphi_{k+1}(x')[x^{1} - \varphi_{k}(x')]} { \varphi_{k+1}(x') - \varphi_{k}(x')}
+ \frac{ D_{ij}\varphi_{k}(x')[\varphi_{k+1}(x') - x^{1}]} { \varphi_{k+1}(x') - \varphi_{k}(x')} \\
& \quad - \frac{ D_{i}\varphi_{k+1}(x') D_{j}\varphi_{k}(x') - D_{i}\varphi_{k}(x') D_{j}\varphi_{k+1}(x') }{ \varphi_{k+1}(x') - \varphi_{k}(x') } \\
& \quad - \frac{ D_{i}\varphi_{k+1}(x')[x^{1} - \varphi_{k}(x')]} { \varphi_{k+1}(x') - \varphi_{k}(x')}
\cdot \frac{D_{j}\varphi_{k+1}(x')-D_{j}\varphi_{k}(x')}
{ \varphi_{k+1}(x') - \varphi_{k}(x')} \\
& \quad - \frac{ D_{i}\varphi_{k}(x')[\varphi_{k+1}(x') - x^{1}]} { \varphi_{k+1}(x') - \varphi_{k}(x')} \cdot \frac{D_{j}\varphi_{k+1}(x')-D_{j}\varphi_{k}(x')}
{ \varphi_{k+1}(x') - \varphi_{k}(x')},
\end{aligned}\end{equation*}
for any $x  \in Q_{6}^{k}$ and $j = 2, \cdots, n$. Then by the following calculation,
\begin{equation*}\begin{aligned}\label{} 
& \frac{ D_{i}\varphi_{k+1}(x') D_{j}\varphi_{k}(x') - D_{i}\varphi_{k}(x') D_{j}\varphi_{k+1}(x') }{ \varphi_{k+1}(x') - \varphi_{k}(x') } \\
& \quad = \frac{ D_{i}\varphi_{k+1}(x') [ D_{j}\varphi_{k+1}(x') - D_{j}\varphi_{k}(x')] 
- D_{j}\varphi_{k+1}(x') [D_{i}\varphi_{k+1}(x') - D_{i}\varphi_{k}(x')] }{ \varphi_{k+1}(x') - \varphi_{k}(x') },
\end{aligned}\end{equation*}
the lemma follows from \eqref{GS350} and \eqref{GS720}, because $i \in \{ 2, \cdots, n \}$ and $k \in K$ were arbitrary chosen.
\end{proof}

\begin{lem}\label{GSS800}
With the assumptions \eqref{GS130} and \eqref{gradient_graphs}, we have that
\begin{equation*}\label{}
\int_{Q_{r}(z)} |D\pi|^{2} \eta^{2} \, dx
\leq \epsilon r^{\frac{1}{2}} \int_{Q_{r}(z)} |D\eta|^{2} \, dx + c  \epsilon^{-2} r^{\frac{1}{2}}  \int_{Q_{r}(z)} \eta^{2} \, dx
\quad \left( \eta \in C_{c}^{\infty}(Q_{r}(z)) \right)
\end{equation*}
for any $Q_{r}(z) \subset Q_{6}$ and $\epsilon \in (0,1]$.
\end{lem}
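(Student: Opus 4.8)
The plan is to integrate Lemma \ref{GSS700} against $\eta^2$ and then absorb the singular weight $|\varphi_{k+1}-\varphi_k|^{-1/2}$ by a layer-cake argument in the $x^1$-direction, combined with a Hardy-type inequality. First I would square the pointwise bound from Lemma \ref{GSS700} and use $(a+b)^2 \le 2a^2+2b^2$ to reduce matters to estimating $\int_{Q_r(z)} |\varphi_{k+1}(x')-\varphi_k(x')|^{-1} \eta^2 \, dx$ over each slab $Q_r^k(z)$, since the ``$1$'' part of the bound contributes only $c\,r^{1/2}\int \eta^2$ once we note $r \le 6$ so $r \le c r^{1/2}$; we sum over the finitely many $k \in K$ that meet $Q_r(z)$. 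Fix such a $k$ and write $d_k(x') := \varphi_{k+1}(x')-\varphi_k(x')$. On the slab the $x^1$-variable ranges over an interval of length $d_k(x')$ (by Definition \ref{composite cube}), so the slab is a ``graph tube'' of width $d_k(x')$ sitting over $Q_r'(z')$.

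The heart of the matter is the one-dimensional estimate: for $\eta \in C_c^\infty$ supported in a cube of side $2r$, on a segment $I = (\varphi_k(x'),\varphi_{k+1}(x'))$ of length $d = d_k(x')$ one has
\begin{equation*}
\frac{1}{d}\int_I \eta(x^1,x')^2 \, dx^1 \le \epsilon_0\, d \int_I |D_1\eta(x^1,x')|^2 \, dx^1 + c\,\epsilon_0^{-1}\, d^{-1}\Big(\int_I \eta(x^1,x')^2\,dx^1\Big) \cdot \big(\text{controlled term}\big),
\end{equation*}
but in fact the cleaner route is to avoid self-reference: by the fundamental theorem of calculus, for $x^1, y^1 \in I$ we have $\eta(x^1,x')^2 \le 2\eta(y^1,x')^2 + 2d\int_I |D_1\eta|^2\,dx^1$; averaging over $y^1 \in I$ gives $\eta(x^1,x')^2 \le \frac{2}{d}\int_I \eta^2\,dx^1 + 2d\int_I|D_1\eta|^2\,dx^1$. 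Integrating this in $x^1$ over $I$ and dividing by $d$ yields
\begin{equation*}
\frac{1}{d}\int_I \eta^2 \, dx^1 \le \frac{2}{d}\int_I \eta^2\,dx^1 + 2d\int_I |D_1\eta|^2\,dx^1,
\end{equation*}
which is circular. So instead I would use the genuine Hardy-type bound: since $\eta$ vanishes on $\partial Q_{2r}(z) \supset$ the $x^1$-endpoints of the tube (extend $\eta$ by $0$; the tube's $x^1$-extent lies strictly inside $(z^1-r,z^1+r)$ only if the graphs stay inside, which they do on $Q_6$ by \eqref{gradient_graphs} — but to be safe one uses that $\eta$ has compact support in $Q_r(z)$, hence vanishes near $\partial Q_r(z)$, and one integrates over the full $x^1$-fiber of $Q_r(z)$, not just over $I$). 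On the full fiber $J$ of length $2r$, Poincaré gives $\int_J \eta^2 \,dx^1 \le c\,r^2 \int_J |D_1\eta|^2\,dx^1$; hence $\frac{1}{d}\int_I \eta^2\,dx^1 \le \frac{1}{d}\int_J \eta^2\,dx^1 \le \frac{c\,r^2}{d}\int_J|D_1\eta|^2\,dx^1$. This is good when $d$ is not too small, but blows up as $d \to 0$; the fix is to interpolate: split according to whether $d \ge \epsilon^{-?}r^{?}$ or not, or — better — use $\frac{1}{d}\int_I\eta^2 = \frac{1}{d}\int_I \eta^2$ and bound it by $\epsilon r^{1/2}\int_J|D_1\eta|^2 + c\epsilon^{-1}r^{-3/2}d^{-1}\int_I\eta^2 \cdot(\dots)$ via Young's inequality applied after writing $\eta^2(x^1) = \big(\eta(x^1)\big)^2$ and using the FTC representation $\eta(x^1) = \int_{a}^{x^1} D_1\eta$: then $\eta(x^1)^2 \le d\int_I|D_1\eta|^2$ pointwise, so $\frac{1}{d}\int_I\eta^2\,dx^1 \le \frac{d}{d}\cdot d\int_I|D_1\eta|^2 = d\int_I|D_1\eta|^2 \le 2r\int_{Q_r(z)}|D_1\eta|^2\,dx^1$ on that fiber.

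Integrating the last display in $x' \in Q_r'(z')$ and choosing the scale correctly gives $\int_{Q_r^k(z)} d_k^{-1}\eta^2\,dx \le c\,r \int_{Q_r(z)} |D\eta|^2\,dx$; since $r \le 6$, $r \le c r^{1/2}$ is false, so one must be more careful and extract the $\epsilon r^{1/2}$ form by Young: write $d\int_I |D_1\eta|^2 \le \epsilon r^{1/2}\int_I|D_1\eta|^2 \cdot (d/(\epsilon r^{1/2}))$ — no, rather split the region $\{d \le \epsilon^2 r\}$ (where $d^{-1}\ge \epsilon^{-2}r^{-1}$ is large but $d\int|D_1\eta|^2 \le \epsilon^2 r\int|D_1\eta|^2$, and since $d \le \epsilon^2 r \le \epsilon^2\cdot 6$, we gain smallness) from $\{d > \epsilon^2 r\}$ (where $d^{-1} < \epsilon^{-2}r^{-1}$, so $\frac1d\int_I\eta^2 \le \epsilon^{-2}r^{-1}\int_I\eta^2$, a bounded-coefficient term). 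Summing over $k$ and combining with the Poincaré bound $\int_{Q_r(z)}\eta^2 \le cr^2\int|D\eta|^2$ (used only on the small-$d$ part, absorbing $\epsilon^2 r \cdot$ against $\epsilon r^{1/2}$ by noting $r^{1/2}\le c$) delivers
\begin{equation*}
\int_{Q_r(z)}|D\pi|^2\eta^2\,dx \le \epsilon r^{1/2}\int_{Q_r(z)}|D\eta|^2\,dx + c\epsilon^{-2} r^{1/2}\int_{Q_r(z)}\eta^2\,dx.
\end{equation*}
The main obstacle is precisely this bookkeeping: extracting the exact powers $r^{1/2}$ and $\epsilon$, $\epsilon^{-2}$ from the interplay between the Hardy/Poincaré inequality on each graph-fiber and the $d^{-1/2}$-scaling inherited from \eqref{GS350}. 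One must track that the H\"older exponent $\gamma = 1/2$ in \eqref{GS350} is what produces the fractional power $r^{1/2}$, and that the dyadic split in $d$ converts the non-integrable-looking weight into an $\epsilon$-small gradient term plus an $\epsilon^{-2}$-large zeroth-order term, with the $r^{1/2}$ (rather than $r$) appearing because one trades one power of $r$ for the smallness $\epsilon$ on the small-$d$ slab. The finitely-many-$k$ summation and the reduction from $|D\pi|$ to $|D_1\pi| + |D_{x'}\pi|$ (both controlled by Lemma \ref{GSS700}) are routine.
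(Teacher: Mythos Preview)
Your proposal has a genuine gap: the key one-dimensional estimate is wrong, and even the corrected version cannot deliver the $\epsilon$-smallness. Specifically, you write $\eta(x^1)=\int_a^{x^1}D_1\eta$ and conclude $\eta(x^1)^2\le d\int_I|D_1\eta|^2$, where $I=(\varphi_k(x'),\varphi_{k+1}(x'))$. But $\eta$ does \emph{not} vanish at the endpoints of $I$; it vanishes at the endpoints of the full fiber $J=(z^1-r,z^1+r)$. The correct Cauchy--Schwarz/FTC bound is $\eta(x^1)^2\le 2r\int_J|D_1\eta|^2$, which is independent of $d$. This destroys your dichotomy: on the region $\{d\le \epsilon^2 r\}$ you no longer gain the factor $d$ that was supposed to produce $\epsilon^2 r$, and on the region $\{d>\epsilon^2 r\}$ you get $d^{-1}\int_I\eta^2\le \epsilon^{-2}r^{-1}\int_J\eta^2$, which after integrating in $x'$ gives $\epsilon^{-2}r^{-1}\int_{Q_r(z)}\eta^2$ --- and $r^{-1}$ cannot be bounded by $cr^{1/2}$ for small $r$.

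The idea you are missing is the one the paper uses: do \emph{not} apply Cauchy--Schwarz to the FTC representation. Instead write
\[
\eta^2(x^1,x')=\int_{z^1-r}^{x^1}D_{y^1}\big[\eta^2(y^1,x')\big]\,dy^1\le 2\int_J|\eta|\,|D_1\eta|\,dy^1,
\]
which is independent of $x^1$. Integrating this over $x^1\in I\cap J$ contributes a factor $|I\cap J|\le d$, which exactly cancels the weight $1/d$, leaving the bilinear quantity $2\int_J|\eta|\,|D_1\eta|$. After integrating in $x'$ this becomes $2\int_{Q_r(z)}|\eta|\,|D_1\eta|$, and \emph{now} Young's inequality with weight $\epsilon r^{1/2}$ gives the split $\epsilon r^{1/2}\int|D\eta|^2+c\epsilon^{-1}r^{-1/2}\int\eta^2$. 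The point is that keeping the product $|\eta|\,|D_1\eta|$ bilinear is exactly what permits the $\epsilon$-parametrized split; once you pass to $|D_1\eta|^2$ by Cauchy--Schwarz, that flexibility is gone. Your dyadic-in-$d$ idea is a reasonable instinct, but it cannot substitute for this cancellation.
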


\begin{proof}
Fix $ \eta \in C_{c}^{\infty}(Q_{r}(z)) $ and $k \in K$. Then by Lemma \ref{GSS700} and Young's inequality, 
\begin{equation*}\begin{aligned}
\int_{Q_{r}^{k}(z)} |D\pi|^{2} \eta^{2}  dx 
& \leq c\int_{Q_{r}^{k}(z)}  \eta^{2} + \frac{ \eta^{2} \chi_{Q_{r}^{k}(z)}  } {\varphi_{k+1} - \varphi_{k}} \, dx \\
& = c_{1} \bigg[  \int_{Q_{r}(z)} \eta^{2}  dx  
+ \int_{Q_{r}'(z')} \int_{\max \{ \varphi_{k}(x'), z^{1} -r \}}^{\min \{ \varphi_{k+1}(x'), z^{1} + r \}} \frac{  \eta^{2} (x^{1},x')   }{\varphi_{k+1} - \varphi_{k}} dx^{1} dx' \bigg].
\end{aligned}\end{equation*}
Since $ \eta \in C_{c}^{\infty}(Q_{r}(z)) $, by the following calculation
\begin{equation*}\begin{aligned}
& \int_{\max \, \{ \varphi_{k}(x'), z^{1}-r \}}^{\min \, \{ \varphi_{k+1}(x'), z^{1}+r \}} 
\frac{  \eta^{2} (x^{1},x')   }{\varphi_{k+1} - \varphi_{k}} \, dx^{1}  \\
& \quad = \int_{\max \, \{ \varphi_{k}(x'), z^{1}-r \}}^{\min \, \{ \varphi_{k+1}(x'), z^{1}+r \}} \int_{z^{1}-r}^{x^{1}} \frac{ D_{y^{1}} [\eta^{2}(y^{1},x')] }{ \varphi_{k+1} - \varphi_{k} } \, dy^{1}   dx^{1}  \\
& \quad \leq 2 \int_{\max \, \{ \varphi_{k}(x'), z^{1}-r \}}^{\min \, \{ \varphi_{k+1}(x'),z^{1}+ r \}} \int_{z^{1}-r}^{z^{1}+r} \frac{  |D_{y^{1}} \eta(y^{1},x')| |\eta(y^{1},x')| }{ \varphi_{k+1} - \varphi_{k} } \, dy^{1}   dx^{1} \\
& \quad \leq 2 \int_{z^{1}-r}^{z^{1}+r} |D_{y^{1}} \eta(y^{1},x')| |\eta(y^{1},x')| \, dy^{1}.
\end{aligned}\end{equation*}
Since $Q_{r}(z) = (z^{1}-r, z^{1}+r ) \times Q_{r}'(z')$, we obtain from Young's inequality that
\begin{equation*}\begin{aligned}\label{}
& \int_{Q_{r}'(z')} \int_{\max \{ \varphi_{k}(x'), z^{1}-r \}}^{\min \{ \varphi_{k+1}(x'), z^{1}+r \}} 
\frac{  \eta^{2} (x^{1},x')   }{\varphi_{k+1} - \varphi_{k}} dx^{1} dx' \\
& \quad \leq 2 \int_{Q_{r}'(z')} \int_{z^{1}-r}^{z^{1}+r}  \left | D_{y^{1}} \eta(y^{1},x') \right | \left |\eta(y^{1},x') \right | \, dy^{1} dx' \\
& \quad \leq \epsilon r^{\frac{1}{2}}  \int_{Q_{r}(z)} |D\eta|^{2} \, dx + \epsilon^{-1} r^{-\frac{1}{2}} \int_{Q_{r}(z)} \eta^{2} \, dx.
\end{aligned}\end{equation*}
Since $k \in K$ was arbitrary chosen, by combining the above estimates, 
\begin{equation*}\label{}
\sum_{k \in K} \int_{Q_{r}^{k}(z)} |D\pi|^{2} \eta^{2}  \, dx
\leq c \epsilon r^{\frac{1}{2}} \int_{Q_{r}(z)} |D\eta|^{2} \, dx + c \left( 1+ \epsilon^{-1} r^{-\frac{1}{2}} \right) \int_{Q_{r}(z)} \eta^{2} \, dx.
\end{equation*}
By Poincar\'{e}'s inequality,
\begin{equation*}\begin{aligned}\label{}
\int_{Q_{r}(z)} \eta^{2} \, dx 
& \leq \left( \int_{Q_{r}(z)} \eta^{2} \, dx  \right)^{\frac{1}{2}} \left(  \int_{Q_{r}(z)} \eta^{2} \, dx \right)^{\frac{1}{2}}  \\
& \leq r \left( \int_{Q_{r}(z)} |D\eta|^{2} \, dx  \right)^{\frac{1}{2}} \left( \int_{Q_{r}(z)} \eta^{2} \, dx  \right)^{\frac{1}{2}} \\
& \leq \epsilon^{2} r  \int_{Q_{r}(z)} |D\eta|^{2} \, dx + \epsilon^{-2} r \int_{Q_{r}(z)} \eta^{2} \, dx.
\end{aligned}\end{equation*}
Since $\epsilon \in (0,1]$ and $r \in (0,6]$, by combining the above two estimates,  we find that 
\begin{equation*}\label{}
\int_{Q_{r}(z)} |D\pi|^{2} \eta^{2}  \, dx
\leq c \epsilon r^{\frac{1}{2}} \int_{Q_{r}(z)} |D\eta|^{2} \, dx + c  \epsilon^{-2} r^{\frac{1}{2}}  \int_{Q_{r}(z)} \eta^{2} \, dx.
\end{equation*}
Since $\eta \in C_{c}^{\infty}(Q_{r}(z))$ and $\epsilon \in (0,1]$ were arbitrary chosen, the lemma follows.
\end{proof}

With the similar reasoning for the proof of Lemma \ref{GSS800}, by applying Lemma \ref{GSS700} to \eqref{GS270}, one can also prove that
\begin{equation}\label{GS890}
 \int_{Q_{r}(z)} \left| D \left[ \pi^{q'} \right] \right|^{2} \, dx
\leq cr^{n-1}.
\end{equation}

\subsection{Heavyside type functions} 

We define $H_{k} : Q_{5} \to \br$ $\big( k \in K \big)$ so that
\begin{equation}\label{GS910}
H_{k}(x)  = \left\{\begin{array}{rl}
1 &  \text{ if  } x \in Q_{5}^{l} \text{ with } l \geq k, \\
0 &  \text{ if  } x \in Q_{5}^{l} \text{ with } l < k.
\end{array}\right.\end{equation}
Then we have that 
\begin{equation}\label{GS920}
\chi_{Q_{5}^{k}} = H_{k}  - H_{k+1} 
~ \text{ in } ~ Q_{5}.
\end{equation}

\section{Perturbation}

From now on, assume \eqref{graph_con} for some $m \geq 1$. With the assumption \eqref{cof_com}, \eqref{ell}, \eqref{graph_con} and \eqref{gradient_graphs}, let $u$ be a weak solution of \eqref{main equation} which is
\begin{equation}\label{PET120} 
D_{\alpha} \left[ A_{\alpha \beta}^{ij}D_{\beta}u^{j} \right] = D_{i} F_{\alpha}^{i}  
\quad \text{in} \quad Q_{7}.
\end{equation}
To estimate $D^{m+1}u$, assume that
\begin{equation}\label{PET130} 
u \in C^{m,\gamma} \left (Q_{7}^{k}, \br^{N} \right ) 
\qquad (k \in K).
\end{equation}
The higher order derivatives of $u$ in $Q_{5}^{k}$ will be perturbed with respect to the points on the graphs $ \left\{ (\varphi_{k}(x'),x') : x' \in Q_{5}' \right\}$ $(k \in K_{+})$. In view of \eqref{newell},
\begin{equation}\label{PET140}
A_{\alpha \beta}^{ij}, F_{\alpha}^{i} \in C^{m,\gamma} \left (Q_{6}^{k} \right )
\text{ is a restriction of }
A_{\alpha \beta}^{ij,k}, F_{\alpha}^{i,k} \in C^{m,\gamma} \left ( \br^{n} \right )
\end{equation}
for any $k \in K$, $ 1 \leq \alpha, \beta \leq n$ and $ 1 \leq i,j \leq N$. With  \eqref{GS130}, one can extend $u \in C^{m,\gamma} \left (Q_{6}^{k}, \br^{N} \right ) $ to $u_{k} \in C^{m,\gamma} \left( \partial Q_{6}^{k} \cap \partial Q_{6}^{k-1}, \br^{N} \right)$ for any $k \in K$ so that
\begin{equation}\label{PET150} 
\| u_{k} \|_{C^{m,\gamma} \left( \partial Q_{6}^{k} \cap \partial Q_{6}^{k-1}, \br^{N} \right)}
\leq c \| u \|_{C^{m,\gamma} \left( Q_{7}^{k}, \br^{N} \right)}.
\end{equation}
Here, we remark that the (higher order) derivatives of $u_{k}$ and $u_{k+1}$ might not coincide on $\partial Q_{5}^{k} \cap \partial Q_{5}^{k+1}$ $\left( k \in K \right)$. To obtain $m$-th time derivative, fix the index 
\begin{equation}\label{PET160}
p' = (p_{2}, \cdots, p_{n}) \geq 0'
\quad \text{with} \quad
|p'| = p_{2} + \cdots + p_{m} = m.
\end{equation}

With an approximation argument by using \eqref{GS130},  we may assume that
\begin{equation}\label{PET170} 
u \in C^{m+1,\gamma} \left ( Q_{7}^{k}, \br^{N} \right )
\qquad (k \in K).
\end{equation}
We remark that the term $\left\| D^{m+1}u \right\|_{L^{\infty}(Q_{r}(z))}$ (for fixed $Q_{r}(z) \subset Q_{5}$) might appear in the estimates. But because the coefficient on $\left\| D^{m+1}u \right\|_{L^{\infty}(Q_{r}(z))}$ can be sufficiently small, this term will removed in Lemma \ref{PMTS1000}.

\subsection{Perturbation on the equation}

In this subsection, we obtain a perturbation on the equation related to higher order derivatives(which will be done in Lemma \ref{PETS900}). Recall $T_{k}(x)$ from  \eqref{GS220}. For for any $i \in \{1,\cdots,n \}$, set $\psi_{i} : Q_{6} \times Q_{1}' \to \br$ as
\begin{equation}\label{PET190} 
\psi_{i}(x,t') = T_{k}(x) \, D_{i}\varphi_{k+1}(x'+t') + [1-T_{k}(x)] \, D_{i} \varphi_{k} (x'+t') 
\end{equation}
for any $(x, t') \in Q_{6}^{k} \times Q_{1}'$ $(k \in K)$. Then we find from \eqref{PET190} that 
\begin{equation}\label{PET200} 
\psi_{1}(x,t') = 0
\end{equation}
for any $(x, t') \in Q_{6} \times Q_{1}'$. Also it follows from  \eqref{GS220} and \eqref{PET190} that 
\begin{equation}\label{PET210}
\psi_{i} \big( (\varphi_{k}(x'),x') ,t' \big) 
= D_{i}\varphi_{k}(x'+t') 
\end{equation}
for any $(x', t') \in Q_{6}' \times Q_{1}'$ and $k \in K $.

To obtain piece-wise regularity, we use the weak formulation in the following way. We differentiate the equation on the boundary $\partial Q_{5}^{k} \cap \partial Q_{5}^{k+1}$  by using the flow \eqref{PET190}. Then we apply integration by parts formula with respect to each $Q_{5}^{k}$ $(k \in K)$. 

In view of \eqref{PET170},  for any $1 \leq \alpha \leq n$, $ 1 \leq i \leq N $ and $k \in K$, set
\begin{equation}\label{PET220}
U_{\alpha}^{i,k} = \sum_{ 1 \leq j \leq N} \sum_{ 1 \leq \beta \leq n}  A_{\alpha \beta}^{ij,k} D_{\beta}u^{j,k} - F_{\alpha}^{i,k} 
\in C^{m,\gamma} \left(  \overline{Q_{6}^{k}} \right),
\end{equation}
and
\begin{equation}\label{PET230}
\partial_{t'}^{p'}  U_{\alpha}^{i,k} \big|_{t'=0'} 
= \left. \partial_{t'}^{p'}  \left[ U_{\alpha}^{i,k} \big( x + (\psi(x,t'),t')  \big) \right] \right|_{t'=0'}
\quad \text{in} \quad Q_{5}^{k}.
\end{equation}
Since $D_{\alpha} \left[ A^{\alpha \beta}_{ij}D_{\beta}u^{j,k}  \right] = D_{\alpha}  F_{\alpha}^{i,k}$ in $Q_{7}$, we find that for any $ 1 \leq i \leq N $,
\begin{equation}\label{PET235} 
\sum_{ 1 \leq \alpha \leq n} D_{\alpha}U_{\alpha}^{i,k} = \sum_{1 \leq \alpha \leq n} D_{\alpha} \left[ \sum_{1 \leq j \leq N}  \sum_{ 1 \leq \beta \leq n } A_{\alpha \beta}^{ij,k} D_{\beta}u^{j,k} - F_{\alpha}^{i,k} \right]=0 
~  \text{ in } ~
Q_{6}^{k}.
\end{equation}

\begin{lem}\label{PETS240}
Let $\vec{n}_{k} = (n_{k}^{1}, n_{k}^{2}, \cdots, n_{k}^{n})$ be the outward normal vector on $\partial Q_{6}^{k}$. Then for any $ \alpha = 2, \cdots, n$ and $k \in K$, we have  that
\begin{equation}\label{PET243}
n_{k}^{\alpha} = - D_{\alpha}\varphi_{k} \, n_{k}^{1},
\qquad
n_{k+1}^{\alpha} = - D_{\alpha}\varphi_{k} \, n_{k+1}^{1}
\qquad \text{and} \qquad
n_{k}^{1} = - n_{k+1}^{1}
\end{equation}
on $\partial Q_{6}^{k} \cap \partial Q_{6}^{k+1}$. Also for any $ i = 2, \cdots, n$ and $k \in K$, we have  that
\begin{equation}\label{PET246}
n_{k}^{\alpha} = - \psi(x,0') \, n_{k}^{1}
\quad \text{and} \quad
n_{k+1}^{\alpha} = - \psi(x,0') \, n_{k+1}^{1}
\quad  \text{on} \quad  \partial Q_{6}^{k} \cap \partial Q_{6}^{k+1}.
\end{equation}
\end{lem}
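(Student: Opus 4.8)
The plan is to derive both families of identities purely from the defining equations of the subregions $Q_6^k$ and the formula for the flow, using the fact that the relevant boundary pieces are graphs over $x'$.

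\medskip

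First I would fix $k\in K$ and describe $\partial Q_6^k \cap \partial Q_6^{k+1}$. By \eqref{composite cube_strip}, $Q_6^k = \{(x^1,x') \in Q_6 : \varphi_k(x') < x^1 \leq \varphi_{k+1}(x')\}$ and $Q_6^{k+1} = \{(x^1,x') \in Q_6 : \varphi_{k+1}(x') < x^1 \leq \varphi_{k+2}(x')\}$, so their common boundary portion inside $Q_6$ is exactly the graph $\{(\varphi_{k+1}(x'),x') : x' \in Q_6'\} \cap Q_6$, and by \eqref{GS190} this is a genuine interface piece (nonempty when $k+1\in K_-$). On this graph the outward normal $\vec n_k$ of $Q_6^k$ points in the direction of increasing $x^1$ and is parallel to the gradient of the defining function $x^1 - \varphi_{k+1}(x')$, hence proportional to $(1, -D_2\varphi_{k+1}(x'), \cdots, -D_n\varphi_{k+1}(x'))$. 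Wait --- one must be careful: the statement writes $D_\alpha\varphi_k$, not $D_\alpha\varphi_{k+1}$, so the indexing convention here is that $\varphi_k$ (rather than $\varphi_{k+1}$) labels the \emph{top} graph of $Q_6^k$; in any case, on the shared interface the graph function is one and the same for both strips, call it $\varphi$, and the normal is proportional to $(1,-D_2\varphi,\dots,-D_n\varphi)$ for one strip and to $(-1,D_2\varphi,\dots,D_n\varphi)$ for the adjacent one. Reading off components gives $n_k^\alpha = -D_\alpha\varphi\, n_k^1$, $n_{k+1}^\alpha = -D_\alpha\varphi\, n_{k+1}^1$, and $n_k^1 = -n_{k+1}^1$, since the two strips lie on opposite sides of the interface and their outward normals are antiparallel. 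This is \eqref{PET243}.

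\medskip

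Next, for \eqref{PET246} I would invoke the formula for the flow at $t'=0'$. Evaluating \eqref{GS230} at $t'=0'$ gives $\psi(x,0') = 0$ trivially, so more relevantly I should read \eqref{PET246} as involving $\psi_i(x,0')$, the $i$-th component of the first time-derivative data, i.e. $\pi_i(x)$ from \eqref{GS260}–\eqref{PET210}; on the interface graph $\{(\varphi_k(x'),x')\}$, the boundary case of $T_k$ forces $\psi_i((\varphi_k(x'),x'),t') = D_i\varphi_k(x'+t')$ by \eqref{PET210}, so at $t'=0'$ we get $\psi_i((\varphi_k(x'),x'),0') = D_i\varphi_k(x')$. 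Substituting this identity into the already-established first relation of \eqref{PET243} replaces $D_\alpha\varphi_k$ by $\psi_\alpha(x,0')$ verbatim, and likewise for the $k+1$ relation, yielding \eqref{PET246}. The same substitution works from the other strip's side because on that same interface the boundary value of the flow component agrees — this is precisely the content of the model-case discussion earlier that the \emph{first} derivatives of the two touching graphs coincide at a touching point, though here we only need agreement of $\psi_i$ with $D_i\varphi$ of the common interface function, which is automatic.

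\medskip

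The main obstacle, such as it is, is bookkeeping rather than mathematics: keeping straight which of $\varphi_k$, $\varphi_{k+1}$ is the top graph of $Q_6^k$ and hence which graph function parametrizes $\partial Q_6^k \cap \partial Q_6^{k+1}$, and correspondingly which sign the scalar $n_k^1$ carries. Once the orientation convention is pinned down (outward normal of $Q_6^k$ points toward larger $x^1$ on its top face and toward smaller $x^1$ on its bottom face), everything follows from elementary calculus of graphs: the normal to a graph $x^1 = \varphi(x')$ is a scalar multiple of $(1, -D_{x'}\varphi(x'))$, and two regions abutting along that graph from opposite sides have antiparallel outward normals there. I would present the argument for a single representative interface and then note that $k\in K$ was arbitrary.
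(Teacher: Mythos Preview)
Your approach is correct and matches the paper's own proof, which is extremely terse: it simply says that \eqref{PET243} holds from Definition~\ref{composite cube} and that \eqref{PET246} follows from \eqref{PET210} and \eqref{PET243}. You have supplied exactly the elementary graph-normal computation and the substitution via \eqref{PET210} that underlie those two sentences, and you correctly flagged the apparent indexing slips in the statement (the interface $\partial Q_6^k \cap \partial Q_6^{k+1}$ is the graph of $\varphi_{k+1}$ rather than $\varphi_k$, and \eqref{PET246} should read $\psi_\alpha(x,0')$ rather than $\psi(x,0')$).
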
 

\begin{proof}
\eqref{PET243} holds from Definition \ref{composite cube}. \eqref{PET246} follows from \eqref{PET210} and \eqref{PET243}.
\end{proof}

\begin{lem}\label{PETS250}
With the assumption \eqref{PET220}, we have that
\begin{equation*}\label{}
\sum_{ 1 \leq \alpha \leq n} \left[ U_{\alpha}^{i,k}  n_{k}^{\alpha} + U_{\alpha}^{i,k+1}  n_{k+1}^{\alpha}  \right]= 0
\quad \text{in} \quad
\partial  Q_{6}^{k} \cap \partial Q_{6}^{k+1}
\qquad (k \in K).
\end{equation*}
\end{lem}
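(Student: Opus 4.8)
The plan is to read off the identity as the conormal transmission condition across $\Gamma := \partial Q_6^k \cap \partial Q_6^{k+1}$ that is built into the weak formulation of \eqref{main equation} ($\Gamma$ is, near each of its points, a patch of the graph of $\varphi_{k+1}$ separating $Q_6^k$ from $Q_6^{k+1}$). Recall that $u$ being a weak solution of \eqref{PET120} means
\begin{equation*}
\int_{Q_7} \big( A_{\alpha\beta}^{ij} D_\beta u^j - F_\alpha^i \big) D_\alpha \phi^i \, dx = 0 \qquad \big( \phi = (\phi^1,\dots,\phi^N) \in C_c^\infty(Q_7;\br^N) \big),
\end{equation*}
with summation over $\alpha,\beta,i,j$, and by \eqref{newell} the bracket coincides with $U_\alpha^{i,l}$ on each strip $Q_7^l$. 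Since we work under \eqref{PET170}, assumption \eqref{PET220} gives $U_\alpha^{i,k}\in C^{m,\gamma}(\overline{Q_6^k})$ and $U_\alpha^{i,k+1}\in C^{m,\gamma}(\overline{Q_6^{k+1}})$, while \eqref{PET235} gives $\sum_\alpha D_\alpha U_\alpha^{i,k}=0$ in $Q_6^k$ and $\sum_\alpha D_\alpha U_\alpha^{i,k+1}=0$ in $Q_6^{k+1}$.

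First I would localize. Fix a point $x_0$ on the graph of $\varphi_{k+1}$ away from the lateral faces of $Q_6$, and choose a ball $B\subset Q_6$ centered at $x_0$ of radius so small that — using the strict separation \eqref{GS130} together with the flatness \eqref{gradient_graphs} — $B$ meets no graph $\varphi_l$ with $l\ne k+1$ and no face of $Q_6$; then $\Gamma\cap B$ separates $Q_6^k\cap B$ from $Q_6^{k+1}\cap B$, and every other strip is disjoint from $B$. Testing the weak formulation against $\phi\in C_c^\infty(B;\br^N)$ therefore reduces it to
\begin{equation*}
\int_{Q_6^k\cap B} U_\alpha^{i,k} D_\alpha\phi^i\,dx + \int_{Q_6^{k+1}\cap B} U_\alpha^{i,k+1} D_\alpha\phi^i\,dx = 0 .
\end{equation*}
Integrating by parts over each piece, the interior terms vanish by \eqref{PET235}, and since $\phi\equiv 0$ on $\partial B$ the only surviving boundary integrals are over $\partial Q_6^k\cap B = \Gamma\cap B$ and $\partial Q_6^{k+1}\cap B = \Gamma\cap B$, carrying the outward normals $\vec n_k$ and $\vec n_{k+1}=-\vec n_k$ respectively (cf. \eqref{PET243}). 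Summing the two identities yields
\begin{equation*}
\int_{\Gamma\cap B} \Big( \sum_{1\le\alpha\le n}\big[ U_\alpha^{i,k} n_k^\alpha + U_\alpha^{i,k+1} n_{k+1}^\alpha \big] \Big) \phi^i\,dS = 0 \qquad \big( \phi\in C_c^\infty(B;\br^N) \big).
\end{equation*}
Choosing $\phi$ with only its $i$-th component nonzero and invoking the fundamental lemma of the calculus of variations — legitimate because the coefficient of $\phi^i$ is continuous on $\Gamma$, being assembled from restrictions of the $C^{m,\gamma}$ fields $U_\alpha^{i,k},U_\alpha^{i,k+1}$ and the ($C^{m,\gamma}$, hence continuous) unit normal of the graph of $\varphi_{k+1}$ — we obtain $\sum_\alpha[U_\alpha^{i,k}n_k^\alpha+U_\alpha^{i,k+1}n_{k+1}^\alpha]=0$ on $\Gamma\cap B$. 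Letting $x_0$ sweep out $\Gamma$ and passing to the closure proves the lemma.

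The step I expect to demand the most care is this localization: one must verify that $B$ can be chosen so that it touches exactly the two strips $Q_7^k$ and $Q_7^{k+1}$ and meets their common boundary in a single clean graph patch $\Gamma\cap B$, with no spurious boundary contribution from a neighboring graph $\varphi_l$ ($l\ne k+1$) or from $\partial Q_6$. This is exactly where the positive gap $\delta>0$ of \eqref{GS130}, the Lipschitz bound \eqref{gradient_graphs}, and the inclusion \eqref{GS190} are used. Everything after that is the textbook divergence-theorem computation producing a transmission (no-jump-in-conormal-flux) condition, and no further idea is needed.
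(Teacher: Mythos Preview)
Your proof is correct and follows essentially the same route as the paper's: test the weak formulation with $\phi$ supported near the interface, integrate by parts on each strip using \eqref{PET235} to kill the bulk terms, and then conclude pointwise from continuity of the integrand on $\Gamma$. The only cosmetic difference is that the paper tests against $\eta\in C_c^\infty(Q_6)$ globally and then localizes at the end via an explicit $L^2$-approximating sequence of test functions, whereas you localize first with a small ball and invoke the fundamental lemma directly; both executions of the density step are standard.
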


\begin{proof}
Fix $\eta \in C_{c}^{\infty}(Q_{6},\br^{N})$. By \eqref{PET220} and the weak formulation,
\begin{equation}\begin{aligned}\label{PET260}
& \sum_{k \in K} \int_{Q_{6}^{k}} \sum_{ 1 \leq \alpha \leq n} U_{\alpha}^{i,k} D_{\alpha} \eta^{i} \, dx \\
& \quad = \int_{Q_{6}}  \sum_{ 1 \leq \alpha \leq n}  \left[ \sum_{1 \leq j \leq N} \sum_{1 \leq \beta \leq n} A_{\alpha \beta}^{ij,k} D_{\beta}u^{j,k} - F_{i} \right] D_{\alpha} \eta^{i} \, dx 
=0.
\end{aligned}\end{equation}
for any $1 \leq i \leq N$. Apply the integration by parts. Then by \eqref{PET235} and \eqref{PET260},
\begin{equation*}\label{}
\sum_{k \in K} \int_{\partial Q_{6}^{k}}  \sum_{ 1 \leq \alpha \leq n}  \eta^{i} \, U_{\alpha}^{i,k} \, n_{k}^{\alpha} \, dS
= \sum_{k \in K} \int_{Q_{6}^{k}}  \sum_{ 1 \leq \alpha \leq n} \left[ D_{\alpha} U_{\alpha}^{i,k} \,  \eta^{i} + U_{\alpha}^{i,k} D_{\alpha}\eta^{i} \right] \, dx 
= 0,
\end{equation*}
for any $1 \leq i \leq N$. Since $\eta \in C_{c}^{\infty} \left ( Q_{6}, \br^{N} \right )$, we find that
\begin{equation}\label{PET266}
0 = \sum_{k \in K} \int_{ Q_{6}' }  \sum_{ 1 \leq \alpha \leq n}  \left[ \eta^{i} \, U_{\alpha}^{i,k} \, n_{k}^{\alpha} + \eta^{i} \, U_{\alpha}^{i,k+1} \, n_{k+1}^{\alpha}  \right] \left( \varphi_{k}(x'), x' \right) \, dx',
\end{equation}
for any $1 \leq i \leq N$. In view of  \eqref{GS130}, fix $z \in \partial Q_{6}^{k} \cap \partial Q_{6}^{k+1}$ satisfying that $Q_{r}(z) \subset Q_{6}^{k} \cup Q_{6}^{k+1}$. 
Then we obtain from \eqref{graph_con} and \eqref{PET220} that 
\begin{equation}\label{PET270}
\left| U_{\alpha}^{i,k} \, n_{k}^{\alpha}+ U_{\alpha}^{i,k+1} \, n_{k+1}^{\alpha}  \right|^{2}  \left( \varphi_{k}(x'), x' \right) 
\text{ is bounded and continuous in } Q_{r}'(z') 
\end{equation}
for any $1 \leq \alpha \leq n$, $1 \leq i \leq N$ and $k \in K$. 

With \eqref{PET270}, for $m = 1,2,\cdots$ and $ 1 \leq i \leq N$,  choose   $\eta^{i,1}_{m} : \left ( z^{1}-r,z^{1}+r \right ) \to [0,1] $ with $\eta^{i,1}_{m} \in C_{c}^{\infty} \left (z^{1}-r,z^{1}+r \right )$ and  $\eta^{i}_{m} : Q_{r}'(z') \to \br$ with $\eta^{i}_{m} \in C_{c}^{\infty} \left( Q_{r}'(z') \right)$  so that 
\begin{equation*}\label{}
\eta^{i,1}_{m} = 1
\quad \text{ in } \quad 
\left[ z^{1}- (1-2^{-m})r, z^{1}+ (1-2^{m})r \right ]
\qquad (m =1,2,\cdots),
\end{equation*}
and
\begin{equation*}\label{}
\eta^{i} _{m} (x') \xrightarrow[]{m \to \infty}  \sum_{ 1 \leq \alpha \leq n}  \left[ U_{\alpha}^{i,k} \, n_{k}^{\alpha} +  U_{\alpha}^{i,k+1} \, n_{k+1}^{i}  \right] \left( \varphi_{k}(x'), x' \right) 
\quad \text{ in } \quad L^{2} \left( Q_{r}'(z') \right).
\end{equation*}
Take $\eta^{i}_{m} = \eta^{i,1}_{m} \cdot \eta^{i}_{m} \in C_{c}^{\infty}(Q_{r}(z))$ instead of $\eta^{i}$ in \eqref{PET266}. Then by letting $m \to \infty$,
\begin{equation*}\label{}
0 = \int_{ Q_{r}'(z') }  \sum_{1 \leq \alpha \leq n}  \left|  \sum_{ 1 \leq \alpha \leq n}  U_{\alpha}^{i,k} \, n_{k}^{\alpha} + U_{\alpha}^{i,k+1} \, n_{k+1}^{i}  \right|^{2}  \left( \varphi_{k}(x'), x' \right) \, dx'.
\end{equation*}
So we find from \eqref{PET270} that that 
\begin{equation*}\label{}
\sum_{ 1 \leq \alpha \leq n} \left[ U_{\alpha}^{i,k}(z) \, n_{k}^{\alpha}(z) + U_{\alpha}^{i,k+1}(z) \, n_{k+1}^{i}(z)  \right]= 0.
\end{equation*}
Since $ z \in \partial Q_{6}^{k} \cap \partial Q_{6}^{k+1}$ was arbitrary chosen, the lemma follows.
\end{proof}

\begin{lem}\label{PETS400}
For any $1 \leq i \leq N$ and $\eta \in C_{c}^{\infty}(Q_{5}, \br^{N})$, we have that
\begin{equation*}\begin{aligned}
0 & =  \sum_{ k \in K } \int_{\partial Q_{5}^{k}} \partial_{t'}^{p'} \Big( U_{1}^{i,k} \big( x+(\psi,t' ) \big) \Big) \Big|_{t'=0'}  \eta^{i} \, n_{k}^{1}  \, dS \\
&\quad - \sum_{ k \in K } \sum_{ 2 \leq \alpha \leq n}  \int_{\partial Q_{5}^{k}} \partial_{t'}^{p'}  \Big(  \psi_{\alpha}(x,t') \,  U_{\alpha}^{i,k} \big( x+(\psi,t') \big) \Big) \Big|_{t'=0'}  \eta^{i}  \,  n_{k}^{1}  \,  dS,
\end{aligned}\end{equation*}
where $\vec{n}_{k} = (n_{k}^{1}, n_{k}^{2}, \cdots, n_{k}^{n})$ is the outward normal vector on $\partial Q_{6}^{k}$.
\end{lem}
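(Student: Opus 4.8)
The plan is to show that the sum of surface integrals over the $\partial Q_5^k$ collapses because, along each interior interface graph $\Gamma_k := \{(\varphi_k(x'),x') : x' \in Q_5'\}$, the contributions of the two subregions meeting there cancel pointwise. First I would observe that, since $\eta\in C_c^\infty(Q_5,\br^N)$ vanishes near the faces of the cube, the only portions of $\partial Q_5^k$ that contribute to $\int_{\partial Q_5^k}$ are the two graph pieces $\Gamma_k$ (the lower boundary of $Q_5^k$) and $\Gamma_{k+1}$ (the upper one), and that by the remark after Definition \ref{composite cube} the outermost graphs $\Gamma_{k_-}$ and $\Gamma_{k_++1}$ lie outside $Q_5$ and may be dropped. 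Abbreviating, for $j\in K$,
\begin{equation*}
W^{i,j}(x,t') := U_1^{i,j}\big(x+(\psi,t')\big) - \sum_{2\le\alpha\le n}\psi_\alpha(x,t')\,U_\alpha^{i,j}\big(x+(\psi,t')\big),
\end{equation*}
where $\psi$ is as in \eqref{GS230} and $\psi_\alpha$ as in \eqref{PET190}, both for the subregion $j$, so that (by linearity of $\partial_{t'}^{p'}$) the integrand of the lemma is $\partial_{t'}^{p'}W^{i,k}\big|_{t'=0'}\,\eta^i\,n_k^1$, and reindexing the ``upper'' piece $\Gamma_{k+1}$ of $\partial Q_5^k$ as the ``lower'' piece $\Gamma_k$ of $\partial Q_5^{k-1}$, the right-hand side of the lemma becomes
\begin{equation*}
\sum_{k}\int_{\Gamma_k}\Big[\,\partial_{t'}^{p'}W^{i,k}\big|_{0'}\,n_k^1+\partial_{t'}^{p'}W^{i,k-1}\big|_{0'}\,n_{k-1}^1\,\Big]\,\eta^i\,dS,
\end{equation*}
summed over the interior graphs. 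Since the outward normals of $\Gamma_k$ from the two sides are opposite, $n_{k-1}=-n_k$ on $\Gamma_k$ (Lemma \ref{PETS240}), it then suffices to prove $\partial_{t'}^{p'}W^{i,k}\big|_{0'}=\partial_{t'}^{p'}W^{i,k-1}\big|_{0'}$ on each interior $\Gamma_k$.

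To do that I would establish the stronger identity $W^{i,k}(x,t')=W^{i,k-1}(x,t')$ for every $x\in\Gamma_k$ and every $t'$ near $0'$ (after which the coincidence of the $\partial_{t'}^{p'}\big|_{0'}$ is immediate). On $\Gamma_k$ one has $T_k=0$ and $T_{k-1}=1$, so \eqref{GS220}, \eqref{GS230} and \eqref{PET190} show that the flow of the subregion $k$ and the flow of the subregion $k-1$ coincide there, both carrying $x=(\varphi_k(x'),x')$ to $y:=(\varphi_k(x'+t'),x'+t')\in\Gamma_k$, and that their first time derivatives coincide as well, $\psi_\alpha(x,t')=D_\alpha\varphi_k(x'+t')$ for $2\le\alpha\le n$ (this being \eqref{PET210}, valid for either adjacent subregion at the common graph). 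Combined with $n_j^\alpha=-D_\alpha\varphi_k\,n_j^1$ on $\Gamma_k$ for $j\in\{k-1,k\}$ (Lemma \ref{PETS240}), this shows that both $W^{i,k}$ and $W^{i,k-1}$ collapse on $\Gamma_k$ to the normal component of the associated flux,
\begin{equation*}
W^{i,k}(x,t')=\frac{1}{n_k^1(y)}\sum_{1\le\alpha\le n}U_\alpha^{i,k}(y)\,n_k^\alpha(y),\qquad W^{i,k-1}(x,t')=\frac{1}{n_k^1(y)}\sum_{1\le\alpha\le n}U_\alpha^{i,k-1}(y)\,n_k^\alpha(y),
\end{equation*}
the second equality after also using $n_{k-1}=-n_k$. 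By the transmission condition of Lemma \ref{PETS250}, applied at $y$ on the interface $\Gamma_k=\partial Q_6^{k-1}\cap\partial Q_6^k$ (again with $n_{k-1}=-n_k$), the two right-hand sides agree, whence $W^{i,k}=W^{i,k-1}$ on $\Gamma_k$ and the displayed sum from the first paragraph vanishes. I would also note, as a sanity check, that the case $|p'|=0$ is nothing but the identity obtained by testing the weak formulation \eqref{PET260} against $\eta\in C_c^\infty(Q_5,\br^N)$ and integrating by parts over each $Q_5^k$ by means of \eqref{PET235}.

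The main obstacle is this middle step: one must verify that the two interpolating flows, assembled from the pairs $(\varphi_k,\varphi_{k+1})$ and $(\varphi_{k-1},\varphi_k)$, genuinely agree on their common graph together with their first time derivatives, and that the particular combination $W^{i,j}$ degenerates there to the normal component of the flux, so that the transmission condition of Lemma \ref{PETS250} can be brought in — this is precisely the cancellation that the $T_k$-weighted construction of $\psi$ is designed to produce. The reduction in the first paragraph is routine bookkeeping, the only care needed being to discard the two outermost graphs and to pair each upper boundary piece of $Q_5^{k-1}$ with the matching lower boundary piece of $Q_5^k$.
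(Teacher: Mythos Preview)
Your proposal is correct and follows essentially the same approach as the paper's proof: both reduce to showing that the quantity $W^{i,k}(x,t')=U_1^{i,k}(x+(\psi,t'))-\sum_{2\le\alpha\le n}\psi_\alpha(x,t')\,U_\alpha^{i,k}(x+(\psi,t'))$ coincides with its neighbor $W^{i,k\pm1}$ identically in $t'$ along the shared interface, by using that the flow maps the graph to itself, that $\psi_\alpha$ reduces to $D_\alpha\varphi_k$ there (equation \eqref{PET210}), and the transmission condition of Lemma \ref{PETS250}, after which one differentiates in $t'$ and pairs opposite normals. The only cosmetic difference is that you phrase the interface identity as equality of normal fluxes $\tfrac{1}{n_k^1}\sum_\alpha U_\alpha^{i,\cdot} n_k^\alpha$, whereas the paper writes it directly as continuity of $U_1^{i,\cdot}-\sum_{\alpha\ge2}D_\alpha\varphi_k\,U_\alpha^{i,\cdot}$; also note that $\psi$ and $\psi_\alpha$ in the paper are single globally defined piecewise functions rather than one per subregion, so your check that ``both flows agree on $\Gamma_k$'' is really the verification that this piecewise definition is continuous across the interface.
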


\begin{proof}
It follows from Lemma \ref{PETS250} that for any $1 \leq i \leq N$ and $ k \in K$, 
\begin{equation*}\begin{aligned}\label{}
U_{1}^{i,k} - \sum_{ 2 \leq \alpha \leq n}  D_{\alpha}\varphi_{k}  \,U_{\alpha}^{i,k} 
= U_{1}^{i,k+1} - \sum_{ 2 \leq \alpha \leq n}  D_{\alpha}\varphi_{k}  \,U_{\alpha}^{i,k+1} 
\quad \text{on} \quad
\partial Q_{6}^{k} \cap \partial Q_{6}^{k+1}.
\end{aligned}\end{equation*}
Fix $x = \left( \varphi_{k}(x'), x' \right) \in \partial Q_{5}^{k} \cap \partial Q_{5}^{k+1}$ $( k \in K)$. Then for any $t' \in Q_{1}'$, we have that
\begin{equation*}\begin{aligned}\label{}
& U_{1}^{i,k} \left( \varphi_{k}(x'+t'), x'+t' \right) - \sum_{ 2 \leq \alpha \leq n}  D_{\alpha}\varphi_{k} \left( x'+t' \right)  \,U_{\alpha}^{i,k} \left( \varphi_{k}(x'+t'), x'+t' \right) \\
& \  = U_{1}^{i,k+1} \left( \varphi_{k}(x'+t'), x'+t' \right)  - \sum_{ 2 \leq \alpha \leq n}  D_{\alpha}\varphi_{k} \left( x'+t' \right) \,U_{\alpha}^{i,k+1} \left( \varphi_{k}(x'+t'), x'+t' \right),
\end{aligned}\end{equation*}
for any $1 \leq  i \leq N$. Since  $x =  \left( \varphi_{k}(x'), x' \right) \in \partial Q_{5}^{k} \cap \partial Q_{5}^{k+1} ( \subset Q_{6})$, we find from \eqref{PET210} that
\begin{equation*}\label{}
\psi_{\alpha}(x,t)
=  \psi_{\alpha} \big( (\varphi_{k}(x'),x') ,t' \big)  
= D_{\alpha}\varphi_{k} \left( x'+t' \right)  
\qquad \left( t' \in Q_{1}' , ~ 1 \leq \alpha \leq n \right ).
\end{equation*}
Also from \eqref{GS230} and that $x = \left( \varphi_{k}(x'), x' \right) \in \partial Q_{5}^{k} \cap \partial Q_{5}^{k+1} ( \subset Q_{6})$, we find  that 
\begin{equation*}\label{}
x + \left( \psi(x,t'),t' \right)
= \left( \varphi_{k}(x'+t'), x'+t' \right)  
\qquad \left( t' \in Q_{1}' \right ).
\end{equation*}
So by combining the above three equality,
\begin{equation*}\begin{aligned}\label{}
& U_{1}^{i,k} \left( x + (\psi,t') \right) - \sum_{ 2 \leq \alpha \leq n}  \psi_{\alpha}(x,t')  U_{\alpha}^{i,k} \left( x+(\psi,t') \right) \\
& \quad = U_{1}^{i,k+1} \left( x+(\psi,t') \right) - \sum_{ 2 \leq \alpha \leq n}  \psi_{\alpha}(x,t')   U_{\alpha}^{i,k+1} \left( x+(\psi,t') \right)
\end{aligned}\end{equation*}
for any $t' \in Q_{1}'$ and $1 \leq i \leq N$. Then by using that $\partial_{t'}^{p'} = \partial_{t^{2}}^{p_{2}} \cdots \partial_{t^{n}}^{p_{n}}$, we find that
\begin{equation*}\begin{aligned}\label{}
& \left. \partial_{t'}^{p'} \left[ U_{1}^{i,k} \left( x + (\psi,t') \right) - \sum_{ 2 \leq \alpha \leq n}  \psi_{\alpha}(x,t')  U_{\alpha}^{i,k} \left( x+(\psi,t') \right)   \right] \right|_{t'=0'} \\
& \quad = \left. \partial_{t'}^{p'} \left[ U_{1}^{i,k+1} \left( x+(\psi,t') \right) - \sum_{ 2 \leq \alpha \leq n}  \psi_{\alpha}(x,t')   U_{\alpha}^{i,k+1} \left( x+(\psi,t') \right) \right] \right|_{t'=0'}.
\end{aligned}\end{equation*}
Since $x = \left( \varphi_{k}(x'), x' \right) \in \partial Q_{5}^{k} \cap \partial Q_{5}^{k+1}$ $(k \in K)$ was arbitrary chosen and $\eta \in C_{c}^{\infty} \left( Q_{5} ,\br^{N} \right)$, the lemma holds from that $n_{k}^{1} = -n_{k+1}^{1}$ for any $k \in K$.
\end{proof}

Let $\vec{n}_{k} = \left( n_{k}^{1}, \cdots, n_{k}^{n} \right)$ be the outward normal vector on $\partial Q_{5}^{k}$. For any $\eta \in C_{c}^{\infty} \left( Q_{5}, \br^{N} \right)$, we have from Lemma \ref{PETS400} that
\begin{equation}\label{PET450}
I^{i} + II^{i} + III^{i}=0
\qquad (1 \leq i \leq N),
\end{equation}
where
\begin{equation*}\begin{aligned}
I^{i} & =  \sum_{ k \in K } \int_{\partial Q_{5}^{k}} \partial_{t'}^{p'} \Big( U_{1}^{i,k} \big( x+(\psi,t' ) \big) \Big) \Big|_{t'=0'}  \eta^{i} \, n_{k}^{1}  \, dS, \\
II^{i} &= - \sum_{ k \in K } \sum_{2 \leq \alpha \leq n}  \int_{\partial Q_{5}^{k}} \partial_{t'}^{p'}  \Big( \big[ \psi_{\alpha}(x,t') - \psi_{\alpha}(x,0')  \big] U_{\alpha}^{i,k} \big( x+(\psi,t') \big) \Big) \Big|_{t'=0'}  \eta^{i}  \,  n_{k}^{1}  \,  dS, \\
III^{i} &=  - \sum_{ k \in K } \sum_{2 \leq \alpha \leq n}  \int_{\partial Q_{5}^{k}} \partial_{t'}^{p'}  \Big( \psi_{\alpha}(x,0') \, U_{\alpha}^{i,k} \big( x+(\psi,t') \big) \Big) \Big|_{t'=0'}  \eta^{i} \,  n_{k}^{1}  \,   dS ,
\end{aligned}\end{equation*}
Then by Lemma \ref{PETS240}, for any $\eta^{i} \in C_{c}^{\infty}(Q_{5})$.
\begin{equation*}\begin{aligned}\label{} 
III ^{i}
& = - \sum_{ k \in K } \sum_{2 \leq \alpha \leq n}  \int_{\partial Q_{5}^{k}} \partial_{t'}^{p'}  \big[ \psi_{\alpha}(x,0') \, U_{\alpha}^{i,k} \big( x+(\psi,t') \big) \big] \Big|_{t'=0'}  \eta^{i} \,  n_{k}^{1}  \,   dS \\
& =  \sum_{ k \in K } \sum_{2  \leq \alpha \leq n } \int_{\partial Q_{5}^{k}} \partial_{t'}^{p'}  \big[ U_{\alpha}^{i,k} \big( x+(\psi,t') \big) \big]  \Big|_{t'=0'} \eta^{i} \,  n_{k}^{\alpha}  \,   dS.
\end{aligned}\end{equation*}
Thus
\begin{equation}\label{PET460} 
I^{i} + III^{i} 
=  \sum_{ k \in K } \sum_{ 1 \leq \alpha \leq n } \int_{\partial Q_{5}^{k}} \partial_{t'}^{p'}  \big[ U_{\alpha}^{i,k} \big( x+(\psi,t') \big) \big]  \Big|_{t'=0'} \eta^{i} \,  n_{k}^{\alpha}  \,   dS.
\end{equation}
We first estimate $I^{i}$ and $III^{i}$ $(1 \leq i \leq N)$. Recall from \eqref{GS280} in Lemma \ref{GSS275} that
\begin{equation}\label{PET470} 
\partial_{t'}^{p'}  h \big|_{t'=0'} 
 = \sum_{0' \leq q' \leq p' } \Big( \begin{array}{c} p' \\ q' \end{array} \Big)
\pi^{q'} 
D^{ (|q'|,p'-q') } h + \sum_{|\xi| \leq |p'|-1} P_{\xi}^{ p' } (x) D_{\xi} h 
~ \text{ in } ~  Q_{5}.
\end{equation}
With \eqref{PET220}, we find from \eqref{PET460} and \eqref{PET470}, for any $\eta^{i} \in C_{c}^{\infty} \left( Q_{5}, \br^{N} \right)$, 
\begin{equation}\begin{aligned}\label{PET480} 
I^{i}+III^{i} 
& =  \sum_{1 \leq \alpha \leq n} \sum_{ k \in K } \int_{ \partial Q_{5}^{k} }  \sum_{0' \leq q' \leq p' } \Big( \begin{array}{c} p' \\ q' \end{array} \Big)
 \pi^{q'} D^{ (|q'|,p'-q') } U_{\alpha}^{i,k} \,  \eta^{i} \  n_{k}^{\alpha} \, dS \\
& \quad + \sum_{1 \leq \alpha \leq n} \sum_{ k \in K } \int_{ \partial Q_{5}^{k} }  
\sum_{|\xi| \leq |p'|-1} P_{\xi}^{ p' }(x) 
D_{\xi}U_{\alpha}^{i,k} \,  \eta^{i}  \ n_{k}^{\alpha} \, dS \\
& : = IV_{1}^{i} + IV_{2}^{i}.
\end{aligned}\end{equation}
for any $1 \leq i \leq N$.

We estimate $IV_{1}^{i}$ by applying the integration by parts with respect to $x^{1}$-variable. In view of \eqref{cof_com}, \eqref{data_con} and \eqref{PET170},  for the simplicity of the calculation, we set 
\begin{equation}\label{PET490}
E_{z,r} = \sum_{k \in K} \left[ \| D^{m+1}u \|_{L^{\infty}(Q_{r}^{k}(z))}  +  \| u \|_{C^{m,\gamma}( Q_{7}^{k})} +  \| F \|_{C^{m,\gamma}( Q_{7}^{k})} \right],
\end{equation}
for any $Q_{r}(z) \subset Q_{5}$. To handle $IV_{2}^{i}$, we obtain the following Lemma \ref{PETS500}. In Lemma \ref{PETS500}, $f_{k}$ only need to be defined on the set 
\begin{equation*}
\partial Q_{5}^{k} \cap \left( \bigcup_{l \in \left( \{ k,k+1 \} \cap  K_{-} \right)} \left\{ \left( \varphi_{l}(x'),x' \right) : x \in Q_{5}' \right\} \right) 
~ \left(\subset \overline{Q_{6}^{k}} \right)
\end{equation*}
(where the inclusion holds from \eqref{GS180}) but we define on the set $\overline{Q_{6}^{k}}$  for the simplicity.

\begin{lem}\label{PETS500}
Under the assumption \eqref{GS180}, suppose that $g_{k} : \overline{ Q_{6}^{k} }  \to \br$  and $h :  \overline{Q_{6} } \to \br$ satisfies that $g_{k}, h  \in C^{\gamma} \left( \overline{Q_{6}^{k}} \right)$ for any $k \in K$. Then for any  $\eta \in C_{c}^{\infty}(Q_{5})$,
\begin{equation*}\label{}
\sum_{k \in K} \int_{ \partial Q_{5}^{k} } g_{k}h \eta \, n_{k}^{1} \,  dS 
= \sum_{ k \in  K_{-}  } \int_{ Q_{5} } \bg_{k} \left[ g_{k-1}-g_{k},h \right] H_{k}  D_{1} \eta \,  dx,
\end{equation*}
where the linear operator $\bg_{k} : C^{\gamma} \left( \partial Q_{6}^{k} \cap \partial Q_{6}^{k-1} \right) \times C^{\gamma} \left( \partial Q_{6}^{k} \cap \partial Q_{6}^{k-1} \right)  \to C^{\gamma} \left( Q_{5}' \right) $ is  defined as
\begin{equation*}
\bg_{k}[g,h](x) =  \left[ g h \right]  \big( \varphi_{k}(x'),x' \big).
\end{equation*}
\end{lem}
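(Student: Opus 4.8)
\textbf{Proof proposal for Lemma \ref{PETS500}.}

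The plan is to reduce the surface integral over $\partial Q_5^k$ to a volume integral over $Q_5$ by introducing the Heavyside functions $H_k$ and using the fundamental theorem of calculus in the $x^1$-direction. First I would observe that, since $\eta \in C_c^\infty(Q_5)$ and the composite cube decomposes $Q_5 = \dotcup_{k \in K} Q_5^k$, the relevant portion of $\partial Q_5^k$ contributing to the integral is exactly the interior graph piece $\{(\varphi_k(x'),x') : x' \in Q_5'\}$ for $k \in K_-$ (together with $\{(\varphi_{k+1}(x'),x') : x' \in Q_5'\}$ for $k+1 \in K_-$), because on the remaining faces of $\partial Q_5^k$ either $\eta$ vanishes or the graph lies outside $Q_5$ by \eqref{GS180} and the remark following Definition \ref{composite cube}. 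On the graph piece $\{x^1 = \varphi_k(x')\}$ the outward normal $n_k^1$ for $Q_5^k$ and $n_{k-1}^1$ for $Q_5^{k-1}$ have opposite signs, so pairing the $k$ and $k-1$ contributions along this common graph produces the jump $g_{k-1} - g_k$ times $h$, evaluated at $(\varphi_k(x'),x')$; this is exactly $\bg_k[g_{k-1}-g_k,h](x')$.

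Next I would convert the resulting sum of graph integrals into a volume integral. For each fixed $x' \in Q_5'$, writing $\eta(\varphi_k(x'),x')$ via $\int$ of $D_1\eta$ is not quite what is needed; instead the cleaner route is to note that $H_k$ is, for fixed $x'$, the indicator of $\{x^1 > \varphi_k(x')\}$ inside $Q_5$ (up to the boundary convention in \eqref{GS910}), so that
\begin{equation*}
\int_{Q_5} \bg_k[g_{k-1}-g_k,h](x') \, H_k(x) \, D_1\eta(x) \, dx
= \int_{Q_5'} \bg_k[g_{k-1}-g_k,h](x') \left( \int_{\varphi_k(x')}^{b(x')} D_1\eta(x^1,x')\, dx^1 \right) dx',
\end{equation*}
where the inner $x^1$-integral runs over the $x^1$-fiber of $Q_5$ above the graph; since $\eta$ has compact support in $Q_5$, the fundamental theorem of calculus collapses the inner integral to $-\eta(\varphi_k(x'),x')$. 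Summing over $k \in K_-$ and comparing with the boundary side, the signs match because $n_k^1 = -1$ on the lower graph face $\{x^1 = \varphi_k(x')\}$ of $Q_5^k$. Finally I would check the claimed mapping property: $g_{k-1}-g_k \in C^\gamma(\partial Q_6^k \cap \partial Q_6^{k-1})$ and $h \in C^\gamma$ there, their product is $C^\gamma$, and precomposition with the $C^{1,\gamma}$ (hence Lipschitz) graph map $x' \mapsto (\varphi_k(x'),x')$ preserves $C^\gamma$-regularity with the stated bound, so $\bg_k$ indeed maps into $C^\gamma(Q_5')$.

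The only genuine bookkeeping obstacle is the boundary-of-the-composite-cube accounting: making sure that the faces of $\partial Q_5^k$ other than the two interior graphs either carry a vanishing $\eta$ or correspond to $k \notin K_-$ (equivalently $Q_5^k = \emptyset$ or $Q_5^{k-1} = \emptyset$ by \eqref{GS185}), so that no spurious boundary terms survive. This is exactly the point of restricting the sum on the right-hand side to $k \in K_-$ and of the domain-restriction remark preceding the lemma, and it follows from \eqref{GS180}, \eqref{GS185}, and \eqref{GS190}. Everything else is a direct pairing of adjacent strips plus one application of the fundamental theorem of calculus, so I expect no analytic difficulty beyond this geometric bookkeeping.
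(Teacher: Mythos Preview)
Your proposal is correct and follows essentially the same route as the paper. The paper also restricts to the interior graph pieces via \eqref{GS180}, converts the surface integrals to volume integrals against $H_{k}$ (respectively $1-H_{k}$) by the fundamental theorem of calculus in $x^{1}$, and then reindexes and combines using that $\int_{Q_{5}} [g_{k}h](\varphi_{k}(x'),x')\,D_{1}\eta\,dx = 0$; the only cosmetic difference is that the paper converts first and pairs afterward, whereas you pair adjacent strips first and then convert, which is the same computation in reverse order.
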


\begin{proof}
By the definition of $K_{-}$ in \eqref{GS180},
\begin{equation*}\begin{aligned}
\sum_{k \in K} \int_{\partial Q_{5}^{k}} g_{k} h \eta \, n_{k}^{1} \,  dS 
& =  - \sum_{k+1 \in K_{-}} \int_{ Q_{5} \cap \left\{ \left( \varphi_{k+1}(x'),x' \right) : x' \in Q_{5}' \right\} } g_{k} h \eta \, n_{k}^{1} \,  dS  \\
& \qquad  - \sum_{k \in K_{-}} \int_{ Q_{5} \cap  \left\{ \left( \varphi_{k}(x'),x' \right) : x' \in Q_{5}' \right\} } g_{k} h \eta \, n_{k}^{1} \,  dS. 
\end{aligned}\end{equation*}
By using \eqref{GS910} and integration by parts with respect to $x^{1}$-variable,
\begin{equation*}\begin{aligned}
& \sum_{k+1 \in K_{-}} \int_{ Q_{5} \cap \left\{ \left( \varphi_{k+1}(x'),x' \right) : x' \in Q_{5}' \right\} } g_{k} h \eta \, n_{k}^{1} \,  dS \\
& \quad = \sum_{k+1 \in K_{-}}  \int_{ Q_{5} } \left[ g_{k} h \right] \big( \varphi_{k+1}(x'),x' \big)  D_{1}\eta \, H_{k+1} \, dx \\
& \quad = \sum_{k \in K_{-} }  \int_{ Q_{5} } \left[ g_{k-1} h \right] \big( \varphi_{k}(x'),x' \big)  D_{1}\eta \, H_{k} \, dx,
\end{aligned}\end{equation*}
and
\begin{equation*}\begin{aligned}
& \sum_{k \in K_{-}} \int_{ Q_{5} \cap \left\{ \left( \varphi_{k}(x'),x' \right) : x' \in Q_{5}' \right\} } g_{k} h \eta \, n_{k}^{1} \,  dS \\
& \quad = \sum_{k \in K_{-}}  \int_{ Q_{5} } \left[ g_{k} h \right] \big( \varphi_{k}(x'),x' \big)  D_{1}\eta \, (1-H_{k}) \, dx,
\end{aligned}\end{equation*}
where we used that $\eta \in C_{c}^{\infty}(Q_{5})$. So the lemma holds by combining the above three equality and that $\sum_{k \in K_{-}}  \int_{ Q_{5} } \left[ g_{k} h \right] \big( \varphi_{k}(x'),x' \big)  D_{1}\eta \, dx= 0$.
\end{proof}

We now estimate $I^{i}+III^{i}$ in Lemma \ref{PETS600} and $II^{i}$ in Lemma \ref{PETS700}.  We remark that $G_{1}$ in Lemma \ref{PETS600} and Lemma \ref{PETS700} are H\"{o}lder continuous in each subregion $Q_{5}^{k}$ $(k \in K)$. Moreover, $\bg_{1,k}[h]$ only depends on $x'$-variables and $G_{1}$ might have big jumps on the graphs $\left \{  \big( \varphi_{k}(x'), x' \big) : x' \in Q_{5}' \right\}$ $(k \in K_{-})$.

\begin{lem}\label{PETS600}
For any $\eta \in C_{c}^{\infty} \left( Q_{r}(z), \br^{N} \right)$ with $Q_{r}(z) \subset Q_{5}$, we have that
\begin{equation*}\label{}\begin{aligned}
& \left| I^{i} + III^{i} - \sum_{ k \in K } \int_{ Q_{r}^{k}(z) }  \sum_{0' \leq q' \leq p' } 
\Big( \begin{array}{c} p' \\ q' \end{array} \Big)
\pi^{q'} 
D_{1} \left [ D^{ (|q'|,p'-q') } U_{1}^{i,k}  \eta^{i}  \right] \, + \, G_{1}^{i} D_{1}\eta^{i}  \, dx \right| \\
& \quad  \leq c r^{\frac{1}{4}} \int_{Q_{r}(z)} \epsilon |D\eta|^{2} +   \epsilon^{-2}  |\eta|^{2} + E_{z,r}^{2}   \, dx,
\end{aligned}\end{equation*}
for any $1 \leq i \leq N$ and $\epsilon \in (0,1]$, where
\begin{equation}\label{PET620}
G_{1} ^{i}=  \sum_{ k \in  K_{-}  } \sum_{1 \leq \alpha \leq n} 
\ \bg_{\alpha}^{k} \left[ U_{\alpha}^{i,k-1} -  U_{\alpha}^{i,k} \right]H_{k} 
\end{equation}
with the linear operator $\ \bg_{\alpha}^{k} : C^{m-1,\gamma} \left( \partial Q_{6}^{k} \cap \partial Q_{6}^{k-1} \right) \to C^{\gamma} \left( Q_{5}' \right) $  $(k \in K_{-})$  defined as
\begin{equation}\label{PET625} 
\ \bg_{\alpha}^{k}[h](x') = \sum_{|\xi| \leq |p'|-1} \left[ \pi_{\alpha} \,  P_{\xi}^{ p' }   D_{\xi}  h \right ] \big( \varphi_{k}(x'),x' \big).
\end{equation}
\end{lem}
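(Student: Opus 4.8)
The plan is to start from the identity $I^{i}+III^{i}=IV_{1}^{i}+IV_{2}^{i}$ recorded in \eqref{PET480} and to process the two boundary sums separately. A device I would use throughout is that, on each face of $\partial Q_{5}^{k}$ lying on a graph $\{(\varphi_{l}(x'),x'):x'\in Q_{5}'\}$, Lemma \ref{PETS240} together with $\psi_{\alpha}(x,0')=\pi_{\alpha}(x)$ gives $n_{k}^{\alpha}=-\pi_{\alpha}\,n_{k}^{1}$ for $2\le\alpha\le n$ (and $\pi_{1}=-1$ covers $\alpha=1$), so that every boundary integral can be rewritten with only the weight $n_{k}^{1}$; the remaining faces of $\partial Q_{5}^{k}$ contribute nothing, since $\eta$ has compact support in $Q_{5}$.

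For $IV_{2}^{i}$ this yields $IV_{2}^{i}=-\sum_{k\in K}\int_{\partial Q_{5}^{k}}g_{k}^{i}\,\eta^{i}\,n_{k}^{1}\,dS$ with $g_{k}^{i}=\sum_{1\le\alpha\le n}\pi_{\alpha}\sum_{|\xi|\le|p'|-1}P_{\xi}^{p'}D_{\xi}U_{\alpha}^{i,k}$, and Lemma \ref{PETS500} (applied with second argument $\equiv 1$) turns the surface sum into a volume integral $\sum_{k\in K_{-}}\int_{Q_{5}}[g_{k-1}^{i}-g_{k}^{i}](\varphi_{k}(x'),x')\,H_{k}\,D_{1}\eta^{i}\,dx$. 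Along an interior graph $\varphi_{k}$ the piecewise coefficients $\pi_{\alpha}$ and $P_{\xi}^{p'}$ take the same values from both sides (namely $D_{\alpha}\varphi_{k}$ and $\tilde{P}_{\xi}^{p'}[D_{x'}\varphi_{k},\dots,D_{x'}^{m}\varphi_{k}]$), hence $[g_{k-1}^{i}-g_{k}^{i}](\varphi_{k},\cdot)=\sum_{\alpha}\bg_{\alpha}^{k}[U_{\alpha}^{i,k-1}-U_{\alpha}^{i,k}]$, which is exactly the $k$-th summand of $G_{1}^{i}$ in \eqref{PET620}. Thus $IV_{2}^{i}$ reproduces the term $\int_{Q_{r}}G_{1}^{i}\,D_{1}\eta^{i}\,dx$ of the statement, the sign being that dictated by the orientation convention for $n_{k}^{1}$.

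For $IV_{1}^{i}$ I split the sum over $\alpha$. The $\alpha=1$ contribution, after the $x^{1}$-integration by parts on each strip $Q_{5}^{k}$, equals $\sum_{k}\int_{Q_{5}^{k}}\sum_{q'}\binom{p'}{q'}D_{1}[\pi^{q'}D^{(|q'|,p'-q')}U_{1}^{i,k}\eta^{i}]\,dx$; the Leibniz rule splits this into the bulk term $M^{i}:=\sum_{k}\int_{Q_{r}^{k}}\sum_{q'}\binom{p'}{q'}\pi^{q'}D_{1}[D^{(|q'|,p'-q')}U_{1}^{i,k}\eta^{i}]\,dx$ of the statement plus the remainder $\sum_{k}\int_{Q_{r}^{k}}\sum_{q'}\binom{p'}{q'}(D_{1}\pi^{q'})D^{(|q'|,p'-q')}U_{1}^{i,k}\eta^{i}\,dx$. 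Since the $\pi_{\alpha}$ are bounded by \eqref{gradient_graphs} one has $|D_{1}\pi^{q'}|\le c|D\pi|$, and since $D^{(|q'|,p'-q')}U_{1}^{i,k}$ is an $m$-th order derivative of $U_{1}^{i,k}$ one has $\|D^{(|q'|,p'-q')}U_{1}^{i,k}\|_{L^{\infty}(Q_{r}^{k}(z))}\le cE_{z,r}$; using $E_{z,r}|D\pi||\eta|\le\tfrac12 r^{-1/4}|D\pi|^{2}\eta^{2}+\tfrac12 r^{1/4}E_{z,r}^{2}$ and then Lemma \ref{GSS800} bounds this remainder by $cr^{1/4}\int_{Q_{r}(z)}(\epsilon|D\eta|^{2}+\epsilon^{-2}|\eta|^{2}+E_{z,r}^{2})\,dx$. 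The substantive step is the $\alpha\ge2$ contribution of $IV_{1}^{i}$: writing its normal weights through $n_{k}^{\alpha}=-\pi_{\alpha}n_{k}^{1}$ and applying Lemma \ref{PETS500} produces $m$-th order jumps of $\pi_{\alpha}\pi^{q'}D^{(|q'|,p'-q')}U_{\alpha}^{i,k}$ along the graphs. I would combine these with $M^{i}$ using the differentiated equation $\sum_{\alpha}D_{\alpha}U_{\alpha}^{i,k}=0$ in each $Q_{6}^{k}$ (equation \eqref{PET235}), which makes the contributions of order $\ge m+1$ in $u$ cancel against the corresponding part of $M^{i}$, together with the conormal transmission relation $\sum_{1\le\alpha\le n}\pi_{\alpha}(U_{\alpha}^{i,k-1}-U_{\alpha}^{i,k})=0$ on $\varphi_{k}$ (a consequence of Lemma \ref{PETS250}) differentiated tangentially along $\varphi_{k}$, which recasts the remaining $m$-th order jumps as the lower-order expressions already collected in $G_{1}^{i}$ plus bounded quantities times $\eta^{i}$, $D\eta^{i}$, or $D\pi^{q'}$. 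These last are controlled exactly as in the $\alpha=1$ case, using $\|D^{m+1}u\|_{L^{\infty}(Q_{r}^{k}(z))}\le E_{z,r}$, estimate \eqref{GS890}, Lemma \ref{GSS800}, and Young's inequality with parameter $r^{1/4}$.

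Summing the finitely many resulting contributions over $\alpha$, $k\in K$, the indices $q'$ with $0'\le q'\le p'$, and the multi-indices $|\xi|\le|p'|-1$, and absorbing constants, gives the asserted estimate. I expect the main difficulty to be precisely the $\alpha\ge2$ step: one must orchestrate the successive integrations by parts and the tangential differentiations of the conormal condition so that every derivative of $u$ of order $\ge m+1$ is carried only by the displayed bulk term $M^{i}$ (and cancels everywhere else), and so that the surviving $m$-th order jump terms assemble into $G_{1}^{i}$ rather than into something larger; once that bookkeeping is in place, all the error terms fall into the two standard types already handled and the bound follows at once.
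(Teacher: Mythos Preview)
Your treatment of $IV_{2}^{i}$ is essentially the paper's argument: rewrite the normals via $n_{k}^{\alpha}=-\pi_{\alpha}n_{k}^{1}$, apply Lemma~\ref{PETS500}, and use the continuity of $\pi_{\alpha}$ and $P_{\xi}^{p'}$ across each graph to isolate the jumps of $D_{\xi}U_{\alpha}^{i,k}$, producing $G_{1}^{i}$.

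Your handling of $IV_{1}^{i}$, however, is built around a typo in the displayed statement. The bulk term should carry the full sum $\sum_{1\le\alpha\le n}\pi^{q'}D_{\alpha}\bigl[D^{(|q'|,p'-q')}U_{\alpha}^{i,k}\eta^{i}\bigr]$, not the single $\alpha=1$ contribution; this is confirmed by the way Lemma~\ref{PETS800} consumes the result (it expands $D_{\alpha}[D^{(|q'|,p'-q')}U_{\alpha}^{i,k}\eta^{i}]$ by Leibniz and kills one summand via $\sum_{\alpha}D_{\alpha}U_{\alpha}^{i,k}=0$). With the corrected target, the paper's treatment of $IV_{1}^{i}$ is a single step: the divergence theorem on each $Q_{5}^{k}$ converts $\int_{\partial Q_{5}^{k}}\pi^{q'}D^{(|q'|,p'-q')}U_{\alpha}^{i,k}\eta^{i}\,n_{k}^{\alpha}\,dS$ directly into $\int_{Q_{5}^{k}}D_{\alpha}\bigl[\pi^{q'}D^{(|q'|,p'-q')}U_{\alpha}^{i,k}\eta^{i}\bigr]\,dx$, and the Leibniz remainder $\int (D_{\alpha}\pi^{q'})\,D^{(|q'|,p'-q')}U_{\alpha}^{i,k}\,\eta^{i}$ is bounded exactly as you do in your $\alpha=1$ paragraph (Young with weight $r^{1/4}$, then Lemma~\ref{GSS800}). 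No splitting into $\alpha=1$ and $\alpha\ge2$ is needed.

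Your alternative route for $\alpha\ge2$ --- converting $n_{k}^{\alpha}$ to $n_{k}^{1}$, applying Lemma~\ref{PETS500}, and then hoping to cancel the resulting $m$-th-order jumps of $U_{\alpha}^{i,k}$ against $M^{i}$ via the equation and the tangentially differentiated transmission relation --- has a genuine gap. The jumps produced by Lemma~\ref{PETS500} are functions of $x'$ only, paired against $H_{k}D_{1}\eta^{i}$, whereas $M^{i}$ is a genuine volume integral; there is no mechanism for pointwise cancellation between them. Moreover, the tangentially differentiated transmission relation is precisely the content of Lemma~\ref{PETS400} and the identity $I^{i}+II^{i}+III^{i}=0$ that you are already using, so invoking it again yields nothing new. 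As written, your scheme leaves terms of order $\int_{Q_{r}(z)}E_{z,r}\,|D\eta|\,dx$ without the crucial factor $r^{1/4}$.
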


\begin{proof}
With respect to $x^{1}$-variable, use integration by parts to $IV_{1}$ in \eqref{PET480}. Then
\begin{equation*}\begin{aligned}\label{}
IV_{1}^{i}  & =  \sum_{k \in K} \sum_{ 1 \leq \alpha \leq n } \int_{ Q_{5}^{k} }  \sum_{0' \leq q' \leq p' } 
\Big( \begin{array}{c} p' \\ q' \end{array} \Big)
D_{\alpha} \left[ \pi^{q'} \right]
D^{ (|q'|,p'-q') } U_{\alpha}^{i,k} \, \eta^{i}  \, dx \\
& \quad  + \sum_{k \in K}  \sum_{ 1 \leq \alpha \leq n }  \int_{Q_{5}^{k} } \sum_{0' \leq q' \leq p' } 
\Big( \begin{array}{c} p' \\ q' \end{array} \Big)
\pi^{q'} 
D_{\alpha} \Big[ D^{ (|q'|,p'-q') } U_{\alpha}^{i,k}  \eta^{i} \Big] \, dx.
\end{aligned}\end{equation*}
By that $Q_{r}(z) \subset Q_{5}$,
\begin{equation}\begin{aligned}\label{PET640}
IV_{1}^{i}  & =  \sum_{k \in K}  \sum_{ 1 \leq \alpha \leq n }  \int_{ Q_{r}^{k}(z) }  \sum_{0' \leq q' \leq p' } 
\Big( \begin{array}{c} p' \\ q' \end{array} \Big)
D_{\alpha} \left[ \pi^{q'} \right]
D^{ (|q'|,p'-q') } U_{\alpha}^{i,k} \, \eta^{i}  \, dx \\
& \quad  + \sum_{k \in K}  \sum_{ 1 \leq \alpha \leq n }  \int_{Q_{r}^{k}(z) } \sum_{0' \leq q' \leq p' } 
\Big( \begin{array}{c} p' \\ q' \end{array} \Big)
\pi^{q'} 
D_{\alpha} \Big[ D^{ (|q'|,p'-q') } U_{\alpha}^{i,k}  \eta^{i} \Big] \, dx.
\end{aligned}\end{equation}
From \eqref{gradient_graphs} and \eqref{GS260}, we have that $|\pi_{2}|, \cdots, |\pi_{n}| \leq c$. So by Young's inequality,
\begin{equation*}\begin{aligned}\label{}
& \bigg| \sum_{ k \in K }   \int_{ Q_{r}^{k}(z) }  \sum_{ 1 \leq \alpha \leq n } \sum_{0' \leq q' \leq p' } 
\Big( \begin{array}{c} p' \\ q' \end{array} \Big)
D_{\alpha} \left[ \pi^{q'} \right]
D^{ (|q'|,p'-q') } U_{\alpha}^{i,k}  \eta^{i} \, dx \bigg|  \\
& \quad \leq c  \int_{ Q_{r}(z) }  \big| D\pi \big| E_{z,r}   \left| \eta \right| \, dx \\
& \quad \leq c  \int_{ Q_{r}(z) } r^{\frac{1}{4}} E_{z,r}^{2} + r^{-\frac{1}{4}} |D\pi|^{2} \left| \eta \right| ^{2} \, dx,
\end{aligned}\end{equation*}
where the definition of $E_{z,r}$ in \eqref{PET490} is used. 
So by \eqref{PET640} and Lemma \ref{GSS800},
\begin{equation*}\begin{aligned}\label{}
& \left| IV_{1}^{i} - \sum_{k \in K}   \int_{Q_{r}^{k}(z) } \sum_{ 1 \leq \alpha \leq n }  \sum_{0' \leq q' \leq p' } 
\Big( \begin{array}{c} p' \\ q' \end{array} \Big)
\pi^{q'} 
D_{1} \left[ D^{ (|q'|,p'-q') } U_{1}^{i,k}  \eta^{i} \right] \, dx \right|  \\
& \quad \leq c r^{\frac{1}{4}} \int_{Q_{r}(z)} \epsilon |D\eta|^{2} +   \epsilon^{-2}  |\eta|^{2} + E_{z,r}^{2}   \, dx.
\end{aligned}\end{equation*}

We next estimate $IV_{2}$. By \eqref{GS410} and Lemma \ref{PETS240}, we find that 
\begin{equation*}\label{}
n_{k}^{\alpha}  = - \pi_{\alpha} \, n_{k}^{1}
\quad \text{ on } \quad
\partial Q_{5}^{k} \cap \partial Q_{5}^{k+1},
\end{equation*}
for any $ i = 1,2, \cdots, n$ and $k \in K$. So by \eqref{PET480},
\begin{equation*}\begin{aligned}\label{}
IV_{2}^{i} & = \sum_{ k \in K } \int_{ \partial Q_{5}^{k} }   \sum_{1 \leq \alpha \leq n} 
\sum_{|\xi| \leq |p'|-1} P_{\xi}^{ p' }(x) 
D_{\xi}U_{\alpha}^{i,k}  \, \eta^{i}  \ n_{k}^{\alpha} \, dS \\
& =  -  \sum_{ k \in K }  \int_{ \partial Q_{5}^{k} }  \sum_{1 \leq \alpha \leq n} 
\sum_{|\xi| \leq |p'|-1} P_{\xi}^{ p' }(x) 
D_{\xi}U_{\alpha}^{i,k}  \, \eta^{i}  \, \pi_{\alpha} \, n_{k}^{1} \, dS.
\end{aligned}\end{equation*}
With \eqref{GS130}, we have from \eqref{PET220}, \eqref{GS260} and the definition of $P_{\xi}^{p'}$ in Lemma \ref{GSS275}  that 
\begin{equation*}\label{}
D_{\xi} U_{\alpha}^{i,k}, \pi_{\alpha} P_{\xi}^{p'} \in C^{\gamma} \left( \partial Q_{6}^{k} \cap \partial Q_{6}^{k-1} \right)
\qquad (k \in K, \, i \in \{ 1, \cdots, n \} ),
\end{equation*}
for any $|\xi| \leq |p'| - 1$. So take $g_{k} = D_{\xi}U_{\alpha}^{i,k}$  and $h = \pi_{\alpha} P_{\xi}^{p'} $ in Lemma \ref{PETS500} to find that
\begin{equation*}\label{}
IV_{2}^{i} = - \sum_{ k \in K } \int_{ Q_{5}^{k} }   \, G_{1} D_{1}\eta^{i}  \, dx
\end{equation*}
with
\begin{equation*}\label{}
G_{1}^{i} =  \sum_{ k \in  K_{-}  } \sum_{1 \leq \alpha \leq n} \sum_{|\xi| \leq |p'|-1} 
\bg_{k} \left[ D_{\xi} U_{\alpha}^{i,k-1} -  D_{\xi} U_{\alpha}^{i,k} , \pi_{\alpha} P_{\xi}^{p'}  \right] H_{k}
\end{equation*}
where $\bg_{k}$ is defined in Lemma \ref{PETS500}. So by comparing the definition of $\bg_{k}$ in Lemma \ref{PETS500} and $\ \bg_{\alpha}^{k}$ in \eqref{PET625}, the lemma follows form that $IV_{1}^{i}+IV_{2}^{i} = I^{i}+III^{i}$ in \eqref{PET480} $( 1 \leq i \leq N)$ and that $Q_{r}(z) \subset Q_{5}$.
\end{proof}

\begin{lem}\label{PETS700}
For any $\eta \in C_{c}^{\infty} \left( Q_{r}(z) , \br^{N} \right)$ with $Q_{r}(z) \subset Q_{5}$, we have that
\begin{equation*}\begin{aligned}
II^{i} = & - \int_{Q_{r}(z)} G_{1}^{i} D_{1}\eta^{i} \, dx,
\end{aligned}\end{equation*}
for any $1 \leq i \leq N$, where 
\begin{equation*}\label{}
G_{1}^{i} = \sum_{k \in K_{-}} \sum_{ 1 \leq \alpha \leq n }  \ \bg_{\alpha}^{k} \left[ U_{\alpha}^{i,k-1} -  U_{\alpha}^{i,k} \right]  H_{k},
\end{equation*}
with the linear operator $\ \bg_{\alpha}^{k} : C^{m-1,\gamma} \left( \partial Q_{6}^{k} \cap \partial Q_{6}^{k-1} \right) \to C^{\gamma} \left( Q_{5}' \right) $  $(k \in K_{-})$  defined as
\begin{equation*}
\ \bg_{\alpha}^{k}[h](x') =  \sum_{1 \leq \alpha \leq n} \sum_{ 0' < q' \leq p' } \Big( \begin{array}{c} p' \\ q' \end{array} \Big) D^{q'} D_{\alpha}\varphi_{k}(x') \, D^{p'-q'} \left[  h\big(  \varphi_{k}(x'), x' \big) \right].
\end{equation*}
\end{lem}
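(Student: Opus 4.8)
The plan is to compute the boundary integrand of $II^i$ explicitly on each face of $\partial Q_5^k$ and then turn the resulting surface integrals over the interface graphs into volume integrals against $D_1\eta^i$, by the same mechanism as in Lemma \ref{PETS500}. The one structural point that drives everything — and the reason $\bg_\alpha^k$ need only act on $C^{m-1,\gamma}$, one derivative less than the ``main'' terms in Lemma \ref{PETS600} — is that $\psi_\alpha(x,t')-\psi_\alpha(x,0')$ vanishes at $t'=0'$. Hence, when $\partial_{t'}^{p'}=\partial_{t^2}^{p_2}\cdots\partial_{t^n}^{p_n}$ is expanded over the product $\big[\psi_\alpha(x,t')-\psi_\alpha(x,0')\big]\,U_\alpha^{i,k}(x+(\psi,t'))$ by the Leibniz rule, the term carrying no $t'$-derivative on the first factor drops out; only the factors $\partial_{t'}^{q'}\psi_\alpha\big|_{t'=0'}$ with $q'>0'$ appear, multiplying derivatives $\partial_{t'}^{p'-q'}$ of order at most $m-1$ of the inner factor $U_\alpha^{i,k}(x+(\psi,t'))$.

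First I would restrict to the graphs making up $\partial Q_5^k$. On the lower face $\big\{(\varphi_k(x'),x')\big\}$ one has $T_k=0$, so by \eqref{PET190} and \eqref{GS230} — the computation already used in the proof of Lemma \ref{PETS400} — $\psi_\alpha\big((\varphi_k(x'),x'),t'\big)=D_\alpha\varphi_k(x'+t')$ and $x+(\psi(x,t'),t')=(\varphi_k(x'+t'),x'+t')$; on the upper face $\big\{(\varphi_{k+1}(x'),x')\big\}$ one has $T_k=1$ and the same formulas with $\varphi_{k+1}$ in place of $\varphi_k$. Therefore $\partial_{t'}^{q'}\psi_\alpha\big|_{t'=0'}$ becomes $D^{q'}D_\alpha\varphi_k(x')$ and $\partial_{t'}^{p'-q'}\big[U_\alpha^{i,k}(x+(\psi,t'))\big]\big|_{t'=0'}$ becomes $D^{p'-q'}\big[U_\alpha^{i,k}(\varphi_k(x'),x')\big]$ (chain rule along the shift $x'\mapsto x'+t'$), so the integrand on the lower face of $\partial Q_5^k$ is exactly $\bg_\alpha^k\big[U_\alpha^{i,k}\big](x')$ and on the upper face it is $\bg_\alpha^{k+1}\big[U_\alpha^{i,k}\big](x')$. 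Since $\eta$ is supported in $Q_r(z)\subset Q_5$, the lateral faces contribute nothing and, by \eqref{GS180}, only the interface graphs $\big\{(\varphi_m(x'),x'):x'\in Q_5'\big\}$ with $m\in K_-$ survive. Each such graph occurs twice, once as the upper face of $Q_5^{m-1}$ and once as the lower face of $Q_5^m$; by \eqref{PET243} the two outward normals satisfy $n_{m-1}^1=-n_m^1$, so summing the two contributions and using linearity of $\bg_\alpha^m$ replaces $\bg_\alpha^m\big[U_\alpha^{i,m-1}\big]$ and $\bg_\alpha^m\big[U_\alpha^{i,m}\big]$ by the jump $\bg_\alpha^m\big[U_\alpha^{i,m-1}-U_\alpha^{i,m}\big]$. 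Finally, since $\bg_\alpha^m[h](x')$ depends only on $x'$, integration by parts in $x^1$ — equivalently the divergence theorem on $\{H_m=1\}$, whose boundary inside $Q_5$ is precisely that graph — rewrites each interface integral against $\eta^i n_m^1$ as the volume integral of $\bg_\alpha^m\big[U_\alpha^{i,m-1}-U_\alpha^{i,m}\big]H_m D_1\eta^i$, exactly as in Lemma \ref{PETS500}; combined with the overall minus in the definition of $II^i$, summing over $m\in K_-$ and $2\le\alpha\le n$ gives $II^i=-\int_{Q_r(z)}G_1^i D_1\eta^i\,dx$ with $G_1^i$ as stated. The mapping property $\bg_\alpha^m:C^{m-1,\gamma}(\partial Q_6^m\cap\partial Q_6^{m-1})\to C^\gamma(Q_5')$ is then immediate, since each summand of $\bg_\alpha^m[h]$ is a product of $D^{q'}D_\alpha\varphi_m$, of order $|q'|+1\le m+1$ and hence $C^\gamma$ by \eqref{graph_con}, with $D^{p'-q'}\big[h(\varphi_m(\cdot),\cdot)\big]$, of order $|p'-q'|\le m-1$ and hence $C^\gamma$ since $h\in C^{m-1,\gamma}$ and $\varphi_m\in C^{m+1,\gamma}$.

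The main obstacle is the careful bookkeeping of the multi-index Leibniz expansion of $\partial_{t'}^{p'}$ over the product, together with the chain-rule identity $\partial_{t'}^{p'-q'}\big[U_\alpha^{i,k}(\varphi_k(x'+t'),x'+t')\big]\big|_{t'=0'}=D^{p'-q'}\big[U_\alpha^{i,k}(\varphi_k(x'),x')\big]$; it is essential that the $q'=0'$ term genuinely disappears, since otherwise one would have to differentiate $h(\varphi_k(\cdot),\cdot)$ to order $m$, which is not available at the regularity $h\in C^{m-1,\gamma}$ for which $\bg_\alpha^k$ is designed. Once this is in place, the interface pairing via \eqref{PET243} and the surface-to-volume passage through the Heaviside functions $H_m$ (cf.\ \eqref{GS920}) are routine, given Lemma \ref{PETS500}.
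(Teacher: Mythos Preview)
Your proposal is correct and follows essentially the same route as the paper's proof: Leibniz-expand $\partial_{t'}^{p'}$ over the product so the $q'=0'$ term drops, evaluate the two factors on the graphs via \eqref{PET210} and \eqref{GS230} to obtain $D^{q'}D_\alpha\varphi_k(x')$ and $D^{p'-q'}\big[U_\alpha^{i,k}(\varphi_k(x'),x')\big]$, and then invoke Lemma~\ref{PETS500} to convert the interface integrals into volume integrals against $H_k D_1\eta^i$. The one cosmetic point you did not make explicit is that your natural sum is over $2\le\alpha\le n$ (from the definition of $II^i$), whereas the statement writes $1\le\alpha\le n$; the paper closes this by observing $D_1\varphi_k\equiv 0$, so the $\alpha=1$ summand vanishes and the two ranges agree.
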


\begin{proof}
Recall from \eqref{PET450} that
\begin{equation*}\begin{aligned}
II^{i} = - \sum_{ k \in K } \sum_{2 \leq \alpha \leq n}  \int_{\partial Q_{5}^{k}} \partial_{t'}^{p'}  \Big( \big[ \psi_{\alpha}(x,t') - \psi_{\alpha}(x,0')  \big] U_{\alpha}^{i,k} \big( x+(\psi,t') \big) \Big) \Big|_{t'=0'}  \eta^{i}  \,  n_{k}^{1}  \,  dS.
\end{aligned}\end{equation*}
By a direct calculation,
\begin{equation*}\begin{aligned}\label{} 
II^{i} = -  \sum_{ k \in K } \int_{\partial Q_{5}^{k}}  \sum_{2 \leq \alpha \leq n} \sum_{ 0' < q' \leq p' } 
\Big( \begin{array}{c} p' \\ q' \end{array} \Big) \partial_{t'}^{q'} \psi_{\alpha}(x,t')  \Big|_{t'=0'}
 \partial_{t'}^{p'-q'} \big[ U_{\alpha}^{i,k} \big( x+(\psi,t') \big) \big] \Big|_{t'=0'} \eta^{i}   \,  n_{k}^{1}  \,  dS.
\end{aligned}\end{equation*}
We now compute the terms in the above integral. From \eqref{PET210}, we have that
\begin{equation}\begin{aligned}\label{PET770}
\left. \partial_{t'}^{q'} \psi_{\alpha} \big( (\varphi_{k}(x'),x') ,t' \big)  \right|_{t'=0'} 
=  \left. \partial_{t'}^{q'} D_{\alpha}\varphi_{k}(x'+t')   \right|_{t'=0'} 
= D^{q'} D_{\alpha}\varphi_{k}(x'),
\end{aligned}\end{equation}
or any $i = 2, \cdots, n $ and $k \in K_{-}$. By \eqref{GS230}, for any $k \in K_{-}$, we have that
\begin{equation*}\begin{aligned}\label{}
(\varphi_{k}(x'),x') + \big( \psi \big( (\varphi_{k}(x'),x'),t' \big) ,t' \big)
& = \big( \varphi_{k}(x'),x' \big) + \big( \varphi_{k}(x'+t') - \varphi_{k}(x') ,t' \big) \\
& = \big( \varphi_{k}(x'+t') , x ' +  t' \big),
\end{aligned}\end{equation*}
which implies that
\begin{equation}\begin{aligned}\label{PET780}
& \left. \partial_{t'}^{p'-q'} \left[ h \Big( (\varphi_{k}(x'),x') + \big( \psi \big( (\varphi_{k}(x'),x'),t' \big) ,t' \big) \Big) \right] \right|_{t'=0'} \\
& \quad =  \left. \partial_{t'}^{p'-q'} \left[ h \big( \varphi_{k}(x'+t') , x ' +  t' \big) \right] \right|_{t'=0'} \\
& \quad = D^{p'-q'} \big[ h \big(  \varphi_{k}(x'), x' \big) \big].
\end{aligned}\end{equation}
With \eqref{PET770} and \eqref{PET780}, for any $0' < q' \leq p'$, take $g_{k} =  \partial_{t'}^{p'-q'} \big[ U_{\alpha}^{i,k} \big( x+(\psi,t') \big) \big] \Big|_{t'=0'} $ and $h =  \partial_{t'}^{q'} \psi_{\alpha}(x,t')  \Big|_{t'=0'}  $ in Lemma \ref{PETS500} to find that
\begin{equation}\label{PET750}
II^{i} = -  \sum_{ k \in  K_{-}  } \int_{Q_{5}^{k}} \sum_{2 \leq \alpha \leq n} \ \bg_{\alpha}^{k} \left [U_{\alpha}^{i,k-1} - U_{\alpha}^{i,k} \right ] H_{k}  D_{1} \eta^{i} \,  dx,
\end{equation}
for the operator $\ \bg_{\alpha}^{k} : C^{m-1,\gamma} \left(  \overline{ Q_{6}^{k} }  \right)  \to C^{\gamma} \left( Q_{5}' \right) $  defined as
\begin{equation*}
\ \bg_{\alpha}^{k}[h](x) =  \sum_{ 0' < q' \leq p' } \Big( \begin{array}{c} p' \\ q' \end{array} \Big) D^{q'} D_{\alpha}\varphi_{k}(x') \, D^{p'-q'} \left[  h\big(  \varphi_{k}(x'), x' \big) \right].
\end{equation*}
The lemma holds by that $\eta \in C_{c}^{\infty} \left( Q_{r}(z), \br^{N} \right)$ and  $D_{1}\varphi_{k}(x')=0$ $(k \in K_{-})$.
\end{proof}

By combining Lemma \ref{PETS600} and Lemma \ref{PETS700}, we obtain the following lemma.

\begin{lem}\label{PETS800}
For any $\eta \in C_{c}^{\infty} \left( Q_{r}(z), \br^{N} \right)$ with $Q_{r}(z) \subset Q_{5}$, we have that
\begin{equation*}\begin{aligned}
& \left| \sum_{k \in K } \int_{Q_{r}^{k}(z) }  \sum_{0' \leq q' \leq p' } \Big( \begin{array}{c} p' \\ q' \end{array} \Big)
\pi^{q'} D^{(|q'|,p'-q')} U_{\alpha}^{i,k} D_{\alpha} \eta^{i} \, dx - \int_{Q_{r}(z) } G_{1}^{i} D_{1} \eta^{i} \, dx \right| \\
& \quad \leq c r^{\frac{1}{4}} \int_{Q_{r}(z)} \epsilon \left| D\eta \right|^{2} +   \epsilon^{-2}  \left| \eta \right |^{2} + E_{z,r}^{2}   \, dx,
\end{aligned}\end{equation*}
for any $ 1 \leq i \leq N$ and $\epsilon \in (0,1]$, where
\begin{equation}\label{PET820}
G_{1}^{i} =  \sum_{ k \in  K_{-}  } \sum_{1 \leq \alpha \leq n} \ \bg_{\alpha}^{k} \left[ U_{\alpha}^{i,k-1} - U_{\alpha}^{i,k} \right]  H_{k} 
\end{equation}
with the linear operator $\ \bg_{\alpha}^{k} : C^{m-1,\gamma} \left( \partial Q_{6}^{k} \cap \partial Q_{6}^{k-1} \right) \to C^{\gamma} \left( Q_{5}' \right) $  $(k \in K_{-})$  defined as
\begin{equation*}\begin{aligned}\label{}
\ \bg_{\alpha}^{k}[h](x') 
& = \sum_{|\xi| \leq |p'|-1} \left[ \pi_{\alpha}  P_{\xi}^{ p' }  D_{\xi}  h \right] \big( \varphi_{k}(x'),x' \big) \\
& \quad + \sum_{ 0' < q' \leq p' } \left( \begin{array}{c} p' \\ q' \end{array} \right) D^{q'} D_{\alpha}\varphi_{k}(x') \, D^{p'-q'}  \left[ h\big(  \varphi_{k}(x'), x' \big) \right].
\end{aligned}\end{equation*}
\end{lem}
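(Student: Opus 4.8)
Fix an index $1\le i\le N$ and $\eta\in C_c^\infty(Q_r(z),\br^N)$ with $Q_r(z)\subset Q_5$, and write
\[
B:=\sum_{k\in K}\int_{Q_r^k(z)}\sum_{1\le\alpha\le n}\sum_{0'\le q'\le p'}\binom{p'}{q'}\pi^{q'}\,D^{(|q'|,p'-q')}U_\alpha^{i,k}\,D_\alpha\eta^i\,dx
\]
for the first term on the left-hand side. The plan is to integrate $B$ by parts once and then to feed in the quantities already produced in the proofs of Lemma \ref{PETS600} and Lemma \ref{PETS700}. By the divergence form \eqref{PET235}, $\sum_\alpha D_\alpha U_\alpha^{i,k}=0$ in $Q_6^k$, so $D^{(|q'|,p'-q')}\big(\sum_\alpha D_\alpha U_\alpha^{i,k}\big)=0$ as a distribution in $Q_r^k(z)$, hence $\pi^{q'}\sum_\alpha D^{(|q'|,p'-q')}U_\alpha^{i,k}D_\alpha\eta^i=\pi^{q'}\sum_\alpha D_\alpha\big[D^{(|q'|,p'-q')}U_\alpha^{i,k}\eta^i\big]$ there; applying the divergence theorem on each $Q_r^k(z)$, using the product rule in $D_\alpha$, and using that $\eta$ vanishes on $\partial Q_r(z)$, I obtain
\[
B=\sum_{k\in K}\sum_{1\le\alpha\le n}\int_{\partial Q_r^k(z)}\sum_{0'\le q'\le p'}\binom{p'}{q'}\pi^{q'}D^{(|q'|,p'-q')}U_\alpha^{i,k}\,\eta^i\,n_k^\alpha\,dS\;-\;R^i,
\]
with $R^i:=\sum_{k\in K}\sum_\alpha\int_{Q_r^k(z)}\sum_{q'}\binom{p'}{q'}D_\alpha[\pi^{q'}]\,D^{(|q'|,p'-q')}U_\alpha^{i,k}\,\eta^i\,dx$.

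Since $\supp\eta\subset Q_r(z)\subset Q_5$, the surface integral above is exactly $IV_1^i$ of \eqref{PET480}, and by \eqref{PET480} together with \eqref{PET450} one has $IV_1^i=-II^i-IV_2^i$. I would then substitute the two identities already available: the $IV_2^i$-computation inside the proof of Lemma \ref{PETS600} gives $IV_2^i=-\int_{Q_r(z)}G_1^iD_1\eta^i\,dx$ with the operator $\bg_\alpha^k$ of \eqref{PET625}, while Lemma \ref{PETS700} gives $II^i=-\int_{Q_r(z)}G_1^iD_1\eta^i\,dx$ with its own operator $\bg_\alpha^k$. As the operator $\bg_\alpha^k$ in the present statement is precisely the sum of those two, adding the identities yields $IV_1^i=\int_{Q_r(z)}G_1^iD_1\eta^i\,dx$ for the $G_1^i$ of \eqref{PET820}; thus $B=\int_{Q_r(z)}G_1^iD_1\eta^i\,dx-R^i$, so the left-hand side of the claimed inequality equals $|R^i|$.

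It then remains to estimate $R^i$. Since $|q'|+|p'-q'|=|p'|=m$ and $u\in C^{m+1,\gamma}(Q_7^k)$ by \eqref{PET170}, the Leibniz rule applied to the definition \eqref{PET220} of $U_\alpha^{i,k}$ gives $\big|D^{(|q'|,p'-q')}U_\alpha^{i,k}\big|\le cE_{z,r}$ in $Q_r^k(z)$; combining this with $|\pi_\alpha|\le c$ (from \eqref{gradient_graphs} and \eqref{GS260}) and Young's inequality,
\[
|R^i|\le c\int_{Q_r(z)}|D\pi|\,E_{z,r}\,|\eta|\,dx\le c\int_{Q_r(z)}\Big(r^{1/4}E_{z,r}^2+r^{-1/4}|D\pi|^2|\eta|^2\Big)\,dx,
\]
and applying Lemma \ref{GSS800} to each scalar component $\eta^i$ with $\epsilon r^{1/2}$ in place of $\epsilon$ bounds $\int_{Q_r(z)}|D\pi|^2|\eta|^2\,dx$ by $\epsilon r^{1/2}\int_{Q_r(z)}|D\eta|^2+c\epsilon^{-2}r^{1/2}\int_{Q_r(z)}|\eta|^2$. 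This yields $|R^i|\le cr^{1/4}\int_{Q_r(z)}\big(\epsilon|D\eta|^2+\epsilon^{-2}|\eta|^2+E_{z,r}^2\big)\,dx$, which is the asserted bound.

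The step I expect to be the real obstacle is the integration by parts of $B$: because $D^{(|q'|,p'-q')}U_\alpha^{i,k}$ is a top-order ($m$-th) derivative of a field that is only $C^{m,\gamma}$, it has no further classical derivative, so the step that cancels the ``main'' bulk term and the ensuing divergence theorem must be read distributionally — what makes this legitimate is that $\sum_\alpha D_\alpha U_\alpha^{i,k}=0$ holds as a distribution (it is the divergence form \eqref{PET235} of the equation) and that $\pi^{q'}$ is genuinely $C^1$ on $\overline{Q_r^k(z)}$ once $\delta>0$, so that the product rule and the boundary traces are justified. A secondary bookkeeping point is checking that the surface term produced really is $IV_1^i$ (only the graph pieces of $\partial Q_r^k(z)$ meeting $\supp\eta$ contribute, and these coincide with the ones in \eqref{PET480}) and that the operators from Lemmas \ref{PETS600} and \ref{PETS700} assemble into the single $\bg_\alpha^k$ of the statement.
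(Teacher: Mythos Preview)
Your proposal is correct and follows essentially the same route as the paper. The only difference is organizational: the paper first packages the integration-by-parts of $IV_1^i$ and the computation of $IV_2^i$ into Lemma~\ref{PETS600} (so the error term you call $R^i$ is already absorbed into that lemma's right-hand side), then in the proof of Lemma~\ref{PETS800} it simply invokes Lemmas~\ref{PETS600} and~\ref{PETS700}, uses $I^i+II^i+III^i=0$, and finishes by expanding $D_\alpha\big[D^{(|q'|,p'-q')}U_\alpha^{i,k}\eta^i\big]$ via the product rule together with \eqref{PET235}. You instead start from $B$, integrate by parts to produce $IV_1^i-R^i$, and then feed in the identities for $IV_2^i$ and $II^i$; this is the same computation traversed in the opposite direction. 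One small slip: when you invoke Lemma~\ref{GSS800} there is no need to substitute $\epsilon r^{1/2}$ for $\epsilon$ --- the lemma as stated already carries the factor $r^{1/2}$ on both terms, so multiplying by $r^{-1/4}$ directly gives the $r^{1/4}$ bound.
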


\begin{proof}
Since $I + II + III=0$, by  \eqref{PET480}, Lemma \ref{PETS600} and Lemma \ref{PETS700},
\begin{equation*}\label{}\begin{aligned}
& \bigg| \sum_{ k \in K } \int_{Q_{r}^{k}(z) }  \sum_{0' \leq q' \leq p' } 
\Big( \begin{array}{c} p' \\ q' \end{array} \Big)
\pi^{q'} D_{\alpha} \Big[ D^{ (|q'|,p'-q') } U_{\alpha}^{i,k}  \eta^{i}  \Big]  \, dx  -  \int_{ Q_{r}(z) } G_{1}^{i} D_{1} \eta^{i}  \, dx \bigg| \\ 
& \quad \leq c r^{\frac{1}{4}} \int_{Q_{r}(z)} \epsilon \left| D\eta \right|^{2} +   \epsilon^{-2}  \left| \eta \right|^{2} + E_{z,r}^{2}   \, dx.
\end{aligned}\end{equation*}
By a direct calculation,
\begin{equation*}
D_{\alpha} \Big[ D^{ (|q'|,p'-q') } U_{\alpha}^{i,k}  \eta^{i}  \Big]  = D^{ (|q'|,p'-q') } D_{\alpha}U_{\alpha}^{i,k}   \eta^{i}  +  D^{ (|q'|,p'-q') } U_{\alpha}^{i,k}  D_{\alpha}\eta^{i}.
\end{equation*}
From \eqref{PET220} and \eqref{PET235}, we have that $D_{\alpha}U_{\alpha}^{i,k} = 0$ in $Q_{r}^{k}(z)$. So the lemma holds.
\end{proof}

So we obtain the desired estimate of this subsection.

\begin{lem}\label{PETS900}
For any $\eta \in C_{c}^{\infty} \left( Q_{r}(z) , \br^{N} \right)$ with $Q_{r}(z) \subset Q_{5}$, we have that
\begin{equation*}\begin{aligned}\label{}
& \left| \int_{Q_{r}(z) } \sum_{1 \leq \alpha \leq n} \left[ \sum_{1 \leq j \leq N} \sum_{1 \leq \beta \leq n} A_{\alpha \beta}^{ij,k}   \sum_{0' \leq q' \leq p' } \Big( \begin{array}{c} p' \\ q' \end{array} \Big)
\pi^{q'} D^{(|q'|,p'-q')} D_{\beta}u^{j,k} + \nu_{\alpha}^{i} \right] D_{\alpha} \eta^{i} - G_{1}^{i} D_{1} \eta^{i}  \, dx \right| \\
& \quad \leq c  r^{n+\frac{1}{4}} \left[   \| D^{m+1}u \|_{L^{\infty}(Q_{r}(z))}^{2}   +  \sum_{k \in K}  \left[  \| u \|_{C^{m,\gamma}( Q_{7}^{k})}^{2} +  \| F \|_{C^{m,\gamma}( Q_{7}^{k})}^{2} \right]  \right] \\
& \qquad + c  r^{\frac{1}{4}} \int_{ Q_{r}(z) } \epsilon |D\eta|^{2}  + \epsilon^{-2} |\eta|^{2}  \, dx,
\end{aligned}\end{equation*}
for any $1 \leq i \leq N$, with $G_{1}^{i} : Q_{5} \to \br$ in Lemma \ref{PETS800} and
\begin{equation*}
\nu_{\alpha}^{i} = \sum_{k \in K} \sum_{0' \leq q' \leq p' } \Big( \begin{array}{c} p' \\ q' \end{array} \Big)
\pi^{q'} \Big[ D^{ (|q'|,p'-q') } U_{\alpha}^{i,k} - \sum_{1 \leq j \leq N} \sum_{1 \leq \beta \leq n} A_{\alpha \beta}^{ij,k}D^{ (|q'|,p'-q') } D_{\beta}u^{j,k} \bigg]  \chi_{Q_{7}^{k}}
\end{equation*}
for any $ 1 \leq \alpha \leq n$ and  $1 \leq i \leq N$. Moreover, we have that
\begin{equation}\label{PET930}
\left \| G_{1}^{i} \right \|_{C^{\gamma}(Q_{5}^{k})} + \sum_{k \in K} \left \| \nu_{\alpha}^{i} \right \|_{C^{\gamma}(Q_{5}^{k})} 
\leq c \sum_{k \in K} \left[   \| u \|_{C^{m,\gamma}( Q_{7}^{k})} +  \| F \|_{C^{m,\gamma}( Q_{7}^{k})} \right],
\end{equation}
for any $ 1 \leq \alpha \leq n$ and  $1 \leq i \leq N$.
\end{lem}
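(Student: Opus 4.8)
The plan is to bootstrap from Lemma~\ref{PETS800}, which already gives the claimed inequality once the first left-hand integral is replaced by $\sum_{k\in K}\int_{Q_r^k(z)}\sum_{0'\le q'\le p'}\binom{p'}{q'}\pi^{q'}D^{(|q'|,p'-q')}U_\alpha^{i,k}\,D_\alpha\eta^i\,dx$ and the right-hand side by $cr^{1/4}\int_{Q_r(z)}\bigl(\epsilon|D\eta|^2+\epsilon^{-2}|\eta|^2+E_{z,r}^2\bigr)\,dx$. The first step is purely algebraic: in each subregion $Q_r^k(z)$ substitute the definition \eqref{PET220}, $U_\alpha^{i,k}=\sum_{1\le j\le N}\sum_{1\le\beta\le n}A_{\alpha\beta}^{ij,k}D_\beta u^{j,k}-F_\alpha^{i,k}$, and expand $D^{(|q'|,p'-q')}\!\bigl[A_{\alpha\beta}^{ij,k}D_\beta u^{j,k}\bigr]$ by the Leibniz rule. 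The unique term in which all $m$ derivatives act on $D_\beta u^{j,k}$ is $A_{\alpha\beta}^{ij,k}D^{(|q'|,p'-q')}D_\beta u^{j,k}$; after multiplying by $\binom{p'}{q'}\pi^{q'}$ and summing over $q'$ it assembles into the main term $\sum_{j,\beta}A_{\alpha\beta}^{ij,k}\sum_{q'}\binom{p'}{q'}\pi^{q'}D^{(|q'|,p'-q')}D_\beta u^{j,k}$, while everything else — the Leibniz terms carrying at least one derivative on the coefficient, together with $-D^{(|q'|,p'-q')}F_\alpha^{i,k}$, weighted by $\binom{p'}{q'}\pi^{q'}$ and summed over $q'$ — is, by its very definition, $\nu_\alpha^i$ restricted to $Q_r^k(z)$ (where $\chi_{Q_7^k}\equiv1$). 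Since this rewriting is an exact identity it creates no new error, so combining it with Lemma~\ref{PETS800} produces the left-hand side of the claim up to the error $cr^{1/4}\int_{Q_r(z)}\bigl(\epsilon|D\eta|^2+\epsilon^{-2}|\eta|^2+E_{z,r}^2\bigr)\,dx$.

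It remains to absorb the $E_{z,r}^2$ part of this error and to prove \eqref{PET930}. For the first, $E_{z,r}$ is constant on $Q_r(z)$, so $\int_{Q_r(z)}E_{z,r}^2\,dx=E_{z,r}^2|Q_r(z)|\le cr^n E_{z,r}^2$; using \eqref{PET490}, the inequality $(\sum_k a_k)^2\le c\sum_k a_k^2$ (the cardinality of $K$ being a fixed constant), and $\|D^{m+1}u\|_{L^\infty(Q_r^k(z))}\le\|D^{m+1}u\|_{L^\infty(Q_r(z))}$, one gets $E_{z,r}^2\le c\bigl[\|D^{m+1}u\|_{L^\infty(Q_r(z))}^2+\sum_{k\in K}\bigl(\|u\|_{C^{m,\gamma}(Q_7^k)}^2+\|F\|_{C^{m,\gamma}(Q_7^k)}^2\bigr)\bigr]$, so $cr^{1/4}\int_{Q_r(z)}E_{z,r}^2\,dx$ is dominated by the first term on the right of the statement and the leftover error is exactly $cr^{1/4}\int_{Q_r(z)}\bigl(\epsilon|D\eta|^2+\epsilon^{-2}|\eta|^2\bigr)\,dx$.

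For \eqref{PET930} I treat $\nu_\alpha^i$ and $G_1^i$ separately. On $Q_5^k$ the bracket defining $\nu_\alpha^i$ is the sum of the lower-order Leibniz terms $(D^\mu A_{\alpha\beta}^{ij,k})(D^{\mu'}D_\beta u^{j,k})$ with $|\mu|\ge1$ (hence $|\mu'|\le m-1$) and of $-D^{(|q'|,p'-q')}F_\alpha^{i,k}$; the term that was cancelled is the only one containing an $(m+1)$-st derivative of $u$, so $\nu_\alpha^i$ involves only derivatives of $u$, $A$ and $F$ of order at most $m$. By \eqref{PET130}, \eqref{cof_com} and \eqref{PET220} each such factor lies in $C^\gamma$ with norm controlled by fixed constants and by $\|u\|_{C^{m,\gamma}(Q_7^k)}$, $\|F\|_{C^{m,\gamma}(Q_7^k)}$, while $\pi^{q'}$ is bounded by \eqref{gradient_graphs} and Hölder continuous of exponent $\tfrac12\ge\gamma$ by Lemma~\ref{GSS600}; since a product of bounded $C^\gamma$ functions is $C^\gamma$, we obtain $\|\nu_\alpha^i\|_{C^\gamma(Q_5^k)}\le c\bigl(\|u\|_{C^{m,\gamma}(Q_7^k)}+\|F\|_{C^{m,\gamma}(Q_7^k)}\bigr)$, which sums over $k$ to the required bound. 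For $G_1^i=\sum_{k\in K_-}\sum_\alpha\bg_\alpha^k[U_\alpha^{i,k-1}-U_\alpha^{i,k}]H_k$, the key is that $\bg_\alpha^k$ maps $C^{m-1,\gamma}(\partial Q_6^k\cap\partial Q_6^{k-1})$ boundedly into $C^\gamma(Q_5')$: on the graph $\{(\varphi_k(x'),x')\}$ one has $T_k=0$, so by \eqref{GS260} and Lemma~\ref{GSS275} the factor $\pi_\alpha$ collapses to $D_\alpha\varphi_k(x')$ and $P_\xi^{p'}$ collapses to a fixed polynomial in $D_{x'}\varphi_k,\dots,D_{x'}^m\varphi_k$, all of class $C^\gamma(Q_6')$ by \eqref{graph_con}; moreover $D^{q'}D_\alpha\varphi_k$ is a derivative of $\varphi_k$ of order $|q'|+1\le m+1$, hence $C^\gamma$, and for $|q'|\ge1$ the map $h\mapsto D^{p'-q'}\bigl[h(\varphi_k(\cdot),\cdot)\bigr]$ (a derivative of order $\le m-1$) sends $C^{m-1,\gamma}$ into $C^\gamma$. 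Thus $\|\bg_\alpha^k[h]\|_{C^\gamma(Q_5')}\le c\|h\|_{C^{m-1,\gamma}}$, and by \eqref{PET220} and \eqref{PET150}, $\|U_\alpha^{i,k}\|_{C^{m-1,\gamma}(\overline{Q_6^k})}\le\|U_\alpha^{i,k}\|_{C^{m,\gamma}(\overline{Q_6^k})}\le c\bigl(\|u\|_{C^{m,\gamma}(Q_7^k)}+\|F\|_{C^{m,\gamma}(Q_7^k)}\bigr)$. Finally each $H_l$ is constant on $Q_5^k$, so there $G_1^i$ is a finite sum of the $x'$-functions $\bg_\alpha^l[\cdot]$; since $|x-y|\ge|x'-y'|$, its $C^\gamma(Q_5^k)$-norm is dominated by the corresponding $C^\gamma(Q_5')$-norm, giving $\|G_1^i\|_{C^\gamma(Q_5^k)}\le c\sum_{l\in K}\bigl(\|u\|_{C^{m,\gamma}(Q_7^l)}+\|F\|_{C^{m,\gamma}(Q_7^l)}\bigr)$. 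Adding the two estimates yields \eqref{PET930}.

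The only genuinely delicate point is the $C^\gamma$-boundedness of $\bg_\alpha^k$: one must exploit that on the graph $T_k$ vanishes, so that the a priori merely Lipschitz (and $\delta$-dependent) factor $T_k$ disappears and $P_\xi^{p'}$, $\pi_\alpha$ become honest $C^\gamma$ functions of $x'$, and one must check that the extra differentiations $D^{q'}$ occurring in $\bg_\alpha^k$ never exceed the $C^{m+1,\gamma}$-regularity of $\varphi_k$. Everything else is the Leibniz bookkeeping of the first paragraph together with the Young-type estimate already packaged inside Lemma~\ref{PETS800}.
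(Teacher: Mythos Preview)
Your proof is correct and follows essentially the same route as the paper: the paper's proof is extremely terse, merely citing Lemma~\ref{PETS800} together with \eqref{PET490}, \eqref{GS920}, \eqref{PET220}, \eqref{PET130} and \eqref{PET820}, and your argument is precisely the unpacking of those citations --- the algebraic substitution of the definition of $\nu_\alpha^i$ to pass from the $U_\alpha^{i,k}$-integral of Lemma~\ref{PETS800} to the displayed integral, the absorption of $E_{z,r}^2$ via \eqref{PET490}, and the piecewise $C^\gamma$-bounds on $\nu_\alpha^i$ and $G_1^i$. Your additional observation that on the graph $T_k=0$ so that $\pi_\alpha$ and $P_\xi^{p'}$ become genuine $C^\gamma$ functions of $x'$ is a helpful clarification of why the operator $\bg_\alpha^k$ is bounded, which the paper leaves implicit.
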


\begin{proof}
With \eqref{PET490} and \eqref{GS920}, the estimate holds from Lemma \ref{PETS800}. With \eqref{PET220}, we find from \eqref{PET130} and \eqref{PET820} in Lemma \ref{PETS800} that \eqref{PET930} holds.
\end{proof}

\subsection{Choosing test function}

We will choose a suitable test function in $Q_{r}(z)$ by using $\eta$ in \eqref{CTF650} (which will be used in Lemma \ref{PMTS200} and Lemma \ref{PMTS300}) With \eqref{GS130}, set $Z_{k} : Q_{7} \to \br$ $\big( k \in K \big)$ as
\begin{equation}\label{CTF110} 
Z_{k}(x) : 
= \left\{ \begin{array}{ccc}
\varphi_{k+1}(x') & \text{ if } & x^{1} > \varphi_{k+1}(x'), \\
x^{1} & \text{ if } & \varphi_{k}(x') < x^{1} \leq \varphi_{k+1}(x'), \\
\varphi_{k}(x') & \text{ if } & x^{1} \leq \varphi_{k}(x'),
\end{array}\right.
\end{equation}
which implies that
\begin{equation}\label{CTF120} 
(x^{1},x') \in Q_{7}^{l} 
\qquad \Longrightarrow \qquad
Z_{k}(x^{1},x')  = \left\{ \begin{array}{cl}
\varphi_{k+1}(x') & \text{ if } l > k , \\
x^{1} & \text{ if } l = k , \\
\varphi_{k}(x') & \text{ if } k > l.
\end{array}\right.
\end{equation}
We use the perturbation on $Q_{5}^{k}$ with respect to the point $z_{k} \left ( \in \br^{n} \right )$ defined as follows. For any $k \in K$ and fixed $z \in Q_{5}$, set
\begin{equation}\label{CTF140} 
z_{k} = \big( Z_{k}(z), z' \big).
\end{equation}
To use the perturbation, with \eqref{gradient_graphs}, we will check that $z_{k} \in \overline{ Q_{6}^{k} }$ in Corollary \ref{CTFS150}. However, $z_{k}$ depends on $z$. So later in  Lemma \ref{CTFS350}, we will handle the points $z_{k}$ $(k \in K)$ by using the graph functions $\varphi_{k}$ $(k \in K)$ which does not depends on $z$. In fact, we use perturbation on $Q_{5r}^{k}(z)$ with respect to the point $z_{k}$ in \eqref{CTF140}. So with \eqref{gradient_graphs}, we will check that $z_{k} \in \overline{ Q_{6r}^{k}(z) }$ in Lemma \ref{CTFS180}.

\begin{lem}\label{CTFS180}
For any $Q_{6r}(z) \subset Q_{6}$ and the corresponding points $z_{k}$ $(k \in K)$ in \eqref{CTF120}, we have that
\begin{equation}\label{CTF182}
Q_{5r}^{k}(z) \not = \emptyset  
\qquad \Lra \qquad
z_{k} \in \overline{ Q_{6r}^{k}(z) }
\qquad 
(k \in K).
\end{equation}
Moreover, for any $x \in Q_{5r}^{k}(z)$, there exists a path belongs to $ Q_{6r}^{k}(z) \cup \{ z_{k }\}$ with length less than $cr$ which connects between $x$ and $z_{k}$.
\end{lem}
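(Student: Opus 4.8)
The plan is to treat the implication and the path assertion separately, in both cases using the identity, immediate from \eqref{CTF110}, that $Z_k(z)$ is the median of $\varphi_k(z')$, $z^1$, $\varphi_{k+1}(z')$; thus $\varphi_k(z')\le Z_k(z)\le\varphi_{k+1}(z')$ automatically and $|Z_k(z)-z^1|=\mathrm{dist}\big(z^1,[\varphi_k(z'),\varphi_{k+1}(z')]\big)$, where $z_k=(Z_k(z),z')$ as in \eqref{CTF140}. The two quantitative inputs are the almost-flatness \eqref{gradient_graphs}, which for $w'\in Q_{5r}'(z')$ yields $|\varphi_l(w')-\varphi_l(z')|<r/4$ for $l\in\{k,k+1\}$, and the strict separation \eqref{GS130}, which makes each strip $\{\varphi_k<x^1\le\varphi_{k+1}\}$ have nonempty interior.

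\emph{The implication.} Suppose $Q_{5r}^k(z)\neq\emptyset$ and pick $w=(w^1,w')\in Q_{5r}^k(z)$, so $\varphi_k(w')<w^1\le\varphi_{k+1}(w')$ and $\|w-z\|_\infty<5r$. The bound on $|\varphi_l(w')-\varphi_l(z')|$ shows $w^1$ lies within distance $r/4$ of $[\varphi_k(z'),\varphi_{k+1}(z')]$, whence
\[
|Z_k(z)-z^1|=\mathrm{dist}\big(z^1,[\varphi_k(z'),\varphi_{k+1}(z')]\big)\le|z^1-w^1|+\tfrac r4<6r,
\]
so $z_k\in Q_{6r}(z)$. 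If $Z_k(z)>\varphi_k(z')$, then $z_k\in Q_{6r}^k(z)$ by \eqref{composite cube_strip}; if $Z_k(z)=\varphi_k(z')$, then by \eqref{GS130} the points $(\varphi_k(z')+t,z')$ with $t\in(0,\delta]$ small lie in $Q_{6r}^k(z)$ and converge to $z_k$. Hence $z_k\in\overline{Q_{6r}^k(z)}$ in all cases.

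\emph{The path.} Given $x=(x^1,x')\in Q_{5r}^k(z)$, set $t_0=T_k(x)$, which lies in $(0,1]$ because $\varphi_k(x')<x^1\le\varphi_{k+1}(x')$ and $\varphi_{k+1}(x')>\varphi_k(x')$ by \eqref{GS130}. I would concatenate two curves. The first is
\[
\gamma_1(s)=\Big(\varphi_k(x'_s)+t_0\big[\varphi_{k+1}(x'_s)-\varphi_k(x'_s)\big],\ x'_s\Big),\qquad s\in[0,1],\quad x'_s=(1-s)x'+sz',
\]
so $\gamma_1(0)=x$, $T_k\equiv t_0$ along $\gamma_1$ (hence $\gamma_1$ stays in the strip, since $t_0\in(0,1]$ and $\varphi_{k+1}-\varphi_k>0$), the $x'$-component stays in $Q_{5r}'(z')$, and by \eqref{gradient_graphs} the first component has total variation at most $\tfrac{3}{20n}|x'-z'|<r$; consequently $\gamma_1\subset Q_{6r}^k(z)$ and $\gamma_1$ has length at most $cr$. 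The second is the vertical segment $\gamma_2$ in $\{x'=z'\}$ joining $\gamma_1(1)=\big(\varphi_k(z')+t_0[\varphi_{k+1}(z')-\varphi_k(z')],z'\big)$ to $z_k=(Z_k(z),z')$: both first coordinates lie in $[\varphi_k(z'),\varphi_{k+1}(z')]$, and the first one exceeds $\varphi_k(z')$ because $t_0>0$, so every point of $\gamma_2$ except possibly $z_k$ lies in the strip; moreover both first coordinates lie in $(z^1-6r,z^1+6r)$ — that of $\gamma_1(1)$ because it is within $r$ of $x^1\in(z^1-5r,z^1+5r)$, that of $z_k$ by the implication just proved — so $\gamma_2\subset Q_{6r}^k(z)\cup\{z_k\}$ and $\gamma_2$ has length $<12r$. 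Concatenating $\gamma_1$ and $\gamma_2$ gives the required path, of length at most $cr$ with $c=c(n)$.

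\emph{Main difficulty.} The one delicate point is keeping the path inside $Q_{6r}^k(z)$: the first leg must be chosen to keep the vertical fraction $T_k$ constant, precisely so that the excursion of the first coordinate is governed by the small slope in \eqref{gradient_graphs} rather than by the possibly large width $\varphi_{k+1}-\varphi_k$; and since $Q_{6r}^k(z)$ is only half-open along the lower graph, one must adjoin the single point $z_k$, which is reachable from inside the strip thanks to the separation \eqref{GS130}.
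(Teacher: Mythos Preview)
Your proof is correct and in fact cleaner than the paper's. For the implication, the paper carries out a three-case analysis according to whether $z^{1}$ lies above, inside, or below the interval $[\varphi_{k}(z'),\varphi_{k+1}(z')]$, in each case locating auxiliary points on the segment $[x,z]$ where the graphs are crossed; your median observation $Z_{k}(z)=\mathrm{med}\big(\varphi_{k}(z'),z^{1},\varphi_{k+1}(z')\big)$ collapses all three cases into the single distance estimate $|Z_{k}(z)-z^{1}|=\mathrm{dist}\big(z^{1},[\varphi_{k}(z'),\varphi_{k+1}(z')]\big)$, which is more transparent.

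For the path, the constructions genuinely differ. The paper first moves $x$ vertically up to (a truncation of) the upper graph, then slides along that truncated graph $s\mapsto\big(\min\{z^{1}+\tfrac{11r}{2},\varphi_{k+1}\},\,x'_{s}\big)$, and finally drops vertically to $z_{k}$; this costs three pieces and a somewhat ad hoc cap to stay inside $Q_{6r}(z)$. Your choice to freeze the relative height $T_{k}\equiv t_{0}$ while sliding $x'$ to $z'$ is the natural one: it automatically keeps the curve in the strip, and the small-gradient hypothesis \eqref{gradient_graphs} directly bounds the vertical excursion by $cr$, so containment in $Q_{6r}(z)$ is immediate without any truncation. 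The only point requiring care --- that the half-open convention on $Q^{k}_{6r}(z)$ may exclude $z_{k}$ itself when $Z_{k}(z)=\varphi_{k}(z')$ --- you handle exactly as the statement allows, by adjoining $\{z_{k}\}$. The minor constant $\tfrac{3}{20n}$ in your variation bound is looser than necessary (the convex combination $(1-t_{0})D\varphi_{k}+t_{0}D\varphi_{k+1}$ already has each component bounded by $\tfrac{1}{20n}$), but the conclusion $<r$ is unaffected.
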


\begin{proof}
Fix $k \in K$. Assume that $Q_{6r}(z) \subset Q_{6}$ and $Q_{5r}^{k}(z) \not = \emptyset  $. Then we have that $Q_{5}^{k} \supset Q_{5r}^{k}(z) \not = \emptyset$, and it follows from Corollary \ref{CTFS150} that $z_{k} \in \overline{ Q_{6}^{k} }$. So we only need to prove that $z_{k} \in Q_{6r}(z)  $.

Since $Q_{5r}^{k}(z) \not = \emptyset$, we have that $\inf_{Q_{5r}'(z')} \varphi_{k} \leq z^{1} + 5r$ and $\sup_{Q_{5r}'(z')} \varphi_{k+1} \geq z^{1} - 5r$, 
other-wise we have that $Q_{5r}^{k}(z) = \emptyset$. So by \eqref{gradient_graphs},
\begin{equation}\label{CTF190}
\varphi_{k}(x') < z^{1} + \frac{11r}{2}
\qquad \text{and} \qquad
\varphi_{k+1}(x') > z^{1} - \frac{11r}{2}
\qquad \left(x' \in Q_{5r}'(z') \right).
\end{equation}

To prove that $z_{k} \in Q_{6r}(z)  $, we consider three cases. (1) Assume that $z^{1} > \varphi_{k+1}(z')$. Then from \eqref{gradient_graphs} and \eqref{CTF190}, we have that $ z^{1} - 6r < \varphi_{k+1}(z') < z^{1} $, which implies that $z_{k} = (Z_{k}(z),z') = \left( \varphi_{k+1}(z'),z' \right) \in \overline{ Q_{6r}^{k}(z) }$ (2) Assume that $\varphi_{k}(z') \leq z^{1} \leq \varphi_{k+1}(z')$. Then we have from \eqref{CTF110} and \eqref{CTF140} that $z_{k} = z \in \overline{ Q_{6r}^{k}(z) } $. (3) Assume that $z_{1} < \varphi_{k}(z')$. Then from \eqref{gradient_graphs} and  \eqref{CTF140}, we have that $  z^{1} < \varphi_{k}(z') < z^{1} + 6r$, which implies that $z_{k} = (Z_{k}(z),z') = \left( \varphi_{k}(z'),z' \right) \in \overline{ Q_{6r}^{k}(z) }$. Since $k \in K$ was arbitrary chosen, \eqref{CTF182} holds.

\sskip

We choose a path connecting $x \in Q_{5r}^{k}(z)$ and $z_{k} \in \overline{ Q_{6r}^{k}(z) }$ as follows. Set 
\begin{equation*}\label{}
{\it l}(s) = \left( \min \left\{ z^{1} + \frac{11r}{2}, \varphi_{k+1}(sz' + (1-s)x') \right\}, sz' + (1-s)x' \right) 
\qquad (s \in [0,1])
\end{equation*}
The path is a connected path of (1) the line connecting $x$ and ${\it l}(0)$, (2) the path ${\it l}(s)$ $(s \in [0,1])$ connecting  ${\it l}(0)$ and ${\it l}(1)$, (3) the line connecting ${\it l}(1)$ and $z_{k}$.  Since $x \in Q_{5r}^{k}(z)$ and $z_{k} \in \overline{ Q_{6r}^{k}(z) }$, one can show that this new path belongs to $Q_{6r}^{k}(z) \cup \{ z_{k }\}$ and has length less than $50nr$.
\end{proof}

The following corollary is a special case of Lemma \ref{CTFS180}.

\begin{cor}\label{CTFS150}
For any $z \in Q_{5}$ and the corresponding points $z_{k}$ $(k \in K)$ in \eqref{CTF120}, we have that
\begin{equation*}\label{}
Q_{5}^{k} \not  = \emptyset
\qquad \Lra \qquad
z_{k} \in \overline{ Q_{6}^{k} }
\qquad 
(k \in K).
\end{equation*}
\end{cor}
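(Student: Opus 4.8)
The plan is to run the same three–case analysis used in the proof of Lemma~\ref{CTFS180}, with $Q_{5r}(z)$ and $Q_{6r}(z)$ there replaced by $Q_5$ and $Q_6$; the corollary is precisely the qualitative core of that lemma. Fix $k \in K$ with $Q_5^k \neq \emptyset$ and fix $z \in Q_5$. By \eqref{CTF110}--\eqref{CTF140} we have $z_k = (Z_k(z),z')$ with $\varphi_k(z') \leq Z_k(z) \leq \varphi_{k+1}(z')$, and $z' \in Q_5' \subset Q_6'$. Hence it suffices to show $Z_k(z) \in (-6,6)$: once this is known, $z_k \in \overline{Q_6^k}$, since $z_k$ lies in $Q_6^k$ when $Z_k(z) > \varphi_k(z')$ and on the graph $\big\{ (\varphi_k(x'),x') : x' \in Q_6' \big\}$ when $Z_k(z) = \varphi_k(z')$.

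First I would extract a witness point. Since $Q_5^k \neq \emptyset$, there is $w = (w^1,w') \in Q_5$ with $\varphi_k(w') < w^1 \leq \varphi_{k+1}(w')$; in particular $-5 < w^1 < 5$. Because $z',w' \in Q_5' \subset Q_7'$ and the segment joining them lies in $Q_5'$, the flatness bound \eqref{gradient_graphs} gives, for $j \in \{k,k+1\}$,
\begin{equation*}
|\varphi_j(z') - \varphi_j(w')| \leq \|D_{x'}\varphi_j\|_{L^\infty(Q_7')}\,\sum_{i=2}^n |z^i - w^i| < \frac{1}{20n}\,(n-1)\cdot 10 < \frac{1}{2},
\end{equation*}
so that $\varphi_k(z') < \varphi_k(w') + \tfrac12 < \tfrac{11}{2}$ and $\varphi_{k+1}(z') > \varphi_{k+1}(w') - \tfrac12 > -\tfrac{11}{2}$.

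Then I would split according to \eqref{CTF110}: if $z^1 > \varphi_{k+1}(z')$ then $Z_k(z) = \varphi_{k+1}(z')$ and $-\tfrac{11}{2} < \varphi_{k+1}(z') < z^1 < 5$; if $\varphi_k(z') < z^1 \leq \varphi_{k+1}(z')$ then $Z_k(z) = z^1 \in (-5,5)$; if $z^1 \leq \varphi_k(z')$ then $Z_k(z) = \varphi_k(z')$ and $-5 < z^1 \leq \varphi_k(z') < \tfrac{11}{2}$. In all three cases $Z_k(z) \in (-6,6)$, hence $z_k \in \overline{Q_6^k}$ by the first paragraph; since $k \in K$ and $z \in Q_5$ were arbitrary, the corollary follows. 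There is no real obstacle here; the only points requiring care are that the reduction "$z_k \in \overline{Q_6^k}$ if and only if $Z_k(z) \in [-6,6]$" genuinely uses that $Z_k(z)$ is already clamped into $[\varphi_k(z'),\varphi_{k+1}(z')]$ and that $z' \in Q_5'$, and that the flatness constant $1/(20n)$ is small enough that the transferred bounds $\pm\tfrac{11}{2}$ stay strictly inside $(-6,6)$.
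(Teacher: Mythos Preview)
Your proof is correct. The three--case analysis, together with the use of a witness point $w\in Q_5^k$ and the flatness bound \eqref{gradient_graphs} to control $\varphi_k(z')$ and $\varphi_{k+1}(z')$, is exactly the mechanism behind Lemma~\ref{CTFS180}; your direct transcription of it to the fixed scales $Q_5,Q_6$ goes through cleanly.

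The paper's own proof simply invokes Lemma~\ref{CTFS180} with ``$z=0$, $r=1$'' (the self--reference in the printed proof is an evident typo for Lemma~\ref{CTFS180}). Note, however, that Lemma~\ref{CTFS180} is stated for $Q_{6r}(z)\subset Q_6$, which for $r=1$ forces $z=0$, whereas the corollary concerns an arbitrary $z\in Q_5$; moreover the proof of Lemma~\ref{CTFS180} itself quotes Corollary~\ref{CTFS150}. Your direct argument sidesteps this circularity and actually supplies the independent proof the corollary needs.
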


\begin{proof}
By taking $z=0$ and $r=1$ in Corollary \ref{CTFS150}, the lemma follows.
\end{proof}

Recall from \eqref{GS280} that
\begin{equation}\label{CTF640} 
\partial_{t'}^{p'} h \big|_{t'=0'} 
= \sum_{0' \leq q' \leq p' } \Big( \begin{array}{c} p' \\ q' \end{array} \Big)
\pi^{q'} 
D^{ (|q'|,p'-q') } h
+ \sum_{|\xi| \leq |p'|-1} P_{\xi}^{ p' } (x) D_{\xi} h 
~ \text{ in } ~ Q_{5}.
\end{equation}
For any $z \in Q_{5}$ and $p' \geq 0'$ with $|p'|=m$, the test function will be chosen by using the following $\eta : Q_{5} \to \br^{N}$ defined as 
\begin{equation}\begin{aligned}\label{CTF650}
\eta 
& = \partial_{t}^{p'}u \big|_{t=0} - \sum_{k \in K , \, Q_{5}^{k} \not = \emptyset} \sum_{0' \leq q' \leq p' } 
\Big( \begin{array}{c} p' \\ q' \end{array} \Big)
\pi^{q'} \big( Z_{k},x' \big)
D^{ ( |q'|, p'-q' ) } u_{k}(z_{k}) \\
& \quad - \sum_{k \in K, \,  Q_{5}^{k} \not = \emptyset }  \sum_{|\xi| \leq |p'|-1} 
P_{\xi}^{ p' } (Z_{k},x')  D_{\xi}u_{k} \big(Z_{k}  , x' \big) \\
& \quad + \sum_{ k \in K_{-}, \,  Q_{5}^{k} \not = \emptyset  }  \sum_{0' \leq q' \leq p' } 
\Big( \begin{array}{c} p' \\ q' \end{array} \Big) 
\pi^{q'} \big( \varphi_{k}(x') , x' \big)
D^{ ( |q'|, p'-q' ) }  u_{k} \big( \varphi_{k}(z') , z' \big)   \\
& \quad +  \sum_{ k \in K_{-}, \,  Q_{5}^{k} \not = \emptyset  } \sum_{|\xi| \leq |p'|-1}  P_{\xi}^{p'} \big( \varphi_{k}(x') , x' \big)   D_{\xi}u_{k} \big( \varphi_{k}(x') , x' \big).
\end{aligned}\end{equation}
It follows from \eqref{CTF640} that
\begin{equation}\label{CTF220}
\tilde{\eta} = I + II + III
\quad \text{in} \quad
Q_{5},
\end{equation}
where
\begin{equation*}\label{}
I = \sum_{0' \leq q' \leq p' } 
\Big( \begin{array}{c} p' \\ q' \end{array} \Big)
\left[ \pi^{q'} D^{(|q'|, p'-q')} u -  \sum_{ k \in K, \,  Q_{5}^{k} \not = \emptyset } 
\pi^{q'} \big( Z_{k},x' \big)
D^{ ( |q'|, p'-q' ) }  u_{k}(z_{k}) \chi_{Q_{5}^{k}} \right],
\end{equation*}
\begin{equation*}\begin{aligned}\label{}
II  & =  - \sum_{k \in K, \,  Q_{5}^{k} \not = \emptyset  }  \sum_{0' \leq q' \leq p' } 
\Big( \begin{array}{c} p' \\ q' \end{array} \Big) 
\pi^{q'} \big( Z_{k},x' \big) 
D^{ ( |q'|, p'-q' ) }  u_{k}(z_{k}) \chi_{Q_{5} \setminus Q_{5}^{k}} \\
& \quad + \sum_{ k \in K_{-}, \,  Q_{5}^{k} \not = \emptyset  }  \sum_{0' \leq q' \leq p' } 
\Big( \begin{array}{c} p' \\ q' \end{array} \Big) 
\pi^{q'} \big( \varphi_{k}(x') , x' \big)
D^{ ( |q'|, p'-q' ) }  u_{k} \big( \varphi_{k}(z') , z' \big)
\end{aligned}\end{equation*}
and
\begin{equation*}\begin{aligned}\label{}
III & =  - \sum_{k \in K, \,  Q_{5}^{k} \not = \emptyset } \sum_{|\xi| \leq |p'|-1} P_{\xi}^{ p'  } \big( Z_{k},x' \big)  D_{\xi}u_{k} \big( Z_{k}, x' \big) \chi_{Q_{5} \setminus Q_{5}^{k}} \\
& \quad +  \sum_{ k \in K_{-}, \,  Q_{5}^{k} \not = \emptyset  } \sum_{|\xi| \leq |p'|-1}  P_{\xi}^{p'} \big( \varphi_{k}(x') , x' \big)   D_{\xi}u_{k} \big( \varphi_{k}(x') , x' \big).
\end{aligned}\end{equation*}
Due to the approximation argument, we have that $\delta>0$ in \eqref{GS130} and \eqref{PET170} hold. So we find that $\eta$ in \eqref{CTF650} is weakly differentiable in $Q_{5}$ and that $D\eta \in L^{\infty}(Q_{5})$.

\begin{lem}\label{CTFS250}
$\eta$ in \eqref{CTF650} is weakly differentiable in $Q_{5}$ and that $D\eta \in L^{\infty}(Q_{5})$.
\end{lem}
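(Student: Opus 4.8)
The plan is to decompose $\eta$ into the four groups of terms appearing in \eqref{CTF650} and check weak differentiability of each piece separately, then collect the resulting $L^\infty$-bounds on the derivatives. The first term $\partial_{t'}^{p'}u\big|_{t'=0'}$ is, by the formula \eqref{GS280} of Lemma \ref{GSS275}, a finite sum of products of the form $\pi^{q'}\,D^{(|q'|,p'-q')}u$ and $P_\xi^{p'}D_\xi u$, where $\pi^{q'}$ and $P_\xi^{p'}$ are piecewise-defined (on each $Q_7^k$) via the graph functions $\varphi_k,\varphi_{k+1}$ and $T_k$. Because of the approximation assumption $\delta>0$ in \eqref{GS130} together with \eqref{PET170}, one has $u\in C^{m+1,\gamma}(Q_7^k)$ and $\varphi_k\in C^{m+1,\gamma}$, and the denominators $\varphi_{k+1}-\varphi_k\ge\delta$ stay bounded away from $0$ on $Q_6^k$; hence $T_k$, $\pi$, $P_\xi^{p'}$ are $C^\infty$ (indeed $C^{m,\gamma}$ suffices) on each closed strip, and $\partial_{t'}^{p'}u\big|_{t'=0'}\in C^{0,\gamma}(\overline{Q_6^k})$ with a derivative in $L^\infty(Q_6^k)$. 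Across a common face $\partial Q_6^k\cap\partial Q_6^{k+1}$ the values of the various pieces may jump, so the only thing to verify is that $\eta$ itself has no jump there — that the distributional derivative across the interface carries no surface measure — which is exactly why the three correction sums in \eqref{CTF650} were subtracted and added.

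Concretely, I would use the reformulation \eqref{CTF220}, $\eta=I+II+III$, and examine each of $I,II,III$ on $Q_5^k$ and across $\partial Q_5^k\cap\partial Q_5^{k+1}$. On the interior of each $Q_5^k$ every summand is a product of a $C^\infty$ (or $C^{m,\gamma}$) factor built from $\varphi_\ell,T_\ell$ with either $u$ (which is $C^{m+1,\gamma}(Q_7^k)$), or with a \emph{constant} obtained by freezing $u_k$ at the point $z_k$ (for the $I$, $II$ terms) or at $(\varphi_k(z'),z')$, or with a trace $u_k(\varphi_k(x'),x')$ or $u_k(Z_k,x')$ that depends only on $x'$ through a $C^{m,\gamma}$ composition; in all cases the summand is $C^{0,\gamma}(\overline{Q_5^k})$ with $L^\infty$ gradient, with bound controlled by $\sum_k\|u\|_{C^{m,\gamma}(Q_7^k)}$ and the graph norms (and $|D^{m+1}u|$ where the genuinely $(m+1)$-st derivative of $u$ enters, namely in $I$). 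The key point on the interface: on $\partial Q_5^k\cap\partial Q_5^{k+1}$ one has $Z_k=\varphi_{k+1}(x')$, $z_k=(Z_k(z),z')$ limits appropriately, $\chi_{Q_5^k}$ jumps to $0$ while $\chi_{Q_5\setminus Q_5^k}$ jumps to $1$, and $T_k\to1$, $T_{k+1}\to0$; substituting these one checks that the jump of $I$ from the $\chi_{Q_5^k}$-removal is cancelled by the jump of $II$ from the $\chi_{Q_5\setminus Q_5^k}$-term, and likewise the $\chi$-jump contributions in $III$ cancel, leaving the two $K_-$-indexed sums (which are continuous up to the face because they are evaluated at $\varphi_k(x')$, not at $Z_k$) to match across. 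Since $\eta$ is then continuous across every interface and piecewise $C^{0,\gamma}$ with piecewise $L^\infty$ gradient, a standard argument (no concentrated boundary term in $\int\eta\, D_\alpha\zeta$, $\zeta\in C_c^\infty$) gives $\eta\in W^{1,\infty}(Q_5)$, i.e. $D\eta\in L^\infty(Q_5)$.

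The main obstacle is the interface-cancellation bookkeeping: one must verify, index by index, that the traces of the frozen-value terms in $I$ and $II$ and the trace terms in $III$ glue together continuously across $\partial Q_5^k\cap\partial Q_5^{k+1}$ for \emph{every} $k\in K$, using that $u_k$ and $u_{k+1}$ are both extensions of the \emph{same} $u$ from a neighbourhood of that face (so their values and all derivatives agree \emph{on} the face, even though they differ off it), that $T_k$ and $T_{k+1}$ take boundary values $1$ and $0$ there, and that $Z_k=\varphi_{k+1}$ on that face. Once the continuity across interfaces is established, the $L^\infty$ bound on $D\eta$ is routine: differentiate each product by the Leibniz rule, use $\|D\pi\|$ controlled via Lemma \ref{GSS700} (which is only singular like $|\varphi_{k+1}-\varphi_k|^{-1/2}$, hence in $L^\infty(Q_6^k)$ once $\delta>0$), and use $u\in C^{m+1,\gamma}(Q_7^k)$, $\varphi_k\in C^{m+1,\gamma}$. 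I would state the bound with the constant depending on $\delta$ (it blows up as $\delta\to0$), which is acceptable here since this lemma is only invoked inside the fixed-$\delta$ approximation; the $\delta$-independent estimates come later from Lemma \ref{PETS900}.
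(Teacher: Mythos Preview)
Your plan is workable in outline, but there is a genuine error in the stated reasoning, and the paper's proof is considerably simpler.

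The error: you write that $u_k$ and $u_{k+1}$ ``are both extensions of the same $u$ from a neighbourhood of that face, so their values and all derivatives agree on the face.'' This is false. The function $u_k$ extends $u|_{Q_7^k}$ from one side and $u_{k+1}$ extends $u|_{Q_7^{k+1}}$ from the other; on the face their values coincide (since $u$ itself is continuous), hence so do all \emph{tangential} derivatives, but the \emph{normal} derivatives are the one-sided normal derivatives of $u$, which in general differ---this jump is precisely the phenomenon the entire paper is built around. If you carry out your $I+II+III$ cancellation bookkeeping honestly, the residual jump across $\partial Q_5^l\cap\partial Q_5^{l+1}$ does not vanish term by term; what remains is exactly $\partial_{t'}^{p'}u_{l+1}\big|_{t'=0'}-\partial_{t'}^{p'}u_l\big|_{t'=0'}$ on the face, and you then need a separate argument to make \emph{that} vanish. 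The correct argument is that the flow $t'\mapsto x+(\psi(x,t'),t')$ starting on the interface \emph{stays on the interface}, so the flow derivative depends only on the trace of $u$ there, which is the same from both sides.

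The paper's proof goes straight to this point. It observes that the second through fifth sums in \eqref{CTF650} are each globally continuous with bounded piecewise gradient once $\delta>0$ (they are built from $Z_k$, $\pi^{q'}$, $P_\xi^{p'}$---all continuous across interfaces---composed with $u_k$ evaluated inside $\overline{Q_7^k}$, or with constants), so only $\partial_{t'}^{p'}u\big|_{t'=0'}$ needs checking. For that term one writes $\int_{Q_5}(\partial_{t'}^{p'}u|_{t'=0'})\,D_\alpha\varphi=\sum_k\int_{Q_5^k}$, integrates by parts on each piece, and the boundary contributions cancel because $\partial_{t'}^{p'}u_k\big|_{t'=0'}=\partial_{t'}^{p'}u_{k-1}\big|_{t'=0'}$ on $\partial Q_5^k\cap\partial Q_5^{k-1}$ by the flow argument above. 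In particular, the correction sums in \eqref{CTF650} are \emph{not} there to enforce continuity of $\eta$---that holds already---but to set up the quantitative comparisons in Lemmas \ref{CTFS400}--\ref{CTFS700}.
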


\begin{proof}
We only need to check that $\partial_{t}^{p'}u \big|_{t=0}$ is weakly differentiable in $Q_{5}$ and the derivative is bounded in $Q_{5}$. Fix $\alpha \in \{ 1, \cdots, n \}$.

In view of \eqref{PET170}, we have that $\partial_{t}^{p'}u_{k} \big|_{t=0} \in C^{1} \left( \overline{ Q_{5}^{k} }, \br^{N} \right)$. Thus for any $\varphi \in C_{c}^{\infty}(Q_{5},\br^{N})$,
\begin{equation*}\begin{aligned}\label{}
\int_{Q_{5}} \partial_{t}^{p'}u \big|_{t=0} D_{i}\varphi \, dx  
& = \sum_{k \in K} \int_{Q_{5}^{k}} \partial_{t}^{p'}u \big|_{t=0} D_{i}\varphi \, dx \\
& = \sum_{k \in K} \int_{Q_{5}^{k}} \partial_{t}^{p'}u_{k} \big|_{t=0} \, dx  \\
& = - \sum_{k \in K} \int_{Q_{5}^{k}} D_{i} \left( \partial_{t}^{p'}u_{k} \right) \big|_{t=0} \varphi \, dx 
+ \sum_{k \in K} \int_{\partial Q_{5}^{k}} \partial_{t}^{p'}u_{k} \big|_{t=0} \varphi \, n_{k}^{\alpha} \,  dS.
\end{aligned}\end{equation*}
where $\vec{n}_{k} = (n_{k}^{1}, n_{k}^{2}, \cdots, n_{k}^{n})$ is the outward normal vector on $\partial Q_{5}^{k}$.  

From that $u_{k}=u_{k-1}$ on $\partial Q_{5}^{k} \cap \partial Q_{5}^{k-1}$, we obtain that $\partial_{t}^{p'}u_{k} \big|_{t=0} =  \partial_{t}^{p'}u_{k-1} \big|_{t=0} $ on $\partial Q_{5}^{k} \cap \partial Q_{5}^{k-1}$. So from that  $n_{k}^{\alpha} = - n_{k-1}^{i} $ $(k \in K)$, we obtain that
\begin{equation*}\label{}
\sum_{k \in K} \int_{\partial Q_{5}^{k}} \partial_{t}^{p'}u_{k} \big|_{t=0} \varphi \, n_{k}^{\alpha} \,  dS = 0.
\end{equation*}
Thus
\begin{equation*}\begin{aligned}\label{}
\int_{Q_{5}} \partial_{t}^{p'}u \big|_{t=0} D_{i}\varphi \, dx  
& = - \sum_{k \in K} \int_{Q_{5}^{k}} D_{\alpha}\left( \partial_{t}^{p'}u_{k} \right) \big|_{t=0} \varphi \, dx \\
& = - \int_{Q_{5}} \left[ \sum_{k \in K} D_{\alpha}\left( \partial_{t}^{p'}u_{k} \right) \big|_{t=0} \chi_{Q_{5}^{k}} \right] \varphi \, dx.
\end{aligned}\end{equation*}
With  \eqref{PET170}, we discover that $\partial_{t}^{p'}u \big|_{t=0} $ is weakly differentiable in $Q_{5}$ and that $D_{\alpha}\left( \partial_{t}^{p'}u \big|_{t=0} \right) = \sum_{k \in K} D_{\alpha}\left( \partial_{t}^{p'}u_{k} \right) \big|_{t=0} \chi_{Q_{5}^{k}} \in L^{\infty}(Q_{5})$. Since $i \in \{ 1, \cdots, n\}$ was arbitrary chosen, the lemma holds.
\end{proof}

Recall from \eqref{GS910} that
\begin{equation}\label{CTF290} 
x \in Q_{5}^{l} 
\qquad \Longrightarrow \qquad
H_{k}(x)  = \left\{ \begin{array}{cl}
1 & ~ \text{ if } ~ l \geq k , \\
0 & ~ \text{ if } ~ l < k.
\end{array}\right.
\end{equation}
To handle $I$, $II$ and $III$, we obtain three following lemmas related to \eqref{CTF290}.

\begin{lem}\label{CTFS300}
For any $h : Q_{5} \to \br$ and $k \in K$, we have that
\begin{equation*}
 h \big(Z_{k} , x' \big) \chi_{Q_{5} \setminus Q_{5}^{k}}
= (1-H_{k}) \, h \big( \varphi_{k}(x') , x' \big) + H_{k+1} \, h \big( \varphi_{k+1}(x') , x' \big) .
\end{equation*}
\end{lem}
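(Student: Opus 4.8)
The plan is to prove the stated identity pointwise on $Q_5$ by a case analysis according to which subregion $Q_5^l$ contains the point. Since $Q_7 = \dotcup_{k \in K} Q_7^k$ by \eqref{GS150}, intersecting with $Q_5$ shows that $Q_5$ is the disjoint union of the sets $Q_5^l$ $(l \in K)$; so it suffices to fix an arbitrary $l \in K$ with $Q_5^l \ne \emptyset$, fix $x = (x^1,x') \in Q_5^l$, and verify that the two sides of the asserted equality agree at $x$. First I would record the elementary values at such a point: by \eqref{CTF120}, $Z_k(x^1,x') = \varphi_{k+1}(x')$ when $l>k$, $Z_k(x^1,x')=x^1$ when $l=k$, and $Z_k(x^1,x')=\varphi_k(x')$ when $l<k$; by \eqref{GS910}, $H_k(x)=1$ exactly when $l \ge k$ (equivalently $1-H_k(x)=1$ exactly when $l<k$) and $H_{k+1}(x)=1$ exactly when $l>k$; and $\chi_{Q_5 \setminus Q_5^k}(x)=1$ exactly when $l \ne k$, since the $Q_5^l$ are mutually disjoint.

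With these values in hand the three cases are immediate. If $l > k$, the left-hand side is $h(\varphi_{k+1}(x'),x')$, while on the right $1-H_k(x)=0$ and $H_{k+1}(x)=1$, so the right-hand side is also $h(\varphi_{k+1}(x'),x')$. If $l=k$, the left-hand side vanishes because $\chi_{Q_5\setminus Q_5^k}(x)=0$, and the right-hand side vanishes because $1-H_k(x)=0$ and $H_{k+1}(x)=0$. If $l<k$, the left-hand side is $h(\varphi_k(x'),x')$, while on the right $1-H_k(x)=1$ and $H_{k+1}(x)=0$, giving $h(\varphi_k(x'),x')$. Hence the two functions coincide at every point of $Q_5$, which is the claim.

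There is essentially no obstacle here: the statement is a purely combinatorial identity between indicator functions, and no regularity of $h$ or of the graph functions $\varphi_k$ is used. The only point that needs attention is to match the three defining branches of $Z_k$ in \eqref{CTF110}--\eqref{CTF120} against the three branches of the pair $(1-H_k,\,H_{k+1})$; this matching works precisely because the strip convention $\varphi_k(x')<x^1\le\varphi_{k+1}(x')$ in \eqref{composite cube_strip} makes $\{Q_5^l\}_{l\in K}$ a genuine partition of $Q_5$, so that each $x$ falls into exactly one of the cases $l>k$, $l=k$, $l<k$.
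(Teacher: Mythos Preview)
Your proof is correct and is essentially the same as the paper's: both amount to a pointwise case analysis on which $Q_5^l$ contains $x$, using the values of $Z_k$ from \eqref{CTF120} and of $H_k$, $H_{k+1}$ from \eqref{GS910}. The paper writes this as $\chi_{Q_5\setminus Q_5^k}=\sum_{l<k}\chi_{Q_5^l}+\sum_{l>k}\chi_{Q_5^l}$ and evaluates each sum separately, but the content is identical.
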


\begin{proof}
Fix $k \in K$. From \eqref{CTF290}, we find that
\begin{equation*}
\sum_{l <k, \, l \in K} h \big(Z_{k}  , x' \big) \chi_{Q_{5}^{l}}
=  \sum_{ l<k, \, l \in K } h \big( \varphi_{k}(x') , x' \big) \chi_{Q_{5}^{l}}
=  (1-H_{k}) \, h  \big( \varphi_{k}(x') , x' \big) 
~ \text{ in } ~ Q_{5}.
\end{equation*}
Also from \eqref{CTF290}, we find that
\begin{equation*}
\sum_{ l>k , \, l \in K} h \big(Z_{k}, x' \big) \chi_{Q_{5}^{l}}
= \sum_{ l>k, \, l \in K} h \big( \varphi_{k+1}(x')  , x' \big) \chi_{Q_{5}^{l}}
= H_{k+1} \, h \big( \varphi_{k+1}(x') , x' \big)
~ \text{ in } ~ Q_{5}.
\end{equation*}
Since $k \in K$ was arbitrary chosen, the lemma follows from the above equalities.
\end{proof}

With Lemma \ref{GSS950}, we only need to handle the indices in $K_{-}$ in  \eqref{GS180} not all the indices in $K$ . Here, $ K_{-}$ represents the index of graph functions  intersecting $Q_{5}$.

\begin{lem}\label{GSS950}
For any $k \in K$ with $Q_{5}^{k} \not = \emptyset$, we have that
\begin{equation}\label{GS960}
k+1 \not \in K_{-} 
\qquad \Lra \qquad
H_{k+1}=0 ~ \text{ in } ~ Q_{5},
\end{equation}
and
\begin{equation}\label{GS970}
k \not \in K_{-} 
\qquad \Lra \qquad 
H_{k}=1 ~ \text{ in } ~ Q_{5}.
\end{equation}
\end{lem}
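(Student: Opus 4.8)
The plan is to reduce both implications in Lemma \ref{GSS950} to emptiness statements about the strips $Q_5^l$ and then to establish these by a single flatness argument. First, since $\left(Q_7,\{\varphi_k:k\in K_+\}\right)$ is a composite cube, \eqref{GS150} gives $Q_7=\dotcup_{l\in K}Q_7^l$, so intersecting with $Q_5$ yields $Q_5=\dotcup_{l\in K}Q_5^l$. Reading off the definition \eqref{GS910} of $H_k$, this shows that $H_{k+1}\equiv 0$ in $Q_5$ is equivalent to $Q_5^l=\emptyset$ for every $l\ge k+1$, and that $H_k\equiv 1$ in $Q_5$ is equivalent to $Q_5^l=\emptyset$ for every $l<k$. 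Next, using \eqref{GS185} together with the hypothesis $Q_5^k\neq\emptyset$: the assumption $k+1\notin K_-$ forces $Q_5^{k+1}=\emptyset$, while $k\notin K_-$ forces $Q_5^{k-1}=\emptyset$. Hence it suffices to prove two claims, that $Q_5^k\neq\emptyset$ and $Q_5^{k+1}=\emptyset$ imply $Q_5^l=\emptyset$ for all $l>k$, and that $Q_5^k\neq\emptyset$ and $Q_5^{k-1}=\emptyset$ imply $Q_5^l=\emptyset$ for all $l<k$.

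For the first claim I unwind $Q_5^{k+1}=\emptyset$ pointwise: for each $x'\in Q_5'$ the interval $\left(\varphi_{k+1}(x'),\varphi_{k+2}(x')\right]$ is nonempty by \eqref{GS130} and disjoint from $(-5,5)$, so either $\varphi_{k+2}(x')\le -5$ or $\varphi_{k+1}(x')\ge 5$. I claim that in fact one of the two alternatives holds for all $x'\in Q_5'$ at once. Indeed, if $\varphi_{k+2}(x_A')\le -5$ at some point $x_A'$ and $\varphi_{k+1}(x_B')\ge 5$ at some point $x_B'$, then $\varphi_{k+1}\le\varphi_{k+2}$ gives $\varphi_{k+1}(x_A')\le -5$ and $\varphi_{k+1}(x_B')\ge 5$, so the oscillation of $\varphi_{k+1}$ over $Q_5'$ is at least $10$; but \eqref{gradient_graphs} and the mean value inequality bound this oscillation by $\frac{1}{20n}$ times the diameter of $Q_5'$, hence by a constant strictly less than $1$, a contradiction. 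Now the hypothesis $Q_5^k\neq\emptyset$ provides a point $(x_0^1,x_0')\in Q_5^k$ with $-5<x_0^1\le\varphi_{k+1}(x_0')$, so $\varphi_{k+1}(x_0')>-5$; this excludes the alternative ``$\varphi_{k+2}\le -5$ on $Q_5'$'', which would force $\varphi_{k+1}(x_0')\le\varphi_{k+2}(x_0')\le -5$. Therefore $\varphi_{k+1}\ge 5$ throughout $Q_5'$, and then for every $l\ge k+1$ and $x'\in Q_5'$ we have $\left(\varphi_l(x'),\varphi_{l+1}(x')\right]\subset(5,\infty)$, which is disjoint from $(-5,5)$; thus $Q_5^l=\emptyset$. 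The second claim is the mirror image: $Q_5^{k-1}=\emptyset$ gives $\varphi_k(x')\le -5$ or $\varphi_{k-1}(x')\ge 5$ for each $x'$, the same oscillation estimate forces one alternative to hold on all of $Q_5'$, and this time the point from $Q_5^k\neq\emptyset$ satisfies $\varphi_k(x_0')<x_0^1<5$, excluding ``$\varphi_{k-1}\ge 5$''; hence $\varphi_k\le -5$ on $Q_5'$ and $Q_5^l=\emptyset$ for all $l<k$.

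I do not anticipate a genuine obstacle. The degenerate indices $k=k_-$ (so that $k-1\notin K$) and $k=k_+$ (so that $k+1\notin K$) require no argument at all: in those cases every index of $K$ lies on one side of $k$, so $H_k\equiv 1$, respectively $H_{k+1}\equiv 0$, on $Q_5$ directly from \eqref{GS910}. The only step with real content is the dichotomy, and its essential input is the almost-flatness \eqref{gradient_graphs}; it is precisely this that prevents two non-crossing graphs from being far apart over one part of $Q_5'$ while nearly coinciding over another, and hence collapses the two pointwise alternatives into a single global one.
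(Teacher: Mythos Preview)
Your proof is correct. The approach differs from the paper's in a mild but genuine way: the paper simply writes $K_-=\{l_-,\ldots,l_+\}$ as an interval of consecutive integers and then, combining this with \eqref{GS185} and the hypothesis $Q_5^k\neq\emptyset$, pins down $k\in\{l_--1,\ldots,l_+\}$; from there the two implications are immediate index arithmetic. You instead bypass the interval structure of $K_-$ and prove directly, via the flatness bound \eqref{gradient_graphs}, that emptiness of one adjacent strip ($Q_5^{k+1}$ or $Q_5^{k-1}$) together with $Q_5^k\neq\emptyset$ forces the relevant graph to lie entirely on one side of $Q_5$, which then propagates to all further strips. In effect you are reproving on the spot the one consequence of the interval structure that is actually needed here. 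The paper's route is shorter once the interval shape of $K_-$ is taken for granted; your route is more self-contained and makes the role of \eqref{gradient_graphs} explicit, and it also correctly isolates the degenerate endpoints $k=k_\pm$ where the corresponding graph $\varphi_{k\pm 1}$ is not available.
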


\begin{proof}
Let $K_{-} = \left\{ l_{-} , l_{-}+1, \cdots, l_{+} \right\} $. From \eqref{GS180} and that $Q_{5}^{k} \not = \emptyset$, we find that
\begin{equation}\label{GS980}
k \in  \left\{ l_{-}-1, l_{-} , l_{-}+1, \cdots, l_{+} \right\}.
\end{equation}

To prove \eqref{GS960}, assume that $k+1 \not \in K_{-} $. Then from \eqref{GS980} and that $k+1 \not \in K_{-} $, we obtain that $k+1 = l_{+}+1$. Since $K_{-} = \left\{ l_{-} , l_{-}+1, \cdots, l_{+} \right\} $, we have from Definition \ref{composite cube} that $H_{k+1} = 0$ in $Q_{5}$.

To prove \eqref{GS970}, assume that $k \not \in K_{-} $. Then by \eqref{GS980} and that  $k \not \in K_{-} $, we find that $k = l_{-}-1$. Since $K_{-} = \left\{ l_{-} , l_{-}+1, \cdots, l_{+} \right\} $, we have that $H_{k} = 1$ in $Q_{5}$.
\end{proof}

We use perturbations with respect to the points $z_{k}$ $(k \in K)$. In view of Lemma \ref{CTFS300} and Lemma \ref{CTFS350}, $z_{k}$ can be handled by the graph functions $\varphi_{k}$ for any $k \in K$.

\begin{lem}\label{CTFS350}
For any $k \in K$, we have that
\begin{equation}\label{CTF360}
H_{k}(x) \not = 1
\text{ for some } x \in Q_{r}(z)
\qquad \Lra \qquad
\left| z_{k} - \left(   \varphi_{k}(z'), z' \right)  \right| <2r,
\end{equation}
and
\begin{equation}\label{CTF370}
H_{k+1}(x) \not = 0
\text{ for some } x \in Q_{r}(z)
\qquad \Lra \qquad
\left| z_{k} -  \left(  \varphi_{k+1}(z') , z' \right)  \right| < 2r.
\end{equation}
\end{lem}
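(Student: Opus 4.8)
The plan is to reduce both implications to a statement about a single real number, the vertical coordinate $Z_k(z)$, and then dispatch them by inspecting the three branches of the definition \eqref{CTF110} of $Z_k$. Since $z_k=(Z_k(z),z')$ by \eqref{CTF140} and the transversal parts of $(\varphi_k(z'),z')$ and $(\varphi_{k+1}(z'),z')$ are again $z'$, we have $|z_k-(\varphi_k(z'),z')|=|Z_k(z)-\varphi_k(z')|$ and $|z_k-(\varphi_{k+1}(z'),z')|=|Z_k(z)-\varphi_{k+1}(z')|$, so \eqref{CTF360} and \eqref{CTF370} become assertions about one real number. Throughout I work, as in the neighbouring lemmas, under $Q_r(z)\subset Q_5$, so that any $x=(x^1,x')\in Q_r(z)$ furnished by the hypotheses lies in the domain of the Heaviside functions and satisfies $x',z'\in Q_7'$; consequently \eqref{gradient_graphs} applies and shows that the oscillation of each graph $\varphi_k$ over $Q_r'(z')$ is strictly smaller than $r$.

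For \eqref{CTF360}, suppose $H_k(x)\neq1$ for some $x=(x^1,x')\in Q_r(z)$. By \eqref{GS910} the point $x$ then lies in a strip $Q_5^l$ with $l<k$, so, using the ordering $\varphi_{l+1}\le\varphi_k$ of the graphs, $x^1\le\varphi_{l+1}(x')\le\varphi_k(x')$; together with $x^1>z^1-r$ this gives $\varphi_k(x')>z^1-r$, and the oscillation bound upgrades it to $\varphi_k(z')>z^1-2r$. I then split by \eqref{CTF110}: when $z^1\le\varphi_k(z')$, $Z_k(z)=\varphi_k(z')$ and the difference vanishes; when $\varphi_k(z')<z^1\le\varphi_{k+1}(z')$, $Z_k(z)=z^1$ and $|Z_k(z)-\varphi_k(z')|=z^1-\varphi_k(z')<2r$; when $z^1>\varphi_{k+1}(z')$, $Z_k(z)=\varphi_{k+1}(z')$ and $|Z_k(z)-\varphi_k(z')|=\varphi_{k+1}(z')-\varphi_k(z')\le z^1-\varphi_k(z')<2r$. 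In all cases $|z_k-(\varphi_k(z'),z')|<2r$.

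The proof of \eqref{CTF370} is the mirror image. If $H_{k+1}(x)\neq0$ for some $x=(x^1,x')\in Q_r(z)$, then by \eqref{GS910} the strip containing $x$ has index $l\ge k+1$, so $x^1>\varphi_l(x')\ge\varphi_{k+1}(x')$; with $x^1<z^1+r$ and the oscillation bound this yields $\varphi_{k+1}(z')<z^1+2r$. Inspecting the same three branches of \eqref{CTF110}, now comparing $Z_k(z)$ with $\varphi_{k+1}(z')$, gives $|Z_k(z)-\varphi_{k+1}(z')|<2r$ in each branch (the only one needing the estimate is $z^1\le\varphi_k(z')$, where $Z_k(z)=\varphi_k(z')$ and $\varphi_{k+1}(z')-\varphi_k(z')\le\varphi_{k+1}(z')-z^1<2r$), which is \eqref{CTF370}.

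This lemma is in essence bookkeeping, and I do not anticipate a genuine obstacle. The two points that deserve a little attention are (i) checking that \eqref{gradient_graphs} really forces the oscillation of $\varphi_k$ over $Q_r'(z')$ to be $<r$, so that the strict inequalities above survive, and (ii) keeping track, in each of the three branches of \eqref{CTF110}, of which of $\varphi_k$ and $\varphi_{k+1}$ lies above $Z_k(z)$, which is what makes the chain of inequalities in the last branch of each case go through.
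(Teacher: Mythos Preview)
Your proof is correct and follows the same three-case split as the paper, distinguishing the branches of \eqref{CTF110}. The organization is slightly more streamlined than the paper's: you first extract from the hypothesis and the gradient bound \eqref{gradient_graphs} the single inequality $\varphi_k(z')>z^1-2r$ (respectively $\varphi_{k+1}(z')<z^1+2r$), and then read off the conclusion directly in each branch. The paper instead constructs, in the nontrivial branches, auxiliary points $y$ (and, in the third branch, also $t$) on the segment from $z$ to $x$ where that segment crosses the graphs $\varphi_k$ and $\varphi_{k+1}$, and then estimates via triangle inequalities through these crossing points. Both routes are equivalent and equally elementary; yours simply avoids the extra geometric construction.
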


\begin{proof}
We first prove \eqref{CTF360}. Suppose that $H_{k}(x) \not = 1$  for some $x \in Q_{r}(z)$. Then we have from \eqref{composite cube_strip} and \eqref{CTF290} that $x^{1} \leq \varphi_{k}(x')$. 

(1) Assume that $\varphi_{k}(z') \geq z^{1}$. Then from \eqref{CTF120} and \eqref{CTF140}, we have that $z_{k} = \left( \varphi_{k}(z'), z' \right)$, and so \eqref{CTF360} holds. 

(2) Assume that $\varphi_{k}(z') < z^{1} \leq \varphi_{k+1}(z') $. Then we have from \eqref{CTF120} and \eqref{CTF140} that $z_{k}=z$. Since $x^{1} \leq \varphi_{k}(x')$, choose a point $y = (y^{1},y') \in Q_{r}(z)$ on the line connecting $z$ and $x$ such that $y^{1}= \varphi_{k}(y')$.  Then by \eqref{gradient_graphs} and that $z_{k}=z$,
\begin{equation*}\label{}
\left | \varphi_{k}(z') - z_{k}^{1} \right | \leq \left |\varphi_{k}(z') - \varphi_{k}(y') \right | + \left| \varphi_{k}(y') - z^{1} \right| \leq \frac{ \left| z' - y' \right| }{10n}  + \left| y^{1} - z^{1} \right|.
\end{equation*}
Since $y = (y^{1},y') \in Q_{r}(z)$ is on the line connecting $z$ and $x$, we have that $|z'-y'| \leq |z'-x'| < 2nr$ and $|z^{1}-y^{1}| \leq |z^{1}-x^{1}| < r$, and so \eqref{CTF360} holds.

(3) Assume that $z^{1} > \varphi_{k+1}(z') \left( > \varphi_{k}(z') \right)$. Then we have that $z_{k} = \left( \varphi_{k+1}(z'), z' \right)$. Since $x^{1} \leq \varphi_{k}(x') \left( < \varphi_{k+1}(x') \right)$, choose $y, t \in Q_{r}(z)$ on the line connecting $z$ and $x$ such that $y^{1}= \varphi_{k}(y')$ and $t^{1} = \varphi_{k+1}(t')$. Then by \eqref{gradient_graphs} and that $z_{k} = \left( \varphi_{k+1}(z'), z' \right)$, 
\begin{equation*}\begin{aligned}\label{}
\left| \varphi_{k}(z') - z_{k}^{1} \right| 
& \leq \left| \varphi_{k}(z') - \varphi_{k}(y') \right| + \left| \varphi_{k}(y') - \varphi_{k+1}(t') \right| + \left| \varphi_{k+1}(t') - \varphi_{k+1}(z') \right| \\
& \leq \frac{ \left| z' - y' \right| }{10n}  + |y^{1}-t^{1}| + \frac{ \left| t' - z' \right| }{10n}.
\end{aligned}\end{equation*}
Since $y,t \in Q_{r}(z)$ are on the line connecting $z$ and $x$, we have that $|z'-y'|, |t'-z'| \leq |z'-x'| < 2nr$ and $|y^{1}-t^{1}| \leq |z^{1}-x^{1}| < r$, and so \eqref{CTF360} holds.

\sskip

We now prove \eqref{CTF370}. Suppose that $H_{k+1}(x) \not = 0$  for some $x \in Q_{r}(z)$. Then we have from \eqref{composite cube_strip}  and \eqref{CTF290} that $x^{1} > \varphi_{k+1}(x')$. 

(1) Assume that $\varphi_{k+1}(z') < z^{1}$. Then from \eqref{CTF120} and \eqref{CTF140}, we have that $z_{k} = \left( \varphi_{k+1}(z'), z' \right)$ and so \eqref{CTF370} holds. 

(2) Assume that $\varphi_{k}(z') < z^{1} \leq \varphi_{k+1}(z') $. Then we have from \eqref{CTF120} and \eqref{CTF140} that $z_{k}=z$. Since $x^{1} > \varphi_{k+1}(x')$, choose a point $y = (y^{1},y')\in Q_{r}(z)$ on the line connecting $z$ and $x$ such that $y^{1}= \varphi_{k+1}(y')$.  Then by \eqref{gradient_graphs} and that $z_{k}=z$,
\begin{equation*}\label{}
\left | \varphi_{k+1}(z') - z_{k}^{1} \right | \leq \left |\varphi_{k+1}(z') - \varphi_{k+1}(y') \right | + \left| \varphi_{k+1}(y') - z^{1} \right| \leq \frac{ \left| z' - y' \right| }{10n}  + \left| y^{1} - z^{1} \right|.
\end{equation*}
Since $y = (y^{1},y') \in Q_{r}(z)$ is on the line connecting $z$ and $x$,  we have that $|z'-y'| \leq |z'-x'| < 2nr$ and $|z^{1}-y^{1}| \leq |z^{1}-x^{1}| < r$, and so \eqref{CTF370} holds.

(3) Assume that $z^{1} \leq \varphi_{k}(z') \left( < \varphi_{k+1}(z') \right)$. Then we have that $z_{k} = \left( \varphi_{k}(z'), z' \right)$. Since $x^{1} > \varphi_{k+1}(x') \left( > \varphi_{k}(x') \right)$, choose $y, t \in Q_{r}(z)$ on the line connecting $z$ and $x$ such that $y^{1}= \varphi_{k+1}(y')$ and $t^{1} = \varphi_{k}(t')$. Then by \eqref{gradient_graphs} and that  $z_{k} = \left( \varphi_{k}(z'), z' \right)$,
\begin{equation*}\begin{aligned}\label{}
\left| \varphi_{k+1}(z') - z_{k}^{1} \right| 
& \leq \left| \varphi_{k+1}(z') - \varphi_{k+1}(y') \right| + \left| \varphi_{k+1}(y') - \varphi_{k}(t') \right| + \left| \varphi_{k}(t') - \varphi_{k}(z') \right| \\
& \leq \frac{ \left| z' - y' \right| }{10n}  + |y^{1}-t^{1}| + \frac{ \left| t' - z' \right| }{10n}.
\end{aligned}\end{equation*}
Since $y,t \in Q_{r}(z)$ are on the line connecting $z$ and $x$,  we have that $|z'-y'|, |t'-z'| \leq |z'-x'| < 2nr$ and $|y^{1}-t^{1}| \leq |z^{1}-x^{1}| < r$, and so \eqref{CTF370} holds.
\end{proof}

Set $S = \cup_{k \in  K_{+} } \big\{ \big( \varphi_{k}(x'),x' \big) \in x' \in Q_{5}' \big\}$. Then $|S| = 0$. By using Corollary \ref{CTFS150}, Lemma \ref{CTFS300} and Lemma \ref{GSS950} and Lemma \ref{CTFS350}, we estimate $DI$, $D(II)$ and $D(III)$ in the set $Q_{r}(z) \setminus S$ as in the following three lemmas.

\begin{lem}\label{CTFS400}
We have the following estimate for $I$ in \eqref{CTF220} :
\begin{equation*}\label{}
\int_{Q_{r}(z) \setminus S} \bigg| DI - \sum_{0' \leq q' \leq p' } \Big( \begin{array}{c} p' \\ q' \end{array} \Big) \pi^{q'} D^{(|q'|, p'-q')} Du  \bigg| ^{2} \, dx
\leq cr^{n+1} \| D^{m+1}u \|_{L^{\infty}( Q_{2r}(z) )}^{2}.
\end{equation*}
\end{lem}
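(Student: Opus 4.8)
The plan is to collapse the left-hand side to a pointwise identity on each strip and then feed it into the path estimate of Lemma~\ref{CTFS180}. First I would fix $k\in K$ with $Q_r^k(z)\neq\emptyset$ (for the other $k$ there is nothing to do) and restrict attention to the open set $Q_r^k(z)\setminus S$, on which $\varphi_k(x')<x^1<\varphi_{k+1}(x')$. There $Z_k(x)=x^1$ by \eqref{CTF120}, hence $\pi^{q'}(Z_k,x')=\pi^{q'}(x)$; moreover every characteristic function $\chi_{Q_5^l}$ occurring in the definition of $I$ vanishes except the one with $l=k$, and $u=u_k$. Thus on $Q_r^k(z)\setminus S$,
\[
I=\sum_{0'\leq q'\leq p'}\Big(\begin{array}{c} p'\\ q'\end{array}\Big)\,\pi^{q'}\,\big[D^{(|q'|,p'-q')}u_k(x)-D^{(|q'|,p'-q')}u_k(z_k)\big],
\]
where $D^{(|q'|,p'-q')}u_k(z_k)$ is a constant. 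Differentiating and subtracting $\sum_{q'}\binom{p'}{q'}\pi^{q'}D^{(|q'|,p'-q')}Du$ — which is precisely the Leibniz term in which the derivative falls on $D^{(|q'|,p'-q')}u_k$ — the $\pi^{q'}$-weighted top-order terms cancel identically and leave, on $Q_r^k(z)\setminus S$,
\[
DI-\sum_{0'\leq q'\leq p'}\Big(\begin{array}{c} p'\\ q'\end{array}\Big)\pi^{q'}D^{(|q'|,p'-q')}Du=\sum_{0'\leq q'\leq p'}\Big(\begin{array}{c} p'\\ q'\end{array}\Big)D\pi^{q'}\,\big[D^{(|q'|,p'-q')}u_k(x)-D^{(|q'|,p'-q')}u_k(z_k)\big].
\]

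Next I would estimate the difference of $m$-th order derivatives of $u_k$ appearing on the right (note $|(|q'|,p'-q')|=|p'|=m$). Since $u_k\in C^{m+1,\gamma}(\overline{Q_6^k})$ by \eqref{PET170}, and since Lemma~\ref{CTFS180}, applied with radius $r/5$ (its hypothesis $Q_{6r/5}(z)\subset Q_6$ being a consequence of $Q_{2r}(z)\subset Q_5$), furnishes a path joining $x$ to $z_k$ that lies in $Q_{6r/5}^k(z)\cup\{z_k\}\subset\overline{Q_{2r}^k(z)}$ and has length at most $cr$ — a path that never leaves the single region $\overline{Q_6^k}$ — integrating $\nabla D^{(|q'|,p'-q')}u_k$ (whose modulus is $\le c|D^{m+1}u_k|=c|D^{m+1}u|$) along it gives
\[
\big|D^{(|q'|,p'-q')}u_k(x)-D^{(|q'|,p'-q')}u_k(z_k)\big|\leq cr\,\|D^{m+1}u\|_{L^\infty(Q_{2r}(z))}\qquad\big(x\in Q_r^k(z)\big).
\]
Here I use that $z_k\in\overline{Q_{6r/5}^k(z)}$, which holds by \eqref{CTF182} since $Q_r^k(z)\neq\emptyset$, and that $u_k=u$ on $Q_6^k$.

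Finally I would assemble the pieces. The number of multi-indices $q'$ with $0'\le q'\le p'$ depends only on $n$ and $m$, so squaring the identity of the first step, summing over the strips via $Q_r(z)\setminus S=\dotcup_{k}\big(Q_r^k(z)\setminus S\big)$, and inserting the bound just obtained,
\[
\int_{Q_r(z)\setminus S}\Big|DI-\sum_{0'\leq q'\leq p'}\Big(\begin{array}{c} p'\\ q'\end{array}\Big)\pi^{q'}D^{(|q'|,p'-q')}Du\Big|^{2}dx\leq cr^{2}\,\|D^{m+1}u\|_{L^\infty(Q_{2r}(z))}^{2}\sum_{0'\le q'\le p'}\int_{Q_r(z)}\big|D\pi^{q'}\big|^{2}dx,
\]
and then \eqref{GS890} bounds each $\int_{Q_r(z)}|D\pi^{q'}|^{2}dx$ by $cr^{n-1}$, which yields the asserted $cr^{n+1}\|D^{m+1}u\|_{L^\infty(Q_{2r}(z))}^{2}$. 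I expect the delicate point to be the step invoking Lemma~\ref{CTFS180}: because $u_k$ is only piecewise smooth in $Q_6$, the difference of its $m$-th derivatives at the running point $x$ and at the frozen point $z_k$ is controlled by $\|D^{m+1}u\|_{L^\infty}$ only once one knows that $x$ and $z_k$ can be joined by a short path remaining inside the single region $\overline{Q_6^k}$ (and that the scales can be matched so this path, together with $z_k$, stays in $\overline{Q_{2r}^k(z)}$). It is also essential that the factor $D\pi^{q'}$, which may be unbounded near points where two graphs touch, enters only through its $L^2$-norm from \eqref{GS890}, paired against the $O(r)$ factor above.
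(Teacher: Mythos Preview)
Your argument is correct and follows essentially the same route as the paper's proof: collapse $I$ on each strip via $Z_k(x)=x^1$, differentiate to leave only the $D\pi^{q'}$ terms paired with $m$-th order differences of $u_k$, control those differences by $cr\|D^{m+1}u\|_{L^\infty(Q_{2r}(z))}$ through the path of Lemma~\ref{CTFS180}, and conclude with \eqref{GS890}. The only difference is expository---you spell out the scaling $r/5$ in invoking Lemma~\ref{CTFS180} and the path integration explicitly, whereas the paper compresses these into a single line.
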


\begin{proof}
From \eqref{composite cube_strip} and \eqref{CTF110}, we have that $ \big( Z_{k}(x),x' \big) = x$ for any $x \in Q_{5}^{k} $ $(k \in K)$. It follows that $\pi^{q'} \big( Z_{k}(x),x' \big) = \pi^{q'}(x)$ for any  $x \in  Q_{5}^{k} $ $(k \in K)$. So by \eqref{CTF220},
\begin{equation*}\label{CTF480}
I = \sum_{0' \leq q' \leq p' } 
\Big( \begin{array}{c} p' \\ q' \end{array} \Big)
\bigg[ \pi^{q'} D^{(|q'|, p'-q')} u -  
\pi^{q'} 
D^{ ( |q'|, p'-q' ) }  u_{k}(z_{k})  \bigg]
~ \text{ in } ~ Q_{r}^{k}(z)
\quad (k \in K).
\end{equation*}
It follows that
\begin{equation*}\begin{aligned}\label{}
&  \sum_{k \in K } \int_{Q_{r}^{k}(z) \setminus S}  \left| DI - \sum_{0' \leq q' \leq p' } \Big( \begin{array}{c} p' \\ q' \end{array} \Big)
\pi^{q'} D^{(|q'|, p'-q')} Du  \right|^{2} \, dx \\
& \quad =  \sum_{k \in K } \int_{Q_{r}^{k}(z) \setminus S}   \left| \sum_{0' \leq q' \leq p' } D \left[ \pi^{q'} \right]  \left[ D^{ ( |q'|, p'-q' ) }  u - D^{ ( |q'|, p'-q' ) }  u_{k}(z_{k}) \right] \right|^{2} \, dx .
\end{aligned}\end{equation*}
Since $Q_{2r}(z) \subset Q_{5}$ and $D^{m}u_{k} \in C^{\gamma} \left( \overline{ Q_{2r}^{k}(z) } \right)$, we have from Lemma \ref{CTFS180} that
\begin{equation*}\label{} 
|D^{m}u - D^{m}u_{k}(z_{k})| \leq cr \| D^{m+1}u \|_{L^{\infty}(Q_{2r}^{k}(z))}
~ \text{ in } ~ Q_{r}^{k}(z)
\qquad ( k \in K),
\end{equation*}
by separating the cases that $Q_{r}^{k}(z) = \emptyset$ and $Q_{r}^{k}(z) \not = \emptyset$. It follows that
\begin{equation*}\begin{aligned}\label{}
& \int_{Q_{r}(z) \setminus S}  \left| DI - \sum_{0' \leq q' \leq p' } \Big( \begin{array}{c} p' \\ q' \end{array} \Big)
\pi^{q'} D^{(|q'|, p'-q')} Du  \right|^{2} \, dx \\
& \quad \leq c r^{2}  \| D^{m+1}u \|_{L^{\infty}( Q_{2r}(z) )}^{2}  \int_{Q_{r}(z) }  \left| D \left[ \pi^{q'} \right] \right|^{2} \, dx,
\end{aligned}\end{equation*}
and so the lemma follows from \eqref{GS890}.
\end{proof}

\begin{lem}\label{CTFS500}
For any $\alpha \in \{ 1, \cdots, n \}$, the next estimate holds for $II$ in \eqref{CTF220}:
\begin{equation*}\label{}
\left| D_{\alpha}(II) +  \sum_{ k \in K_{-}  }  \bh_{\alpha}^{k} \left[ u_{k-1}- u_{k} \right]   H_{k}  \right| 
\leq c r^{\gamma} \sum_{ k \in K_{-}   } \| u_{k} \|_{C^{m,\gamma} \left( \overline{Q_{6}^{k}} \right)},
\end{equation*}
in $Q_{r}(z) \setminus S$ where the linear operator $\bh_{\alpha}^{k} : C^{m,\gamma} \left( \partial Q_{6}^{k} \cap \partial Q_{6}^{k-1} \right) \to C^{\gamma} \left( Q_{5}' \right) $ $(k \in K_{-})$ defined as
\begin{equation*}\label{}
\bh_{\alpha}^{k}[h](x') =  \sum_{0' \leq q' \leq p' } 
\Big( \begin{array}{c} p' \\ q' \end{array} \Big) 
D_{j} \left[ \pi^{q'} \big( \varphi_{k}(x') , x' \big) \right]
D^{ ( |q'|, p'-q' ) } h \big( \varphi_{k}(x') , x' \big).
\end{equation*}
\end{lem}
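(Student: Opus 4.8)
The plan is to expand $II$ from \eqref{CTF220}, convert the cutoff terms $\pi^{q'}(Z_k,x')\,\chi_{Q_5\setminus Q_5^k}$ into Heaviside‑weighted expressions by Lemma~\ref{CTFS300}, delete the vanishing pieces with Lemma~\ref{GSS950}, and only then differentiate on the open set $Q_r(z)\setminus S$, on which every Heaviside/indicator factor is locally constant and the perturbation points $z_k$, $\big(\varphi_k(z'),z'\big)$, $z'$ are independent of $x$.

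Concretely, I would first apply Lemma~\ref{CTFS300} with $h=\pi^{q'}$ to the first group of terms of $II$, replacing $\pi^{q'}(Z_k,x')\,\chi_{Q_5\setminus Q_5^k}$ by $(1-H_k)\,\pi^{q'}\big(\varphi_k(x'),x'\big)+H_{k+1}\,\pi^{q'}\big(\varphi_{k+1}(x'),x'\big)$; on the graph $\pi^{q'}\big(\varphi_k(x'),x'\big)=\prod_{i=2}^{n}\big(D_i\varphi_k(x')\big)^{q_i}$ by \eqref{GS260}, \eqref{GS270}, which for $m\geq1$ is a $C^{1,\gamma}$ function of $x'$ with $\big\|D_\alpha\big[\pi^{q'}(\varphi_k(\cdot),\cdot)\big]\big\|_{C^\gamma(Q_5')}\leq c$, so the singular transverse derivative $D\pi$ of Lemma~\ref{GSS700} never enters. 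In the $H_{k+1}$‑term I substitute $l=k+1$ and in the $(1-H_k)$‑term I keep $k$; by Lemma~\ref{GSS950} every summand with $l\notin K_-$ has $H_l\equiv0$ and every summand with $k\notin K_-$ (and $Q_5^k\neq\emptyset$) has $1-H_k\equiv0$, so all three sums collapse to sums over $k\in K_-$. Differentiating termwise on $Q_r(z)\setminus S$ — where $D_\alpha$ acts only on the graph factor and annihilates the $x$‑independent constants $D^{(|q'|,p'-q')}u_k(z_k)$, $D^{(|q'|,p'-q')}u_k(\varphi_k(z'),z')$, $D^{(|q'|,p'-q')}u_{k-1}(z_{k-1})$ — I obtain $D_\alpha(II)=-\sum_{k\in K_-}\sum_{0'\leq q'\leq p'}\binom{p'}{q'}D_\alpha\!\big[\pi^{q'}(\varphi_k(\cdot),\cdot)\big]\,B_k$ with $B_k=(1-H_k)\,D^{(|q'|,p'-q')}u_k(z_k)+H_k\,D^{(|q'|,p'-q')}u_{k-1}(z_{k-1})-D^{(|q'|,p'-q')}u_k(\varphi_k(z'),z')$.

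It then remains to compare $B_k$ with the bracket $B_k'=H_k\big[D^{(|q'|,p'-q')}u_{k-1}(\varphi_k(x'),x')-D^{(|q'|,p'-q')}u_k(\varphi_k(x'),x')\big]$ coming from $-\bh_\alpha^k[u_{k-1}-u_k]H_k$. Regrouping and using $H_k\in\{0,1\}$ pointwise: at $x\in Q_r(z)\setminus S$ with $H_k(x)=0$ one gets $B_k-B_k'=D^{(|q'|,p'-q')}u_k(z_k)-D^{(|q'|,p'-q')}u_k(\varphi_k(z'),z')$, and \eqref{CTF360} of Lemma~\ref{CTFS350} (applicable since $H_k(x)\neq1$) bounds $|z_k-(\varphi_k(z'),z')|<2r$; at $x$ with $H_k(x)=1$ one gets $B_k-B_k'=\big[D^{(|q'|,p'-q')}u_{k-1}(z_{k-1})-D^{(|q'|,p'-q')}u_{k-1}(\varphi_k(x'),x')\big]+\big[D^{(|q'|,p'-q')}u_k(\varphi_k(x'),x')-D^{(|q'|,p'-q')}u_k(\varphi_k(z'),z')\big]$, and \eqref{CTF370} of Lemma~\ref{CTFS350} applied at index $k-1$ (which lies in $K$ because $k\in K_-$ by \eqref{GS185}; here $H_k(x)\neq0$) gives $|z_{k-1}-(\varphi_k(z'),z')|<2r$, while $|(\varphi_k(z'),z')-(\varphi_k(x'),x')|\leq c|z'-x'|<cr$ by \eqref{gradient_graphs}. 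Since $|q'|+|p'-q'|=|p'|=m$, the $C^{m,\gamma}$‑regularity of $u_{k-1},u_k$ turns each difference of values at points within distance $cr$ into $cr^\gamma\|u_j\|_{C^{m,\gamma}(\overline{Q_6^{j}})}$; multiplying by the bounded factor $D_\alpha[\pi^{q'}(\varphi_k(\cdot),\cdot)]$ and summing over the finitely many $k\in K_-$, $q'\leq p'$ (the adjacent index $k-1\in K$ being controlled by \eqref{PET150}) yields the asserted estimate.

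I expect the principal difficulty to be exactly this bookkeeping together with the pointwise case split: one must check that after Lemma~\ref{CTFS300} and the shift $l=k+1$ the vanishing statements of Lemma~\ref{GSS950} collapse the sums to $K_-$ precisely, and that the correct instance of Lemma~\ref{CTFS350} is invoked — \eqref{CTF360} at index $k$ when $H_k(x)=0$, \eqref{CTF370} at index $k-1$ when $H_k(x)=1$ — so that the two evaluation points surviving in $B_k-B_k'$ always lie within $cr$. The differentiation is benign: it is performed on $Q_r(z)\setminus S$, away from the jump set, so no derivative of a Heaviside function arises, and only the manifestly bounded tangential derivative $D_\alpha\prod_i(D_i\varphi_k)^{q_i}$ appears rather than $D\pi$ itself.
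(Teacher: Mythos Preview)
Your plan is correct and follows essentially the same route as the paper's proof: apply Lemma~\ref{CTFS300} to rewrite the $\chi_{Q_5\setminus Q_5^k}$ factor via Heaviside functions, use Lemma~\ref{GSS950} to reduce the sums to $K_-$, differentiate on $Q_r(z)\setminus S$ (where the Heaviside factors are locally constant and only the bounded graph derivatives $D_\alpha\big[\pi^{q'}(\varphi_k(\cdot),\cdot)\big]$ appear), and then invoke Lemma~\ref{CTFS350} in the case split on $H_k(x)$ to place the surviving evaluation points within distance $cr$. The paper organizes the same computation by splitting $D_\alpha(II)=II_1+II_2+II_3$ and estimating each piece separately, whereas you pack everything into the single bracket $B_k$ and compare with $B_k'$ pointwise; the content and the cited lemmas are identical.
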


\begin{proof}
In Lemma \ref{CTFS300}, we take $h =  \sum_{0' \leq q' \leq p' } 
\Big( \begin{array}{c} p' \\ q' \end{array} \Big)  \pi^{q'} D^{ ( |q'|, p'-q' ) }  u_{k}(z_{k})$. Then 
\begin{equation*}\begin{aligned}\label{}
II & = - \sum_{ k \in K, \,  Q_{5}^{k} \not = \emptyset  }  \sum_{0' \leq q' \leq p' } 
\Big( \begin{array}{c} p' \\ q' \end{array} \Big) 
(1-H_{k})  \pi^{q'} \big( \varphi_{k}(x') , x' \big) 
D^{ ( |q'|, p'-q' ) }  u_{k}(z_{k}) \\
& \quad - \sum_{ k \in K, \,  Q_{5}^{k} \not = \emptyset   }  \sum_{0' \leq q' \leq p' } 
\Big( \begin{array}{c} p' \\ q' \end{array} \Big) 
H_{k+1}  \pi^{q'} \big( \varphi_{k+1}(x') , x' \big) 
D^{ ( |q'|, p'-q' ) }  u_{k}(z_{k}) \\
& \quad + \sum_{ k \in K_{-}, \,  Q_{5}^{k} \not = \emptyset  }  \sum_{0' \leq q' \leq p' } 
\Big( \begin{array}{c} p' \\ q' \end{array} \Big) 
\pi^{q'} \big( \varphi_{k}(x') , x' \big)
D^{ ( |q'|, p'-q' ) }  u_{k} \big( \varphi_{k}(z') , z' \big),
\end{aligned}\end{equation*}
in $Q_{r}(z)$. Fix $\alpha  \in \{ 1,  \cdots, n \}$.  We find from Lemma \ref{GSS950} that
\begin{equation*}\label{}
D_{\alpha}(II) = II_{1} + II_{2} + II_{3} \quad \text{in} \quad Q_{r}(z) \setminus S,
\end{equation*}
where
\begin{equation*}\begin{aligned}\label{}
II_{1} & = - \sum_{ k \in K_{-}, \,  Q_{5}^{k} \not = \emptyset  }  \sum_{0' \leq q' \leq p' } 
\Big( \begin{array}{c} p' \\ q' \end{array} \Big) 
(1-H_{k}) D_{\alpha} \left[ \pi^{q'} \big( \varphi_{k}(x') , x' \big) \right]
D^{ ( |q'|, p'-q' ) }  u_{k}(z_{k}) \\
& \quad + \sum_{ k \in K_{-}, \,  Q_{5}^{k} \not = \emptyset  }  \sum_{0' \leq q' \leq p' } 
\Big( \begin{array}{c} p' \\ q' \end{array} \Big) 
(1-H_{k})  D_{\alpha} \left[\pi^{q'} \big( \varphi_{k}(x') , x' \big) \right]
D^{ ( |q'|, p'-q' ) }  u_{k}  \big( \varphi_{k}(z') , z' \big),
\end{aligned}\end{equation*}
\begin{equation*}\begin{aligned}\label{}
II_{2} & =  \sum_{ k \in K_{-}, \,  Q_{5}^{k} \not = \emptyset  }  \sum_{0' \leq q' \leq p' } 
\Big( \begin{array}{c} p' \\ q' \end{array} \Big) 
H_{k} D_{\alpha} \left[ \pi^{q'} \big( \varphi_{k}(x') , x' \big) \right]
D^{ ( |q'|, p'-q' ) }  u_{k} \big( \varphi_{k}(z') , z' \big),
\end{aligned}\end{equation*}
and
\begin{equation*}\begin{aligned}\label{}
II_{3} & =  - \sum_{ k+1 \in K_{-}, \,  Q_{5}^{k} \not = \emptyset   }  \sum_{0' \leq q' \leq p' } 
\Big( \begin{array}{c} p' \\ q' \end{array} \Big) 
H_{k+1} D_{\alpha} \left[ \pi^{q'} \big( \varphi_{k+1}(x') , x' \big) \right]
D^{ ( |q'|, p'-q' ) }  u_{k}(z_{k}) \\
& =  - \sum_{ k \in K_{-}, \,  Q_{5}^{k-1} \not = \emptyset   }  \sum_{0' \leq q' \leq p' } 
\Big( \begin{array}{c} p' \\ q' \end{array} \Big) 
H_{k} D_{\alpha} \left[ \pi^{q'} \big( \varphi_{k}(x') , x' \big) \right]
D^{ ( |q'|, p'-q' ) }  u_{k-1}(z_{k-1}),
\end{aligned}\end{equation*}
in $Q_{r}(z) \setminus S$. In view of \eqref{GS260}, we discover that
\begin{equation}\label{CTF560}
\left| D \left[ \pi^{q'} \big( \varphi_{k}(x') , x' \big)  \right]  \right| \leq c 
\quad \text{in} \quad
Q_{5}'.
\end{equation}
By Corollary \ref{CTFS150}, if $Q_{5}^{k} \not = \emptyset$ for some $k \in K$ then $z_{k}  \in \overline{ Q_{6}^{k} }$. Also we have from \eqref{GS190}  that $\big( \varphi_{k}(z') , z' \big) \in \overline{ Q_{6}^{k} }$ for any $k \in K_{-}$. So by  Lemma \ref{CTFS350} and \eqref{CTF560},
\begin{equation*}\begin{aligned}\label{}
|II_{1}| 
& \leq c \sum_{ k \in K_{-}, \,  Q_{5}^{k} \not = \emptyset  }  \sum_{0' \leq q' \leq p' } 
(1-H_{k})  \left| D^{ ( |q'|, p'-q' ) }  u_{k}(z_{k}) - 
D^{ ( |q'|, p'-q' ) }  u_{k} \big( \varphi_{k}(z') , z' \big)  \right|  \\
& \leq c r^{\gamma} \sum_{ k \in K_{-}, \,  Q_{5}^{k} \not = \emptyset  } \| u_{k} \|_{C^{m,\gamma} \left( \overline{Q_{6}^{k}} \right)}
\end{aligned}\end{equation*}
in $Q_{r}(z) \setminus S$. 

From \eqref{GS190}, we have that $\big( \varphi_{k}(x') , x' \big) \in \overline{ Q_{6}^{k} }$ for any $x' \in Q_{r}'(z')$ and  $k \in K_{-}$. So it follows from \eqref{CTF560} that
\begin{equation*}\begin{aligned}\label{}
& \left| II_{2}  - \sum_{ k \in K_{-}, \,  Q_{5}^{k} \not = \emptyset  }  \sum_{0' \leq q' \leq p' }  
\Big( \begin{array}{c} p' \\ q' \end{array} \Big) 
H_{k} D_{\alpha} \left[ \pi^{q'} \big( \varphi_{k}(x') , x' \big) \right]
D^{ ( |q'|, p'-q' ) }  u_{k} \big( \varphi_{k}(x') , x' \big)   \right| \\
& \quad \leq c r^{\gamma} \sum_{ k \in K_{-}, \,  Q_{5}^{k} \not = \emptyset  } \| u_{k} \|_{C^{m,\gamma} \left( \overline{Q_{6}^{k}} \right)},
\end{aligned}\end{equation*}
in $Q_{r}(z) \setminus S$. 

By Corollary \ref{CTFS150}, if $Q_{5}^{k-1} \not = \emptyset$ for some $k-1\in K$ then  $z_{k-1}  \in \overline{ Q_{6}^{k-1} }$. Also we have from \eqref{GS190}  that $\big( \varphi_{k}(x') , x' \big) \in \overline{ Q_{6}^{k-1} }$ for any $x' \in Q_{r}'(z')$ and $k \in K_{-}$. So by Lemma \ref{CTFS350} and \eqref{CTF560} that
\begin{equation*}\begin{aligned}\label{}
& \left| II_{3}  + \sum_{ k \in K_{-}, \,  Q_{5}^{k-1} \not = \emptyset   }  \sum_{0' \leq q' \leq p' } 
\Big( \begin{array}{c} p' \\ q' \end{array} \Big)  H_{k}   
D_{\alpha}\left[ \pi^{q'} \big( \varphi_{k}(x') , x' \big)  \right]
D^{ ( |q'|, p'-q' ) }  u_{k-1} \big( \varphi_{k}(x') , x' \big) \right| \\
& \quad \leq c r^{\gamma} \sum_{ k \in K_{-}, \,  Q_{5}^{k-1} \not = \emptyset  } \| u_{k} \|_{C^{m,\gamma} \left( \overline{Q_{6}^{k}} \right)},
\end{aligned}\end{equation*}
in $Q_{r}(z) \setminus S$. 

Since $\alpha \in \{1,  \cdots, n \}$ was arbitrary, the lemma holds from  \eqref{GS185}.
\end{proof}

\begin{lem}\label{CTFS600}
For any $\alpha \in \{ 1,\cdots, n \}$, the following  holds for $III$ in \eqref{CTF220}:
\begin{equation*}\label{}
D_{\alpha}(III)  = -  \sum_{ k \in K_{-}  }  \bh_{\alpha}^{k} \left[ u_{k-1}  - u_{k} \right] H_{k}  
\end{equation*}
in $Q_{r}(z) \setminus S$ with the linear operator $\bh_{\alpha}^{k} : C^{m,\gamma} \left( \partial Q_{6}^{k} \cap \partial Q_{6}^{k-1} \right) \to C^{\gamma} \left( Q_{5}' \right) $  $(k \in K_{-})$ defined as
\begin{equation*}\label{}
\bh_{\alpha}^{k}[h](x) =  \sum_{|\xi| \leq |p'|-1}  D_{\alpha} \left[ P_{\xi}^{p'} \big[ \varphi_{k}(x'), x' \big] \, D_{\xi}h \big( \varphi_{k}(x') , x' \big) \right] \, H_{k} .
\end{equation*}
\end{lem}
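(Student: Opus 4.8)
The plan is to argue exactly as in the first half of the proof of Lemma \ref{CTFS500}, the difference being that here the identity is \emph{exact}: the term $III$ contains no $z_k$, so no Taylor-expansion error is produced. First I would rewrite $III$ from \eqref{CTF220} using Lemma \ref{CTFS300}. For each $k \in K$ with $Q_5^k \neq \emptyset$, applying that lemma with $h(y^1,x') = \sum_{|\xi| \leq |p'|-1} P_\xi^{p'}(y^1,x')\, D_\xi u_k(y^1,x')$ gives
\begin{equation*}\begin{aligned}
\sum_{|\xi| \leq |p'|-1} P_\xi^{p'}(Z_k,x')\, D_\xi u_k(Z_k,x')\,\chi_{Q_5 \setminus Q_5^k}
&= (1-H_k)\sum_{|\xi|} P_\xi^{p'}\big(\varphi_k(x'),x'\big)\, D_\xi u_k\big(\varphi_k(x'),x'\big) \\
&\quad + H_{k+1}\sum_{|\xi|} P_\xi^{p'}\big(\varphi_{k+1}(x'),x'\big)\, D_\xi u_k\big(\varphi_{k+1}(x'),x'\big),
\end{aligned}\end{equation*}
where the sums in $\xi$ all range over $|\xi| \leq |p'|-1$.

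Next I would use Lemma \ref{GSS950} to restrict the index sets: among $k$ with $Q_5^k \neq \emptyset$, the $(1-H_k)$-term is nonzero only when $k \in K_-$, and the $H_{k+1}$-term only when $k+1 \in K_-$. Reindexing the latter family by $k \mapsto k+1$ and recalling from \eqref{GS185} that $k \in K_-$ already forces both $Q_5^k \neq \emptyset$ and $Q_5^{k-1} \neq \emptyset$, the three sums making up $III$ combine — the constant contribution of the last sum cancelling the $-(1-H_k)$ contribution — into
\begin{equation*}
III = \sum_{k \in K_-} H_k \sum_{|\xi| \leq |p'|-1} P_\xi^{p'}\big(\varphi_k(x'),x'\big)\,\big[ D_\xi u_k - D_\xi u_{k-1}\big]\big(\varphi_k(x'),x'\big) \qquad \text{in } Q_5 .
\end{equation*}

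Finally I would differentiate on $Q_r(z)\setminus S$. Since $S = \bigcup_{k \in K_+}\{(\varphi_k(x'),x') : x' \in Q_5'\}$ is a finite union of graphs of continuous functions it is closed, so $Q_r(z)\setminus S$ is open, and each $H_k$ — which jumps only across graphs contained in $S$ — is locally constant there; hence $D_\alpha H_k = 0$ on $Q_r(z)\setminus S$ in the classical sense. Because $P_\xi^{p'}(\varphi_k(x'),x')$ depends only on $x'$ and $x' \mapsto D_\xi u_k(\varphi_k(x'),x')$ is $C^{1,\gamma}$ (by \eqref{PET170} and \eqref{graph_con}), the Leibniz rule gives on $Q_r(z)\setminus S$
\begin{equation*}
D_\alpha(III) = \sum_{k \in K_-} H_k \sum_{|\xi| \leq |p'|-1} D_\alpha\big[ P_\xi^{p'}[\varphi_k(x'),x']\, D_\xi(u_k - u_{k-1})\big(\varphi_k(x'),x'\big)\big],
\end{equation*}
which, since $H_k^2 = H_k$, is precisely $-\sum_{k \in K_-}\bh_\alpha^k[u_{k-1}-u_k]\,H_k$; because $|S| = 0$ this also represents the weak derivative of $III$ on $Q_5$, consistently with Lemma \ref{CTFS250}.

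The only genuinely delicate point I anticipate is the bookkeeping in the second step — keeping straight which of the conditions $k \in K$, $k \in K_-$, $Q_5^k \neq \emptyset$, $Q_5^{k-1}\neq\emptyset$ attaches to which summand, and verifying term by term that the reindexing and the cancellation of the constant against $1-H_k$ are legitimate. Everything else is routine; compared with Lemma \ref{CTFS500} this lemma is in fact simpler, since the identity is exact and no error estimate enters.
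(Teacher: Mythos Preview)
Your proposal is correct and follows essentially the same approach as the paper: apply Lemma \ref{CTFS300} to rewrite $III$, use Lemma \ref{GSS950} together with \eqref{GS185} to restrict the index set to $K_{-}$ and reindex the $H_{k+1}$-sum, and observe that $H_k$ is locally constant on $Q_r(z)\setminus S$ so that differentiation passes through. The only cosmetic difference is that you simplify $III$ to a single sum over $K_{-}$ \emph{before} differentiating, whereas the paper differentiates first and then combines the resulting pieces $III_1$ and $III_2$; this is an immaterial reordering.
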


\begin{proof}
In Lemma \ref{CTFS300}, we take $h = \sum_{|\xi| \leq |p'|-1} P_{\xi}^{ p'  } \big( Z_{k},x' \big)  D_{\xi}u_{k} \big( Z_{k}, x' \big)   $. Then 
\begin{equation*}\begin{aligned}\label{} 
III & = - \sum_{ k \in K, \,  Q_{5}^{k} \not = \emptyset  } \sum_{|\xi| \leq |p'|-1} (1-H_{k}) P_{\xi}^{p'} \big[ \varphi_{k}(x'),x' \big] \, D_{\xi}u_{k} \big( \varphi_{k}(x') , x' \big) \\
& \quad - \sum_{ k \in K, \,  Q_{5}^{k} \not = \emptyset  } \sum_{|\xi| \leq |p'|-1}  H_{k+1} P_{\xi}^{p'} \big[ \varphi_{k+1}(x'), x' \big] \, D_{\xi}u_{k} \big( \varphi_{k+1}(x') , x' \big) \\
& \quad +  \sum_{ k \in K_{-}, \,  Q_{5}^{k} \not = \emptyset  } \sum_{|\xi| \leq |p'|-1}  P_{\xi}^{p'} \big( \varphi_{k}(x') , x' \big)   D_{\xi}u_{k} \big( \varphi_{k}(x') , x' \big)
\end{aligned}\end{equation*}
in $Q_{r}(z) \setminus S$. Fix $ \alpha \in  \{ 1, \cdots, n \}$. Then from Lemma \ref{GSS950}, we find that
\begin{equation*}\label{}
D_{\alpha}(III) = III_{1} + III_{2} \quad \text{in} \quad Q_{5} \setminus S,
\end{equation*}
where
\begin{equation*}\begin{aligned}\label{}
III_{1} & = \sum_{ k \in K_{-}, \,  Q_{5}^{k} \not = \emptyset  } \sum_{|\xi| \leq |p'|-1}  H_{k} D_{\alpha} \left[ P_{\xi}^{p'} \big[ \varphi_{k}(x'),x' \big] \, D_{\xi}u_{k} \big( \varphi_{k}(x') , x' \big) \right] ,
\end{aligned}\end{equation*}
and
\begin{equation*}\begin{aligned}\label{}
III_{2} & = -  \sum_{ k+1 \in K_{-}, \,  Q_{5}^{k} \not = \emptyset  } \sum_{|\xi| \leq |p'|-1}  H_{k+1} D_{\alpha} \left[ P_{\xi}^{p'} \big[ \varphi_{k+1}(x'), x' \big] \, D_{\xi}u_{k} \big( \varphi_{k+1}(x') , x' \big) \right] \\
&  =   - \sum_{ k \in K_{-}, \,  Q_{5}^{k-1} \not = \emptyset  } \sum_{|\xi| \leq |p'|-1}  H_{k} D_{\alpha} \left[ P_{\xi}^{p'} \big[ \varphi_{k}(x'), x' \big] \, D_{\xi}u_{k-1} \big( \varphi_{k}(x') , x' \big) \right],
\end{aligned}\end{equation*}
in $x \in Q_{r}(z) \setminus S$.  Since $\alpha \in \{1, \cdots, n \}$ was arbitrary chosen, the lemma follows from \eqref{GS185}.
\end{proof}

In the following lemma, $G_{\alpha}$ $(\alpha \in \{1, \cdots, N \} )$ does not depend on $Q_{r}(z)$.

\begin{lem}\label{CTFS700}
For any $Q_{2r}(z) \subset Q_{5}$ and $\alpha  \in \{ 1, \cdots, n \}$, the following
\begin{equation*}\begin{aligned}\label{} 
& \int_{ Q_{r}(z) } \bigg| D_{\alpha}\eta - \sum_{0' \leq q' \leq p' } 
\Big( \begin{array}{c} p' \\ q' \end{array} \Big) \pi^{q'} D^{(|q'|, p'-q')} D_{\alpha}u  +  G_{j} \bigg|^{2} \, dx \\
& \quad \leq cr^{n+2\gamma} \left[  \| D^{m+1}u \|_{L^{\infty}( Q_{2r}(z) )}^{2} + \sum_{k \in K_{-}}  \| u \|_{C^{m,\gamma}( Q_{7}^{k} )} \right],
\end{aligned}\end{equation*}
holds for $\eta$ in \eqref{CTF650} where 
\begin{equation*}
G_{\alpha} = \sum_{k \in K_{-}} \bh_{\alpha}^{k} \left [ u_{k-1} - u_{k} \right ] H_{k} 
\end{equation*}
with the linear operator $\bh_{\alpha}^{k} : C^{m,\gamma} \left( \partial Q_{6}^{k} \cap \partial Q_{6}^{k-1} \right) \to C^{\gamma} \left( Q_{5}' \right) $  $(k \in K_{-})$ defined as
\begin{equation*}\begin{aligned}\label{}
\bh_{\alpha}^{k}[h](x) &=  \sum_{0' \leq q' \leq p' } 
\Big( \begin{array}{c} p' \\ q' \end{array} \Big) 
D_{\alpha} \left[ \pi^{q'} \big( \varphi_{k}(x') , x' \big) \right]
D^{ ( |q'|, p'-q' ) } h \big( \varphi_{k}(x') , x' \big) \\
& \quad+   \sum_{|\xi| \leq |p'|-1}  D_{\alpha} \left[ P_{\xi}^{p'} \big[ \varphi_{k}(x'), x' \big] \, D_{\xi}h \big( \varphi_{k}(x') , x' \big) \right].
\end{aligned}\end{equation*}
\end{lem}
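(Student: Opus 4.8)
The plan is to assemble the three preceding lemmas through the triangle inequality in $L^2$. First I would record that $\eta = I + II + III$ by \eqref{CTF220} and that, by Lemma \ref{CTFS250}, $\eta$ is weakly differentiable with $D\eta \in L^\infty(Q_5)$; since $|S|=0$ and $I$, $II$, $III$ are piecewise smooth off $S$, on $Q_r(z)\setminus S$ the weak derivative satisfies $D_\alpha\eta = D_\alpha I + D_\alpha(II) + D_\alpha(III)$ almost everywhere and $\int_{Q_r(z)} = \int_{Q_r(z)\setminus S}$. Hence it suffices to bound the $L^2(Q_r(z)\setminus S)$ norm of
\[
\Big[ D_\alpha I - \sum_{0'\le q'\le p'}\binom{p'}{q'}\pi^{q'}D^{(|q'|,p'-q')}D_\alpha u\Big] + D_\alpha(II) + D_\alpha(III) + G_\alpha .
\]

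Next I would split $G_\alpha$ according to the two lines defining $\bh_{\alpha}^{k}$ in the statement: writing $G_\alpha = G_\alpha^{II} + G_\alpha^{III}$, where $G_\alpha^{II} = \sum_{k\in K_-}\bh_{\alpha}^{k,\,II}[u_{k-1}-u_k]H_k$ collects the first line and $G_\alpha^{III}$ the second, one checks that $G_\alpha^{II}$ and $G_\alpha^{III}$ coincide exactly with the correction terms produced in Lemma \ref{CTFS500} and Lemma \ref{CTFS600}, respectively; here \eqref{GS185} is used to identify $K_-$ with $\{k : Q_5^k \ne \emptyset \text{ and } Q_5^{k-1}\ne\emptyset\}$ so that the index ranges match. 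The displayed quantity then equals $(a)+(b)+(c)$ with $(a) = D_\alpha I - \sum_{0'\le q'\le p'}\binom{p'}{q'}\pi^{q'}D^{(|q'|,p'-q')}D_\alpha u$, $(b) = D_\alpha(II) + G_\alpha^{II}$, and $(c) = D_\alpha(III) + G_\alpha^{III}$.

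Finally I would estimate the three pieces separately. By Lemma \ref{CTFS600}, $(c)$ vanishes identically on $Q_r(z)\setminus S$. By Lemma \ref{CTFS500}, $(b)$ is bounded pointwise on $Q_r(z)\setminus S$ by $c\,r^\gamma\sum_{k\in K_-}\|u_k\|_{C^{m,\gamma}(\overline{Q_6^k})}$, so $\|(b)\|_{L^2(Q_r(z))} \le c\,r^{n/2+\gamma}\sum_{k\in K_-}\|u_k\|_{C^{m,\gamma}(\overline{Q_6^k})}$, which by the extension estimate \eqref{PET150} is $\le c\,r^{n/2+\gamma}\sum_{k\in K_-}\|u\|_{C^{m,\gamma}(Q_7^k)}$. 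By Lemma \ref{CTFS400} (restricted to the $\alpha$-component), $\|(a)\|_{L^2(Q_r(z)\setminus S)} \le c\,r^{(n+1)/2}\|D^{m+1}u\|_{L^\infty(Q_{2r}(z))}$. Adding these by the triangle inequality, squaring, and using $r^{n+1}\le c\,r^{n+2\gamma}$ (valid since $\gamma\le 1/4$ and $r\le 5$) yields the claimed bound. The substantive analytic work has already been carried out in the three cited lemmas, so the only real care needed here is the bookkeeping that identifies $G_\alpha$ with $G_\alpha^{II}+G_\alpha^{III}$, the conversion of the pointwise estimate for $(b)$ into an $L^2$ bound over $Q_r(z)$, and the reduction of $\|u_k\|_{C^{m,\gamma}(\overline{Q_6^k})}$ to $\|u\|_{C^{m,\gamma}(Q_7^k)}$ via \eqref{PET150}; the main potential pitfall is matching the signs and the index sets of the $II$- and $III$-correction operators to those of $G_\alpha$.
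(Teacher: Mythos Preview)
Your proposal is correct and follows essentially the same route as the paper's proof: decompose $\eta = I + II + III$ via \eqref{CTF220}, use Lemma \ref{CTFS250} to reduce to $Q_r(z)\setminus S$, then combine Lemmas \ref{CTFS400}, \ref{CTFS500}, and \ref{CTFS600} and invoke \eqref{PET150}. The paper's own proof is a one-line assembly of these ingredients, while you have made the bookkeeping (splitting $G_\alpha$, matching signs and index sets, absorbing $r^{n+1}$ into $r^{n+2\gamma}$) explicit; nothing beyond that is different.
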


\begin{proof}
We first estimate $\eta$. From \eqref{CTFS250}, $\eta$ is weakly differentiable in $Q_{r}(z)$ with the estimate $D\eta \in L^{\infty}(Q_{r}(z))$, which implies that
\begin{equation*}\label{} 
\int_{ S } \bigg| D_{\alpha}\eta - \sum_{0' \leq q' \leq p' } 
\Big( \begin{array}{c} p' \\ q' \end{array} \Big) \pi^{q'} D^{(|q'|, p'-q')} D_{\alpha}u  + \sum_{k \in K_{-}}   \bh_{\alpha}^{k} \left[ u_{k-1} - u_{k} \right] H_{k}   \bigg|^{2} dx =0.
\end{equation*}
With \eqref{CTF220}, we obtain from Lemma \ref{CTFS400}, Lemma \ref{CTFS500} and Lemma \ref{CTFS600} that
\begin{equation*}\begin{aligned}\label{} 
& \int_{ Q_{r}(z) \setminus S } \bigg| D_{\alpha}\eta - \sum_{0' \leq q' \leq p' } 
\Big( \begin{array}{c} p' \\ q' \end{array} \Big) \pi^{q'} D^{(|q'|, p'-q')} D_{\alpha}u + \sum_{k \in K_{-}}  \bh_{\alpha}^{k} \left[ u_{k-1} - u_{k} \right]  H_{k}   \bigg|^{2} \, dx \\
& \quad \leq cr^{n+2\gamma}  \left[ \| D^{m+1}u \|_{L^{\infty}( Q_{2r}(z) )}^{2} + \sum_{k \in K_{-}}  \| u \|_{C^{m,\gamma}( \overline{ Q_{6}^{k} } )} \right].
\end{aligned}\end{equation*}
So the lemma follows from \eqref{PET150}.
\end{proof}

To obtain the piece-wise regularity, we will use the following lemma.

\begin{lem}\label{CTFS800}
For $ \bh_{\alpha}^{k}$ and $G_{\alpha}$ in Lemma \ref{CTFS700},  we have that
\begin{equation*}\begin{aligned}\label{}
\sum_{k\in K_{-}} \left\|  \bh_{\alpha}^{k} \left[ u_{k-1} - u_{k} \right] \right \|_{C^{\gamma}(Q_{5}')} 
+ \sum_{k \in K} \| G_{\alpha} \|_{C^{\gamma}(Q_{5}^{k})} 
\leq c \sum_{k \in K}  \| u \|_{C^{m,\gamma}( Q_{7}^{k} )} ,
\end{aligned}\end{equation*}
for any $\alpha \in \{1, \cdots, n \}$.
\end{lem}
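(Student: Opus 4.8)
The plan is to reduce the statement to a single operator bound: that each linear operator $\bh_{\alpha}^{k}$ $(k\in K_{-})$ maps $C^{m,\gamma}\big(\partial Q_{6}^{k}\cap\partial Q_{6}^{k-1}\big)$ into $C^{\gamma}(Q_{5}')$ boundedly, with operator norm controlled by a constant depending only on $n,m,\gamma$ and the $C^{m+1,\gamma}$-norms of the graph functions in \eqref{graph_con} (and, importantly, independent of $\delta$ in \eqref{GS130}). Granting this, I would insert $h=u_{k-1}-u_{k}$, use the extension estimate \eqref{PET150}, and sum over the finitely many $k\in K_{-}$ to control $\sum_{k\in K_{-}}\|\bh_{\alpha}^{k}[u_{k-1}-u_{k}]\|_{C^{\gamma}(Q_{5}')}$; the bound for $G_{\alpha}$ would then follow because on each subregion $G_{\alpha}$ is an $x'$-dependent function multiplied by a locally constant Heaviside factor.

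To prove the operator bound I would evaluate the coefficients appearing in $\bh_{\alpha}^{k}[h]$ along the graph $\big\{(\varphi_{k}(x'),x'):x'\in Q_{5}'\big\}$, where $T_{k}\equiv 0$ by \eqref{GS220}. Substituting $x^{1}=\varphi_{k}(x')$ into \eqref{GS260}--\eqref{GS270} gives $\pi^{q'}(\varphi_{k}(x'),x')=\prod_{i=2}^{n}(D_{i}\varphi_{k}(x'))^{q_{i}}$, a genuine function of $x'$ alone that belongs to $C^{m,\gamma}(Q_{5}')$; in particular the a priori singular term $D_{1}\pi^{q'}$ of Lemma \ref{GSS700} never appears, so $D_{\alpha}\big[\pi^{q'}(\varphi_{k}(x'),x')\big]\in C^{m-1,\gamma}(Q_{5}')\subset C^{\gamma}(Q_{5}')$ (and $=0$ when $\alpha=1$), with a bound depending only on the geometry. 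Likewise, from Lemma \ref{GSS275}, along this graph one has $P_{\xi}^{p'}[\varphi_{k}(x'),x']=\tilde{P}_{\xi}^{p'}[D_{x'}\varphi_{k}(x'),\dots,D_{x'}^{m}\varphi_{k}(x')]\in C^{1,\gamma}(Q_{5}')$, again with a geometric bound. For the $h$-dependent factors I would count derivatives: since $|q'|+|p'-q'|=|p'|=m$ and $\varphi_{k}\in C^{m+1,\gamma}$ is Lipschitz by \eqref{gradient_graphs}, the composition $x'\mapsto D^{(|q'|,p'-q')}h(\varphi_{k}(x'),x')$ lies in $C^{\gamma}(Q_{5}')$, and for $|\xi|\le m-1$ the composition $x'\mapsto D_{\xi}h(\varphi_{k}(x'),x')$ lies in $C^{m-|\xi|,\gamma}(Q_{5}')\subset C^{1,\gamma}(Q_{5}')$, both with norms at most $c\,\|h\|_{C^{m,\gamma}(\partial Q_{6}^{k}\cap\partial Q_{6}^{k-1})}$. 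Applying the Leibniz rule termwise then shows that each summand of $\bh_{\alpha}^{k}[h]$ is a product of $C^{\gamma}(Q_{5}')$-functions (in the first sum $D_{\alpha}$ sits only on $\pi^{q'}(\varphi_{k},\cdot)$; in the second sum $D_{\alpha}$ falls on a product of two $C^{1,\gamma}(Q_{5}')$-factors and hence produces a $C^{\gamma}(Q_{5}')$-function), giving $\|\bh_{\alpha}^{k}[h]\|_{C^{\gamma}(Q_{5}')}\le c\,\|h\|_{C^{m,\gamma}(\partial Q_{6}^{k}\cap\partial Q_{6}^{k-1})}$.

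Then I would take $h=u_{k-1}-u_{k}$: by \eqref{GS190} the graph of $\varphi_{k}$ lies in $\partial Q_{6}^{k}\cap\partial Q_{6}^{k-1}$, so the traces of $u_{k-1}$ and $u_{k}$ there make sense, and \eqref{PET150} (together with its analogue for the index $k-1$) gives $\|u_{k-1}-u_{k}\|_{C^{m,\gamma}(\partial Q_{6}^{k}\cap\partial Q_{6}^{k-1})}\le c(\|u\|_{C^{m,\gamma}(Q_{7}^{k-1})}+\|u\|_{C^{m,\gamma}(Q_{7}^{k})})$; summing over $k\in K_{-}$ bounds the first term by $c\sum_{k\in K}\|u\|_{C^{m,\gamma}(Q_{7}^{k})}$. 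For $G_{\alpha}=\sum_{k\in K_{-}}\bh_{\alpha}^{k}[u_{k-1}-u_{k}]\,H_{k}$, I would use \eqref{GS910}: on a subregion $Q_{5}^{l}$ the factor $H_{k}$ is the constant $1$ if $l\ge k$ and $0$ if $l<k$, so $G_{\alpha}|_{Q_{5}^{l}}=\sum_{k\in K_{-},\,k\le l}\bh_{\alpha}^{k}[u_{k-1}-u_{k}]$ precomposed with the Lipschitz projection $x\mapsto x'$; hence $\|G_{\alpha}\|_{C^{\gamma}(Q_{5}^{l})}\le\sum_{k\in K_{-}}\|\bh_{\alpha}^{k}[u_{k-1}-u_{k}]\|_{C^{\gamma}(Q_{5}')}$, and summing over the finitely many $l\in K$ gives the bound for $\sum_{k\in K}\|G_{\alpha}\|_{C^{\gamma}(Q_{5}^{k})}$. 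Combining the two estimates finishes the argument.

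The hard part will be the operator bound in the second step, namely showing that $\bh_{\alpha}^{k}$ really lands in $C^{\gamma}(Q_{5}')$ with a constant independent of $\delta$. Since $D\pi$ is only an $L^{2}$-object with a $|\varphi_{k+1}-\varphi_{k}|^{-1/2}$ blow-up by Lemma \ref{GSS700}, the crux is the observation that along the graph of $\varphi_{k}$ one has $T_{k}\equiv 0$, so one may substitute \emph{before} differentiating and thereby turn $\pi^{q'}$ and $P_{\xi}^{p'}$ into honest polynomials in the $x'$-derivatives of $\varphi_{k}$; once that is in place, what remains is only a count of how many derivatives each composition can absorb, together with routine product-rule estimates.
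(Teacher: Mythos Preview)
Your proposal is correct and follows essentially the same approach as the paper. The paper's proof is a one-line argument hinging on the identity $\pi^{q'}(\varphi_{k}(x'),x')=[D_{2}\varphi_{k}(x')]^{q_{2}}\cdots[D_{n}\varphi_{k}(x')]^{q_{n}}$ together with references to \eqref{GS180}, \eqref{PET140}, \eqref{PET150} and the definition of $P_{\xi}^{p'}$ in Lemma~\ref{GSS275}; you have unpacked exactly these ingredients, making explicit the key observation that restricting to the graph (where $T_{k}\equiv 0$) turns the potentially singular objects $\pi^{q'}$ and $P_{\xi}^{p'}$ into honest polynomials in the $x'$-derivatives of $\varphi_{k}$, and then carrying out the derivative count and the Heaviside-function argument that the paper leaves implicit.
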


\begin{proof}
By that $\pi^{q'}  \big( \varphi_{k}(x'),x' \big)  
= [D_{2}\varphi_{k} (x') ]^{q_{2}} \cdots  [D_{n}\varphi_{k} (x')]^{q_{n}}$ $\big( x' \in Q_{5}', \, k \in K \big)$, the lemma holds from \eqref{GS180}, \eqref{PET140}, \eqref{PET150} and the definition of $P_{\xi}^{p'}$ in Lemma \ref{GSS275}.
\end{proof}

\newpage

\section{Higher order derivatives}

In the main equation, we assume that the coefficients are piece-wise continuous, and so the gradient of the weak solution might have big jumps on each $\partial Q_{5}^{k} \cap \partial Q_{5}^{k-1}$ $(k \in K)$. To show that $D^{m+1}u$ is piece-wise H\"{o}lder continuous, we first show that $U^{p'}$ $(|p'|=m)$  defined in \eqref{HOD020} are H\"{o}lder continuous. Then we will prove that $D^{m+1}u$ is piece-wise H\"{o}lder continuous by comparing $D^{m+1}u$ and $U^{p'}$. Here, $U^{p'}$ is defined by using the perturbation results in Lemma \ref{PETS900} and Lemma \ref{CTFS700}.

For the simplicity of the calculation, we abuse the notation $D_{\xi}u = D_{\xi}u_{k}$ in $Q_{7}^{k}$. Recall from \eqref{GS410} that
\begin{equation*}\label{} 
\pi_{1}=-1, 
\quad
\pi' = (\pi_{2}, \cdots, \pi_{n}),
\quad
\pi = (\pi_{1},\pi') = (-1,\pi_{2}, \cdots, \pi_{n})
\quad \text{ in } \quad Q_{7}.
\end{equation*}
With the assumption \eqref{PET170}, for any $p' \geq 0'$ with $|p'| = m$, set $G_{1}^{p'}$ as in Lemma \ref{PETS900} and $G_{2}^{p'},\cdots, G_{n}^{p'}$ as in Lemma \ref{CTFS700}. Then  define $U^{p'} : Q_{5} \to \br^{Nn}$ and $G^{p'} : Q_{5} \to \br^{Nn}$ as follows :
\begin{equation}\label{HOD020}
U^{p'} = \left( \begin{array}{ccc}
U_{1}^{1,p'} & \cdots & U_{n}^{1,p'} \\
\vdots & \ddots &  \vdots \\ 
U_{1}^{N,p'} & \cdots & U_{n}^{N,p'} 
\end{array}\right)
\qquad \text{and} \qquad
G ^{p'}=  \left( \begin{array}{ccc}
G_{1}^{1,p'} & \cdots & G_{n}^{1,p'} \\
\vdots  & \ddots  &\vdots  \\ 
G_{1}^{N,p'} & \cdots & G_{n}^{N,p'} 
\end{array}\right),
\end{equation}
where
\begin{equation*}\label{} 
U_{1}^{i,p'}
=G_{1}^{i,p'} + \sum_{1 \leq \alpha \leq n} \pi_{\alpha} \left[  \sum_{0' \leq q' \leq p' } 
\Big( \begin{array}{c} p' \\ q' \end{array} \Big)
\pi^{q'} D^{(|q'|,p'-q')}  \left[ \sum_{1 \leq j \leq N } \sum_{1 \leq \beta \leq n} A^{\alpha \beta}_{ij}D_{\beta}u^{j} - F_{\alpha}^{i} \right] \right]
\end{equation*}
in $Q_{5}$,
and 
\begin{equation}\label{HOD040} 
U_{\beta}^{i,p'} = G_{\beta}^{i,p'} +  \sum_{0' \leq q' \leq p' } 
\Big( \begin{array}{c} p' \\ q' \end{array} \Big)
\pi^{q'}
\left[ D^{ (|q'|,p'-q') }  D_{\beta}u^{i} + \pi_{\beta} D^{ (|q'|,p'-q') }  D_{1}u^{i} \right]
\end{equation}
in $Q_{5}$ for any $\beta \in \{ 2, \cdots, n \}$ and $i \in \{ 1, \cdots, N \}$. Recall from Lemma \ref{PETS800} and  \eqref{PET220} that
\begin{equation}\label{HOD050}
G_{1}^{i,p'} =  \sum_{ k \in  K_{-}  } \sum_{1 \leq \alpha \leq n} \ \bg_{\alpha}^{p',k} \left[ U_{\alpha}^{i,k-1} - U_{\alpha}^{i,k} \right]  H_{k}  
\quad \text{in} \quad Q_{5}
\end{equation}
and
\begin{equation*}\label{}
U_{\alpha}^{i,k} = \sum_{ 1 \leq j \leq N} \sum_{ 1 \leq \beta \leq n}  A_{\alpha \beta}^{ij,k} D_{\beta}u^{j,k} - F_{\alpha}^{i,k}
\quad \text{in} \quad Q_{5}
\end{equation*}
for any  $ 1 \leq \alpha \leq n, \ 1 \leq i \leq N$ and  $k \in K$ with the linear operator $\ \bg_{\alpha}^{p',k} : C^{m-1,\gamma} \left( \partial Q_{6}^{k} \cap \partial Q_{6}^{k-1} \right) \to C^{\gamma} \left( Q_{5}' \right) $  $(k \in K_{-})$  defined as
\begin{equation*}\begin{aligned}\label{}
\ \bg_{\alpha}^{p',k}[h](x') 
& = \sum_{|\xi| \leq |p'|-1} \left[ \pi_{\alpha}  P_{\xi}^{ p' }  D_{\xi}  h \right] \big( \varphi_{k}(x'),x' \big) \\
& \quad + \sum_{ 0' < q' \leq p' } \left( \begin{array}{c} p' \\ q' \end{array} \right) D^{q'} D_{i}\varphi_{k}(x') \, D^{p'-q'}  \left[ h\big(  \varphi_{k}(x'), x' \big) \right],
\end{aligned}\end{equation*}
for any $x' \in Q_{5}'$, $ 1 \leq \alpha \leq n$ and  $k \in K$.
Also recall from Lemma \ref{CTFS700} that 
\begin{equation}\label{HOD060}
G_{\beta}^{i,p'} = \sum_{k \in K_{-}} \bh_{\beta}^{p',k} \left [ u_{k-1} - u_{k} \right ] H_{k} 
\quad \text{in} \quad Q_{5}
\qquad ( \beta =2, 3, \cdots, n)
\end{equation}
with the linear operator $\bh_{\beta}^{p',k} : C^{m,\gamma} \left( \partial Q_{6}^{k} \cap \partial Q_{6}^{k-1} \right) \to C^{\gamma} \left( Q_{5}' \right) $  $(k \in K_{-})$ defined as
\begin{equation*}\begin{aligned}\label{}
\bh_{\beta}^{p',k}[h](x') &=  \sum_{0' \leq q' \leq p' } 
\Big( \begin{array}{c} p' \\ q' \end{array} \Big) 
D_{\beta} \left[ \pi^{q'} \big( \varphi_{k}(x') , x' \big) \right]
D^{ ( |q'|, p'-q' ) } h \big( \varphi_{k}(x') , x' \big) \\
& \quad+   \sum_{|\xi| \leq |p'|-1}  D_{\beta} \left[ P_{\xi}^{p'} \big[ \varphi_{k}(x'), x' \big] \, D_{\xi}h \big( \varphi_{k}(x') , x' \big) \right],
\end{aligned}\end{equation*}
for any $x' \in Q_{5}'$, $ 2 \leq \beta \leq n$ and  $k \in K$.
Then by Lemma \ref{PETS900} and Lemma \ref{CTFS800},
\begin{equation}\label{HOD065}
\left\| G^{p'} \right\|_{C^{\gamma}( Q_{5}^{k} )}  \leq c \sum_{k \in K} \Big[ \| u \|_{C^{m,\gamma}( Q_{7}^{k} )} 
+  \| F \|_{C^{m,\gamma}( Q_{7}^{k})} \Big] 
\quad \text{ in } \quad Q_{5}.
\end{equation}

To handle $U_{1}^{p'}$, we show Lemma \ref{HODS100}. In view of Lemma \ref{PETS900}, set $\nu^{p'} : Q_{5} \to \br^{Nn}$ as
\begin{equation}\label{HOD070}
\nu^{p'} = \left( \begin{array}{ccc}
\nu_{1}^{1,p'} & \cdots & \nu_{n}^{1,p'} \\
\vdots & \ddots &  \vdots \\ 
\nu_{1}^{N,p'} & \cdots & \nu_{n}^{N,p'} 
\end{array}\right)
\quad \text{ in } \quad Q_{5},
\end{equation}
where
\begin{equation*}\begin{aligned}\label{} 
\nu_{\alpha}^{i,p'} 
& =  \sum_{0' \leq q' \leq p' } 
\Big( \begin{array}{c} p' \\ q' \end{array} \Big)
\pi^{q'}    D^{(|q'|,p'-q')} \left( \sum_{1 \leq j \leq N } \sum_{1 \leq \beta \leq n} A^{\alpha \beta}_{ij}D_{\beta}u^{j} - F_{\alpha}^{i} \right) \\
&\quad  -  \sum_{0' \leq q' \leq p' } 
\Big( \begin{array}{c} p' \\ q' \end{array} \Big)
\pi^{q'}   \sum_{1 \leq j \leq N } \sum_{1 \leq \beta \leq n} A^{\alpha \beta}_{ij}D^{(|q'|,p'-q')} D_{\beta}u^{j},
\end{aligned}\end{equation*}
for any $1 \leq \alpha \leq n$ and $1 \leq i \leq N$. For the simplicity, set $\tilde{u}^{p'} : Q_{5} \to \br^{Nn}$ as
\begin{equation}\label{HOD075}
\tilde{u}_{\alpha}^{j,p'} 
= \sum_{0' \leq q' \leq p' } 
\Big( \begin{array}{c} p' \\ q' \end{array} \Big)
\pi^{q'} 
D^{ (|q'|,p'-q') } D_{\beta}u^{j}  
\quad \text{in} \quad Q_{5},
\end{equation}
for any $1 \leq \alpha \leq  n$ and $1 \leq j \leq N$.  By recalling \eqref{HOD070} and that $U_{1}^{i,p}$ in \eqref{HOD020},
\begin{equation}\label{HOD080} 
U_{1}^{i,p'}
= G_{1}^{i,p'} + \sum_{1 \leq \alpha \leq n } \pi_{\alpha} \left[ \nu_{\alpha}^{i,p'} + \sum_{1 \leq j \leq N } \sum_{1 \leq \beta \leq n} A^{\alpha \beta}_{ij}\tilde{u}_{\beta}^{j,p'}  \right]
\quad \text{in} \quad Q_{5},
\end{equation}
and
\begin{equation}\label{HOD085}
U_{\beta}^{j,p'} = G_{\beta}^{j,p'} +  \tilde{u}_{\beta}^{j,p'} + \pi_{\beta} \tilde{u}_{1}^{j,p'}
\quad \text{in} \quad Q_{5}, 
\end{equation}
for any $2 \leq \beta \leq  n$ and $1 \leq j \leq N$.  From  Lemma \ref{CTFS800}, we find that
\begin{equation}\begin{aligned}\label{HOD090}
\left\| \nu^{p'} \right\|_{C^{\gamma}(Q_{5}^{k})} 
\leq c \sum_{k \in K} \Big[ \| u \|_{C^{m,\gamma}( Q_{7}^{k} )} 
+  \| F \|_{C^{m,\gamma}( Q_{7}^{k})} \Big].
\end{aligned}\end{equation}
From \eqref{ell} and that $\pi_{1}=-1$, we have that
\begin{equation}\label{HOD140} 
\sum_{ 1 \leq i,j \leq N } \sum_{1 \leq \alpha,  \beta \leq n } A^{\alpha \beta}_{ij}\pi_{\alpha} \pi_{\beta} \zeta_{i} \zeta_{j} \geq \lambda \left| \left( \pi_{1}, \cdots, \pi_{n} \right) \right|^{2} |\zeta|^{2} \geq \lambda |\zeta|^{2} 
\quad \text{in} \quad Q_{5}.
\end{equation}

\begin{lem}\label{HODS100}
For $U_{\alpha}^{i,p'}$ and $G_{\alpha}^{i,p'}$ in \eqref{HOD020} $(1 \leq \alpha \leq n, \ 1 \leq i \leq N)$, we have that 
\begin{equation*}\begin{aligned}
& \sum_{1 \leq j \leq N } \sum_{1 \leq \alpha, \beta \leq n } A^{\alpha \beta}_{ij}\pi_{\alpha} \pi_{\beta}    \tilde{u}_{1}^{j,p'} \\
& \quad = G_{1}^{i,p'} - U_{1}^{i,p'} + \sum_{  1 \leq \alpha \leq n  } \pi_{\alpha} \left [ \nu_{\alpha}^{i,p'} +  \sum_{ 1 \leq j \leq N } \sum_{2 \leq \beta  \leq n }  A^{\alpha \beta}_{ij}\left( U_{\beta}^{i,p'} - G_{\beta}^{i,p'} \right) \right]
~ \text{ in } ~ Q_{5},
\end{aligned}\end{equation*}
for any $1 \leq i \leq N$.
\end{lem}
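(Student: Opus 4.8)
The plan is to obtain this identity by a purely algebraic rearrangement of the already-established pointwise formulas \eqref{HOD080} and \eqref{HOD085}, using only the normalization $\pi_1 = -1$ recalled from \eqref{GS410}; no analytic estimate enters here.

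First I would start from \eqref{HOD080} and isolate the part that is quadratic in $\pi$. Rewriting \eqref{HOD080} gives
\begin{equation*}
G_1^{i,p'} - U_1^{i,p'} + \sum_{1 \leq \alpha \leq n} \pi_\alpha \nu_\alpha^{i,p'} = - \sum_{1 \leq \alpha \leq n} \pi_\alpha \sum_{1 \leq j \leq N} \sum_{1 \leq \beta \leq n} A^{\alpha \beta}_{ij} \tilde{u}_\beta^{j,p'} \quad \text{in } Q_5,
\end{equation*}
and then I would split the inner $\beta$-sum on the right-hand side into the single term $\beta = 1$ and the terms $\beta = 2, \dots, n$.

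For the indices $\beta \in \{2, \dots, n\}$ I would substitute \eqref{HOD085} in the form $\tilde{u}_\beta^{j,p'} = U_\beta^{j,p'} - G_\beta^{j,p'} - \pi_\beta \tilde{u}_1^{j,p'}$, which turns that part of the sum into $- \sum_\alpha \pi_\alpha \sum_j \sum_{2 \leq \beta \leq n} A^{\alpha \beta}_{ij}\bigl(U_\beta^{j,p'} - G_\beta^{j,p'}\bigr)$ together with the extra piece $\sum_\alpha \sum_{2 \leq \beta \leq n} \pi_\alpha \pi_\beta \sum_j A^{\alpha \beta}_{ij} \tilde{u}_1^{j,p'}$. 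For the remaining term $\beta = 1$, the identity $\pi_1 = -1$ lets me write $- \sum_\alpha \pi_\alpha \sum_j A^{\alpha 1}_{ij} \tilde{u}_1^{j,p'} = \sum_\alpha \pi_\alpha \pi_1 \sum_j A^{\alpha 1}_{ij} \tilde{u}_1^{j,p'}$. Adding this to the extra piece just produced supplies the missing $\beta = 1$ summand and reconstitutes the full double sum $\sum_{\alpha, \beta} \pi_\alpha \pi_\beta \sum_j A^{\alpha \beta}_{ij} \tilde{u}_1^{j,p'}$, which is exactly the left-hand side of the claimed identity; transposing the remaining terms to the other side yields the lemma.

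Since every step is a bookkeeping manipulation of equalities that hold pointwise in $Q_5$, there is essentially no genuine obstacle here. The only point demanding care is keeping straight which summations range over $1, \dots, n$ and which over $2, \dots, n$, and invoking $\pi_1 = -1$ at precisely the right moment so that the isolated $\beta = 1$ term merges correctly into the quadratic form $A^{\alpha \beta}_{ij} \pi_\alpha \pi_\beta$ that also appears in \eqref{HOD140}.
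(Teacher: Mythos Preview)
Your proposal is correct and follows essentially the same approach as the paper's own proof: both argue by substituting the expression $\tilde{u}_\beta^{j,p'} = U_\beta^{j,p'} - G_\beta^{j,p'} - \pi_\beta \tilde{u}_1^{j,p'}$ from \eqref{HOD085} into \eqref{HOD080}, using $\pi_1=-1$ to reconstitute the full quadratic form $\sum_{\alpha,\beta} A^{\alpha\beta}_{ij}\pi_\alpha\pi_\beta\,\tilde{u}_1^{j,p'}$, and then rearranging. Your write-up is in fact slightly more explicit than the paper's about where $\pi_1=-1$ enters, which is helpful.
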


\begin{proof}
For any $2 \leq \beta \leq n$ and $1 \leq j \leq N$, \eqref{HOD085} yields that
\begin{equation*}\label{} 
\tilde{u}_{\beta}^{j,p'}
=  U_{\beta}^{j,p'} - G_{\beta}^{j,p'} - \pi_{\beta}  \tilde{u}_{1}^{j,p'}
\quad \text{in} \quad Q_{5}.
\end{equation*}
So by \eqref{HOD080} and that $\pi_{1} = -1$, 
\begin{equation*}\begin{aligned}\label{}
U_{1}^{i,p'}
& = G_{1}^{i,p'} +  \sum_{  1 \leq \alpha \leq n  } \pi_{\alpha} \sum_{ 1 \leq j \leq N } \sum_{2 \leq \beta  \leq n }  A^{\alpha \beta}_{ij}\left( U_{\beta}^{i,p'} - G_{\beta}^{i,p'} \right) \\
& \quad + \sum_{ 1 \leq \alpha \leq n } \pi_{\alpha} \left[ \nu_{\alpha}^{i,p'} - \sum_{1 \leq j \leq N } \sum_{1 \leq \beta \leq n} \pi_{\beta} A^{\alpha \beta}_{ij} \tilde{u}_{1}^{j,p} \right]
& \quad \text{in} \quad  Q_{5}
\end{aligned}\end{equation*}
for any $1 \leq i \leq N$, which implies that
\begin{equation*}\begin{aligned}
& \sum_{1 \leq j \leq N } \sum_{1 \leq \alpha,  \beta \leq n } A^{\alpha \beta}_{ij}\pi_{\alpha} \pi_{\beta}    \tilde{u}_{1}^{j,p'} \\
& \quad = G_{1}^{i,p'} - U_{1}^{i,p'} + \sum_{  1 \leq \alpha \leq n  } \pi_{\alpha} \left [ \nu_{\alpha}^{i,p'} +  \sum_{ 1 \leq j \leq N } \sum_{2 \leq \beta  \leq n }  A^{\alpha \beta}_{ij}\left( U_{\beta}^{i,p'} - G_{\beta}^{i,p'} \right) \right]
~ \text{ in } ~ Q_{5},
\end{aligned}\end{equation*}
for any $1 \leq i \leq N$. So the lemma follows.
\end{proof}

To use \eqref{HOD040} and Lemma \ref{HODS100}, for fixed $p' \geq 0'$ with $|p'|=m$, set
\begin{equation}\label{HOD270}
\bb^{p'}[z,w] = \left( \bb^{p'} _{1}[z,w], \cdots, \bb^{p'} _{n}[z,w] \right)
\qquad (z,w \in Q_{5}),
\end{equation}
where  
\begin{equation*}\begin{aligned}\label{}
\bb^{j,p'} _{1}[z,w] 
& =  \sum_{0' \leq q' \leq p' } 
\left( \begin{array}{c} p' \\ q' \end{array} \right)
\pi^{q'}(z) D^{ (|q'|,p'-q') }  D_{1}u^{j} (w) 
\end{aligned}\end{equation*}
and
\begin{equation*}\label{}
\bb_{\beta}^{j,p'}[z,w] = \sum_{0' \leq q' \leq p' } 
\left( \begin{array}{c} p' \\ q' \end{array} \right)
\pi^{q'}(z)
\Big[ D^{ (|q'|,p'-q') }  D_{\beta}u^{j}(w) + \pi_{\beta}(z) D^{ (|q'|,p'-q') }  D_{1}u^{j}(w)  \Big],
\end{equation*}
for any $2 \leq \beta \leq n$ and $1 \leq j \leq N$. Then we have that
\begin{equation}\label{HOD280}
\bb_{\beta}^{j,p'}[w,w] 
= \tilde{u}_{\beta}^{j,p'}(w) + [1-\delta_{\beta1}]  \pi_{\beta}(w) \tilde{u}_{1}^{j,p'}(w) 
\quad (1 \leq  \beta \leq n, \ 1 \leq j \leq N ),
\end{equation}
for any $w \in Q_{5}$. So it follows from Lemma \ref{GSS600} that
\begin{equation}\label{HOD285}
\left|  \bb^{p'}[w,w] \right|
\leq c \left| \tilde{u} ^{p'}(w) \right|
\end{equation}
and
\begin{equation}\label{HOD287}
\left| \bb^{p'} [w,w] - \bb^{p'} [z,z] \right|
\leq c \left[ \left| \tilde{u}^{p'} (w) - \tilde{u}^{p'}(z) \right| + |z-w|^{\gamma} \left| \tilde{u}^{p'}(z) \right| \right] 
\end{equation}
for any $w,z \in Q_{5}$.

\begin{lem}\label{HODS290}
For any $z,w \in Q_{5}^{k}$ $(k \in K)$ and $p' \geq 0'$ with $|p'|=m$, we have that
\begin{equation}\label{HOD292}
 \left| \bb^{p'} [z,w] - \bb^{p'} [w,w] \right|
\leq c |z-w|^{\gamma} \left| \tilde{u}^{p'}(w) \right|,
\end{equation}
\begin{equation}\label{HOD294} 
\left| \bb^{p'} [z,z] \right|
\leq c \left[  \left| U^{p'}(z) \right| +  \sum_{k \in K} \Big[ \| u \|_{C^{m,\gamma} (Q_{7}^{k}) }  +  \| F  \|_{C^{m,\gamma} (Q_{7}^{k}) }  \Big] \right],
\end{equation}
and
\begin{equation}\begin{aligned}\label{HOD296}
\left| \bb^{p'} [w,w] - \bb^{p'} [z,z] \right|
& \leq c  \left[  \left| U^{p'}(w) - U^{p'}(z)\right| 
+  |w-z|^{\gamma} \left| U^{p'}(z) \right| \right]  \\
& \quad + c |w-z|^{\gamma} \sum_{k \in K} \left[  \| u \|_{C^{m,\gamma} (Q_{7}^{k}) }  +  \| F \|_{C^{m,\gamma} (Q_{7}^{k}) }  \right].
\end{aligned}\end{equation}
In addition, we have that
\begin{equation}\label{HOD297}
\left| \tilde{u}^{p'} \right| 
\leq c \left[ \left| U^{p'} \right| + \left| G^{p'} \right|  + \left| \nu^{p'} \right|  \right]
\quad \text{ in } \quad Q_{5}.
\end{equation}
\end{lem}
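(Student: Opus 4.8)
The four inequalities fall into two groups. Inequalities \eqref{HOD297}, \eqref{HOD294} and \eqref{HOD296} are essentially linear-algebraic consequences of the identities \eqref{HOD080}, \eqref{HOD085}, \eqref{HOD280}, \eqref{HOD285}, \eqref{HOD287} and of Lemma \ref{HODS100}, combined with the coercivity \eqref{HOD140}, the H\"{o}lder continuity of $\pi'$ from Lemma \ref{GSS600}, and the $C^{\gamma}$-bounds \eqref{HOD065}, \eqref{HOD090} for $G^{p'}$ and $\nu^{p'}$; inequality \eqref{HOD292} is the one that carries real content. For \eqref{HOD297} I would read Lemma \ref{HODS100} as the identity $M_{ij}\tilde{u}^{j,p'}_{1}=(\text{a linear combination of the entries of }U^{p'},G^{p'},\nu^{p'})$, where $M_{ij}=\sum_{\alpha,\beta}A^{\alpha\beta}_{ij}\pi_{\alpha}\pi_{\beta}$ and the right-hand side is bounded by $c\bigl[\,|U^{p'}|+|G^{p'}|+|\nu^{p'}|\,\bigr]$ using $|\pi'|\le c$ and $|A|\le\Lambda$. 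By \eqref{HOD140} one has $\langle\zeta,M\zeta\rangle\ge\lambda|\zeta|^{2}$, hence $|M\zeta|\ge\lambda|\zeta|$, so that $|\tilde{u}^{p'}_{1}|\le c\bigl[\,|U^{p'}|+|G^{p'}|+|\nu^{p'}|\,\bigr]$; feeding this into \eqref{HOD085} and using $|\pi_{\beta}|\le c$ gives the same bound for $\tilde{u}^{p'}_{\beta}$ with $\beta\ge 2$, which is \eqref{HOD297}.

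For \eqref{HOD294} I would combine \eqref{HOD285}, i.e.\ $|\bb^{p'}[z,z]|\le c|\tilde{u}^{p'}(z)|$, with \eqref{HOD297} evaluated at $z$ and with \eqref{HOD065}, \eqref{HOD090}, which bound $|G^{p'}(z)|+|\nu^{p'}(z)|$ by $cE$ (the point $z$ lies in exactly one cell $Q^{k}_{5}$, on which $G^{p'}$ and $\nu^{p'}$ are $C^{\gamma}$). For \eqref{HOD296} I would start from \eqref{HOD287}, which bounds $|\bb^{p'}[w,w]-\bb^{p'}[z,z]|$ by $c\bigl[\,|\tilde{u}^{p'}(w)-\tilde{u}^{p'}(z)|+|w-z|^{\gamma}|\tilde{u}^{p'}(z)|\,\bigr]$, and then estimate the increment $\tilde{u}^{p'}(w)-\tilde{u}^{p'}(z)$ for $z,w$ in a common cell $Q^{k}_{5}$. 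Writing $\tilde{u}^{p'}$, through Lemma \ref{HODS100} and \eqref{HOD085}, as an expression that is rational in $\pi$ (with denominator bounded below by $\lambda$ thanks to \eqref{HOD140}) and affine in $(U^{p'},G^{p'},\nu^{p'})$, I would split this increment into the part generated by $U^{p'}(w)-U^{p'}(z)$ (controlled by $c|U^{p'}(w)-U^{p'}(z)|$), the part generated by $G^{p'}(w)-G^{p'}(z)$ and $\nu^{p'}(w)-\nu^{p'}(z)$ (controlled by $c|w-z|^{\gamma}E$ via \eqref{HOD065}, \eqref{HOD090}), and the part generated by $\pi(w)-\pi(z)$ and the coefficients of $A$ (controlled by $c|w-z|^{\gamma}(|U^{p'}(z)|+E)$ via Lemma \ref{GSS600} and the H\"{o}lder continuity \eqref{cof} of $A$). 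This gives $|\tilde{u}^{p'}(w)-\tilde{u}^{p'}(z)|\le c\bigl[\,|U^{p'}(w)-U^{p'}(z)|+|w-z|^{\gamma}(|U^{p'}(z)|+E)\,\bigr]$, which together with $|\tilde{u}^{p'}(z)|\le c\bigl[\,|U^{p'}(z)|+E\,\bigr]$ (established in the proof of \eqref{HOD294}) yields \eqref{HOD296}.

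The substantial step is \eqref{HOD292}. Since $\bb^{p'}[z,w]$ and $\bb^{p'}[w,w]$ differ only in the $\pi$-arguments, I would regard $\bb^{j,p'}_{\beta}[\,\cdot\,,w]$ as the constant-coefficient differential operator $L_{\beta}(\cdot)\prod_{l=2}^{n}(D_{l}+\pi_{l}(\cdot)D_{1})^{p_{l}}$ --- with $L_{1}=D_{1}$ and $L_{\beta}=D_{\beta}+\pi_{\beta}(\cdot)D_{1}$ for $\beta\ge2$ --- applied to $u^{j}$ and evaluated at $w$, and pass from $\pi(z)$ to $\pi(w)$ one factor at a time. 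Each such step produces a factor $\pi_{i}(z)-\pi_{i}(w)$, which is $\le c|z-w|^{\gamma}$ by Lemma \ref{GSS600} (recall $\gamma\le 1/4<1/2$), multiplied by an $(m{+}1)$-st derivative of $u^{j}$ at $w$ that contains at least one $D_{1}$; in particular the top-order purely tangential derivatives $D^{p'}_{x'}D_{\beta}u^{j}(w)$ ($2\le\beta\le n$) drop out entirely. It then remains to re-express this combination through $\tilde{u}^{p'}(w)$: for this I would use that $u$ solves \eqref{main equation} in $Q^{k}_{5}$, i.e.\ that $\sum_{\alpha}D_{\alpha}U^{i,k}_{\alpha}=0$ there (cf.\ \eqref{PET235}), differentiate this relation $|p'|-1$ times so that, together with the components of $\tilde{u}^{p'}(w)$, it becomes a linear system for the relevant $(m{+}1)$-st derivatives of $u^{j}$ whose inhomogeneity involves only $\tilde{u}^{p'}(w)$ and terms of size $cE$ (precisely those recorded in $\nu^{p'}$), and then invert it by the coercivity \eqref{HOD140}, exactly as in Lemma \ref{HODS100}.

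I expect this last reduction to be the main obstacle. One has to pin down exactly which $(m{+}1)$-st derivatives of $u^{j}$ appear after the telescoping, match them against the components of $\tilde{u}^{p'}(w)$ and the $|p'|-1$ differentiated copies of the equation, and verify that the determinant of the resulting system is bounded below by a positive multiple of $\lambda$; carrying out this bookkeeping uniformly in $p'$ and $k$, so that the constant depends only on $n,N,\lambda,\Lambda,\gamma$ and the geometric data, is the delicate part of the argument. The other three inequalities, by contrast, require only the elementary linear algebra indicated above.
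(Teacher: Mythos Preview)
Your treatment of \eqref{HOD297}, \eqref{HOD294} and \eqref{HOD296} is essentially the paper's: use Lemma~\ref{HODS100} and the coercivity \eqref{HOD140} to bound $\tilde u^{p'}_{1}$ by $|U^{p'}|+|G^{p'}|+|\nu^{p'}|$, lift to all of $\tilde u^{p'}$ via \eqref{HOD085}, and combine with \eqref{HOD285}, \eqref{HOD065}, \eqref{HOD090} for \eqref{HOD294}; for \eqref{HOD296} subtract the identity of Lemma~\ref{HODS100} at $w$ and at $z$, freeze $A$ and $\pi$ at $w$, apply \eqref{HOD140} to the resulting difference to control $|\tilde u^{p'}_{1}(w)-\tilde u^{p'}_{1}(z)|$, lift again through \eqref{HOD085}, and close with \eqref{HOD287} and \eqref{HOD297}. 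This matches the paper line for line.

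Where you go wrong is \eqref{HOD292}, which you single out as ``the substantial step'' and propose to handle by telescoping the $\pi$-factors and then solving a linear system built from $\tilde u^{p'}(w)$ together with $m{-}1$ differentiations of the equation, inverted ``by the coercivity \eqref{HOD140}, exactly as in Lemma~\ref{HODS100}.'' In the paper this inequality is disposed of in one line: ``Since $\pi\in C^{\gamma}(Q_{5})$, we obtain \eqref{HOD292} from \eqref{HOD270}.'' Indeed, straight from the definition \eqref{HOD270}, the difference $\bb^{p'}[z,w]-\bb^{p'}[w,w]$ is a finite linear combination of $(m{+}1)$-st derivatives of $u$ at $w$ with coefficients of the form $\pi^{q'}(z)-\pi^{q'}(w)$ or $\pi_{\beta}(z)-\pi_{\beta}(w)$, each of size $\le c|z-w|^{\gamma}$ by Lemma~\ref{GSS600}. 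Strictly speaking this one-line argument delivers the right-hand side $c|z-w|^{\gamma}|D^{m+1}u(w)|$ rather than $c|z-w|^{\gamma}|\tilde u^{p'}(w)|$, and it is precisely this weaker form that the paper actually invokes downstream (see the proof of Lemma~\ref{HODS600}); the appearance of $|\tilde u^{p'}(w)|$ in the stated \eqref{HOD292} should be read accordingly. Your proposed system, by contrast, is not the $N\times N$ system of Lemma~\ref{HODS100} to which \eqref{HOD140} applies, and for a single $p'$ there is no evident reason it is determined or uniformly invertible; but the point is moot, since no such inversion is needed here.
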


\begin{proof}
Since $\pi \in C^{\gamma}(Q_{5})$, we obtain \eqref{HOD292} from \eqref{HOD270}. By \eqref{HOD040} and Lemma \ref{HODS100},
\begin{equation*}\begin{aligned}
& \sum_{1 \leq i,j \leq N } \sum_{1 \leq \alpha,  \beta \leq n } A^{\alpha \beta}_{ij}\pi_{\alpha} \pi_{\beta}    \tilde{u}_{1}^{i,p'} \tilde{u}_{1}^{j,p'} \\
& \quad = \sum_{ 1 \leq i \leq N }    \tilde{u}_{1}^{i,p'} \left[ G_{1}^{i,p'} - U_{1}^{i,p'} + \sum_{  1 \leq \alpha \leq n  } \pi_{\alpha} \left( \nu_{\alpha}^{i,p'} +  \sum_{ 1 \leq j \leq N } \sum_{2 \leq \beta  \leq n }  A^{\alpha \beta}_{ij}\left( U_{\beta}^{i,p'} - G_{\beta}^{i,p'} \right) \right) \right]
\end{aligned}\end{equation*}
in $Q_{5}$. It follows from \eqref{HOD140}  that 
\begin{equation*}\label{}
\lambda \left|   \tilde{u}_{1}^{p'}  \right|^{2}
\leq \sum_{1 \leq i,j \leq N } \sum_{1 \leq \alpha,  \beta \leq n } A^{\alpha \beta}_{ij}\pi_{\alpha} \pi_{\beta}    \tilde{u}_{1}^{i,p'} \tilde{u}_{1}^{j,p'} 
\leq c \left|   \tilde{u}_{1}^{p'}  \right| \left[   \left| U^{p'} \right| + \left| G^{p'} \right| + \left| \nu^{p'} \right|  \right].
\end{equation*}
So \eqref{HOD297} holds from  \eqref{HOD085}. Also with \eqref{HOD285} and \eqref{HOD297}, \eqref{HOD294} holds from  \eqref{HOD065} and \eqref{HOD090}. To show \eqref{HOD296}, investigate that
\begin{equation*}\begin{aligned}
& \sum_{1 \leq j \leq N } \sum_{1 \leq \alpha,  \beta \leq n } A_{\alpha \beta}^{ij}(w) \pi_{\alpha}(w) \pi_{\beta}(w)  \left[ \tilde{u}_{1}^{j,p'}(w) -  \tilde{u}_{1}^{j,p'}(z) \right] \\
& \quad = \sum_{1 \leq j \leq N } \sum_{1 \leq \alpha,  \beta \leq n } 
\left[ A_{\alpha \beta}^{ij}(w) \pi_{\alpha}(w) \pi_{\beta}(w) \tilde{u}_{1}^{j,p'}(w) -  A_{\alpha \beta}^{ij}(z) \pi_{\alpha}(z) \pi_{\beta}(z) \tilde{u}_{1}^{j,p'}(z) \right]  \\
& \qquad + \sum_{1 \leq j \leq N } \sum_{1 \leq \alpha,  \beta \leq n } 
\left[ A_{\alpha \beta}^{ij}(z) \pi_{\alpha}(z) \pi_{\beta}(z) \tilde{u}_{1}^{j,p'}(z) -  A_{\alpha \beta}^{ij}(w) \pi_{\alpha}(w) \pi_{\beta}(w) \tilde{u}_{1}^{j,p'}(z) \right].
\end{aligned}\end{equation*}
Recall that $A_{\alpha \beta}^{ij}, \pi_{\alpha} \in C^{\gamma} \left( Q_{5}^{k} \right)$ $( k \in K)$. Then we have from Lemma \ref{HODS100} that 
\begin{equation*}\begin{aligned}
& \sum_{1 \leq i,j \leq N } \sum_{1 \leq \alpha,  \beta \leq n } A_{\alpha \beta}^{ij}(w) \pi_{\alpha}(w) \pi_{\beta}(w)  \left[ \tilde{u}_{1}^{i,p'}(w) -  \tilde{u}_{1}^{i,p'}(z) \right]   \left[ \tilde{u}_{1}^{j,p'}(w) -  \tilde{u}_{1}^{j,p'}(z) \right] \\
& \quad \leq c \left| \tilde{u}_{1}^{p'} (w) - \tilde{u}_{1}^{p'}(z) \right| 
 \left[ \big| U^{p'}(w) - U^{p'}(z)\big| + \big| G^{p'}(w) - G^{p'}(z) \big| + \big| \nu^{p'}(w) - \nu^{p'}(z) \big| \right] \\
& \qquad  + c  |w-z|^{\gamma}  \left| \tilde{u}_{1}^{p'} (w) - \tilde{u}_{1}^{p'}(z) \right|
\left| \tilde{u}^{p'}(z) \right|,
\end{aligned}\end{equation*}
for any $ w, z \in Q_{5}^{k}$ $(k \in K)$. It follows from \eqref{HOD140} that
\begin{equation*}\begin{aligned}\label{}
\left| \tilde{u}_{1}^{p'}(w) -  \tilde{u}_{1}^{p'}(z) \right|
& \leq c   \left[ \big| U^{p'}(w) - U^{p'}(z)\big| + \big| G^{p'}(w) - G^{p'}(z) \big| + \big| \nu^{p'}(w) - \nu^{p'}(z) \big| \right]  \\
& \quad + c |w-z|^{\gamma}  \left| \tilde{u}^{p'}(z) \right|,
\end{aligned}\end{equation*}
for any $ w, z \in Q_{5}^{k}$ $(k \in K)$. So  by \eqref{HOD085} and that $\pi_{\alpha} \in C^{\gamma} \left( Q_{5}^{k} \right)$ $( k \in K)$,
\begin{equation*}\begin{aligned}\label{}
\left| \tilde{u}^{p'}(w) -  \tilde{u}^{p'}(z) \right|
& \leq c   \left[ \big| U^{p'}(w) - U^{p'}(z)\big| + \big| G^{p'}(w) - G^{p'}(z) \big| + \big| \nu^{p'}(w) - \nu^{p'}(z) \big| \right]  \\
& \quad + c |w-z|^{\gamma} \left| \tilde{u}^{p'}(z) \right|,
\end{aligned}\end{equation*}
for any $ w, z \in Q_{5}^{k}$ $(k \in K)$.  It follows from \eqref{HOD287} and \eqref{HOD297} that 
\begin{equation*}\begin{aligned}\label{}
& \left| \bb^{p'} [w,w] - \bb^{p'} [z,z] \right|  \\
& \quad \leq c \left[ \big| U^{p'}(w) - U^{p'}(z)\big| + \big| G^{p'}(w) - G^{p'}(z) \big| + \big| \nu^{p'}(w) - \nu^{p'}(z) \big|  \right] \\
& \qquad  + c |z-w|^{\gamma}  \left[ \big| U^{p'}(z)\big| + \big| G^{p'}(z) \big| + \big|  \nu^{p'}(z) \big|  \right],
\end{aligned}\end{equation*}
for any $ w, z \in Q_{5}^{k}$ $(k \in K)$.  So \eqref{HOD296} follows from \eqref{HOD065} and \eqref{HOD090}.
\end{proof}

To derive Lemma \ref{HODS550} and Lemma \ref{HODS900}, we obtain the estimates with respect to $y$-coordinate system as in Lemma \ref{HODS200}.  

\begin{lem}\label{HODS200}
For any $z \in Q_{5}$, let $\Psi_{z} : Q_{5} \to \br^{n}$ be a coordinate transformation with  $y = \Psi_{z}(x) = \big( \Psi_{z}^{1}(x), \Psi_{z}'(x) \big)$ and $\Phi_{z}^{-1} = \Psi_{z}$ such  that
\begin{equation*}\label{} 
\Psi_{z}^{1}(x) = x^{1} - z^{1} - \pi'(z) \cdot (x'-z')
\qquad \text{and} \qquad
\Psi_{z}'(x) = x'-z'.
\end{equation*} 
Then for any $\alpha,\beta \in \{ 2, \cdots, n \}$, we have that 
\begin{equation*}\label{} 
\frac{ \partial x^{1} }{ \partial y^{1} } = 1, 
\qquad
\frac{ \partial x^{1} }{ \partial y^{\beta} } =  \pi_{\beta}(z),
\qquad
\frac{ \partial x^{\alpha} }{ \partial y^{1} } 
= 0,
\qquad
\frac{ \partial x^{\alpha} }{ \partial y^{\beta} } = \delta_{ij},
\end{equation*}
and
\begin{equation*}\label{} 
\frac{ \partial y^{1} }{ \partial x^{1} } = 1, 
\qquad
\frac{ \partial y^{1} }{ \partial x^{\beta} } = -  \pi_{\beta}(z),
\qquad
\frac{ \partial y^{\alpha} }{ \partial x^{1} } 
= 0,
\qquad
\frac{ \partial y^{\alpha} }{ \partial x^{\beta} } = \delta_{ij}.
\end{equation*}
\end{lem}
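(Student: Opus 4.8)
The plan is to treat $\Psi_z$ as what it is, an affine change of variables with unipotent linear part, write down its inverse $\Phi_z$ explicitly, and then read off all four blocks of identities by differentiating the two formulas componentwise; since every partial derivative produced this way is a constant, no limits or estimates are involved.

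First I would unravel the definition. Writing $y=(y^1,y')=\Psi_z(x)$, the hypotheses say $y^1 = x^1 - z^1 - \pi'(z)\cdot(x'-z') = x^1 - z^1 - \sum_{\beta=2}^{n}\pi_\beta(z)(x^\beta-z^\beta)$ and $y^\alpha = x^\alpha - z^\alpha$ for $\alpha\in\{2,\dots,n\}$, where $\pi'(z)=(\pi_2(z),\dots,\pi_n(z))$ is a \emph{fixed} vector because the transformation is centered at the single point $z$. Differentiating these expressions directly in the $x$-variables yields $\partial y^1/\partial x^1 = 1$, $\partial y^1/\partial x^\beta = -\pi_\beta(z)$, $\partial y^\alpha/\partial x^1 = 0$ and $\partial y^\alpha/\partial x^\beta = \delta_{\alpha\beta}$ for $\alpha,\beta\in\{2,\dots,n\}$, which is exactly the second block of identities in the statement.

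Next I would invert the map. Since $x'-z'=y'$, substituting back into the formula for $y^1$ gives $x^1 = y^1 + z^1 + \pi'(z)\cdot y' = y^1 + z^1 + \sum_{\beta=2}^{n}\pi_\beta(z)\,y^\beta$, together with $x^\alpha = y^\alpha + z^\alpha$ for $\alpha\in\{2,\dots,n\}$. This is precisely the map $\Phi_z$ with $\Phi_z^{-1}=\Psi_z$, and it confirms in passing that $\Psi_z$ is a bijection of $Q_5$ onto its image (its linear part is triangular with unit diagonal). Differentiating this expression in the $y$-variables gives $\partial x^1/\partial y^1 = 1$, $\partial x^1/\partial y^\beta = \pi_\beta(z)$, $\partial x^\alpha/\partial y^1 = 0$ and $\partial x^\alpha/\partial y^\beta = \delta_{\alpha\beta}$, which is the first block.

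There is no real obstacle here; the lemma is a bookkeeping statement. The only point worth a word of care is that $\pi'(z)$ does not depend on the running variable $x$, so all the displayed derivatives are genuine constants and the two Jacobian matrices are mutually inverse, as a one-line multiplication confirms. This is exactly the form in which the change of coordinates will be used in Lemma \ref{HODS550} and Lemma \ref{HODS900}, where one needs to pass between the $x$- and $y$-coordinate systems when differentiating.
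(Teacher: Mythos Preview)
Your proof is correct and follows essentially the same approach as the paper: write out $y=\Psi_z(x)$ and its inverse $x=\Phi_z(y)$ explicitly as affine maps with the constant vector $\pi'(z)$, then differentiate componentwise. The only cosmetic difference is the order in which the two blocks are handled.
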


\begin{proof}
Since $x^{1} = y^{1} + z^{1} + \pi'(z) \cdot y'$ and $x' = y' + z'$, we find that 
\begin{equation*}\label{} 
\frac{ \partial x^{1} }{ \partial y^{1} } = 1, 
\quad
\frac{ \partial x^{1} }{ \partial y^{\beta} } =  \pi_{\beta}(z),
\quad
\frac{ \partial x^{\alpha} }{ \partial y^{1} } 
= 0 
\quad \text{and} \quad
\frac{ \partial x^{\alpha} }{ \partial y^{\beta} } = \delta_{\alpha \beta}
\qquad (\alpha,\beta \in \{ 2, \cdots, n \} ).
\end{equation*}
Since $y^{1} = x^{1} - \pi'(z) \cdot (x'-z')$ and $y' = x' - z'$, we find that
\begin{equation*}\label{} 
\frac{ \partial y^{1} }{ \partial x^{1} } = 1, 
\quad
\frac{ \partial y^{1} }{ \partial x^{\beta} } = -  \pi_{\beta}(z),
\quad
\frac{ \partial y^{\alpha} }{ \partial x^{1} } 
= 0 
\quad \text{and} \quad
\frac{ \partial y^{\alpha} }{ \partial x^{\beta} } = \delta_{\alpha \beta}
\qquad (\alpha,\beta \in \{ 2, \cdots, n \} ).
\end{equation*}
This completes the proof.
\end{proof}

\begin{lem}\label{HODS250}
For $z \in Q_{5}$,  let $y=\Psi_{z}(x)$ and $\Phi_{z}^{-1} = \Psi_{z}$  as in Lemma \ref{HODS200}. Also let
\begin{equation}\label{HOD253}
\hat{A}^{\alpha \beta}_{ij}(y) = \sum_{1 \leq s, t \leq n} A^{st}_{ij} \big( \Phi_{z}(y) \big)   \frac{ \partial y^{\alpha} }{ \partial x^{s}} \frac{ \partial y^{\beta} }{ \partial x^{t} } \big( \Phi_{z}(y) \big)
\quad \text{in} \quad
\Psi_{z}(Q_{5}),
\end{equation}
and
\begin{equation}\label{HOD256}
\hat{F}_{\alpha}(y) = \frac{ \partial y^{\alpha} }{ \partial x^{s} } \big( \Phi_{z}(y) \big)  F_{s} \big( \Phi_{z}(y) \big)
\quad \text{in} \quad
\Psi_{z}(Q_{5}).
\end{equation}
Then for $\hat{u}(y) = u \big( \Phi_{z}(y) \big)$, we have that  
\begin{equation}\label{HOD260} 
D_{y^{\alpha}} \left[ \hat{A}^{\alpha \beta}_{ij}D_{y^{\beta}}\hat{u}^{j} \right]
= D_{y^{\alpha}} \hat{F}_{\alpha}^{i}  
\quad \text{in} \quad
\Psi_{z}(Q_{5}),
\end{equation}
and $\hat{A}^{\alpha \beta}_{ij}, \hat{u}, \hat{F} \in C^{m,\gamma} \left( \Psi_{z} \left( Q_{5}^{k} \right) \right)$ for any $k \in K$ with the estimate that
\begin{equation}\label{HOD263} 
\| \hat{u} \|_{C^{m,\gamma} \left( \Psi_{z}(Q_{5}^{k}) \right)}  +  \big \| \hat{F} \big \|_{C^{m,\gamma} \left( \Psi_{z}(Q_{5}^{k}) \right)}  
\leq c \Big[ \| u \|_{C^{m,\gamma} (Q_{5}^{k}) }  +  \big \| F \big \|_{C^{m,\gamma} (Q_{5}^{k}) } \Big].
\end{equation}
\end{lem}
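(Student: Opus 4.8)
The plan is to treat $\Psi_z$ as an affine change of variables with unit Jacobian determinant, so that the transformation of the weak formulation is the classical one, and to read off the regularity from the fact that precomposition with an affine map whose linear part has uniformly bounded norm preserves $C^{m,\gamma}$-norms.

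First I would record the elementary structure of $\Psi_z$ and $\Phi_z=\Psi_z^{-1}$. By Lemma \ref{HODS200} every partial derivative $\partial y^\alpha/\partial x^s$ and $\partial x^\alpha/\partial y^\beta$ is constant in $x$ (it depends only on $\pi'(z)$), so both maps are affine; from \eqref{GS260} and \eqref{gradient_graphs} one has $|\pi_\beta(z)|\le 1/(20n)$, hence both Jacobian matrices have operator norm bounded by a universal constant, and the matrix $(\partial x^\alpha/\partial y^\beta)$ is triangular with ones on the diagonal, so its determinant is $1$. Moreover $\Psi_z$ is a shear in the $x^1$-direction whose shift depends only on $x'$, so it carries each graph $\{x^1=\varphi_k(x')\}$ to a graph $\{y^1=\varphi_k(y'+z')-z^1-\pi'(z)\cdot y'\}$ in the $y$-coordinates; consequently it maps each strip $Q_5^k$ bijectively onto a strip between two such graphs, which makes the set $\Psi_z(Q_5^k)$ and the norms in \eqref{HOD263} meaningful.

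Next I would derive \eqref{HOD260}. Given $\hat\eta\in C_c^\infty(\Psi_z(Q_5),\br^N)$, put $\eta=\hat\eta\circ\Psi_z\in C_c^\infty(Q_5,\br^N)$; by the chain rule $D_{x^s}\eta^i=\sum_\alpha (\partial y^\alpha/\partial x^s)\,(D_{y^\alpha}\hat\eta^i)\circ\Psi_z$ and, since $u\in W^{1,2}(Q_7)$, likewise $D_{x^t}u^j=\sum_\beta (\partial y^\beta/\partial x^t)\,(D_{y^\beta}\hat u^j)\circ\Psi_z$ almost everywhere. Inserting these into the weak formulation $\int_{Q_5}\big[A^{st}_{ij}D_{x^t}u^j-F^i_s\big]D_{x^s}\eta^i\,dx=0$, changing variables by $x=\Phi_z(y)$ (for which $dx=dy$), and using the definitions \eqref{HOD253} and \eqref{HOD256}, the integral becomes $\int_{\Psi_z(Q_5)}\big[\hat A^{\alpha\beta}_{ij}D_{y^\beta}\hat u^j-\hat F^i_\alpha\big]D_{y^\alpha}\hat\eta^i\,dy=0$ for all such $\hat\eta$, which is exactly the weak form of \eqref{HOD260}. (The ellipticity and growth conditions for $\hat A$ transfer by the same substitution, but only its regularity is needed here.)

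Finally, for the regularity and \eqref{HOD263}: since $\Phi_z$ is affine with linear part of norm $\le c$, for any $g\in C^{m,\gamma}(Q_5^k)$ the chain rule gives $g\circ\Phi_z\in C^{m,\gamma}(\Psi_z(Q_5^k))$ with $\|g\circ\Phi_z\|_{C^{m,\gamma}(\Psi_z(Q_5^k))}\le c\|g\|_{C^{m,\gamma}(Q_5^k)}$, the constant depending only on $m$ and the universal bound on $|\pi'(z)|$. Applying this to $g=u$, to $g=F_s$, and to $g=A^{st}_{ij}$ (the last restricted to each strip, where it coincides with the globally $C^{m,\gamma}$ function $A^{st,k}_{ij}$), and noting from \eqref{HOD253}--\eqref{HOD256} that $\hat A$, $\hat F$, $\hat u$ are finite linear combinations of such compositions with constant coefficients of size $\le c$, one obtains $\hat A^{\alpha\beta}_{ij},\hat u,\hat F\in C^{m,\gamma}(\Psi_z(Q_5^k))$ together with \eqref{HOD263}. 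The only point requiring care --- more bookkeeping than a genuine obstacle --- is the matching of domains in \eqref{HOD263}, that is, that $\Phi_z$ maps $\Psi_z(Q_5^k)$ exactly onto $Q_5^k$, which is guaranteed by the shear structure of $\Psi_z$ recorded in the second step.
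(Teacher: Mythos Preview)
Your proof is correct and follows the same route as the paper's, only with considerably more detail: the paper's proof simply says ``by changing variables'' and then cites \eqref{PET130} and Lemma \ref{HODS200}, whereas you spell out the weak formulation, the unit Jacobian, and the affine preservation of $C^{m,\gamma}$-norms explicitly. There is nothing substantively different here.
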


\begin{proof}
Since $D_{x^{\alpha}} \left[  a_{ij} D_{x^{j}} u - F_{i} \right] = 0$ in $Q_{5}$, by changing variables,
\begin{equation*}
D_{y^{\alpha}} \left[ A^{st}_{ij}  \big( \Phi_{z}(y) \big)  \frac{ \partial y^{\alpha} }{ \partial x^{s}} \frac{ \partial y^{\beta} }{ \partial x^{t} } 
 D_{y^{\beta}} \hat{u} \right] =  D_{y^{\alpha}} \left[  \frac{ \partial y^{\alpha} }{ \partial x^{s} } F_{s}  \right]
\quad \text{in} \quad
\Psi_{z}(Q_{5}).
\end{equation*}
With \eqref{PET130} and Lemma \ref{HODS200}, the lemma holds by \eqref{HOD256} and that $\hat{u}(y) = u \big( \Phi(y) \big)$.
\end{proof}

\begin{lem}\label{HODS300}
For a fixed $z \in Q_{5}$,  let $y=\Psi_{z}(x)$,  $\Phi_{z}^{-1} = \Psi_{z}$ and $\hat{u}(y) = u \left( \Phi_{z}(y) \right)$ as in Lemma \ref{HODS200}. Then for any $w \in Q_{5}$ and $p' \geq 0'$ with $|p'| = m$, we have that
\begin{equation*}\label{} 
D_{y^{\beta}}D_{y'}^{p'}\hat{u} \big( \Psi_{z}(w) \big)
= \bb^{p'} _{\beta}[z,w]
\qquad \qquad ( \beta \in \{ 1, \cdots, n \} ).
\end{equation*}
\end{lem}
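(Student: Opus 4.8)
The plan is to exploit that, by Lemma \ref{HODS200}, $\Phi_z$ and $\Psi_z$ are affine maps whose Jacobians are \emph{constant} in $y$; consequently, differentiating the composition $\hat u(y)=u(\Phi_z(y))$ in the $y$-variables is the same as applying a constant-coefficient first-order operator to $u$ and then composing with $\Phi_z$. Iterating this and expanding by the binomial theorem will produce exactly the coefficients $\binom{p'}{q'}\pi^{q'}(z)$ occurring in $\bb^{p'}[z,w]$, and one final $y$-derivative will match the two cases $\beta=1$ and $\beta\ge 2$ in the definition of $\bb^{p'}_\beta$.

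First I would record the chain rule: by Lemma \ref{HODS200}, for any $C^1$ function $h$ and $\hat h(y)=h(\Phi_z(y))$ one has $D_{y^1}\hat h=(D_1h)\circ\Phi_z$ and, for $\beta\in\{2,\dots,n\}$, $D_{y^\beta}\hat h=\big((\pi_\beta(z)D_1+D_\beta)h\big)\circ\Phi_z$, since $\partial x^1/\partial y^\beta=\pi_\beta(z)$ is independent of $y$ while $\partial x^\alpha/\partial y^\beta=\delta_{\alpha\beta}$ and $\partial x^\alpha/\partial y^1=0$ for $\alpha\ge 2$. Writing $\mathcal L_\beta:=\pi_\beta(z)D_1+D_\beta$ and using that $D_1,\dots,D_n$ commute (under the approximation hypothesis \eqref{PET170} the relevant $(m{+}1)$-st order derivatives of $u$ exist on each subregion, so this is legitimate), the operators $\mathcal L_2,\dots,\mathcal L_n$ commute with one another and with $D_1$; hence an induction on $|p'|$ gives
\[
D_{y'}^{p'}\hat u(y)=\Big[\prod_{\beta=2}^{n}\mathcal L_\beta^{\,p_\beta}\,u\Big]\big(\Phi_z(y)\big).
\]
Expanding each factor via $\mathcal L_\beta^{\,p_\beta}=\sum_{q_\beta=0}^{p_\beta}\binom{p_\beta}{q_\beta}\pi_\beta(z)^{q_\beta}D_1^{q_\beta}D_\beta^{p_\beta-q_\beta}$ and multiplying out (again using commutativity) yields
\[
D_{y'}^{p'}\hat u(y)=\Big[\sum_{0'\le q'\le p'}\binom{p'}{q'}\pi^{q'}(z)\,D^{(|q'|,p'-q')}u\Big]\big(\Phi_z(y)\big),
\]
since $\binom{p'}{q'}=\binom{p_2}{q_2}\cdots\binom{p_n}{q_n}$, $\pi^{q'}(z)=\pi_2(z)^{q_2}\cdots\pi_n(z)^{q_n}$ and $D^{(|q'|,p'-q')}=D_1^{|q'|}D_2^{p_2-q_2}\cdots D_n^{p_n-q_n}$.

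Finally I would apply one further $y$-derivative to this identity. For $\beta=1$, $D_{y^1}$ passes through $\Phi_z$ trivially, so evaluating at $y=\Psi_z(w)$ (hence $\Phi_z(y)=w$) and using $D_1D^{(|q'|,p'-q')}u=D^{(|q'|,p'-q')}D_1u$ gives
\[
D_{y^1}D_{y'}^{p'}\hat u\big(\Psi_z(w)\big)=\sum_{0'\le q'\le p'}\binom{p'}{q'}\pi^{q'}(z)\,D^{(|q'|,p'-q')}D_1u(w)=\bb_1^{p'}[z,w].
\]
For $\beta\in\{2,\dots,n\}$, applying $D_{y^\beta}=\mathcal L_\beta(\cdot)\circ\Phi_z$ to the displayed formula produces, alongside $D^{(|q'|,p'-q')}D_\beta u(w)$, the extra term $\pi_\beta(z)D^{(|q'|,p'-q')}D_1u(w)$, which is precisely the defining expression for $\bb_\beta^{p'}[z,w]$; carrying this out component-by-component in $1\le j\le N$ finishes the proof. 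There is no substantive obstacle here: the only points needing care are the commutativity of the $D_i$'s that justifies the multinomial bookkeeping, and checking that $\hat u$ inherits from $u$ (via \eqref{PET170} and the extensions $u_k$) enough piecewise $C^{m+1,\gamma}$ regularity for all the displayed $(m{+}1)$-st order $y$-derivatives to be taken pointwise.
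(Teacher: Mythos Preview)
Your proof is correct and follows essentially the same route as the paper: both compute $D_{y'}^{p'}\hat u$ by expanding the chain rule via the constant Jacobian of Lemma~\ref{HODS200} into the sum $\sum_{0'\le q'\le p'}\binom{p'}{q'}\pi^{q'}(z)D^{(|q'|,p'-q')}u$, and then apply one more $y$-derivative to recover $\bb^{p'}_\beta[z,w]$. Your introduction of the commuting operators $\mathcal L_\beta=\pi_\beta(z)D_1+D_\beta$ is a convenient repackaging of exactly the same binomial bookkeeping the paper performs directly.
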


\begin{proof}
We prove the lemma for a fixed $p' \geq 0'$ with $|p'| = m$. By Lemma \ref{HODS200},
\begin{equation*}\label{} 
\frac{ \partial x^{1} }{ \partial y^{1} } = 1, 
\quad
\frac{ \partial x^{1} }{ \partial y^{\beta} } =  \pi_{\beta}(z),
\quad
\frac{ \partial x^{\alpha} }{ \partial y^{1} } 
= 0 
\quad \text{and} \qquad
\frac{ \partial x^{\alpha} }{ \partial y^{\beta} } = \delta_{\alpha \beta}
\qquad
(\alpha,\beta \in \{ 2, \cdots, n \}).
\end{equation*}
Fix $w \in Q_{5}$. By the chain rule,
\begin{equation*}\begin{aligned}\label{} 
& D_{y'}^{p'}\hat{u} \big( \Psi_{z}(w) \big) \\
& \ \ = \sum_{0 \leq q' \leq p'} 
\Big( \begin{array}{c} p' \\ q' \end{array} \Big) \,
\bigg( \frac{ \partial x^{1} }{ \partial y^{2} } \bigg)^{ q^{2} } 
\bigg( \frac{ \partial x^{2} }{ \partial y^{2} } \bigg)^{ p^{2} - q^{2} }
\cdots 
\bigg( \frac{ \partial x^{1} }{ \partial y^{n} } \bigg)^{ q^{n} }
\bigg( \frac{ \partial x^{n} }{ \partial y^{n} } \bigg)^{ p^{n} - q^{n} }
D_{x}^{(|q'|,p'-q')}u(w) \\
& \ \ = \sum_{0 \leq q' \leq p'} 
\Big( \begin{array}{c} p' \\ q' \end{array} \Big)
\pi^{q'}(z) D_{x}^{(|q'|,p'-q')}u(w) .
\end{aligned}\end{equation*}
Then we apply the chain rule again to find that  
\begin{equation*}\begin{aligned}\label{} 
D_{y^{1}}D_{y'}^{p'}\hat{u} \big( \Psi_{z}(w) \big)
= \sum_{0 \leq q' \leq p'} 
\Big( \begin{array}{c} p' \\ q' \end{array} \Big)
\pi^{q'}(z) D_{x^{1}}D_{x}^{(|q'|,p'-q')}u(w)
= \bb^{p'} _{1}[z,w],
\end{aligned}\end{equation*}
and
\begin{equation*}\begin{aligned}\label{} 
& D_{y^{\beta}}D_{y'}^{p'}\hat{u} \big( \Psi_{z}(w) \big) \\
& \quad = \sum_{0 \leq q' \leq p'} 
\Big( \begin{array}{c} p' \\ q' \end{array} \Big)
\pi^{q'}(z) \bigg[ \frac{ \partial x^{\beta} }{ \partial y^{\beta} } \cdot D_{x^{\beta}}D_{x}^{(|q'|,p'-q')}u(w) +  \frac{ \partial x^{1} }{ \partial y^{\beta} } \cdot  D_{1}D_{x}^{(|q'|,p'-q')}u(w) \bigg] \\
& \quad = \sum_{0 \leq q' \leq p'} 
\Big( \begin{array}{c} p' \\ q' \end{array} \Big)
\pi^{q'}(z) \Big[ D_{x^{\beta}} D_{x}^{(|q'|,p'-q')}u(w) + \pi_{\beta}(z) D_{x^{1}}D_{x}^{(|q'|,p'-q')}u(w) \Big] \\
& \quad = \bb_{\beta}^{p'}[z,w],
\end{aligned}\end{equation*}
for any $\beta \in \{ 2, \cdots, n \}$. Since $w$ was arbitrary chosen, the lemma follows.
\end{proof}

We will estimate $|D^{m+1}u|$ in Lemma \ref{HODS500}.

\begin{lem}\label{HODS400}
For a fixed $z \in Q_{5}$,  let $y=\Psi_{z}(x)$,  $\Phi_{z}^{-1} = \Psi_{z}$ and $\hat{u}(y) = u \left( \Phi_{z}(y) \right)$ as in Lemma \ref{HODS200}. Then we have that
\begin{equation*}\begin{aligned}\label{}
\left| D_{y}^{m+1}\hat{u}  \big( \Psi_{z}(z) \big)  \right|  
\leq c \left[ \sum_{|p'|=m} \left| \bb^{p'} [z,z] \right| + \sum_{k \in K} \Big[  \| u \|_{C^{m,\gamma} (Q_{5}^{k}) }  +  \| F  \|_{C^{m,\gamma} (Q_{5}^{k}) } \Big] \right].
\end{aligned}\end{equation*}
\end{lem}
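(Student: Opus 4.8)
The plan is to exploit the transformed system \eqref{HOD260} satisfied by $\hat u = u\circ\Phi_{z}$ in order to recover, one by one, those order-$(m+1)$ $y$-derivatives of $\hat u$ at the single point $p_{0}:=\Psi_{z}(z)$ that are \emph{not} already furnished by Lemma \ref{HODS300}, bounding each of them by the right-hand side of the asserted inequality. First I would record two preliminary facts. Since by Lemma \ref{HODS200} all Jacobian entries $\partial y^{\alpha}/\partial x^{s}$ are constants, \eqref{HOD253} gives $\hat A^{11}_{ij}(y)=\sum_{1\le s,t\le n}A^{st}_{ij}\big(\Phi_{z}(y)\big)\,\pi_{s}(z)\pi_{t}(z)$ with $\pi_{1}(z)=-1$; evaluating at $p_{0}$, where $\Phi_{z}(p_{0})=z$, and invoking \eqref{HOD140}, the matrix $\big(\hat A^{11}_{ij}(p_{0})\big)_{i,j}$ is coercive, $\sum_{i,j}\hat A^{11}_{ij}(p_{0})\zeta^{i}\zeta^{j}\ge\lambda|\zeta|^{2}$, hence invertible with $\big\|\hat A^{11}(p_{0})^{-1}\big\|\le\lambda^{-1}$. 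Moreover Lemma \ref{HODS300} with $w=z$ gives $D_{y^{\beta}}D_{y'}^{p'}\hat u(p_{0})=\bb^{p'}_{\beta}[z,z]$ for every $|p'|=m$ and every $\beta\in\{1,\dots,n\}$, which is precisely the collection of order-$(m+1)$ partial derivatives of $\hat u$ at $p_{0}$ involving at most one differentiation in $y^{1}$.

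Next I would rewrite \eqref{HOD260} by isolating the pure second $y^{1}$-derivative,
\begin{equation*}
\hat A^{11}_{ij}D_{y^{1}}^{2}\hat u^{j}=R^{i}:=D_{y^{\alpha}}\hat F^{i}_{\alpha}-\big(D_{y^{\alpha}}\hat A^{\alpha\beta}_{ij}\big)D_{y^{\beta}}\hat u^{j}-\sum_{(\alpha,\beta)\neq(1,1)}\hat A^{\alpha\beta}_{ij}D_{y^{\alpha}}D_{y^{\beta}}\hat u^{j},
\end{equation*}
noting that each term of $R^{i}$ contains at most one differentiation in $y^{1}$ of a second derivative of $\hat u$, together with only first derivatives of $\hat A$ and of $\hat F$, all of which are bounded on $\Psi_{z}(Q_{5}^{k})$ by Lemma \ref{HODS250}. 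The argument then proceeds by induction on the number $j$ of $y^{1}$-indices of an order-$(m+1)$ derivative $D_{y^{1}}^{j}D_{y'}^{r'}\hat u$, $|r'|=m+1-j$. The cases $j\le 1$ are exactly the content of the previous paragraph and are bounded by $c\sum_{|p'|=m}\big|\bb^{p'}[z,z]\big|$. For the step from $j$ to $j+1$ (so $j\in\{1,\dots,m\}$, $|r'|=m-j$) I would apply $D_{y^{1}}^{j-1}D_{y'}^{r'}$, an operator of order $m-1$, to the displayed identity and evaluate at $p_{0}$. By the Leibniz rule the only term carrying $j+1$ differentiations in $y^{1}$ on $\hat u$ is $\hat A^{11}_{ij}(p_{0})\,D_{y^{1}}^{j+1}D_{y'}^{r'}\hat u^{j}(p_{0})$; every other term is of one of two kinds: (i) a derivative of $\hat u$ of order $\le m$ multiplied by a derivative of $\hat A$ or $\hat F$ of order $\le m$ — this occurs whenever at least one differentiation falls on a coefficient or on $\hat F$ — hence pointwise bounded by $c\big(\|\hat u\|_{C^{m,\gamma}(\Psi_{z}(Q_{5}^{k}))}+\|\hat F\|_{C^{m,\gamma}(\Psi_{z}(Q_{5}^{k}))}\big)$; or (ii) an order-$(m+1)$ derivative of $\hat u$ carrying at most $j$ differentiations in $y^{1}$, arising from $\hat A^{\alpha\beta}_{ij}D_{y^{\alpha}}D_{y^{\beta}}\hat u^{j}$ with $(\alpha,\beta)\neq(1,1)$ when the coefficient is left undifferentiated, hence controlled by the inductive hypothesis. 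Inverting $\hat A^{11}(p_{0})$ then bounds $\big|D_{y^{1}}^{j+1}D_{y'}^{r'}\hat u(p_{0})\big|$, and after at most $m+1$ such steps all order-$(m+1)$ $y$-derivatives of $\hat u$ at $p_{0}$ are under control. Finally, \eqref{HOD263} converts the norms of $\hat u$ and $\hat F$ into those of $u$ and $F$, and summing over $k\in K$ yields the stated estimate.

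The routine but delicate part of this plan is the bookkeeping in the inductive step: one must verify that every term produced when $D_{y^{1}}^{j-1}D_{y'}^{r'}$ acts on $\hat A^{11}D_{y^{1}}^{2}\hat u^{j}$ and on $R^{i}$ indeed lands in category (i) or (ii) — that is, that each differentiation landing on a coefficient strictly lowers the order of the accompanying $\hat u$-factor, so that it remains $\le m$, while the worst surviving $\hat u$-terms accumulate at most $j$ indices in $y^{1}$. I expect this combinatorial check to be the only real obstacle; the elliptic inversion is confined to the single point $p_{0}$ and is immediate from \eqref{HOD140}, and all remaining ingredients have been established in Lemmas \ref{HODS200}--\ref{HODS300}.
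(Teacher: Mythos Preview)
Your proposal is correct and follows essentially the same approach as the paper: both argue by induction on the number of $y^{1}$-differentiations, differentiating the transformed equation \eqref{HOD260} at order $m-1$, isolating the term containing $\hat A^{11}_{ij}D_{y^{1}}^{2}\hat u^{j}$, and using the ellipticity \eqref{HOD140} together with Lemma \ref{HODS300} for the base case. The only cosmetic difference is that you invert the matrix $\hat A^{11}(p_{0})$ directly (which is legitimate, since coercivity of the quadratic form forces invertibility with $\|\hat A^{11}(p_{0})^{-1}\|\le\lambda^{-1}$), whereas the paper multiplies by the unknown vector and applies \eqref{ell} in quadratic-form fashion; the two devices are equivalent.
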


\begin{proof}
Fix nonnegative integers $ l_{1}$ and $l_{2} $ with $l_{1} + l_{2} = m-1$. We differentiate  \eqref{HOD260} in Lemma \ref{HODS250} by $D_{y}^{l_{1}}D_{y'}^{l_{2}}$ to find that for any $i \in \{  1, \cdots, N \}$,
\begin{equation*}\begin{aligned}
& \left| \sum_{ 1 \leq j \leq N }  \hat{A}^{11}_{ij} \left( D_{y}^{l_{1}}D_{y'}^{l_{2}}D_{y^{1}y^{1}} \hat{u}^{j} \right) \right| \\
& \quad \leq c \left[ \big|D_{y}^{l_{1}+1}D_{y'}^{l_{2}+1} \hat{u} \big| 
+ \sum_{k \in K} \left[ \| \hat{u} \|_{C^{m,\gamma} \left( \Psi(Q_{5}^{k}) \right)}  +  \big \| \hat{F} \big \|_{C^{m,\gamma} \left( \Psi(Q_{5}^{k}) \right)}  \right] \right]
\end{aligned}\end{equation*}
in $\Psi ( Q_{5} )$, which implies that
\begin{equation*}\begin{aligned}
& \sum_{ 1 \leq i,j \leq N }  \hat{A}^{11}_{ij} \left( D_{y}^{l_{1}}D_{y'}^{l_{2}}D_{y^{1}y^{1}} \hat{u}^{j} \right) \left( D_{y}^{l_{1}}D_{y'}^{l_{2}}D_{y^{1}y^{1}} \hat{u}^{i} \right) \\
& \quad \leq c \left| D_{y}^{l_{1}}D_{y'}^{l_{2}}D_{y^{1}y^{1}} \hat{u}  \right|
\left[ \big|D_{y}^{l_{1}+1}D_{y'}^{l_{2}+1} \hat{u} \big| 
+ \sum_{k \in K} \left[ \| \hat{u} \|_{C^{m,\gamma} \left( \Psi(Q_{5}^{k}) \right)}  +  \big \| \hat{F} \big \|_{C^{m,\gamma} \left( \Psi(Q_{5}^{k}) \right)}  \right] \right]
\end{aligned}\end{equation*}
in $\Psi ( Q_{5} )$. So it follows from \eqref{ell} that 
\begin{equation*}\begin{aligned}
\left| D_{y}^{l_{1}}D_{y'}^{l_{2}}D_{y^{1}y^{1}} \hat{u}  \right|
\leq c \left[ \big|D_{y}^{l_{1}+1}D_{y'}^{l_{2}+1} \hat{u} \big| 
+ \sum_{k \in K} \left[ \| \hat{u} \|_{C^{m,\gamma} \left( \Psi(Q_{5}^{k}) \right)}  +  \big \| \hat{F} \big \|_{C^{m,\gamma} \left( \Psi(Q_{5}^{k}) \right)}  \right] \right]
\end{aligned}\end{equation*}
in $\Psi ( Q_{5} )$, which implies that 
\begin{equation*}\label{} 
\big| D_{y}^{l_{1}+2}D_{y'}^{l_{2}}\hat{u} \big|  
\leq c \left[ \big| D_{y}^{l_{1}+1}D_{y'}^{l_{2}+1}\hat{u} \big| + \sum_{k \in K} \left[ \| \hat{u} \|_{C^{m,\gamma} \left( \Psi(Q_{5}^{k}) \right)}  +  \big \| \hat{F} \big \|_{C^{m,\gamma} \left( \Psi(Q_{5}^{k}) \right)}  \right]   \right]
\end{equation*}
in $\Psi ( Q_{5} )$. Since  nonnegative integers $ l_{1}$ and $l_{2} $ with $l_{1} + l_{2} = m-1$ were arbitrary chosen, we find  from \eqref{HOD263} in Lemma \ref{HODS250} that
\begin{equation*}\label{HOD420} 
\left| D_{y}^{m+1}\hat{u}  \big( \Psi_{z}(z) \big)   \right| 
\leq c \left[ \big| D_{y}D_{y'}^{m}\hat{u}  \big( \Psi_{z}(z) \big)   \big| + \sum_{k \in K} \left[  \| u \|_{C^{m,\gamma} (Q_{5}^{k}) }  +  \| F  \|_{C^{m,\gamma} (Q_{5}^{k}) } \right] \right].
\end{equation*}
So the lemma holds from Lemma \ref{HODS300}.
\end{proof}

\begin{lem}\label{HODS500}
For a fixed $z \in Q_{5}$,  let $y=\Psi_{z}(x)$,  $\Phi_{z}^{-1} = \Psi_{z}$ and $\hat{u}(y) = u \left( \Phi_{z}(y) \right)$ as in Lemma \ref{HODS200}. Then we have that
 \begin{equation*}\begin{aligned}
\big| D^{m+1}u(z) \big| 
\leq c \left[ \sum_{|p'|=m}  \left| \bb^{p'}[z,z] \right|+ \sum_{k \in K} \Big[ \| u \|_{C^{m,\gamma} (Q_{5}^{k}) }  +  \| F  \|_{C^{m,\gamma} (Q_{5}^{k}) }  \Big] \right].
\end{aligned}\end{equation*}
\end{lem}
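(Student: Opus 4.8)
The plan is to transfer the bound of Lemma~\ref{HODS400}, which is written in the $y$-coordinates, back to the original $x$-coordinates by the chain rule, exploiting that $\Psi_{z}$ is an \emph{affine} change of variables. First I would recall from Lemma~\ref{HODS200} that $x^{1}=y^{1}+z^{1}+\pi'(z)\cdot y'$ and $x'=y'+z'$, so that $\Psi_{z}$ (and $\Phi_{z}=\Psi_{z}^{-1}$) is linear-plus-translation with constant, invertible Jacobian. Since $u(x)=\hat{u}\big(\Psi_{z}(x)\big)$, the chain rule gives at every point the clean identity
\[
D^{m+1}u(x)\;=\;\big(D_{y}^{m+1}\hat{u}\big)\big(\Psi_{z}(x)\big)\circ\big(D\Psi_{z}\big)^{\otimes(m+1)},
\]
where no derivatives of $\Psi_{z}$ of order $\ge 2$ appear because $\Psi_{z}$ is affine; in particular $\big|D^{m+1}u(z)\big|\le\|D\Psi_{z}\|^{m+1}\,\big|D_{y}^{m+1}\hat{u}\big(\Psi_{z}(z)\big)\big|$.

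Next I would bound $\|D\Psi_{z}\|$ uniformly. By Lemma~\ref{HODS200} the only nontrivial entries of $D\Psi_{z}$ are $\partial y^{1}/\partial x^{\beta}=-\pi_{\beta}(z)$ for $\beta=2,\dots,n$, and by \eqref{gradient_graphs} together with \eqref{GS260} one has $|\pi_{\beta}(z)|\le\tfrac{1}{20n}$; hence $\|D\Psi_{z}\|\le c(n)$, and therefore $\big|D^{m+1}u(z)\big|\le c\,\big|D_{y}^{m+1}\hat{u}\big(\Psi_{z}(z)\big)\big|$ with $c=c(n,m)$. Evaluating at $x=z$ and inserting the estimate of Lemma~\ref{HODS400} then produces exactly the asserted inequality, with right-hand side $\sum_{|p'|=m}\big|\bb^{p'}[z,z]\big|+\sum_{k\in K}\big[\|u\|_{C^{m,\gamma}(Q_{5}^{k})}+\|F\|_{C^{m,\gamma}(Q_{5}^{k})}\big]$.

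This is essentially a change-of-variables bookkeeping argument, so I do not expect a genuine obstacle. The one point to take care with is precisely that $\Psi_{z}$ is affine, so that no derivatives of the graph functions $\varphi_{k}$ enter the constant — in particular the quantities $|\varphi_{k+1}-\varphi_{k}|^{-1/2}$ from Lemma~\ref{GSS700}, which blow up near the touching points, play no role here — and that the linear part of $\Psi_{z}$ is controlled purely by the flatness bound \eqref{gradient_graphs}, leaving a constant depending only on $n$ and $m$. That is where the (modest) subtlety lies, and it is why the $y$-coordinate reduction in Lemmas~\ref{HODS200}–\ref{HODS400} was arranged the way it was.
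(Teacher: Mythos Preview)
Your proposal is correct and follows essentially the same approach as the paper. The paper also proves the claim $\big|D_{x}^{m+1}u(z)\big|\le c\,\big|D_{y}^{m+1}\hat{u}(\Psi_{z}(z))\big|$ via the chain rule (writing out the multinomial expansion explicitly rather than your tensor shorthand, but with the same content), using that $\Psi_{z}$ is affine with bounded Jacobian, and then invokes Lemma~\ref{HODS400}.
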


\begin{proof}
We claim that 
\begin{equation}\begin{aligned}\label{HOD520} 
\big| D_{x}^{m+1}u(z) \big| \leq c \big| D_{y}^{m+1}\hat{u} \big( \Psi(z) \big)  \big|.
\end{aligned}\end{equation}
By the chain rule, for any $p' \geq 0$ with $|p'| \leq m+1$, we have from Lemma \ref{HODS200} that
\begin{equation*}\begin{aligned}\label{} 
D_{x^{1}}^{m+1-|p'|} D_{x'}^{p'}u (z) 
& = \sum_{0 \leq q' \leq p'} 
\Big( \begin{array}{c} p' \\ q' \end{array} \Big) 
\bigg( \frac{ \partial y^{1} }{ \partial x^{2} } \bigg)^{ p^{2} - q^{2}} 
\bigg( \frac{ \partial y^{2} }{ \partial x^{2} } \bigg)^{ q^{2} }
\cdots 
\bigg( \frac{ \partial y^{1} }{ \partial x^{n} } \bigg)^{ p^{n} - q^{n} }
\bigg( \frac{ \partial y^{n} }{ \partial x^{n} } \bigg)^{ q^{n} } \\
& \qquad \qquad \times D_{y^{1}}^{m+1-|p'|}D_{y}^{(|p'|-|q'|,q')} \hat{u} \big( \Psi(z) \big) \\
& = \sum_{0 \leq q' \leq p'}  
\left( \begin{array}{c} p' \\ q' \end{array} \right)
(-1)^{|p'-q'|} \pi^{p'-q'}(z) D_{y}^{(m+1-|q'|,q')}\hat{u} \big( \Psi(z) \big).
\end{aligned}\end{equation*}
It follows that for any $p' \geq 0'$ with $|p'| \leq m+1$,
\begin{equation*}\begin{aligned}\label{} 
\left| D_{x^{1}}^{m+1-|p'|} D_{x'}^{p'}u (z)  \right| \leq c \left| D_{y}^{m+1} \hat{u} \left( \Psi_{z}(z) \right) \right|.
\end{aligned}\end{equation*}
So the claim \eqref{HOD520} holds. The lemma follows from \eqref{HOD520} and Lemma \ref{HODS400}.
\end{proof}

\begin{lem}\label{HODS550}
For any  $z \in Q_{5}$,  we have that
\begin{equation*}\label{} 
\big| D^{m+1}u(z) \big|
\leq c \left[ \sum_{|p'| = m}  \left| U^{p'}(z) \right| +  \sum_{k \in K} \Big[ \| u \|_{C^{m,\gamma} (Q_{7}^{k}) }  +  \| F  \|_{C^{m,\gamma} (Q_{7}^{k}) }  \Big] \right].
\end{equation*}
\end{lem}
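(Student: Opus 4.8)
The statement is an immediate consequence of the two preceding lemmas; there is essentially no new analytic work to carry out, only a reconciliation of the right-hand sides. The plan is as follows. First I would fix $z \in Q_5$ and invoke Lemma \ref{HODS500}, which already yields
\[
\big| D^{m+1}u(z) \big| \leq c\left[ \sum_{|p'|=m} \left| \bb^{p'}[z,z] \right| + \sum_{k \in K} \Big[ \| u \|_{C^{m,\gamma}(Q_5^k)} + \| F \|_{C^{m,\gamma}(Q_5^k)} \Big] \right].
\]
It then remains only to replace each $\left| \bb^{p'}[z,z] \right|$ by $\left| U^{p'}(z) \right|$ modulo the data, which is exactly \eqref{HOD294} in Lemma \ref{HODS290}: for every multi-index $p' \geq 0'$ with $|p'| = m$,
\[
\left| \bb^{p'}[z,z] \right| \leq c\left[ \left| U^{p'}(z) \right| + \sum_{k \in K} \Big[ \| u \|_{C^{m,\gamma}(Q_7^k)} + \| F \|_{C^{m,\gamma}(Q_7^k)} \Big] \right].
\]

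Second, I would substitute this bound into the estimate from Lemma \ref{HODS500}, summing over the finitely many $p'$ with $|p'| = m$ and enlarging the constant $c$ so as to absorb the number of such multi-indices. Finally I would absorb the leftover norms $\| u \|_{C^{m,\gamma}(Q_5^k)}$ and $\| F \|_{C^{m,\gamma}(Q_5^k)}$ into the corresponding norms over $Q_7^k$, which is legitimate since $Q_5^k \subset Q_7^k$ and hence restriction only decreases the $C^{m,\gamma}$-norm. Because $z \in Q_5$ was arbitrary, this produces the claimed pointwise estimate on all of $Q_5$.

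The substance has all been dispatched upstream: the coordinate change $y = \Psi_z(x)$ flattening $\pi'(z)$ in Lemma \ref{HODS200}, the chain-rule identity $D_{y^\beta}D_{y'}^{p'}\hat{u}(\Psi_z(w)) = \bb^{p'}_\beta[z,w]$ of Lemma \ref{HODS300}, the ellipticity bootstrap in Lemmas \ref{HODS400}--\ref{HODS500} recovering all pure $y^1$-derivatives, and the algebraic reduction in Lemmas \ref{HODS100} and \ref{HODS290} that expresses $\tilde{u}^{p'}$, and therefore $\bb^{p'}[z,z]$, through $U^{p'}$, $G^{p'}$, $\nu^{p'}$ together with the $C^\gamma$ bounds \eqref{HOD065} and \eqref{HOD090}. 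Consequently the only point requiring even a word of care is the mismatch of cubes ($Q_5^k$ in Lemma \ref{HODS500} versus $Q_7^k$ in \eqref{HOD294}), which is handled by the trivial inclusion; I do not expect any genuine obstacle here.
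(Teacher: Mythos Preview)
Your proposal is correct and follows exactly the paper's own proof, which reads in its entirety: ``The lemma follows from \eqref{HOD294} in Lemma \ref{HODS290} and Lemma \ref{HODS500}.'' Your write-up merely unpacks this one-line justification (invoke Lemma \ref{HODS500}, then apply \eqref{HOD294} to each $\bb^{p'}[z,z]$, and absorb the $Q_5^k$-norms into the $Q_7^k$-norms), so there is nothing to add.
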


\begin{proof}
The lemma follows from \eqref{HOD294} in Lemma \ref{HODS290} and  Lemma \ref{HODS500}. 
\end{proof}

We next estimate $ \left| D^{m+1}u(w) - D^{m+1}u(z) \right|$ for any $w,z \in Q_{5}^{k}$ $(k \in K)$.

\begin{lem}\label{HODS600}
For a fixed $z \in Q_{5}$,  let $y=\Psi_{z}(x)$,  $\Phi_{z}^{-1} = \Psi_{z}$ and $\hat{u}(y) = u \left( \Phi_{z}(y) \right)$ as in Lemma \ref{HODS200}. Then for any $w \in Q_{5}$, we have that
\begin{equation*}\begin{aligned}
& \left| D_{y}D_{y'}^{m}\hat{u} \big( \Psi_{z}(w) \big) - D_{y}D_{y'}^{m}\hat{u} \big( \Psi_{z}(z) \big)  \right|  \\
& \quad \leq  c \sum_{|p'|=m} \left[ \left| \bb^{p'}[w,w] - \bb^{p'}[z,z] \right|  + |w-z|^{\gamma} \left| \bb^{p'}[z,z] \right| \right ] \\
& \qquad + c |w-z|^{\gamma} \sum_{k \in K} \Big[  \| u \|_{C^{m,\gamma} (Q_{5}^{k}) }  +  \| F \|_{C^{m,\gamma} (Q_{5}^{k}) }  \Big].
\end{aligned}\end{equation*}
\end{lem}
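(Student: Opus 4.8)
The plan is to read the left-hand side off Lemma \ref{HODS300} and then to re-base the coordinate system. By Lemma \ref{HODS300}, for every $\beta\in\{1,\dots,n\}$ and every $p'\geq 0'$ with $|p'|=m$ we have $D_{y^{\beta}}D_{y'}^{p'}\hat{u}\big(\Psi_{z}(w)\big)=\bb^{p'}_{\beta}[z,w]$, and in particular, taking $w=z$, $D_{y^{\beta}}D_{y'}^{p'}\hat{u}\big(\Psi_{z}(z)\big)=\bb^{p'}_{\beta}[z,z]$. Consequently the quantity to be bounded equals, up to a dimensional constant, $\sum_{|p'|=m}\big|\bb^{p'}[z,w]-\bb^{p'}[z,z]\big|$, and the whole lemma reduces to estimating this difference of the flow-polynomials $\bb^{p'}$.

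For that estimate I would insert $\bb^{p'}[w,w]$ and split
\[
\bb^{p'}[z,w]-\bb^{p'}[z,z]=\big(\bb^{p'}[z,w]-\bb^{p'}[w,w]\big)+\big(\bb^{p'}[w,w]-\bb^{p'}[z,z]\big).
\]
The second summand is already one of the terms permitted on the right-hand side of the statement. In the first summand only the first slot of $\bb^{p'}[\cdot,w]$ changes, i.e.\ only $\pi$ is re-based from $z$ to $w$ while the $(m+1)$-th order derivatives of $u$ remain evaluated at $w$; hence \eqref{HOD292} gives $\big|\bb^{p'}[z,w]-\bb^{p'}[w,w]\big|\leq c\,|z-w|^{\gamma}\,|\tilde{u}^{p'}(w)|$, and this step is valid even when $z$ and $w$ lie in different subregions $Q_{5}^{k}$ because $\pi'\in C^{1/2}(Q_{6})\subset C^{\gamma}$ globally by Lemma \ref{GSS600}. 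It then remains to dominate $|\tilde{u}^{p'}(w)|$: by \eqref{HOD280} and $\|\pi'\|_{L^{\infty}}\leq c$ from \eqref{gradient_graphs} one has $|\tilde{u}^{p'}(w)|\leq c\,|\bb^{p'}[w,w]|$, and then $|\bb^{p'}[w,w]|\leq|\bb^{p'}[w,w]-\bb^{p'}[z,z]|+|\bb^{p'}[z,z]|$; alternatively one may route through \eqref{HOD297}, \eqref{HOD065}, \eqref{HOD090} and \eqref{HOD085}, which is where the explicit $C^{m,\gamma}$-data term in the conclusion comes from. Since $|z-w|$ stays bounded on $Q_{5}$, the stray factor $c|z-w|^{\gamma}$ on $|\bb^{p'}[w,w]-\bb^{p'}[z,z]|$ is harmless; absorbing it and summing over $|p'|=m$ yields the claimed inequality.

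The proof is short once Lemmas \ref{HODS300} and \ref{HODS290} are in hand, so the main difficulty I anticipate is bookkeeping rather than analysis. First, $D^{m+1}u$ and the derivatives of $\hat{u}$ are read piecewise (the convention $D_{\xi}u=D_{\xi}u_{k}$ on $Q_{7}^{k}$), so every occurrence of ``$D_{y}D_{y'}^{m}\hat{u}$'' must refer to the extension from the subregion containing the evaluation point; this is precisely what the change-of-variables data of Lemmas \ref{HODS200} and \ref{HODS250} — in particular the per-subregion bound \eqref{HOD263} — is for. Second, one must keep the two arguments of $\bb^{p'}[\cdot,\cdot]$ straight: here only the first (the base point of $\pi$) is moved, which is the cheap perturbation, whereas moving the second argument across an interface would be illegitimate; this forces the decomposition to pass through $\bb^{p'}[w,w]$, and one must resist the temptation to compare $\bb^{p'}[z,w]$ with $\bb^{p'}[z,z]$ by moving the foot point $w$ to $z$ directly. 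In this sense the present lemma is one of the more benign steps: it only triggers the ``$\pi$-only'' estimate \eqref{HOD292}, and it never needs the $\bb\mapsto U$ conversions \eqref{HOD294}, \eqref{HOD296}, which do require the same subregion.
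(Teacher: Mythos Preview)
Your proof is correct and follows essentially the same strategy as the paper: identify the left-hand side via Lemma~\ref{HODS300} as $\sum_{|p'|=m}|\bb^{p'}[z,w]-\bb^{p'}[z,z]|$, insert $\bb^{p'}[w,w]$, and control $|\bb^{p'}[z,w]-\bb^{p'}[w,w]|$ by \eqref{HOD292}. The one small difference is how the factor $|\tilde u^{p'}(w)|$ coming out of \eqref{HOD292} is disposed of: the paper bounds it by $c|D^{m+1}u(w)|$ and then invokes Lemma~\ref{HODS500} (applied at $w$) to return to $\sum_{|p'|=m}|\bb^{p'}[w,w]|$ plus the $C^{m,\gamma}$-data, whereas you invert \eqref{HOD280} directly to get $|\tilde u^{p'}(w)|\le c\,|\bb^{p'}[w,w]|$ and then split $|\bb^{p'}[w,w]|\le|\bb^{p'}[w,w]-\bb^{p'}[z,z]|+|\bb^{p'}[z,z]|$. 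Your route is slightly cleaner and in fact shows the inequality \emph{without} the trailing data term; the paper's detour through Lemma~\ref{HODS500} is what introduces it. Your remark that \eqref{HOD292} remains valid across subregions (since only the global H\"older continuity of $\pi'$ from Lemma~\ref{GSS600} is used) is also to the point, given that the lemma allows arbitrary $w\in Q_5$.
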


\begin{proof}
We have from Lemma \ref{HODS300} that
\begin{equation*}\begin{aligned}\label{} 
D_{y}D_{y'}^{p'}\hat{u} \big( \Psi_{z}(w) \big) - D_{y}D_{y'}^{p'}\hat{u} \big( \Psi_{z}(z) \big) 
= \bb^{p'}[z,w] - \bb^{p'}[z,z],
\end{aligned}\end{equation*}
for any $w \in Q_{5}$ and $p' \geq 0'$ with $|p'|=m$. It follows that
\begin{equation*}
\big| D_{y}D_{y'}^{p'}\hat{u} \big( \Psi_{z}(w) \big) - D_{y}D_{y'}^{p'}\hat{u} \big( \Psi_{z}(z) \big)  \big|
\leq \left| \bb^{p'}[z,w] - \bb^{p'}[w,w] \right| + \left| \bb^{p'}[w,w] - \bb^{p'}[z,z] \right|.
\end{equation*}
for any $w \in Q_{5}$ and $p' \geq 0'$ with $|p'|=m$. We find from \eqref{HOD292} in Lemma \ref{HODS290} that
\begin{equation*}\begin{aligned}\label{}
\left| \bb^{p'}[z,w] - \bb^{p'}[w,w] \right| 
&\leq c |z-w|^{\gamma}|D^{m+1}u(w)| 
\end{aligned}\end{equation*}
for any $w \in Q_{5}$ and $p' \geq 0'$ with $|p'|=m$.  By combining the above two estimates,  the lemma follows from Lemma \ref{HODS500}.
\end{proof}

The proof of the following lemma is almost parallel to that of Lemma \ref{HODS400}.

\begin{lem}\label{HODS700}
For a fixed $z \in Q_{5}$,  let $y=\Psi_{z}(x)$,  $\Phi_{z}^{-1} = \Psi_{z}$ and $\hat{u}(y) = u \left( \Phi_{z}(y) \right)$ as in Lemma \ref{HODS200}. Then for any $w \in Q_{5}^{k}$ $(k \in K)$,we have that
\begin{equation*}\begin{aligned}\label{}
& \Big| D_{y}^{m+1}\hat{u} \big( \Psi_{z}(w) \big) - D_{y}^{m+1}\hat{u} \big( \Psi_{z}(z) \big) \Big| \\
& \quad \leq  c \sum_{|p'|=m} \left[ \left| \bb^{p'}[w,w] - \bb^{p'}[z,z] \right|  + |w-z|^{\gamma} \left| \bb^{p'}[z,z] \right| \right ] \\
& \qquad + c |w-z|^{\gamma} \sum_{k \in K} \Big[  \| u \|_{C^{m,\gamma} (Q_{5}^{k}) }  +  \| F \|_{C^{m,\gamma} (Q_{5}^{k}) }  \Big].
\end{aligned}\end{equation*}
\end{lem}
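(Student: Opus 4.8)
The plan is to mirror the proof of Lemma \ref{HODS400}, replacing each pointwise bound there with a bound on the difference of the relevant quantity at $\Psi_z(w)$ and at $\Psi_z(z)$, and to quote Lemma \ref{HODS600} for the lowest rung. First I would fix nonnegative integers $l_1,l_2$ with $l_1+l_2=m-1$ and apply $D_y^{l_1}D_{y'}^{l_2}$ to the transformed equation \eqref{HOD260}; this is legitimate since $\hat u\in C^{m+1,\gamma}$ on each $\Psi_z(Q_{5}^{k})$ by \eqref{PET170} transported under $\Psi_z$, and $\hat A^{\alpha\beta}_{ij},\hat F\in C^{m,\gamma}$ there by Lemma \ref{HODS250}, with the bound \eqref{HOD263}. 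Isolating the term $\hat A^{11}_{ij}\,D_y^{l_1}D_{y'}^{l_2}D_{y^{1}y^{1}}\hat u^{j}$, every other term is of one of three types: a derivative of $\hat A^{\alpha\beta}_{ij}$ of order $\le m$ times a derivative of $\hat u$ of order $\le m$; a derivative of $\hat F$ of order $\le m$; or $\hat A^{\alpha\beta}_{ij}$ with $(\alpha,\beta)\neq(1,1)$ times a derivative of $\hat u$ of the form $D_y^{l_1+1}D_{y'}^{l_2+1}\hat u$ or $D_y^{l_1}D_{y'}^{l_2+2}\hat u$.

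Next I would evaluate this identity at $\Psi_z(w)$ and at $\Psi_z(z)$ and subtract. Since $w$ and the base point $z$ both lie in $Q_{5}^{k}$, the points $\Psi_z(w),\Psi_z(z)$ both lie in $\Psi_z(Q_{5}^{k})$, where the coefficients, their derivatives up to order $m$, the derivatives of $\hat u$ up to order $m$, and the derivatives of $\hat F$ are all $C^{\gamma}$ with norms controlled by \eqref{HOD263}; moreover $\Psi_z$ is globally Lipschitz with constant $\le c$ by \eqref{gradient_graphs}, so $|\Psi_z(w)-\Psi_z(z)|^{\gamma}\le c|w-z|^{\gamma}$. The one delicate term is the coefficient difference $[\hat A^{11}_{ij}(\Psi_z(w))-\hat A^{11}_{ij}(\Psi_z(z))]\,D_y^{l_1}D_{y'}^{l_2}D_{y^{1}y^{1}}\hat u^{j}(\Psi_z(z))$: a priori it carries a top-order derivative of $\hat u$ at $z$, not controlled by any $C^{m,\gamma}$-norm, but $|D_y^{m+1}\hat u(\Psi_z(z))|$ is exactly what Lemma \ref{HODS400} estimates by $c\big[\sum_{|p'|=m}|\bb^{p'}[z,z]|+\sum_{k\in K}(\|u\|_{C^{m,\gamma}(Q_{5}^{k})}+\|F\|_{C^{m,\gamma}(Q_{5}^{k})})\big]$, which is precisely the pair of terms $|w-z|^{\gamma}|\bb^{p'}[z,z]|$ and $|w-z|^{\gamma}\|u\|_{C^{m,\gamma}}$ appearing in the conclusion. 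Contracting the resulting inequality with the vector $D_y^{l_1}D_{y'}^{l_2}D_{y^{1}y^{1}}\hat u^{i}(\Psi_z(w))-D_y^{l_1}D_{y'}^{l_2}D_{y^{1}y^{1}}\hat u^{i}(\Psi_z(z))$ and using that $\hat A^{11}_{ij}(\Psi_z(w))$ is positive definite in $(i,j)$ with constant $\lambda$ (the transported ellipticity \eqref{ell}, \eqref{HOD140}, as in the proof of Lemma \ref{HODS400}) would give
\begin{align*}
& \big| D_y^{l_1+2}D_{y'}^{l_2}\hat u(\Psi_z(w)) - D_y^{l_1+2}D_{y'}^{l_2}\hat u(\Psi_z(z)) \big| \\
& \quad \le c\,\big| D_y^{l_1+1}D_{y'}^{l_2+1}\hat u(\Psi_z(w)) - D_y^{l_1+1}D_{y'}^{l_2+1}\hat u(\Psi_z(z)) \big| \\
& \qquad + c\,|w-z|^{\gamma}\Big[ \sum_{|p'|=m}\big|\bb^{p'}[z,z]\big| + \sum_{k\in K}\big( \|u\|_{C^{m,\gamma}(Q_{5}^{k})} + \|F\|_{C^{m,\gamma}(Q_{5}^{k})} \big) \Big].
\end{align*}

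Finally I would iterate this inequality for $l_2=0,1,\dots,m-1$, trading one $y^{1}$-derivative for a primed derivative at each step, reducing $|D_y^{m+1}\hat u(\Psi_z(w))-D_y^{m+1}\hat u(\Psi_z(z))|$ to a constant times $|D_{y^{1}}D_{y'}^{m}\hat u(\Psi_z(w))-D_{y^{1}}D_{y'}^{m}\hat u(\Psi_z(z))|$ plus the same error term; Lemma \ref{HODS600} (whose proof rests on Lemma \ref{HODS300}, that is $D_{y^{\beta}}D_{y'}^{p'}\hat u(\Psi_z(w))=\bb^{p'}_{\beta}[z,w]$) bounds that remaining difference by $c\sum_{|p'|=m}[\,|\bb^{p'}[w,w]-\bb^{p'}[z,z]|+|w-z|^{\gamma}|\bb^{p'}[z,z]|\,]+c|w-z|^{\gamma}\sum_{k\in K}(\|u\|_{C^{m,\gamma}(Q_{5}^{k})}+\|F\|_{C^{m,\gamma}(Q_{5}^{k})})$, and combining the two yields the claim. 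I expect the main obstacle to be precisely the bookkeeping of that coefficient-difference term: one must resist estimating it by an uncontrolled $\|D^{m+1}u\|_{L^{\infty}}$ and instead view it as $|w-z|^{\gamma}$ times a top-order derivative at the single point $z$, which Lemma \ref{HODS400} has already bounded in terms of $\sum_{|p'|=m}|\bb^{p'}[z,z]|$ and the piecewise $C^{m,\gamma}$-norms — exactly matching the right-hand side of the statement.
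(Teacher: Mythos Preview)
Your proposal is correct and follows essentially the same route as the paper: differentiate the transformed equation \eqref{HOD260} by $D_y^{l_1}D_{y'}^{l_2}$, take differences at $\Psi_z(w)$ and $\Psi_z(z)$, handle the coefficient-difference term via Lemma \ref{HODS400}, contract against the difference vector and use ellipticity to obtain the iteration step, then cascade down to $D_yD_{y'}^m$ and invoke Lemma \ref{HODS600}. The only cosmetic difference is that the paper carries the raw quantity $|D_y^{m+1}\hat u(\Psi_z(z))|$ through the iteration and applies Lemma \ref{HODS400} at the very end, whereas you substitute its bound immediately; this has no effect on the argument.
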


\begin{proof}
Fix $w,z \in Q_{5}^{k}$ $(k \in K)$, nonnegative integers $ l_{1}$ and $l_{2} $ with $l_{1} + l_{2} = m-1$. We differentiate  \eqref{HOD260} in Lemma \ref{HODS250} by $D_{y}^{l_{1}}D_{y'}^{l_{2}}$ to find that
\begin{equation*}\begin{aligned}
& \left | \sum_{ 1 \leq j \leq N } \left[ \hat{A}^{11}_{ij}\big( \Psi_{z}(w) \big)  D_{y}^{l_{1}}D_{y'}^{l_{2}}D_{y^{1}y^{1}}\hat{u}^{j}\big( \Psi_{z}(w) \big) -  \hat{A}^{11}_{ij}\big( \Psi_{z}(z) \big) D_{y}^{l_{1}}D_{y'}^{l_{2}}D_{y^{1}y^{1}}\hat{u}^{j}\big( \Psi_{z}(z) \big)  \right] \right| \\
& \quad \leq c |w-z|^{\gamma} \left[  \left| D_{y}^{m+1}\hat{u}\big( \Psi_{z}(z) \big) \right| + \left \| \hat{u} \right \|_{C^{m,\gamma} \left( \Psi_{z}(Q_{5}^{k}) \right)}  +  \big\| \hat{F} \big\|_{C^{m,\gamma} \left( \Psi_{z}(Q_{5}^{k}) \right)}  \right] \\
& \qquad + c \left |  D_{y}^{l_{1}+1}D_{y'}^{l_{2}+1}\hat{u}\big( \Psi_{z}(w) \big) - D_{y}^{l_{1}+1}D_{y'}^{l_{2}+1}\hat{u}\big( \Psi_{z}(z) \big) \right |.
\end{aligned}\end{equation*}
By the triangle inequality,
\begin{equation*}\begin{aligned}\label{}
& \left | \sum_{ 1 \leq j \leq N }  \hat{A}^{11}_{ij}\big( \Psi_{z}(w) \big)  \left[ D_{y}^{l_{1}}D_{y'}^{l_{2}}D_{y^{1}y^{1}}\hat{u}^{j}\big( \Psi_{z}(w) \big) -   D_{y}^{l_{1}}D_{y'}^{l_{2}}D_{y^{1}y^{1}}\hat{u}^{j}\big( \Psi_{z}(z) \big)  \right] \right| \\
& \quad \leq \left | \sum_{ 1 \leq j \leq N } \left[ \hat{A}^{11}_{ij}\big( \Psi_{z}(w) \big)  D_{y}^{l_{1}}D_{y'}^{l_{2}}D_{y^{1}y^{1}}\hat{u}^{j}\big( \Psi_{z}(w) \big) -  \hat{A}^{11}_{ij}\big( \Psi_{z}(z) \big) D_{y}^{l_{1}}D_{y'}^{l_{2}}D_{y^{1}y^{1}}\hat{u}^{j}\big( \Psi_{z}(z) \big)  \right] \right| \\
& \qquad + \left| \hat{A}^{11}_{ij}\big( \Psi_{z}(z) \big) - \hat{A}^{11}_{ij}\big( \Psi_{z}(w) \big) \right| \left| D_{y}^{l_{1}}D_{y'}^{l_{2}}D_{y^{1}y^{1}}\hat{u}\big( \Psi_{z}(z) \big) \right| .
\end{aligned}\end{equation*}
Since $\hat{A}^{11}_{ij}\big( \Psi_{z}(w) \big) \geq c \lambda$ and $\hat{a}_{ij},  \hat{F} \in C^{m,\gamma} \left( \Psi_{z}(Q_{5}^{k}) \right)$ for any $k \in K$, it follows that
\begin{equation*}\begin{aligned}\label{}
& \left | \sum_{ 1 \leq j \leq N }  \hat{A}^{11}_{ij}\big( \Psi_{z}(w) \big)  \left[ D_{y}^{l_{1}}D_{y'}^{l_{2}}D_{y^{1}y^{1}}\hat{u}\big( \Psi_{z}(w) \big) -   D_{y}^{l_{1}}D_{y'}^{l_{2}}D_{y^{1}y^{1}}\hat{u}\big( \Psi_{z}(z) \big)  \right] \right| \\
& \quad \leq c |w-z|^{\gamma} \Big[  \big| D_{y}^{m+1}\hat{u}\big( \Psi_{z}(z) \big) \big| + \| \hat{u} \|_{C^{m,\gamma} \left( \Psi_{z}(Q_{5}^{k}) \right)}  +  \big\| \hat{F} \big\|_{C^{m,\gamma} \left( \Psi_{z}(Q_{5}^{k}) \right)}  \Big] \\
& \qquad + c \Big|  D_{y}^{l_{1}+1}D_{y'}^{l_{2}+1}\hat{u}\big( \Psi_{z}(w) \big) - D_{y}^{l_{1}+1}D_{y'}^{l_{2}+1}\hat{u}\big( \Psi_{z}(z) \big) \Big|.
\end{aligned}\end{equation*}
We multiply  $D_{y}^{l_{1}}D_{y'}^{l_{2}}D_{y^{1}y^{1}}\hat{u}^{i}\big( \Psi_{z}(w) \big) -   D_{y}^{l_{1}}D_{y'}^{l_{2}}D_{y^{1}y^{1}}\hat{u}^{i}\big( \Psi_{z}(z) \big)$ on each side and then sum it over $i \in \{1, \cdots , N \}$. By applying \eqref{ell}, we obtain that 
\begin{equation*}\begin{aligned}\label{}
& \Big| D_{y}^{l_{1}+2}D_{y'}^{l_{2}}\hat{u}\big( \Psi_{z}(w) \big) - D_{y}^{l_{1}+2}D_{y'}^{l_{2}}\hat{u}\big( \Psi_{z}(z) \big) \Big| \\
& \quad \leq c |w-z|^{\gamma} \Big[  \big| D_{y}^{m+1}\hat{u}\big( \Psi_{z}(z) \big) \big| + \| \hat{u} \|_{C^{m,\gamma} \left( \Psi_{z}(Q_{5}^{k}) \right)}  +  \big\| \hat{F} \big\|_{C^{m,\gamma} \left( \Psi_{z}(Q_{5}^{k}) \right)}  \Big] \\
& \qquad + c \Big|  D_{y}^{l_{1}+1}D_{y'}^{l_{2}+1}\hat{u}\big( \Psi_{z}(w) \big) - D_{y}^{l_{1}+1}D_{y'}^{l+1}\hat{u}\big( \Psi_{z}(z) \big) \Big|.
\end{aligned}\end{equation*}
Since nonnegative integers $ l_{1}$ and $l_{2} $ with $l_{1} + l_{2} = m-1$ were arbitrary chosen, 
\begin{equation*}\begin{aligned}\label{}
& \Big| D_{y}^{m+1}\hat{u} \big( \Psi_{z}(w) \big) - D_{y}^{m+1}\hat{u} \big( \Psi_{z}(z) \big) \Big| \\
& \quad \leq  c \big| D_{y}D_{y'}^{m}\hat{u} \big( \Psi_{z}(w) \big)  - D_{y}D_{y'}^{m}\hat{u} \big( \Psi_{z}(z) \big)  \big| \\
& \qquad + c |w-z|^{\gamma} \Big[ \big| D_{y}D_{y'}^{m}\hat{u} \big( \Psi_{z}(z) \big)  \big| +  \| u \|_{C^{m,\gamma} (Q_{5}^{k}) }  +  \| F \|_{C^{m,\gamma} (Q_{5}^{k}) }  \Big].
\end{aligned}\end{equation*}
Since $w,z \in Q_{5}^{k}$ $(k \in K)$ were arbitrary chosen, the lemma follows from Lemma \ref{HODS400} and Lemma \ref{HODS600}.
\end{proof}

\begin{lem}\label{HODS800}
For any $z,w \in Q_{5}^{k}$ $(k \in K)$, we have that
\begin{equation*}\begin{aligned}
 \big| D^{m+1}u(w) - D^{m+1}u(z)\big|  
& \leq  c \sum_{|p'|=m} \left[ \left| \bb^{p'}[w,w] - \bb^{p'}[z,z] \right|  + |w-z|^{\gamma} \left| \bb^{p'}[z,z] \right| \right ] \\
& \quad + c |w-z|^{\gamma} \sum_{k \in K} \Big[  \| u \|_{C^{m,\gamma} (Q_{5}^{k}) }  +  \| F \|_{C^{m,\gamma} (Q_{5}^{k}) }  \Big].
\end{aligned}\end{equation*}
\end{lem}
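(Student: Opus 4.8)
The plan is to reduce the statement to the $y$-coordinate estimate of Lemma~\ref{HODS700} and transport it back to the original $x$-coordinates. Fix $k\in K$ and $w,z\in Q_5^k$, and take $\Psi_z$, $\Phi_z=\Psi_z^{-1}$ and $\hat u(y)=u\big(\Phi_z(y)\big)$ as in Lemma~\ref{HODS200}. The essential point is that $\Psi_z$ is affine, so its Jacobian is the \emph{constant} matrix recorded in Lemma~\ref{HODS200}, whose entries depend on $z$ only through the fixed numbers $\pi_\beta(z)$. Hence the chain-rule identity obtained in the proof of Lemma~\ref{HODS500},
\begin{equation*}
D_{x^1}^{m+1-|p'|}D_{x'}^{p'}u(x)=\sum_{0'\le q'\le p'}\binom{p'}{q'}(-1)^{|p'-q'|}\,\pi^{p'-q'}(z)\,D_y^{(m+1-|q'|,q')}\hat u\big(\Psi_z(x)\big),
\end{equation*}
valid for every $p'\ge 0'$ with $|p'|\le m+1$, holds not just at $x=z$ but at every $x\in Q_5$, since differentiating the constant Jacobian produces no extra terms. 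In particular it is valid at both $x=w$ and $x=z$ with one and the same coordinate system $\Psi_z$.

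Next I would subtract. For each such $p'$,
\begin{equation*}\begin{aligned}
&D_{x^1}^{m+1-|p'|}D_{x'}^{p'}u(w)-D_{x^1}^{m+1-|p'|}D_{x'}^{p'}u(z)\\
&\quad=\sum_{0'\le q'\le p'}\binom{p'}{q'}(-1)^{|p'-q'|}\,\pi^{p'-q'}(z)\Big[D_y^{(m+1-|q'|,q')}\hat u\big(\Psi_z(w)\big)-D_y^{(m+1-|q'|,q')}\hat u\big(\Psi_z(z)\big)\Big].
\end{aligned}\end{equation*}
Since $q'$ ranges over indices in $\{2,\dots,n\}$, \eqref{gradient_graphs} and \eqref{GS260} give $|\pi^{p'-q'}(z)|\le c$, so after summing over all $(m+1)$-st order partial derivatives $D_{x^1}^{m+1-|p'|}D_{x'}^{p'}$ we obtain
\begin{equation*}
\big|D^{m+1}u(w)-D^{m+1}u(z)\big|\le c\,\big|D_y^{m+1}\hat u\big(\Psi_z(w)\big)-D_y^{m+1}\hat u\big(\Psi_z(z)\big)\big|.
\end{equation*}
Applying Lemma~\ref{HODS700} to the right-hand side then yields exactly the claimed bound.

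The bulk of the analytic work is therefore already contained in Lemma~\ref{HODS700} (which rests on Lemma~\ref{HODS600} and the bilinear estimates of Lemma~\ref{HODS290} for the $\bb^{p'}$'s, with the $|w-z|^\gamma$ errors absorbed into the $C^{m,\gamma}$-norms of $u$ and $F$); the present lemma is the final coordinate-transport step. The one place that needs attention is the $z$-dependence of the transformation: one must use $\Psi_z$ uniformly for both base points $w$ and $z$ rather than re-centering at $w$, and this is exactly what the affineness of $\Psi_z$ permits, turning the chain rule into a genuine pointwise identity with $z$-independent (constant) coefficients. No further subtlety is expected.
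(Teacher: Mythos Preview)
Your proposal is correct and follows essentially the same route as the paper: establish the pointwise inequality $\big|D_x^{m+1}u(w)-D_x^{m+1}u(z)\big|\le c\,\big|D_y^{m+1}\hat u(\Psi_z(w))-D_y^{m+1}\hat u(\Psi_z(z))\big|$ by the affine chain-rule identity (valid at any $x$, with coefficients $\pi^{p'-q'}(z)$ constant in $x$), then invoke Lemma~\ref{HODS700}. One small wording quibble: in your last sentence the coefficients are not ``$z$-independent'' --- they depend on $z$ through $\pi'(z)$ --- but they are constant in the evaluation point $x$, which is the property you actually use and correctly identified earlier.
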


\begin{proof}
For a fixed $z,w \in Q_{5}^{k}$ $(k \in K)$,  let $y=\Psi_{z}(x)$,  $\Phi_{z}^{-1} = \Psi_{z}$ and $\hat{u}(y) = u \left( \Phi_{z}(y) \right)$ as in Lemma \ref{HODS200}. We claim that 
\begin{equation}\label{HOD830} 
\big| D_{x}^{m+1}u(w) - D_{x}^{m+1}u(z)  \big| \leq c \big| D_{y}^{m+1}\hat{u} \big( \Psi_{z}(w) \big)- D_{y}^{m+1}\hat{u} \big( \Psi_{z}(z) \big) \big|.
\end{equation}
By the chain rule, for any $p' \geq 0$ with $|p'| \leq m+1$, we have from Lemma \ref{HODS200} that
\begin{equation*}\begin{aligned}\label{} 
D_{x^{1}}^{m+1-|p'|} D_{x'}^{p'}u (w) 
& = \sum_{0 \leq q' \leq p'} 
\Big( \begin{array}{c} p' \\ q' \end{array} \Big) 
\bigg( \frac{ \partial y^{1} }{ \partial x^{2} } \bigg)^{ p^{2} - q^{2}} 
\bigg( \frac{ \partial y^{2} }{ \partial x^{2} } \bigg)^{ q^{2} }
\cdots 
\bigg( \frac{ \partial y^{1} }{ \partial x^{n} } \bigg)^{ p^{n} - q^{n} }
\bigg( \frac{ \partial y^{n} }{ \partial x^{n} } \bigg)^{ q^{n} } \\
& \qquad \qquad \times D_{y^{1}}^{m+1-|p'|}D_{y}^{(|p'|-|q'|,q')} \hat{u} \big( \Psi(w) \big) \\
& = \sum_{0 \leq q' \leq p'}  
\Big( \begin{array}{c} p' \\ q' \end{array} \Big)
(-1)^{|p'-q'|} \pi^{p'-q'}(z) D_{y}^{(m+1-|q'|,q')}\hat{u} \big( \Psi(w) \big).
\end{aligned}\end{equation*}
It follows that  for any $p' \geq 0$ with $|p'| \leq m+1$,
\begin{equation*}
\Big|  D_{x^{1}}^{m+1-|p'|} D_{x'}^{p'}u (w) - D_{x^{1}}^{m+1-|p'|} D_{x'}^{p'}u (z) \Big|
\leq c \Big| D_{y}^{m+1}\hat{u} \big( \Psi(w) \big) - D_{y}^{m+1}\hat{u} \big( \Psi(z) \big) \Big|.
\end{equation*}
So the claim \eqref{HOD830} follows. Since $z, w \in Q_{5}^{k}$ $(k \in K)$ were arbitrary chosen, the lemma holds from \eqref{HOD830} and Lemma \ref{HODS700}.
\end{proof}

\begin{lem}\label{HODS900}
For any $z,w \in Q_{5}^{k}$ $(k \in K)$, we have that
\begin{equation*}\begin{aligned}
\big| D^{m+1}u(w) - D^{m+1}u(z)\big| 
& \leq c \sum_{|p'| = m} \left[  \big| U^{p'}(w) - U^{p'}(z)\big| 
+  |w-z|^{\gamma} \big| U^{p'}(z) \big| \right]  \\
& \quad + c |w-z|^{\gamma} \sum_{k \in K} \left[  \| u \|_{C^{m,\gamma} (Q_{7}^{k}) }  +  \| F \|_{C^{m,\gamma} (Q_{7}^{k}) }  \right].
\end{aligned}\end{equation*}
\end{lem}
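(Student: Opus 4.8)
The plan is to obtain Lemma~\ref{HODS900} as a direct combination of Lemma~\ref{HODS800} with the two pointwise comparisons \eqref{HOD294} and \eqref{HOD296} recorded in Lemma~\ref{HODS290}. First I would fix $k \in K$ and $z,w \in Q_5^k$ and invoke Lemma~\ref{HODS800}, which already bounds $\big| D^{m+1}u(w) - D^{m+1}u(z)\big|$ by
\[
c \sum_{|p'|=m} \Big[ \big| \bb^{p'}[w,w] - \bb^{p'}[z,z] \big|  + |w-z|^{\gamma} \big| \bb^{p'}[z,z] \big| \Big ] + c |w-z|^{\gamma} \sum_{k \in K} \Big[  \| u \|_{C^{m,\gamma} (Q_{5}^{k}) }  +  \| F \|_{C^{m,\gamma} (Q_{5}^{k}) }  \Big].
\]
So the task reduces to replacing the $\bb^{p'}$-brackets by the corresponding expressions in $U^{p'}$.

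Next I would insert \eqref{HOD296} to control each term $\big| \bb^{p'}[w,w] - \bb^{p'}[z,z] \big|$ by $c\big[\,|U^{p'}(w) - U^{p'}(z)| + |w-z|^\gamma |U^{p'}(z)|\,\big]$ plus the error $c|w-z|^\gamma \sum_{k\in K}\big[\|u\|_{C^{m,\gamma}(Q_7^k)} + \|F\|_{C^{m,\gamma}(Q_7^k)}\big]$, and \eqref{HOD294} to control each term $|w-z|^\gamma\big|\bb^{p'}[z,z]\big|$ by $c|w-z|^\gamma\big[\,|U^{p'}(z)| + \sum_{k\in K}(\|u\|_{C^{m,\gamma}(Q_7^k)} + \|F\|_{C^{m,\gamma}(Q_7^k)})\,\big]$. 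Since the number of multi-indices $p'$ with $|p'|=m$ depends only on $n$ and $m$, it is absorbed into $c$; summing over such $p'$ then collects precisely the factor $c\sum_{|p'|=m}\big[|U^{p'}(w)-U^{p'}(z)| + |w-z|^\gamma|U^{p'}(z)|\big]$ together with a residual error proportional to $|w-z|^\gamma\sum_{k\in K}\big[\|u\|_{C^{m,\gamma}(Q_7^k)}+\|F\|_{C^{m,\gamma}(Q_7^k)}\big]$.

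Finally I would reconcile the two slightly different error families: Lemma~\ref{HODS800} produces a term over the sets $Q_5^k$, whereas Lemma~\ref{HODS290} and the statement of Lemma~\ref{HODS900} are phrased over $Q_7^k$. Because $Q_5^k\subset Q_7^k$ one has $\|u\|_{C^{m,\gamma}(Q_5^k)}\le\|u\|_{C^{m,\gamma}(Q_7^k)}$ and likewise for $F$, so the $Q_5^k$-error from Lemma~\ref{HODS800} is dominated by the $Q_7^k$-error in the target. Combining all the bounds and relabeling $c$ yields the asserted inequality. There is no genuine obstacle here: the analytic content — the structure of $\bb^{p'}$ in \eqref{HOD270}, the coercivity \eqref{HOD140} used in Lemma~\ref{HODS290}, and the change of variables $\Psi_z$ feeding Lemma~\ref{HODS800} — has already been carried out, and the only mild point to watch is keeping the two error families consistent, which the inclusion $Q_5^k\subset Q_7^k$ settles.
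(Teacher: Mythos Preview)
Your proposal is correct and follows essentially the same approach as the paper: the paper's own proof is the one-line remark that the lemma follows from Lemma~\ref{HODS800} together with \eqref{HOD294} and \eqref{HOD296} in Lemma~\ref{HODS290}, and you have simply spelled out that combination in detail, including the harmless $Q_5^k \subset Q_7^k$ inclusion to align the error terms.
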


\begin{proof}
With Lemma \ref{HODS800}, the lemma holds from \eqref{HOD294} and \eqref{HOD296} in Lemma \ref{HODS290} 
\end{proof}

We now obtain Proposition \ref{prop_compare}.

\begin{proof}[Proof of Proposition \ref{prop_compare}]
Proposition \ref{prop_compare} holds by Lemma \ref{HODS550} and Lemma \ref{HODS900}.
\end{proof}

\newpage

\section{Proof of the main theorem}

\subsection{\texorpdfstring{$L^{2}\text{-estimate of } D^{m+1}u$}{Uinfty}}

We obtain $L^{2}$-type estimate of $D^{m+1}u$. We remark that we have the term $ r^{\gamma}  \| D^{m+1}u \|_{L^{\infty}(Q_{2r}(z) ) }^{2}$ in the right-hand side of  Lemma \ref{CTFS700} and Lemma \ref{PMTS200}, but this term $ r^{\gamma}  \| D^{m+1}u \|_{L^{\infty}(Q_{2r}(z) ) }^{2}$ will be removed by using the term $r^{\gamma}$ later in Lemma \ref{PMTS1000} and Lemma \ref{PMTS1100}.

To simplify the computation, we set 
\begin{equation}\label{PMT130} 
E = \sum_{ k \in K }  \left[ \| u \|_{C^{m,\gamma} (Q_{7}^{k}) }  +  \| F \|_{C^{m,\gamma} (Q_{7}^{k}) } \right].
\end{equation}
Then we find from \eqref{HOD065} and \eqref{HOD090} that
\begin{equation}\label{PMT140} 
\sum_{k \in K} \sum_{ |p'| = m } 
\left[ \left\| G^{p'} \right\|_{C^{\gamma}(Q_{5}^{k})} + \left \| \nu^{p'} \right \|_{C^{\gamma}(Q_{5}^{k})} \right]
\leq E.
\end{equation}
To compare $D^{m+1}u$ and $U^{p'}$, we will use \eqref{PMT150} and \eqref{PMT160}. To perturb the equation, we use \eqref{PMT170}. The test function will be handled by \eqref{PMT180}.

\sskip

By Lemma \ref{HODS550} and \eqref{PMT130}, 
\begin{equation}\label{PMT150} 
| D^{m+1}u(z) |
\leq c \left[ E + \sum_{|p'| = m}  \left| U^{p'}(z) \right|   \right]
\leq c \left[ E + | D^{m+1}u(z) | \right]
\quad (z \in Q_{5}).
\end{equation}
Also by Lemma \ref{HODS900} and \eqref{PMT130}, for any $ z, w \in Q_{5}^{k}$ with $k \in K$,
\begin{equation}\label{PMT160} 
\left| D^{m+1}u(w) - D^{m+1}u(z) \right| 
\leq c \left[ |w-z|^{\gamma} E +  \sum_{|p'| = m} \big| U^{p'}(w) - U^{p'}(z) \big| \right].
\end{equation}

For the simplicity, for any  $p' \geq 0'$ with $|p'|=m$, set $\tilde{u}^{p'} : Q_{5} \to \br^{Nn} $as
\begin{equation}\label{PMT165}
\tilde{u}^{p'} = \left( \begin{array}{ccc}
\tilde{u}_{1}^{1,p'} & \cdots & \tilde{u}_{n}^{1,p'} \\
\vdots & \ddots &  \vdots \\ 
\tilde{u}_{1}^{N,p'} & \cdots & \tilde{u}_{n}^{N,p'} 
\end{array}\right)
\end{equation}
as in \eqref{HOD075}  so that
\begin{equation*}\label{}
\tilde{u}_{\beta}^{j,p'} = \sum_{0' \leq q' \leq p' } \Big( \begin{array}{c} p' \\ q' \end{array} \Big)\pi^{q'} D^{(|q'|,p'-q')} D_{\beta}u^{j}
\quad \text{in} \quad Q_{5}.
\end{equation*}
Then by Lemma \ref{PETS900}, for any $\eta \in C_{c}^{\infty}(Q_{r}(z))$ with $Q_{r}(z) \subset Q_{5}$, 
\begin{equation}\begin{aligned}\label{PMT170}
& \left| \int_{Q_{r}(z) } \sum_{1 \leq \alpha \leq n} \left[ \sum_{1 \leq j \leq N} \sum_{1 \leq \beta \leq n} A^{\alpha \beta}_{ij,k}   \tilde{u}_{\beta}^{j,k} + \nu_{\alpha}^{i} \right] D_{\alpha} \eta^{i} - G_{1}^{i} D_{1} \eta^{i}  \, dx \right| \\
& \quad \leq c  r^{n+\frac{1}{4}} \left[   \| D^{m+1}u \|_{L^{\infty}(Q_{r}(z))}^{2}   +  \sum_{k \in K}  \left[  \| u \|_{C^{m,\gamma}( Q_{7}^{k})}^{2} +  \| F \|_{C^{m,\gamma}( Q_{7}^{k})}^{2} \right]  \right] \\
& \qquad + c  r^{\frac{1}{4}} \int_{ Q_{r}(z) } \epsilon |D\eta|^{2}  + \epsilon^{-2} |\eta|^{2}  \, dx,
\end{aligned}\end{equation}
In view of  \eqref{HOD040}, \eqref{HOD080} tand \eqref{HOD297}, we have that for any $p' \geq 0'$ with $|p'|=m$, 
\begin{equation*}\label{}
\left| \tilde{u}^{p'} \right| \leq c \left[  \left| U^{p'} \right|  + \left| \nu^{p'} \right| + \left| G^{p'} \right| \right]
\quad \text{and} \quad
\left| U^{p'} \right|  \leq  c \left[ \left| \tilde{u}^{p'} \right| + \left| \nu^{p'} \right| + \left| G^{p'} \right|  \right]
\quad \text{in} \quad Q_{5}.
\end{equation*}
So  it follows from Lemma \ref{HODS550} and \eqref{PMT140} that
\begin{equation}\label{PMT175}
| D^{m+1}u(z) |
\leq c \left[ E + \sum_{|p'| = m}  \big| \tilde{u}^{p'}(z) \big|   \right]
\leq c \left[ E + | D^{m+1}u(z) | \right]
\quad (z \in Q_{5}).
\end{equation}

For $\eta^{p'}  : Q_{r}(z) \to \br^{N}$ in \eqref{CTF650} with $Q_{2r}(z) \subset Q_{5}$,  Lemma \ref{CTFS700} implies that
\begin{equation}\label{PMT180} 
\int_{ Q_{r}(z) } \bigg| D_{\beta}\eta^{p'} - \tilde{u}_{\beta}^{p'} + [1-\delta_{\beta}] G_{\beta}^{p'} \bigg|^{2} \, dx 
\leq cr^{n+2\gamma}  \left[   \| D^{m+1}u \|_{L^{\infty}(Q_{2r}(z))}^{2} + E^{2} \right],
\end{equation}
for any $1 \leq \beta \leq n$. Here, with \eqref{HOD050} and \eqref{HOD060}, we obtain from  \eqref{GS180}, Lemma \ref{PETS900} and Lemma \ref{CTFS800} that
\begin{equation}\label{PMT190}
|\eta|  \leq c \sum_{k \in K} \Big[ \| u \|_{C^{m,\gamma}(Q_{7}^{k})} 
+  \| F \|_{C^{m,\gamma}(Q_{7}^{k})} \Big]
\leq cE 
\quad \text{in} \quad  Q_{r}(z).
\end{equation}

\begin{lem}\label{PMTS200}
For any $\phi \in C_{c}^{\infty}(Q_{r}(z))$ with $0 \leq |\phi| \leq 1$, we have that
\begin{equation*}\label{}
\int_{Q_{r}(z) } \left| D^{m+1}u \right|^{2} \phi^{2}   \, dx  
\leq c \bigg[ r^{n+2\gamma} \| D^{m+1}u \|_{L^{\infty}(Q_{2r}(z))}^{2} +  \int_{ Q_{r}(z) } \big( 1+ |D\phi|\big) ^{2} E^{2} \, dx \bigg]
\end{equation*}
for any $Q_{2r}(z) \subset Q_{5}$.
\end{lem}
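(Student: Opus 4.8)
The plan is to run a Caccioppoli-type argument on the perturbed equation \eqref{PMT170}, using the test function built from $\eta^{p'}$ in \eqref{CTF650}. First, by \eqref{PMT175} (squared) one has $|D^{m+1}u|^{2}\le c\big(E^{2}+\sum_{|p'|=m}|\tilde u^{p'}|^{2}\big)$ pointwise in $Q_{5}$, and $\int_{Q_{r}(z)}E^{2}\phi^{2}\,dx\le c r^{n}E^{2}\le c\int_{Q_{r}(z)}(1+|D\phi|)^{2}E^{2}\,dx$; hence it suffices to bound $\sum_{|p'|=m}\int_{Q_{r}(z)}|\tilde u^{p'}|^{2}\phi^{2}\,dx$. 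Fix $p'$ with $|p'|=m$ and the centre $z$, and let $\eta^{p'}$ be as in \eqref{CTF650}. Since $\eta^{p'}$ is weakly differentiable with $D\eta^{p'}\in L^{\infty}(Q_{5})$ (Lemma \ref{CTFS250}), the function $\eta^{p'}\phi^{2}$ lies in $W^{1,\infty}_{0}(Q_{r}(z),\br^{N})$, so after a routine density approximation by $C_{c}^{\infty}$ functions we may apply \eqref{PMT170} with $\eta^{i}=\eta^{p',i}\phi^{2}$ for each $i$ and sum the resulting inequalities over $i=1,\dots,N$.

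Expanding $D_{\alpha}(\eta^{p',i}\phi^{2})=(D_{\alpha}\eta^{p',i})\phi^{2}+2\eta^{p',i}\phi D_{\alpha}\phi$ and using \eqref{PMT180} to write $D_{\beta}\eta^{p',i}=\tilde u_{\beta}^{i,p'}-[1-\delta_{\beta1}]G_{\beta}^{i,p'}+e_{\beta}^{i}$ with $\|e\|_{L^{2}(Q_{r}(z))}^{2}\le c r^{n+2\gamma}\big[\|D^{m+1}u\|_{L^{\infty}(Q_{2r}(z))}^{2}+E^{2}\big]$, the leading term of the tested identity is, on each $Q_{r}^{k}(z)$ and after summing in $i$,
\[
\sum_{i}\int_{Q_{r}(z)}\sum_{\alpha}\Big[\sum_{j,\beta}A^{\alpha\beta}_{ij}\tilde u_{\beta}^{j,p'}\Big]\tilde u_{\alpha}^{i,p'}\phi^{2}\,dx\ \ge\ \lambda\int_{Q_{r}(z)}|\tilde u^{p'}|^{2}\phi^{2}\,dx
\]
by the ellipticity \eqref{ell_com}. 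Every other contribution is an error to be absorbed: the terms carrying $\nu^{p'}$, $G^{p'}$, $G_{1}^{p'}$, which are pointwise $\le cE$ by \eqref{PMT140}, and the $D\phi$-terms, which also carry the factor $\eta^{p'}$, pointwise $\le cE$ by \eqref{PMT190} — all of these give, via Young's inequality, $\le \epsilon\int|\tilde u^{p'}|^{2}\phi^{2}+c\epsilon^{-1}\int_{Q_{r}(z)}(1+|D\phi|)^{2}E^{2}$; the terms involving $e$ give $\le \epsilon\int|\tilde u^{p'}|^{2}\phi^{2}+c\epsilon^{-1}r^{n+2\gamma}\big[\|D^{m+1}u\|_{L^{\infty}(Q_{2r}(z))}^{2}+E^{2}\big]+cr^{n}E^{2}$; and the right-hand side of \eqref{PMT170} itself is handled by noting $|D(\eta^{p'}\phi^{2})|^{2}\le c(|\tilde u^{p'}|^{2}+E^{2}+|e|^{2})\phi^{2}+cE^{2}\phi^{2}|D\phi|^{2}$ and $|\eta^{p'}\phi^{2}|\le cE$, so that (since $r$ is bounded on $Q_{2r}(z)\subset Q_{5}$, and the $r^{1/4}$‑ and $r^{n+1/4}$‑weighted pieces there are of the admissible form $r^{n+2\gamma}\|D^{m+1}u\|_{L^{\infty}(Q_{2r}(z))}^{2}$ — as are the $\|D^{m+1}u\|_{L^{\infty}}$‑terms in general, always carrying a positive power of $r$ beyond $n$) it is bounded by $\epsilon\int|\tilde u^{p'}|^{2}\phi^{2}+c\epsilon^{-1}\big[r^{n+2\gamma}\|D^{m+1}u\|_{L^{\infty}(Q_{2r}(z))}^{2}+\int_{Q_{r}(z)}(1+|D\phi|)^{2}E^{2}\big]$ once the free parameter $\epsilon$ in \eqref{PMT170} is chosen small.

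Choosing $\epsilon$ small, depending only on $\lambda,\Lambda,n,N$, so that the accumulated $\epsilon\int|\tilde u^{p'}|^{2}\phi^{2}$ contributions are at most $\tfrac{\lambda}{2}\int|\tilde u^{p'}|^{2}\phi^{2}$, I absorb them into the left-hand side and obtain the desired bound for $\int_{Q_{r}(z)}|\tilde u^{p'}|^{2}\phi^{2}$; summing over the finitely many $p'$ with $|p'|=m$ and invoking \eqref{PMT175} again gives the claim. The main obstacle is the error bookkeeping rather than any conceptual point: the coefficient $D\pi$ is unbounded near the set where the subdomains touch, so all estimates on $|D\eta|$ must be routed through the $r^{1/4}$‑weighted inequality \eqref{PMT170} (ultimately Lemma \ref{GSS800} and Lemma \ref{PETS600}), and one has to keep the several small parameters — the $\epsilon$ inside \eqref{PMT170}, the implicit scale in \eqref{PMT180}, and the weights in the various Young steps — mutually consistent so that \emph{every} $\int|\tilde u^{p'}|^{2}\phi^{2}$-term is genuinely absorbable; by contrast the $\|D^{m+1}u\|_{L^{\infty}}$-terms are harmless here, since they always come with a positive power of $r$ and are removed later in Lemma \ref{PMTS1000}.
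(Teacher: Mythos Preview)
Your proposal is correct and follows essentially the same approach as the paper: test \eqref{PMT170} with $\phi^{2}\eta^{p'}$, extract the coercive term $\lambda\int|\tilde u^{p'}|^{2}\phi^{2}$ via ellipticity, control all remaining pieces using \eqref{PMT140}, \eqref{PMT180}, \eqref{PMT190}, absorb with a small parameter, and convert back via \eqref{PMT175}. Your explicit remark about routing the $|D\eta|$-estimates through the $r^{1/4}$-weighted bound (to avoid the unboundedness of $D\pi$) is exactly the point behind the paper's intermediate step \eqref{PMT250}.
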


\begin{proof}
Fix $p' \geq 0'$ with $|p'|=m$. For the simplicity, let $\tilde{u} = \tilde{u}^{p'}$ (in \eqref{PMT165}), $\eta = \eta^{p'}$ (in \eqref{CTF650}) and omit $\sum_{ 1 \leq i,j \leq N } \sum_{ 1 \leq \alpha, \beta \leq n} $ in the integral. Then by \eqref{PMT140} and \eqref{PMT180},
\begin{equation}\label{PMT230}
\int_{ Q_{r}(z) } \left| D_{\beta}\eta - \tilde{u}_{\beta} \right|^{2} \, dx 
\leq cr^{n}  \left[   r^{2\gamma} \| D^{m+1}u \|_{L^{\infty}(Q_{2r}(z))}^{2} + E^{2} \right].
\end{equation}
For \eqref{PMT170},  we choose $ \phi^{2} \eta$ instead of $\phi$. Since $r \in (0,1]$ and $0 \leq |\phi| \leq 1$, we obtain from Young's inequality and \eqref{PMT190} that for any $\varepsilon \in (0,1]$,
\begin{equation*}\begin{aligned}\label{}
& \left | \int_{Q_{r}(z) } A^{\alpha \beta}_{ij} \tilde{u}_{\beta}^{j} D_{\alpha} \left( [\phi^{i}]^{2} \eta^{i} \right)  \, dx \right| \\
& \quad \leq c \int_{Q_{r}(z) } \left[ \left| G_{1} \right| + \left| \nu \right| \right]   \left| D\left( \phi^{2} \eta \right) \right|   \, dx \\
& \qquad  + c  r^{2\gamma} \int_{Q_{r}(z)} \left[  \| D^{m+1}u \|_{L^{\infty}(Q_{r}(z))}^{2}   + E^{2} + \varepsilon \left| D\left( \phi^{2} \eta \right) \right|^{2} + \varepsilon^{-2} \left| \phi^{2} \eta \right|^{2} \right]  \, dx.
\end{aligned}\end{equation*}
By \eqref{PMT140} and \eqref{PMT190}, $|\nu| + |G_{1}| + |\eta| \leq cE$ in $Q_{r}(z)$. So by Young's inequality,
\begin{equation*}\begin{aligned}\label{}
& \left | \int_{Q_{r}(z) } A^{\alpha \beta}_{ij} \tilde{u}_{\beta}^{j} D_{\alpha} \left( \phi^{2} \eta^{i} \right)  \, dx \right| \\
& \quad \leq c \int_{Q_{r}(z) } \left[ \left| G_{1} \right| + \left| \nu \right| \right]   \left| D\left( \phi^{2} \eta \right) \right|   \, dx \\
& \quad \leq c \int_{Q_{r}(z) } r^{2\gamma} \| D^{m+1}u \|_{L^{\infty}(Q_{2r}(z))}^{2} +  \varepsilon  r^{2\gamma}|D\eta|^{2} \phi^{2}  + c(\varepsilon)\left( 1+ |D\phi|^{2} \right) E^{2}\, dx,
\end{aligned}\end{equation*}
for any $\varepsilon \in (0,1]$. We estimate the left-hand and right-hand side.

We first estimate the right-hand side. We obtain from Young's inequality that
\begin{equation*}\begin{aligned}\label{}
\int_{Q_{r}(z) } |D\eta|^{2} \phi^{2} \, dx 
\leq 2 \int_{ Q_{r}(z) } \sum_{1 \leq \beta \leq n} \left| D_{\beta}\eta - \tilde{u}_{\beta} \right|^{2} \phi^{2} \, dx 
+ 2 \int_{ Q_{r}(z) } \left| \tilde{u}  \right|^{2} \phi^{2} \, dx.
\end{aligned}\end{equation*}
From \eqref{PMT140}, we have that $|G| \leq cE$ in $Q_{r}(z)$. So by \eqref{PMT180},
\begin{equation}\begin{aligned}\label{PMT250}
\int_{Q_{r}(z) } |D\eta|^{2} \phi^{2} \, dx
\leq c \int_{ Q_{r}(z) }   r^{2\gamma}  \| D^{m+1}u \|_{L^{\infty}(Q_{2r}(z))}^{2}   + |\tilde{u}|^{2} + E^{2} \, dx.
\end{aligned}\end{equation}
By \eqref{PMT175}, $|\tilde{u}| \leq c \left[ |D^{m+1}u| + E \right]$ in $Q_{r}(z)$. So the right-hand side is estimated as
\begin{equation*}\begin{aligned}\label{}
& \int_{Q_{r}(z) } r^{2\gamma} \| D^{m+1}u \|_{L^{\infty}(Q_{2r}(z))}^{2} +  \varepsilon  r^{2\gamma} |D\eta|^{2} \phi^{2}  + c(\varepsilon)\left( 1+ |D\phi|^{2} \right) E^{2}\, dx \\
& \quad \leq c \int_{Q_{r}(z) } r^{2\gamma} \| D^{m+1}u \|_{L^{\infty}(Q_{2r}(z))}^{2} +  \varepsilon |\tilde{u}|^{2}  \phi^{2}  + c(\varepsilon)\left( 1+ |D\phi|^{2} \right) E^{2}\, dx 
\end{aligned}\end{equation*}

We now estimate the left-hand side. By that $D \big( \phi^{2}\eta \big) = 2 \phi \eta D\phi  + \phi^{2} D \eta $,
 \begin{equation*}\begin{aligned}\label{}
& \left | \int_{Q_{r}(z) } A^{\alpha \beta}_{ij} \tilde{u}_{\beta}^{j} D_{\alpha} \left( [\phi^{i}]^{2} \eta^{i} \right)  \, dx \right| \\
& \quad \geq \left | \int_{Q_{r}(z) } A^{\alpha \beta}_{ij} \tilde{u}_{\beta}^{j} \phi \left(D_{\alpha}  \eta^{i} \right) \phi  \, dx \right| 
 - \left | \int_{Q_{r}(z) } A^{\alpha \beta}_{ij} \tilde{u}_{\beta}^{j} \left( 2 \phi D_{\alpha} \phi \,  \eta^{i} \right)  \, dx \right|.
\end{aligned}\end{equation*}
By Young's inequality, we obtain that
\begin{equation*}\begin{aligned}\label{}
& \left | \int_{Q_{r}(z) } A^{\alpha \beta}_{ij} \tilde{u}_{\beta}^{j} \phi \left(D_{\alpha}  \eta^{i} \right) \phi  \, dx \right| \\
& \quad \geq   \left | \int_{Q_{r}(z) }  A^{\alpha \beta}_{ij} \tilde{u}_{\beta}^{j} \phi \, \tilde{u}_{\alpha} ^{i} \phi  \, dx \right| 
-  \left | \int_{Q_{r}(z) } A^{\alpha \beta}_{ij} \tilde{u}_{\beta}^{j} \phi \left( D_{\alpha}  \eta^{i} -  \tilde{u}_{\alpha}^{i} \right) \phi  \, dx \right| \\
& \quad \geq \frac{\lambda}{2} \int_{Q_{r}(z)} |\tilde{u}|^{2} \phi^{2} \, dx 
- c \int_{Q_{r}(z)} \left| D_{i} \eta - \tilde{u}_{j} \right| \phi^{2},
\end{aligned}\end{equation*}
and
\begin{equation*}\begin{aligned}
& \left | \int_{Q_{r}(z) } A^{\alpha \beta}_{ij} \tilde{u}_{\beta}^{j} \left( 2 \phi D_{\alpha} \phi \,  \eta^{i} \right)  \, dx \right|
\leq \frac{\lambda}{4} \int_{Q_{r}(z)} |\tilde{u}|^{2} \phi^{2} \, dx
+ c \int_{Q_{r}(z)} |\eta|^{2} |D\phi|^{2} \, dx.
\end{aligned}\end{equation*}
By combining the above three estimate, it follows from \eqref{PMT190} and \eqref{PMT230} that
\begin{equation*}\begin{aligned}\label{}
&\left | \int_{Q_{r}(z) } A^{\alpha \beta}_{ij} \tilde{u}_{\beta}^{j} D_{\alpha} \left( [\phi^{i}]^{2} \eta^{i} \right)  \, dx \right| \\
& \quad \geq \frac{\lambda}{4} \int_{Q_{r}(z)} |\tilde{u}|^{2} \phi^{2} \, dx 
- c \int_{ Q_{r}(z) } r^{2\gamma} \| D^{m+1}u \|_{L^{\infty}(Q_{2r}(z))}^{2}  + \left( 1+|D\phi|^{2} \right) E^{2} \, dx,
\end{aligned}\end{equation*}
and the estimate for the left-hand side is obtained. So by combining the estimates for left-hand side and right-hand side, we obtain that
\begin{equation*}\label{}
\frac{\lambda}{4} \int_{Q_{r}(z)} |\tilde{u}|^{2} \phi^{2} \, dx 
\leq c_{1} \int_{Q_{r}(z) } r^{\gamma} \| D^{m+1}u \|_{L^{\infty}(Q_{2r}(z))}^{2} +  \varepsilon |\tilde{u}|^{2} \phi^{2}  + c(\varepsilon) \left( 1+ |D\phi|^{2} \right) E^{2}\, dx,
\end{equation*}
for some universal constant $c_{1} \geq 1$. Let  $\varepsilon \in (0,1]$ be a small universal constant satisfying that $c_{1} \epsilon \leq \lambda/8$. Then we have that
\begin{equation*}\label{}
\int_{Q_{r}(z)} |\tilde{u}|^{2} \phi^{2}  dx
\leq c \int_{Q_{r}(z) } r^{2\gamma} \| D^{m+1}u \|_{L^{\infty}(Q_{2r}(z))}^{2} 
+ \left( 1+ |D\phi|^{2} \right) E^{2} dx.
\end{equation*}
From \eqref{PMT175}, we have that $ \left| D^{m+1}u \right| \leq c \left[ |\tilde{u}| + E \right]$ in $Q_{r}(z)$. So the lemma follows.
\end{proof}

\subsection{Comparison estimate}

Fix $Q_{2r}(z) \subset Q_{5}$ and $p' \geq 0'$ with $|p'|=m$. Suppose that $v \in W^{1,2}(Q_{r}(z))$ is the weak solution of 
\begin{equation}\label{PMT310}
\left\{\begin{array}{rcll}
D_{\alpha} \left[ A^{\alpha \beta}_{ij}D_{\beta}v^{j} \right] & = & D_{\alpha} \left[  A^{\alpha \beta}_{ij} [1-\delta_{\beta1}] G_{\beta}^{j,p'} + \delta_{\alpha1} G_{1}^{i,p'}  - \nu_{\alpha}^{i,p'}  \right] & \text{in } Q_{r}(z), \\
v & = & \eta^{p'} & \text{on } \partial Q_{r}(z),
\end{array}\right.
\end{equation}

\begin{lem}\label{PMTS300}
There exists a small universal constant  $R_{1} \in (0,1]$ such that  if $r \in (0,R_{1}]$ then for any $p' \geq 0'$ with $|p'|=m$ and $1 \leq \beta \leq n$, we have that 
\begin{equation*}\begin{aligned}\label{}
& \int_{ Q_{r}(z) } \left|  \sum_{0' \leq q' \leq p' } \Big( \begin{array}{c} p' \\ q' \end{array} \Big) \pi^{q'} D^{ (|q'|,p'-q') }  D_{\beta}u  - D_{\beta}v - [1-\delta_{\beta 1}] G_{\beta}^{p'}  \right|^{2}\, dx \\
& \quad \leq  cr^{n+2\gamma}   \left[  \| D^{m+1}u \|_{L^{\infty}(Q_{r}(z))}^{2}  +  E^{2} \right].
\end{aligned}\end{equation*}
\end{lem}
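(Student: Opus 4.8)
The plan is to compare $u$ against the solution $v$ of the constant-type reference problem \eqref{PMT310} via an energy estimate. First I would fix $Q_{2r}(z) \subset Q_{5}$ and $p' \geq 0'$ with $|p'|=m$, abbreviate $\tilde{u} = \tilde{u}^{p'}$, $\eta = \eta^{p'}$, $G_{\beta} = G_{\beta}^{p'}$, $\nu = \nu^{p'}$, and write $w_{\beta} = \tilde{u}_{\beta} - D_{\beta}v - [1-\delta_{\beta 1}] G_{\beta}$, which is the quantity whose $L^{2}$-norm I want to control. Observe that on $\partial Q_{r}(z)$ we have $v = \eta$, so $D_{\beta}v$ and $D_{\beta}\eta$ have the same tangential part on the boundary; the natural test function for the difference of the two weak formulations is $\phi := v - \eta \in W_{0}^{1,2}(Q_{r}(z))$.

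The key computation: subtract the weak formulation of \eqref{PMT310} (tested against $\phi$) from the perturbed weak identity \eqref{PMT170} (also tested against $\phi$, legitimate since $\phi$ vanishes on $\partial Q_{r}(z)$ and is in $W_{0}^{1,2}$ — I would approximate by $C_{c}^{\infty}$ functions). This gives
\begin{equation*}
\int_{Q_{r}(z)} \sum_{1 \leq \alpha, \beta \leq n} A^{\alpha\beta}_{ij}\left[ \tilde{u}_{\beta}^{j} - D_{\beta}v^{j} - [1-\delta_{\beta1}]G_{\beta}^{j} - \delta_{\beta1}(\cdots) \right] D_{\alpha}\phi^{i}\, dx
\end{equation*}
controlled by the error term on the right of \eqref{PMT170}, namely $c r^{n+1/4}[\|D^{m+1}u\|_{L^{\infty}(Q_{r}(z))}^{2} + E^{2}] + c r^{1/4}\int_{Q_{r}(z)} \epsilon|D\phi|^{2} + \epsilon^{-2}|\phi|^{2}\, dx$. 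Here one must be careful that the $\delta_{\beta1}$-term matching forces the combination $\sum_{\alpha,\beta} A^{\alpha\beta}_{ij}\tilde{u}_{\beta}^{j} D_{\alpha}\phi^{i}$ to align with $\sum_{\alpha}[A^{\alpha1}_{ij}G_{1}^{j} + \delta_{\alpha1}G_{1}^{i} - \nu_{\alpha}^{i}]$ type data; in fact the reference equation \eqref{PMT310} was designed precisely so that $D_{\alpha}\phi^{i} = D_{\alpha}v^{i} - D_{\alpha}\eta^{i}$ and the definition of $w_{\beta}$ absorb these data terms cleanly. After this cancellation the left side becomes $\int A^{\alpha\beta}_{ij} w_{\beta}^{j} D_{\alpha}\phi^{i}\, dx$ up to terms already on the right; then I would insert $D_{\alpha}\phi = D_{\alpha}\phi - w_{\alpha} + w_{\alpha}$ — more precisely use that by \eqref{PMT180} (Lemma \ref{CTFS700}), $\|D_{\beta}\eta - \tilde{u}_{\beta} + [1-\delta_{\beta1}]G_{\beta}\|_{L^{2}(Q_{r}(z))}^{2} \leq c r^{n+2\gamma}[\|D^{m+1}u\|_{L^{\infty}(Q_{2r}(z))}^{2} + E^{2}]$, i.e. $D_{\beta}\phi = (D_{\beta}v - D_{\beta}\eta)$ differs from $-w_{\beta}$ by exactly this small $L^{2}$ quantity. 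Substituting, the coercivity \eqref{ell} gives $\lambda \int |w|^{2}\, dx$ on the left, bounded above by $c r^{n+1/4}[\|D^{m+1}u\|_{L^{\infty}(Q_{r}(z))}^{2}+E^{2}] + (\text{small})\int |w|^{2} + c r^{n+2\gamma}[\cdots]$, using Young's inequality and Poincaré on $\phi \in W_{0}^{1,2}(Q_{r}(z))$ to handle the $\epsilon^{-2}|\phi|^{2}$ and $\epsilon|D\phi|^{2}$ terms (note $D\phi = D v - D\eta$, and $\|D\eta\|_{L^\infty} \leq cE$ by \eqref{PMT190} together with the $\tilde u$ bound). Absorbing the small $\int|w|^{2}$ term into the left and recalling $\gamma \leq 1/4$ so $r^{1/4} \leq r^{2\gamma}$ — actually $2\gamma \leq 1/2 \leq 1/4$ fails, so one instead uses $1/4 \geq 2\gamma$ since $\gamma \leq 1/8$? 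No: $\gamma \in (0,1/4]$ so $2\gamma \leq 1/2$; here I would simply keep $r^{\min\{1/4, 2\gamma\}} = r^{2\gamma}$ is the dominant (smaller exponent, larger on $(0,1]$) — I need $r^{1/4} \leq r^{2\gamma}$ which holds iff $2\gamma \leq 1/4$; when that fails, $r^{n+1/4}$ is already $\leq r^{n+2\gamma}$... wait for $r\leq 1$, smaller exponent gives larger value, so $r^{n+1/4} \geq r^{n+2\gamma}$ when $1/4 \leq 2\gamma$. In that regime I would just note $r^{n+1/4} \leq r^{n} \cdot R_1^{1/4 - 2\gamma} \cdot r^{2\gamma}$... this doesn't work either. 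The clean fix: both exponents exceed $n+2\gamma$ when $\gamma$ is small, and for larger $\gamma$ one replaces $r^{1/4}$ by $r^{2\gamma}$ at the cost of $\gamma \leq 1/4 \leq$ nothing — so actually the statement must intend $r^{n+2\gamma}$ as the genuinely weakest, requiring $2\gamma \leq 1/4$, i.e. the extra hypothesis; but since $\gamma \leq 1/4$ only, I would instead carry $r^{n+\min\{1/4,2\gamma\}}$ and absorb. I'll phrase the final bound as $c r^{n+2\gamma}[\|D^{m+1}u\|_{L^\infty(Q_r(z))}^2 + E^2]$ by choosing $R_1$ small enough that $r^{1/4} \leq r^{2\gamma}$ is not needed — rather, $r^{1/4 - 2\gamma}$ contributes a harmless constant when $1/4 \geq 2\gamma$, and when $1/4 < 2\gamma$ one simply has $r^{1/4}$ with a \emph{better} exponent replaced by $r^{2\gamma}$ upon shrinking, bounded by $R_1^{1/4-2\gamma}$... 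I would resolve this by writing $r^{n+1/4} \leq r^{n+2\gamma}$ whenever $r \leq 1$ and $2\gamma \leq 1/4$, and in the remaining case absorb $r^{1/4}\geq r^{2\gamma}$ into the $\epsilon$-absorption so it never appears standalone.)

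Concretely, then, the final steps are: choose $\epsilon$ a small universal constant in \eqref{PMT170} so that the $\epsilon r^{1/4}\int|D\phi|^{2}$ term (with $|D\phi|^{2} \leq 2|w|^{2} + 2(\text{the } \eqref{PMT180}\text{-error density}) + \cdots$) is absorbed into $\frac{\lambda}{2}\int|w|^{2}$; use \eqref{PMT190} to bound $\int \epsilon^{-2}|\phi|^{2}$ by $c r^{2}\int|D\phi|^{2}$ (Poincaré) and fold it in after shrinking $r \leq R_{1}$; and invoke \eqref{PMT175} to convert the remaining $\|D^{m+1}u\|_{L^{\infty}}$ factor so the statement reads in the form claimed. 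The output is then precisely the asserted inequality. The main obstacle I anticipate is the bookkeeping in the cancellation of the $\delta_{\beta1}$, $\delta_{\alpha1}$ indexed data terms between \eqref{PMT170} and \eqref{PMT310}: one must verify that the right-hand data in \eqref{PMT310} was chosen so that testing both weak formulations against $v - \eta$ and subtracting leaves \emph{exactly} $\int A^{\alpha\beta}_{ij} w_\beta^j D_\alpha(v-\eta)^i\,dx$ plus controlled errors — in particular that $G_1^{p'}$ enters only through $D_1\eta$ and $D_1 v$ consistently, since $G_1^{p'}$ appears both in the equation \eqref{PMT170} (as $G_1^i D_1\eta^i$) and in the boundary data structure. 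Once that algebraic matching is pinned down, the energy estimate is routine; the exponent $2\gamma$ in the conclusion comes entirely from \eqref{PMT180}, and the $\|D^{m+1}u\|_{L^\infty}^2$ term (rather than $L^2$) survives because \eqref{PMT180} and \eqref{PMT170} both carry it — it will be removed only later, in Lemma \ref{PMTS1000}, by the smallness of its coefficient under iteration.
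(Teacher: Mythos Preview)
Your approach is correct and matches the paper's: subtract the weak formulation of \eqref{PMT310} from \eqref{PMT170} tested against $\phi = \eta - v \in W_0^{1,2}(Q_r(z))$, use \eqref{PMT180} to replace $D\phi$ by $-w$ up to an $O(r^{n+2\gamma})$ error, apply ellipticity to extract $\lambda\int|w|^2$, and absorb the residual $cr^{2\gamma}\int|w|^2$ for $r \leq R_1$ small. Your worry about $r^{1/4}$ versus $r^{2\gamma}$ is legitimate but the paper simply writes $r^{2\gamma}$ without comment; it is immaterial since any positive power of $r$ suffices for the absorption step.
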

\begin{proof}
Fix $p' \geq 0'$ with $|p'|=m$. Recall $\tilde{u} = \tilde{u}^{p'}$ in \eqref{PMT165}. We find from \eqref{PMT170} that
\begin{equation*}\begin{aligned}\label{}
& \left| \int_{Q_{r}(z) } A^{\alpha \beta}_{ij} \tilde{u}_{\beta}^{j} D_{\alpha} \phi^{i} +  G_{1}^{i,p'}  D_{1} \phi^{i}  + \nu_{\alpha}^{i,p'}  D_{\alpha}\phi^{i} \, dx \right| \\
& \quad \leq c  r^{2\gamma}  \int_{ Q_{r}(z) }  \| D^{m+1}u \|_{L^{\infty}(Q_{r}(z))}^{2} +  E^{2} +  |D\phi|^{2}   \, dx,
\end{aligned}\end{equation*}
for any $\phi \in C_{c}^{\infty}(Q_{r}(z))$. From \eqref{PMT310}, we find that
\begin{equation*}\begin{aligned}\label{}
\int_{ Q_{r}(z) }  \left[ A^{\alpha \beta}_{ij} \left( D_{\beta}v^{j} - [1-\delta_{\beta 1}] G_{\beta}^{j,p'} \right) + \delta_{\alpha 1} G_{\alpha}^{p'} + \nu_{\alpha}^{i,p'}  \right] D_{\alpha} \phi ^{i} \, dx = 0,
\end{aligned}\end{equation*}
for any $\phi \in C_{c}^{\infty}(Q_{r}(z))$. So it follows from \eqref{PMT130} and \eqref{PMT150} that
\begin{equation*}\begin{aligned}
& \left| \int_{ Q_{r}(z) } 
A^{\alpha \beta}_{ij} \left[ \tilde{u}_{\beta}^{j} - D_{\beta}v^{j} + [1-\delta_{\beta 1}]G_{\beta}^{j,p'} \right]  D_{\alpha} \phi^{i} \, dx \right|  \\
& \quad \leq c  r^{2\gamma}  \int_{ Q_{r}(z) }  \| D^{m+1}u \|_{L^{\infty}(Q_{r}(z))}^{2} +  E^{2} +  |D\phi|^{2}   \, dx,
\end{aligned}\end{equation*}
for any $\phi \in C_{c}^{\infty}(Q_{r}(z))$. From  \eqref{PMT180},  we obtain that
\begin{equation*}\begin{aligned}\label{} 
\int_{ Q_{r}(z) } \left| D_{\alpha}\eta^{i} - \tilde{u}_{\alpha}^{i} + [1-\delta_{\alpha 1}] G_{\alpha}^{i} \right|^{2} \, dx 
\leq cr^{n+2\gamma}  \left[   \sum_{|p'|=m} \big\| U^{p'} \big \|_{L^{\infty}( Q_{2r}(z) )}^{2} + E^{2} \right].
\end{aligned}\end{equation*}
Take $\phi = \eta - v$. Then by using Young's inequality and the elliptic condition,
\begin{equation*}\begin{aligned}
& \int_{ Q_{r}(z) } \left| \tilde{u}_{\beta} - D_{\beta}v -[1-\delta_{\beta 1}] G_{\beta}^{p'}  \right|^{2}\, dx  \\
& \quad \leq  cr^{n+2\gamma}   \left[   \| D^{m+1}u \|_{L^{\infty}(Q_{2r}(z))} + E^{2} \right] 
 + cr^{2\gamma} \int_{ Q_{r}(z) } \left| \tilde{u}_{\beta} -  [1-\delta_{\beta 1}] G_{\beta}^{p'} - D_{\beta}v   \right|^{2}\, dx.
\end{aligned}\end{equation*}
So with \eqref{PMT165},  for the lemma holds for sufficiently small $R_{1}>0$.
\end{proof}

\subsection{Excess decay estimate}

We obtain the excess decay estimate of $U^{p'}$ in Lemma \ref{PMTS700}. Then by Campanato type embedding, we show H\"{o}lder continuity of $U^{p'}$. In the proof of Lemma \ref{PMTS700}, we use the following variation of the technical lemma in \cite[Lemma 3.4]{HQLF1}.

\begin{lem}\label{PMTS400}
Let $\phi(t)$ be a nonnegative  function on $[0,R]$. If
\begin{equation*}
\phi(\rho) \leq  A\bigg( \frac{\rho}{r} \bigg)^{\alpha} \phi(r) + B  r^{\beta}  \left( \frac{r}{\rho} \right)^{n}
\end{equation*}
holds for any $0 < \rho \leq r \leq R$ with $A,B,\alpha,\beta$ nonnegative constants and $\beta < \alpha$. Then for any $\gamma \in (\beta,\alpha)$, there exist  a positive constant $c$ depending on $n,A,\alpha,\beta,\gamma$ such that 
\begin{equation*}
\phi(\rho) \leq  c \bigg[ \bigg( \frac{\rho}{r} \bigg)^{\gamma} \phi(r) + B  \rho^{\beta} \bigg],
\end{equation*}
for all $0<\rho \leq r \leq R$.
\end{lem}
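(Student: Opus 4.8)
The plan is to prove Lemma \ref{PMTS400} by the classical dyadic iteration argument for technical lemmas of this type, the only mild novelty compared with \cite[Lemma 3.4]{HQLF1} being the harmless extra factor $\left( r/\rho \right)^{n}$, which I will show is absorbed into the constant. Fix $\gamma \in (\beta,\alpha)$. Since $\alpha > \gamma$, I would first choose a parameter $\tau = \tau(A,\alpha,\gamma) \in (0,1)$ so small that $A\tau^{\alpha} \leq \tau^{\gamma}$. Applying the hypothesis with $\rho = \tau r$ then gives, for every $r \in (0,R]$,
\begin{equation*}
\phi(\tau r) \leq \tau^{\gamma}\phi(r) + B\tau^{-n}r^{\beta}.
\end{equation*}

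Next I would iterate this estimate along the radii $r_{k} = \tau^{k}r$, $k = 0,1,2,\dots$. A direct induction (the inductive step just reindexes the resulting sum) yields
\begin{equation*}
\phi(r_{k}) \leq \tau^{k\gamma}\phi(r) + B\tau^{-n}r^{\beta}\sum_{j=0}^{k-1}\tau^{j\gamma}\tau^{(k-1-j)\beta}.
\end{equation*}
Because $\gamma > \beta$ and $\tau \in (0,1)$, rewriting the sum as $\tau^{(k-1)\gamma}\sum_{i=0}^{k-1}\tau^{i(\beta-\gamma)}$ and summing the geometric series with ratio $\tau^{\beta-\gamma} > 1$ bounds it by $c(\tau,\beta,\gamma)\,\tau^{k\beta}$, hence
\begin{equation*}
\phi(r_{k}) \leq \tau^{k\gamma}\phi(r) + cB\tau^{-n}\tau^{k\beta}r^{\beta},
\end{equation*}
with $c$ depending only on $A,\alpha,\beta,\gamma$.

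Finally, for arbitrary $0 < \rho \leq r \leq R$, I would pick the integer $k \geq 0$ with $\tau^{k+1}r < \rho \leq \tau^{k}r = r_{k}$ and apply the hypothesis once more to the pair $(\rho,r_{k})$:
\begin{equation*}
\phi(\rho) \leq A\Big( \frac{\rho}{r_{k}} \Big)^{\alpha}\phi(r_{k}) + Br_{k}^{\beta}\Big( \frac{r_{k}}{\rho} \Big)^{n} \leq A\phi(r_{k}) + B\tau^{-n}r_{k}^{\beta},
\end{equation*}
using $\rho \leq r_{k}$ (so $(\rho/r_{k})^{\alpha}\leq 1$) and $\rho > \tau r_{k}$ (so $(r_{k}/\rho)^{n}\leq \tau^{-n}$); this is precisely where the factor $(r/\rho)^{n}$ disappears, since it is only ever evaluated at comparable radii. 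Substituting the bound for $\phi(r_{k})$ and using $\tau^{k} < \tau^{-1}(\rho/r)$, hence $\tau^{k\gamma} < \tau^{-\gamma}(\rho/r)^{\gamma}$ and $r_{k}^{\beta} = \tau^{k\beta}r^{\beta} < \tau^{-\beta}\rho^{\beta}$, gives
\begin{equation*}
\phi(\rho) \leq c\Big[ \Big( \frac{\rho}{r} \Big)^{\gamma}\phi(r) + B\rho^{\beta} \Big]
\end{equation*}
with $c = c(n,A,\alpha,\beta,\gamma)$, which is the assertion. I do not expect any real obstacle: the argument is entirely elementary, and the only points requiring a little care are the bookkeeping of the geometric sum with the two exponents $\gamma > \beta$ and the verification that the $(r/\rho)^{n}$ factor contributes no more than a fixed power of $\tau^{-1}$.
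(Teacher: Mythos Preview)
Your argument is correct and complete: the choice of $\tau$ with $A\tau^{\alpha}\leq\tau^{\gamma}$, the dyadic iteration along $r_k=\tau^k r$, the geometric-sum bookkeeping using $\gamma>\beta$, and the final application of the hypothesis on the pair $(\rho,r_k)$ with $r_k/\rho<\tau^{-1}$ all go through exactly as you describe. The paper does not actually give a proof of this lemma; it only states it as a variation of \cite[Lemma~3.4]{HQLF1} and uses it as a black box, so your write-up is in fact more detailed than what the paper provides and follows precisely the standard route one would take for that cited result.
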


We have the following excess decay estimate for the reference equations from \cite{KYSP1}.

\begin{lem}\label{PMTS500}
For $Q_{2r}(z) \subset Q_{5}$, suppose that $\tilde{F} \in C^{\gamma} \big( Q_{r}^{k}(z), \br^{n} \big)$ for any $k \in K$. Then for the weak solution $v$ of
\begin{equation*}\label{}
D_{\alpha} \left[ A^{\alpha \beta}_{ij} D_{\beta}v^{j} \right] = D_{\alpha} \tilde{F}_{\alpha}^{i}
\text{ in } Q_{r}(z),
\end{equation*}
and  $V : Q_{r}(z) \to \br^{Nn}$defined as
\begin{equation*}\label{} 
V= \left( \sum_{1 \leq i \leq N} \sum_{1 \leq \alpha \leq n} \pi_{\alpha} \left[ \sum_{ 1 \leq j \leq N} \sum_{1 \leq \beta \leq n}  A_{ij}^{\alpha \beta} D_{\beta} v^{i} - \tilde{F}_{\alpha}^{i} \right], D_{x'}v^{i} + \pi' \, D_{1} v^{i} \right),
\end{equation*}
we have that for any $0 < \rho \leq r$,
\begin{equation*}\begin{aligned}
& \frac{1}{\rho^{2\gamma}}  \mint_{ Q_{\rho}(z) } \left| V  - (V)_{ Q_{\rho}(z) } \right|^{2} \, dx \\
& \quad \leq  c \left( \frac{1}{ r^{ 2\gamma }}
\mint_{ Q_{r}(z) } \left| V - (V)_{ Q_{r}(z) } \right|^{2} \, dx + \mint_{ Q_{r}(z) } |V|^{2} \, dx 
+  \sup_{ k \in K }  \big\| \tilde{F} \big\|_{C^{\gamma}(Q_{r}^{k}(z))}^{2}  \right) .
\end{aligned}\end{equation*}
\end{lem}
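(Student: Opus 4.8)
The statement is the $m=0$ excess--decay estimate established in \cite{KYSP1}, applied to the solution $v$ of the divergence--form system with the right--hand side $\tilde F$ in place of $F$: the vector $V$ is, after relabelling, exactly the ``good'' first--order quantity of that paper, and all the structural hypotheses used there are in force here --- the ellipticity and boundedness \eqref{ell}--\eqref{growth}, the composite--cube geometry with the graph smallness \eqref{gradient_graphs}, and the H\"older continuity of $\pi'$ from Lemma \ref{GSS600}. So the plan is to invoke \cite{KYSP1} directly; for completeness I sketch the argument.

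First I would freeze the geometry at $z$ by passing to the coordinates $y=\Psi_z(x)$ of Lemma \ref{HODS200}. By Lemma \ref{HODS250}, $\hat v=v\circ\Phi_z$ solves $D_{y^\alpha}[\hat A^{\alpha\beta}_{ij}D_{y^\beta}\hat v^j]=D_{y^\alpha}\hat F^i_\alpha$ with $\hat A,\hat F\in C^\gamma$ on each $\Psi_z(Q^k_r(z))$. A chain--rule computation (the $m=0$ case of Lemma \ref{HODS300}) shows that under this change of variables $V$ corresponds, up to a bounded invertible transformation and up to replacing the variable $\pi'(x)$ by the frozen value $\pi'(z)$, to the pair $\big(\hat A^{1\beta}_{ij}D_{y^\beta}\hat v^j-\hat F^i_1,\ D_{y'}\hat v^i\big)$ --- the conormal flux across the hyperplanes $\{y^1=\mathrm{const}\}$ together with the tangential gradient. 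On $Q_r(z)$ the discrepancy $\pi'(x)-\pi'(z)$ is $O(r^\gamma)$ by Lemma \ref{GSS600} (since $\gamma\le1/4<1/2$), so it contributes only an error of the type allowed on the right--hand side.

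Next I would freeze the coefficients: let $w$ solve the constant--coefficient system $D_{y^\alpha}[\hat A^{\alpha\beta}_{ij}(\Psi_z(z))D_{y^\beta}w^j]=0$ in $Q_r(z)$ with $w=\hat v$ on $\partial Q_r(z)$. A Caccioppoli estimate together with the $C^\gamma$--regularity of $\hat A$ and $\hat F$ gives $\mint_{Q_r(z)}|D\hat v-Dw|^2\,dy\le c\,r^{2\gamma}\big(\mint_{Q_r(z)}|D\hat v|^2+\sup_{k\in K}\|\hat F\|_{C^\gamma(\Psi_z(Q^k_r(z)))}^2\big)$. For the constant--coefficient solution $w$ the vector $W:=\big(\hat A^{1\beta}_{ij}(\Psi_z(z))D_{y^\beta}w^j,\ D_{y'}w^i\big)$ satisfies a constant--coefficient first--order elliptic (Cauchy--Riemann--type) system --- $D_{y^1}w$ is recovered from $W$ through the invertibility of $\hat A^{11}(\Psi_z(z))$ forced by \eqref{ell}, and the remaining equations close up --- so $W$ is smooth and enjoys the interior excess decay $\mint_{Q_{\theta r}(z)}|W-(W)_{Q_{\theta r}(z)}|^2\le c\,\theta^2\mint_{Q_r(z)}|W-(W)_{Q_r(z)}|^2$ for every $\theta\in(0,\tfrac12]$. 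Transferring these two facts back to the $x$--variables ($\Psi_z$ is bi--Lipschitz with Jacobian bounded above and below, and $\pi'(z)$ is traded for $\pi'(x)$ via Lemma \ref{GSS600}) produces the one--step inequality $\tfrac{1}{(\theta r)^{2\gamma}}\mint_{Q_{\theta r}(z)}|V-(V)_{Q_{\theta r}(z)}|^2\le c\,\theta^{2-2\gamma}\tfrac{1}{r^{2\gamma}}\mint_{Q_r(z)}|V-(V)_{Q_r(z)}|^2+c\big(\mint_{Q_r(z)}|V|^2+\sup_{k\in K}\|\tilde F\|_{C^\gamma(Q^k_r(z))}^2\big)$. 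Fixing $\theta$ with $c\,\theta^{2-2\gamma}\le\tfrac12$ and iterating over dyadic scales puts $\phi(t):=\mint_{Q_t(z)}|V-(V)_{Q_t(z)}|^2$ in the setting of Lemma \ref{PMTS400} with $\alpha=2-2\gamma>\beta=0$, which yields the claimed Morrey bound for all $0<\rho\le r$.

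The main obstacle is the interior excess decay for the good vector $W$ of the constant--coefficient system: this is the ``linear--laminate'' regularity statement, and it is precisely the point at which the jump of the coefficients across the graphs is absorbed into the $\pi$--weighting, since across $\{y^1=\mathrm{const}\}$ the conormal flux and the tangential gradient stay continuous even when $A^{11}$ jumps. Everything else --- the coefficient freezing, the composition with $\Psi_z$, and the replacement of $\pi'(z)$ by $\pi'(x)$ --- is routine once Lemma \ref{GSS600} is available, and the full argument is carried out in \cite{KYSP1}.
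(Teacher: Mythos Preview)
Your approach is essentially the same as the paper's: both simply invoke the corresponding result from \cite{KYSP1} (the paper cites Theorem~1.7 there with $R=3$), and you correctly identify the lemma as the $m=0$ case of that excess--decay estimate. Your additional sketch of the underlying argument in \cite{KYSP1} is extra detail the paper omits; one small imprecision is that the comparison system should be \emph{piecewise} constant (constant within each layer, preserving the laminate structure) rather than globally constant at $\Psi_z(z)$, but you do acknowledge the laminate regularity point at the end, so this is a matter of phrasing in a sketch rather than a real gap.
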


\begin{proof}
Take $R=3$ in Theorem 1.7 in \cite{KYSP1}, then the lemma follows.
\end{proof}

\begin{lem}\label{PMTS700}
For any $p' \geq 0'$ with $|p'|=m$, if  $Q_{2r}(z) \subset Q_{5}$ and $0 < \rho \leq 2r$ then 
\begin{equation*}\begin{aligned}
& \frac{1}{\rho^{2\gamma}}  \mint_{ Q_{\rho}(z) } \left| U^{p'} - \big( U^{p'}  \big)_{Q_{\rho}(z)} \right|^{2}  \, dx \\
& \quad \leq  c \left[
\frac{1}{ r^{ 2\gamma } } \mint_{ Q_{2r}(z) } \left| U^{p'}  - \big( U^{p'} \big)_{Q_{2r}(z)} \right|^{2} \, dx +  \| D^{m+1}u \|_{L^{\infty}(Q_{2r}(z))}^{2} + E^{2}  \right].
\end{aligned}\end{equation*}
\end{lem}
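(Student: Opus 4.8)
The plan is to prove this by a Campanato-type comparison between $U^{p'}$ and the ``good quantity'' $V$ of a reference solution, in the spirit of the gradient estimate of \cite{KYSP1}. Write $M=\|D^{m+1}u\|_{L^{\infty}(Q_{2r}(z))}^{2}+E^{2}$ for the constant appearing on the right-hand side; by Proposition \ref{prop_compare} (equivalently by \eqref{PMT150}) one has $|U^{p'}|^{2}\le c[\,|D^{m+1}u|^{2}+E^{2}\,]\le cM$ on $Q_{2r}(z)$, hence $\mint_{Q_{\sigma}(z)}|U^{p'}|^{2}\le cM$ for every $Q_{\sigma}(z)\subset Q_{2r}(z)$. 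The range $r<\rho\le 2r$ is immediate, since then $\mint_{Q_{\rho}(z)}|U^{p'}-(U^{p'})_{Q_{\rho}(z)}|^{2}\le c\,\mint_{Q_{2r}(z)}|U^{p'}-(U^{p'})_{Q_{2r}(z)}|^{2}$ and $\rho^{-2\gamma}\le r^{-2\gamma}$; so it suffices to treat $0<\rho\le r$, and by a covering argument (together with $\mint_{Q_{2r}(z)}|U^{p'}|^{2}\le cM$) we may assume $r\le R_{1}$, the constant of Lemma \ref{PMTS300}.

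First I would set up the comparison at a generic scale $\sigma\le r$. Let $v$ solve \eqref{PMT310} on $Q_{\sigma}(z)$ with right-hand side datum $\tilde F_{\sigma}=\big(A^{\alpha\beta}_{ij}[1-\delta_{\beta1}]G_{\beta}^{j,p'}+\delta_{\alpha1}G_{1}^{i,p'}-\nu_{\alpha}^{i,p'}\big)$; since $A$ is bounded and piecewise $C^{\gamma}$, the estimate \eqref{PMT140} gives $\sup_{k\in K}\|\tilde F_{\sigma}\|_{C^{\gamma}(Q_{\sigma}^{k}(z))}\le cE$, so Lemma \ref{PMTS500} applies to $v$ and to its associated quantity $V$. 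Comparing the identities \eqref{HOD080}, \eqref{HOD085} for $U^{p'}$ with the definition of $V$ and using $|\pi|,|A^{\alpha\beta}_{ij}|\le c$, every entry of $U^{p'}-V$ is a bounded linear combination of the quantities $\sum_{0'\le q'\le p'}\big(\begin{smallmatrix}p'\\ q'\end{smallmatrix}\big)\pi^{q'}D^{(|q'|,p'-q')}D_{\beta}u-D_{\beta}v-[1-\delta_{\beta1}]G_{\beta}^{p'}$, $1\le\beta\le n$. Hence Lemma \ref{PMTS300} (applied at scale $\sigma$, recalling $\|D^{m+1}u\|_{L^{\infty}(Q_{\sigma}(z))}\le\|D^{m+1}u\|_{L^{\infty}(Q_{2r}(z))}$) yields $\int_{Q_{\sigma}(z)}|U^{p'}-V|^{2}\,dx\le c\sigma^{n+2\gamma}M$, i.e. $\mint_{Q_{\sigma}(z)}|U^{p'}-V|^{2}\le c\sigma^{2\gamma}M$.

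Next, for $0<\rho\le\sigma\le r$, I would combine the two facts through $\mint_{Q_{\rho}}|U^{p'}-(U^{p'})_{Q_{\rho}}|^{2}\le 2\mint_{Q_{\rho}}|V-(V)_{Q_{\rho}}|^{2}+2\mint_{Q_{\rho}}|U^{p'}-V|^{2}$: the first term is estimated by Lemma \ref{PMTS500} together with $\mint_{Q_{\sigma}}|V-(V)_{Q_{\sigma}}|^{2}\le 2\mint_{Q_{\sigma}}|U^{p'}-(U^{p'})_{Q_{\sigma}}|^{2}+c\sigma^{2\gamma}M$, $\mint_{Q_{\sigma}}|V|^{2}\le 2\mint_{Q_{\sigma}}|U^{p'}|^{2}+c\sigma^{2\gamma}M\le cM$, and $\sup_{k}\|\tilde F_{\sigma}\|_{C^{\gamma}}^{2}\le cM$; the second by $\mint_{Q_{\rho}}|U^{p'}-V|^{2}\le(\sigma/\rho)^{n}\mint_{Q_{\sigma}}|U^{p'}-V|^{2}\le c(\sigma/\rho)^{n}\sigma^{2\gamma}M$. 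Writing $\phi(t):=\int_{Q_{t}(z)}|U^{p'}-(U^{p'})_{Q_{t}(z)}|^{2}\,dx$, this produces
\begin{equation*}
\phi(\rho)\le c_{0}\Big(\tfrac{\rho}{\sigma}\Big)^{n+2\gamma}\phi(\sigma)+c_{0}\,M\,\sigma^{\,\beta}\Big(\tfrac{\sigma}{\rho}\Big)^{n}\qquad(0<\rho\le\sigma\le r),
\end{equation*}
for any fixed $\beta<n+2\gamma$ (using $\sigma\le r\le 1$ to absorb the loss $\sigma^{n+2\gamma-\beta}\le1$ into the constant), with $c_{0}$ universal. Now Lemma \ref{PMTS400} with $A=c_{0}$, $\alpha=n+2\gamma$, $B=c_{0}M$ converts this into $\phi(\rho)\le c\big[(\rho/r)^{\theta}\phi(r)+M\rho^{\beta}\big]$ for $\beta<\theta<\alpha$; combining with $\phi(r)\le c\,r^{n}\mint_{Q_{2r}(z)}|U^{p'}-(U^{p'})_{Q_{2r}(z)}|^{2}$ and dividing by $\rho^{n+2\gamma}$ gives the asserted bound on $\frac{1}{\rho^{2\gamma}}\mint_{Q_{\rho}(z)}|U^{p'}-(U^{p'})_{Q_{\rho}(z)}|^{2}$ for $0<\rho\le r$, which with the easy range $r<\rho\le 2r$ completes the proof.

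The step I expect to be the main obstacle is precisely this iteration. The comparison estimate (Lemma \ref{PMTS300}) and the excess decay for the reference quantity (Lemma \ref{PMTS500}) carry the \emph{same} Hölder exponent $\gamma$, so in the dyadic recursion the error term decays at (essentially) the critical rate of the leading term; Lemma \ref{PMTS400} must therefore be invoked with slightly sub-critical exponents and the resulting loss reabsorbed using smallness of the base scale, and keeping the constants uniform as $\rho\to 0^{+}$ is the delicate point. A secondary, more routine, difficulty is the bookkeeping that writes every component of $U^{p'}-V$ in terms of the single quantity controlled by Lemma \ref{PMTS300}, and the verification that the data of the reference equation satisfy $\sup_{k}\|\tilde F_{\sigma}\|_{C^{\gamma}(Q_{\sigma}^{k}(z))}\le cE$.
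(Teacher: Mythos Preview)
Your proposal follows the paper's proof essentially step for step: build the reference solution $v$ of \eqref{PMT310}, compare $U^{p'}$ with the associated quantity $V$ via Lemma~\ref{PMTS300}, invoke Lemma~\ref{PMTS500} for the excess decay of $V$, and iterate through Lemma~\ref{PMTS400}. The algebraic identification of $U^{p'}-V$ as a bounded linear combination of the expressions in Lemma~\ref{PMTS300} (via \eqref{HOD080}--\eqref{HOD085}), and the verification $\sup_{k}\|\tilde F\|_{C^{\gamma}(Q^{k}_{\sigma}(z))}\le cE$ from \eqref{PMT140}, are exactly what the paper does. The reductions to $0<\rho\le r$ and $r\le R_{1}$ are also the same.

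You are right to flag the iteration step as the delicate point, and in fact your proposed fix does not quite close it. Weakening the error term $c\sigma^{n+2\gamma}M$ to $c_{0}M\sigma^{\beta}(\sigma/\rho)^{n}$ with $\beta<n+2\gamma$ does put you in the hypotheses of Lemma~\ref{PMTS400}, but the \emph{output} is then $\phi(\rho)\le c[(\rho/r)^{\theta}\phi(r)+M\rho^{\beta}]$ with $\theta,\beta<n+2\gamma$; dividing by $\rho^{n+2\gamma}$ leaves a factor $\rho^{\beta-n-2\gamma}$ (and similarly $\rho^{\theta-n-2\gamma}$) that blows up as $\rho\to0$, so you recover only a $C^{\gamma-\varepsilon}$ bound, not the asserted $C^{\gamma}$. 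The paper's own proof has the same borderline: it derives the one-step inequality with $\alpha=\beta=2\gamma$ (in the notation of Lemma~\ref{PMTS400}) and then simply cites Lemma~\ref{PMTS400}, which as stated requires $\beta<\alpha$. The honest way to close this is not to weaken the error exponent but to strengthen the leading one: inside the proof of Lemma~\ref{PMTS500} in \cite{KYSP1}, the \emph{homogeneous} reference (laminate) problem furnishes an excess decay exponent strictly larger than $2\gamma$, so that the genuine one-step inequality has $\alpha>2\gamma=\beta$ and Lemma~\ref{PMTS400} applies cleanly. Apart from this point your argument and the paper's are the same.
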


\begin{proof}
We only need to prove the case that $0 < r \leq R_{1}$ where $R_{1} \in (0,1]$ is a universal constant chosen in Lemma \ref{PMTS300}. Otherwise, we have that $0 < \rho   \leq R_{1} < r (\leq 7)$ or $R_{1} < \rho \leq r ( \leq 7)$ in which case the lemma can be obtained by using the case that $0 < r \leq R_{1}$  and that $R_{1} \in (0,1]$ is a universal constant.

We assume that $0 < \rho \leq r$, otherwise we have that $r < \rho \leq 2r$ and one can easily prove the lemma. Fix  $p' \geq 0'$ with $|p'|=m$ and $0 < \rho \leq r$. Let $v^{p'} \in W^{1,2}(Q_{2r}(z))$ be the weak solution of 
\begin{equation*}\label{}
\left\{\begin{array}{rcll}
D_{\alpha} \left[ A^{\alpha \beta}_{ij}D_{\beta}v^{j} \right] & = & D_{\alpha} \left[  -A^{\alpha \beta}_{ij} [1-\delta_{\alpha 1}] G_{\beta}^{j,p'} + \delta_{\alpha1} G_{1}^{i,p'}  - \nu_{\alpha}^{i,p'}  \right] & \text{ in } Q_{r}(z), \\
v & = & \eta^{p'} & \text{ on } \partial Q_{r}(z),
\end{array}\right.
\end{equation*}
where $\eta$ is defined in \eqref{CTF650}. Here,  we have from \eqref{PMT140} that
\begin{equation}\label{PMT275}
\tilde{F}_{\alpha}^{i} : =  - A^{\alpha \beta}_{ij} [1-\delta_{\beta 1}] G_{\beta}^{j,p'} + \delta_{\alpha1} G_{1}^{i,p'}  - \nu_{\alpha}^{i,p'}  
\in C^{\gamma} \left( Q_{r}^{k}(z), \br^{n} \right)
\end{equation}
with the estimate
\begin{equation}\label{PMT730}
\big\| \tilde{F}_{\alpha}^{i} \big\|_{C^{\gamma}(Q_{5}^{k})}  
= \left\|  - A^{\alpha \beta}_{ij} [1-\delta_{\beta 1}] G_{\beta}^{j,p'} + \delta_{\alpha1} G_{1}^{i,p'}  - \nu_{\alpha}^{i,p'}   \right\|_{C^{\gamma}(Q_{5}^{k})}
\leq E.
\end{equation}
for any $k\in K$ and $i = 1,2,\cdots,n$. So by Lemma \ref{PMTS500} and \eqref{PMT130},
\begin{equation*}\begin{aligned}
& \frac{1}{\rho^{2\gamma}} \mint_{ Q_{\rho}(z) } \left| V^{p'}  - (V^{p'})_{ Q_{\rho}(z) } \right|^{2} \, dx \\
& \quad \leq  c \left[ \frac{1}{ r^{ 2\gamma }}
\mint_{ Q_{r}(z) } \left| V^{p'} - (V^{p'})_{ Q_{r}(z) } \right|^{2} \, dx + \mint_{ Q_{r}(z) } \left| V^{p'} \right|^{2} \, dx 
+  \sup_{ k \in K }  \big\| \tilde{F} \big\|_{C^{\gamma}(Q_{r}^{k}(z))}^{2}  \right] .
\end{aligned}\end{equation*}
where
\begin{equation*}\label{} 
V^{i,p'}= \left( \sum_{1 \leq \alpha \leq n} \pi_{\alpha} \left[ \sum_{ 1 \leq j \leq N} \sum_{1 \leq \beta \leq n}  A_{ij}^{\alpha \beta} D_{\beta} v^{j} - \tilde{F}_{\alpha}^{i} \right], D_{x'}v^{i} + \pi' \, D_{1} v^{i} \right),
\end{equation*}
in $Q_{r}(z)$. We compare $U^{p'}$ and $V^{p'}$ as follows. By \eqref{HOD080} and \eqref{HOD085},
\begin{equation*}\label{} 
U_{1}^{i,p'}
= G_{1}^{i,p'} + \sum_{1 \leq \alpha \leq n } \pi_{\alpha} \left[ \nu_{\alpha}^{i,p'} + \sum_{1 \leq j \leq N } \sum_{1 \leq \beta \leq n} A^{\alpha \beta}_{ij}\tilde{u}_{\beta}^{j,p'}  \right]
\quad \text{in} \quad Q_{5},
\end{equation*}
and
\begin{equation*}\label{}
U_{\beta}^{j,p'} = G_{\beta}^{j,p'} +  \tilde{u}_{\beta}^{j,p'} + \pi_{\beta} \tilde{u}_{1}^{j,p'}
\quad \text{in} \quad Q_{5}, 
\end{equation*}
for any $2 \leq \beta \leq  n$ and $1 \leq j \leq N$.  Since $\pi_{1}=-1$, we obtain from \eqref{PMT275} that
\begin{equation*}\begin{aligned}\label{}
U_{1}^{i,p'} - V_{1}^{i,p'}
& = G_{1}^{i,p'} + \sum_{1 \leq \alpha  \leq n} \pi_{\alpha} \left[  \nu_{\alpha}^{i,p'} +  \sum_{1 \leq j \leq N } \sum_{1 \leq \beta \leq n} A^{\alpha \beta}_{ij} \left[ \tilde{u}_{\beta}^{j,p'} - D_{\beta}v^{j} \right]+ \tilde{F}_{\alpha}^{i}  \right]  \\
& =  \sum_{1 \leq \alpha \leq n} \pi_{\alpha} \left[   \sum_{1 \leq j \leq N } \sum_{1 \leq \beta \leq n} A^{\alpha \beta}_{ij} \left[ \tilde{u}_{\beta}^{j,p'} - D_{\beta}v^{j}  - [1-\delta_{\beta1}] G_{\beta}^{j,p'} \right] \right] 
\end{aligned}\end{equation*}
and
\begin{equation*}\begin{aligned}\label{}
U_{\beta}^{i,p'} - V_{\beta}^{i,p'}
& =   G_{\beta}^{i,p'} +  \tilde{u}_{\beta}^{i,p'} + \pi_{\beta} \tilde{u}_{1}^{i,p'}  - D_{\beta}v^{i} - \pi_{\beta} D_{1}v^{i} \\
& = \tilde{u}_{\beta}^{i,p'} - D_{\beta}v^{i,p'}  - [1-\delta_{\beta 1}]  G_{\beta}^{i,p'}  + \pi_{\beta} \left( \tilde{u}_{1}^{i,p'} - D_{1}v^{i,p'} \right)
\end{aligned}\end{equation*}
for any $2 \leq \beta \leq n$ and $1 \leq i \leq N$.  Since $0< r \leq R_{1}$, we have from Lemma \ref{PMTS300} that 
\begin{equation*}\label{}
\int_{ Q_{r}(z) } \left|  D_{\alpha}v  - \tilde{u}_{\alpha}^{p'} + [1-\delta_{\alpha 1}] G_{\alpha}^{p'}  \right|^{2} \, dx 
 \leq  cr^{n+2\gamma}   \left[   \| D^{m+1}u \|_{L^{\infty}(Q_{2r}(z))}^{2} +  E^{2} \right],
\end{equation*}
for any $1 \leq \alpha \leq n$. So we obtain that 
\begin{equation*}\label{PMT780} 
\mint_{Q_{r}(z)} \left| U^{p'} - V^{p'} \right|^{2} \, dx 
\leq  cr^{n+2\gamma}  \left[   \| D^{m+1}u \|_{L^{\infty}(Q_{2r}(z))}^{2} +  E^{2} \right].
\end{equation*}
Thus
\begin{equation*}\begin{aligned}
& \mint_{ Q_{\rho}(z) } \left| U^{p'}  - (U^{p'})_{ Q_{\rho}(z) } \right|^{2} \, dx \\
& \quad \leq  c \left( \frac{\rho}{r} \right)^{2\gamma}
\mint_{ Q_{r}(z) } \left| U^{p'} - (U^{p'})_{ Q_{r}(z) } \right|^{2}  dx \\
& \qquad + cr^{2\gamma} \left( \frac{r}{\rho} \right)^{n} \left( \mint_{ Q_{r}(z) } \left| U^{p'} \right|^{2} dx +  \| D^{m+1}u \|_{L^{\infty}(Q_{2r}(z))}^{2} +  E^{2}  \right).
\end{aligned}\end{equation*}
By \eqref{PMT150} and that $\mint_{ Q_{r}(z) } \left| U^{p'} - (U^{p'})_{ Q_{r}(z) } \right|^{2}  dx \leq c \mint_{ Q_{2r}(z) } \left| U^{p'} - (U^{p'})_{ Q_{2r}(z) } \right|^{2}  dx $, 
\begin{equation*}\begin{aligned}
\mint_{ Q_{\rho}(z) } \left| U^{p'}  - (U^{p'})_{ Q_{\rho}(z) } \right|^{2} \, dx 
& \leq  c \left( \frac{\rho}{r} \right)^{2\gamma}
\mint_{ Q_{2r}(z) } \left| U^{p'} - (U^{p'})_{ Q_{2r}(z) } \right|^{2}  dx \\
& \quad + cr^{2\gamma} \left( \frac{r}{\rho} \right)^{n} \left[ \| D^{m+1}u \|_{L^{\infty}(Q_{2r}(z))}^{2} +  E^{2}  \right].
\end{aligned}\end{equation*}
Since $0 < \rho \leq r$ was arbitrary chosen, the lemma follows from Lemma \ref{PMTS400}.
\end{proof}

So with Campanato type embedding, see for instance \cite[Theorem 3.1]{HQLF1} or \cite[Theorem 2.9]{GE1}, we obtain H\"{o}lder continuity of $U$.

\begin{prop}\label{PMTS800}
Suppose that $h \in L^{2}(Q_{2r}(z))$ satisfies 
\begin{equation*}
\int_{Q_{\rho}(y)} | h - (h)_{Q_{\rho}(y)}|^{2} \, dx 
\leq M^{2} \rho^{n+2\gamma} 
\qquad 
\big( y \in Q_{r}(z), \ \rho \in (0,r] \big)
\end{equation*}
for some $\gamma \in (0,1)$. Then  we have that $[h]_{C^{\gamma}(Q_{r}(z))}\leq c M$.
\end{prop}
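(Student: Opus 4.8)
\textbf{Proof proposal for Proposition \ref{PMTS800}.}
The statement is the classical integral (Campanato) characterisation of H\"older continuity, and the plan is to reproduce its standard proof via averages over shrinking cubes. For $y \in Q_{r}(z)$ and $0<\rho\le r$ set $h_{\rho}(y) = (h)_{Q_{\rho}(y)}$; note that $y \in Q_{r}(z)$ and $\rho\le r$ force $Q_{\rho}(y)\subset Q_{2r}(z)$, so $h_{\rho}(y)$ is well defined and the hypothesis applies at the pair $(y,\rho)$. The first step I would carry out is the one-step bound
\[
|h_{\rho/2}(y)-h_{\rho}(y)|^{2} \le \mint_{Q_{\rho/2}(y)} |h-h_{\rho}(y)|^{2}\,dx \le c\,\mint_{Q_{\rho}(y)} |h-h_{\rho}(y)|^{2}\,dx \le c\,M^{2}\rho^{2\gamma},
\]
using Jensen's inequality, the inclusion $Q_{\rho/2}(y)\subset Q_{\rho}(y)$ together with $|Q_{\rho}(y)|\le c\,|Q_{\rho/2}(y)|$, and finally the assumed decay. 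Hence $|h_{\rho/2}(y)-h_{\rho}(y)|\le cM\rho^{\gamma}$.

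Next I would sum this along the dyadic scales $\rho=2^{-j}r$: since $\gamma>0$ the resulting series converges geometrically, so $\{h_{2^{-j}r}(y)\}_{j}$ is Cauchy for \emph{every} $y\in Q_{r}(z)$; call its limit $\tilde h(y)$. The same geometric summation yields $|\tilde h(y)-h_{\rho}(y)|\le cM\rho^{\gamma}$ for all $0<\rho\le r$. By the Lebesgue differentiation theorem $\tilde h=h$ a.e., so $\tilde h$ is the continuous representative we are after, and it suffices to estimate $[\tilde h]_{C^{\gamma}(Q_{r}(z))}$.

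For the H\"older seminorm, fix $y_{1},y_{2}\in Q_{r}(z)$ and put $d=|y_{1}-y_{2}|$. When $d\le r$ I would take $\rho=d$: the cubes $Q_{\rho}(y_{1}),Q_{\rho}(y_{2})$ lie in $Q_{2r}(z)$ and, their centres being at distance $\rho$, the overlap $W=Q_{\rho}(y_{1})\cap Q_{\rho}(y_{2})$ satisfies $|W|\ge c\,|Q_{\rho}(y_{i})|$. Then
\[
|h_{\rho}(y_{1})-h_{\rho}(y_{2})| \le \mint_{W}|h-h_{\rho}(y_{1})|\,dx + \mint_{W}|h-h_{\rho}(y_{2})|\,dx \le c\sum_{i=1}^{2}\left(\mint_{Q_{\rho}(y_{i})}|h-h_{\rho}(y_{i})|^{2}\,dx\right)^{1/2} \le cM\rho^{\gamma},
\]
by Cauchy--Schwarz, the volume comparison and the hypothesis; combined with $|\tilde h(y_{i})-h_{\rho}(y_{i})|\le cM\rho^{\gamma}$ from the previous step this gives $|\tilde h(y_{1})-\tilde h(y_{2})|\le cMd^{\gamma}$. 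When $d>r$ (necessarily $d\le c_{n}r$, since $Q_{r}(z)$ has diameter $\le c_{n}r$) I would join $y_{1}$ and $y_{2}$ by a chain of at most $c_{n}$ intermediate points of consecutive distance $\le r$ lying in $Q_{r}(z)$ (possible by convexity of the cube), apply the previous case to each link, and sum, obtaining $|\tilde h(y_{1})-\tilde h(y_{2})|\le c_{n}\,cMr^{\gamma}\le cMd^{\gamma}$. This proves $[h]_{C^{\gamma}(Q_{r}(z))}=[\tilde h]_{C^{\gamma}(Q_{r}(z))}\le cM$.

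The proof is routine and the only points that need a little care are bookkeeping the inclusions $Q_{\rho}(y)\subset Q_{2r}(z)$ so that the hypothesis is legitimately invoked, the lower volume bound $|W|\ge c\,|Q_{\rho}(y_{i})|$ for the overlap cube, and the reduction from general $d$ to the case $d\le r$ via a bounded chain of intermediate points. I do not anticipate a genuine obstacle, only the verification that all constants depend only on $n$ and $\gamma$.
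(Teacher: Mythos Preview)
Your argument is correct; it is exactly the standard dyadic-averages proof of the Campanato embedding. The paper itself does not supply a proof of this proposition: it simply invokes it as a known result, referring the reader to \cite[Theorem 3.1]{HQLF1} and \cite[Theorem 2.9]{GE1}. What you have written is essentially the proof found in those references, so there is no methodological difference to discuss.
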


\sskip

The proof of Lemma \ref{PMTS850} is an obvious application of Proposition \ref{PMTS800}. In Lemma \ref{PMTS850}, the term $cr^{2\gamma}  \sum_{|q'|=m} \big \| U^{q'} \big  \|_{L^{\infty}( Q_{2r}(z) )}^{2} $ exists but we can handle it by making $cr^{2\gamma}$ sufficiently small.(See Lemma \ref{PMTS1000})

\begin{lem}\label{PMTS850}
For any $p' \geq 0'$ with $|p'|=m$ and $Q_{2r}(z) \subset Q_{5}$, we have that
\begin{equation*}\label{}
\big\| D^{m+1}u \big\|_{L^{\infty}(Q_{r}(z))}^{2}
\leq c \bigg[ \mint_{Q_{2r}(z)} \big| D^{m+1}u \big|^{2} \, dx   + r^{2\gamma} \bigg( \| D^{m+1}u \|_{L^{\infty}(Q_{2r}(z))}^{2} + E^{2} \bigg) \bigg]
\end{equation*}
and 
\begin{equation*}\label{}
\big[ U^{p'} \big]_{C^{\gamma}(Q_{r}(z))}^{2} 
\leq c \left[  \frac{1}{r^{2\gamma} }
\mint_{Q_{2r}(z)} \left| U^{p'}  - \big( U^{p'} \big)_{Q_{2r}(z)} \right|^{2} dx 
+  \| D^{m+1}u \|_{L^{\infty}(Q_{2r}(z))}^{2} + E^{2} \right].
\end{equation*}
\end{lem}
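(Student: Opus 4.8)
The plan is to extract both inequalities from the excess-decay estimate for $U^{p'}$ in Lemma~\ref{PMTS700}, combined with the Campanato-type embedding in Proposition~\ref{PMTS800}; the comparison between $U^{p'}$ and $D^{m+1}u$ is supplied by Proposition~\ref{prop_compare}, and the $L^2$ input that keeps the $\|D^{m+1}u\|_{L^\infty}$ contribution attached to a small factor is supplied by Lemma~\ref{PMTS200}. Throughout I fix $p'\ge 0'$ with $|p'|=m$ and $Q_{2r}(z)\subset Q_5$.

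\textbf{The $C^\gamma$ bound.} First I would re-center Lemma~\ref{PMTS700}. For an arbitrary $y\in Q_r(z)$ one has $Q_r(y)\subset Q_{2r}(z)\subset Q_5$, so applying Lemma~\ref{PMTS700} with center $y$ and half-radius $r/2$ gives, for every $\rho\in(0,r]$,
\[
\frac{1}{\rho^{2\gamma}}\mint_{Q_\rho(y)}\bigl|U^{p'}-(U^{p'})_{Q_\rho(y)}\bigr|^2\,dx
\le c\Bigl[\frac{1}{r^{2\gamma}}\mint_{Q_r(y)}\bigl|U^{p'}-(U^{p'})_{Q_r(y)}\bigr|^2\,dx+\|D^{m+1}u\|_{L^\infty(Q_{2r}(z))}^2+E^2\Bigr].
\]
Since replacing $(U^{p'})_{Q_r(y)}$ by $(U^{p'})_{Q_{2r}(z)}$ costs only the dimensional volume ratio $|Q_{2r}|/|Q_r|=2^n$, and $\|D^{m+1}u\|_{L^\infty(Q_r(y))}\le\|D^{m+1}u\|_{L^\infty(Q_{2r}(z))}$, the right-hand side is bounded by $cM^2$ with
\[
M^2:=\frac{1}{r^{2\gamma}}\mint_{Q_{2r}(z)}\bigl|U^{p'}-(U^{p'})_{Q_{2r}(z)}\bigr|^2\,dx+\|D^{m+1}u\|_{L^\infty(Q_{2r}(z))}^2+E^2 .
\]
As this holds for all $y\in Q_r(z)$ and all $\rho\in(0,r]$, Proposition~\ref{PMTS800} yields $[U^{p'}]_{C^\gamma(Q_r(z))}\le cM$, which is precisely the second assertion of the lemma.

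\textbf{The $L^\infty$ bound.} Next I would pass from the seminorm bound to the $L^\infty$ bound for $D^{m+1}u$. For $w,w'\in Q_r(z)$ we have $|U^{p'}(w)|\le|U^{p'}(w')|+[U^{p'}]_{C^\gamma(Q_r(z))}|w-w'|^\gamma$; averaging over $w'\in Q_r(z)$ and using Cauchy--Schwarz gives
\[
\|U^{p'}\|_{L^\infty(Q_r(z))}^2\le c\Bigl[\mint_{Q_r(z)}|U^{p'}|^2\,dx+r^{2\gamma}[U^{p'}]_{C^\gamma(Q_r(z))}^2\Bigr].
\]
Inserting the $C^\gamma$ bound just obtained (and using $\mint_{Q_{2r}(z)}|U^{p'}-(U^{p'})_{Q_{2r}(z)}|^2\le c\,\mint_{Q_{2r}(z)}|U^{p'}|^2$ together with $\mint_{Q_r(z)}|U^{p'}|^2\le 2^n\mint_{Q_{2r}(z)}|U^{p'}|^2$), and then translating between $|U^{p'}|$ and $|D^{m+1}u|$ via the two-sided estimate of Proposition~\ref{prop_compare}, one arrives at
\[
\|D^{m+1}u\|_{L^\infty(Q_r(z))}^2\le c\Bigl[\mint_{Q_{2r}(z)}|D^{m+1}u|^2\,dx+r^{2\gamma}\bigl(\|D^{m+1}u\|_{L^\infty(Q_{2r}(z))}^2+E^2\bigr)\Bigr],
\]
where Lemma~\ref{PMTS200}, applied with a cut-off equal to $1$ on $Q_r(z)$ and supported in $Q_{2r}(z)$, is what guarantees that the full-strength $L^\infty$ norm on the right appears multiplied only by $r^{2\gamma}$ rather than by a fixed constant; this grouping is exactly what will make the absorption step in Lemma~\ref{PMTS1000} possible.

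\textbf{Main obstacle.} I do not expect a genuine difficulty here: the substantive work is already contained in Lemma~\ref{PMTS700}, Proposition~\ref{prop_compare} and Lemma~\ref{PMTS200}, and what remains is bookkeeping. The only points requiring a little care are the legitimacy of re-centering Lemma~\ref{PMTS700} at a general $y\in Q_r(z)$ (valid because $Q_r(y)\subset Q_{2r}(z)\subset Q_5$) and the verification that the hypothesis of Proposition~\ref{PMTS800} survives the passage $\rho\to 0$, which it does since Lemma~\ref{PMTS700} is valid on every scale $\rho\in(0,r]$.
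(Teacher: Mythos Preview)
Your approach is essentially the same as the paper's: apply Lemma~\ref{PMTS700} at each $y\in Q_r(z)$ and each scale $\rho\in(0,r]$, feed the resulting uniform excess bound into Proposition~\ref{PMTS800} to obtain the $C^\gamma$ seminorm estimate for $U^{p'}$, then pass to the $L^\infty$ bound via $\|U^{p'}\|_{L^\infty(Q_r(z))}\le |(U^{p'})_{Q_r(z)}|+cr^\gamma[U^{p'}]_{C^\gamma(Q_r(z))}$ and convert to $D^{m+1}u$ using Proposition~\ref{prop_compare} (the paper cites this as \eqref{PMT150}).

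One remark: your appeal to Lemma~\ref{PMTS200} at the end is unnecessary and the explanation attached to it is not quite right. The factor $r^{2\gamma}$ in front of $\|D^{m+1}u\|_{L^\infty(Q_{2r}(z))}^2$ does not come from Lemma~\ref{PMTS200}; it appears automatically because the $L^\infty$ bound for $U^{p'}$ picks up $r^{2\gamma}[U^{p'}]_{C^\gamma}^2$, and the $\|D^{m+1}u\|_{L^\infty}$ term enters only through the $C^\gamma$ estimate you have already proved. The paper's proof of this lemma does not use Lemma~\ref{PMTS200} at all; that lemma is invoked later, in Lemma~\ref{PMTS1100}, for a different purpose.
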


\begin{proof}
Fix $p' \geq 0'$ with $|p'|=m$. Choose  $y \in Q_{r}(z)$ and $\rho \in (0,r]$. By Lemma \ref{PMTS700},
\begin{equation*}\begin{aligned}
& \frac{1}{\rho^{2\gamma}}  \mint_{ Q_{\rho}(y) } \left| U^{p'} - \big( U^{p'}  \big)_{Q_{\rho}(y)} \right|^{2}  \, dx \\
& \quad \leq  c \left[
\frac{1}{ r^{ 2\gamma } } \mint_{ Q_{r}(y) } \left| U^{p'}  - \big( U^{p'} \big)_{Q_{r}(y)} \right|^{2} \, dx +  \| D^{m+1}u \|_{L^{\infty}(Q_{r}(y))}^{2} + E^{2}  \right].
\end{aligned}\end{equation*}
Since $y \in Q_{r}(z)$, we have that 
\begin{equation*}\label{}
\mint_{Q_{r}(y)} \left| U^{p'} - \big( U^{p'} \big)_{Q_{r}(y)} \right|^{2} \, dx   \leq c \mint_{Q_{2r}(z)} \left| U^{p'}  - \big( U^{p'}  \big)_{Q_{2r}(z)} \right|^{2} \, dx.
\end{equation*}
Thus
\begin{equation*}\begin{aligned}
& \frac{1}{\rho^{2\gamma}}  \mint_{ Q_{\rho}(y) } \left| U^{p'} - \big( U^{p'}  \big)_{Q_{\rho}(y)} \right|^{2}  \, dx \\
& \quad \leq  c \left[
\frac{1}{ r^{ 2\gamma } } \mint_{ Q_{2r}(z) } \left| U^{p'}  - \big( U^{p'} \big)_{Q_{2r}(z)} \right|^{2} \, dx +  \| D^{m+1}u \|_{L^{\infty}(Q_{2r}(z))}^{2} + E^{2}  \right].
\end{aligned}\end{equation*}
Since $y \in Q_{r}(z)$ and $\rho \in (0,r]$ was arbitrary chosen, by taking 
\begin{equation*}\label{}
M = c \left[  \frac{1}{r^{2\gamma} }
\mint_{Q_{2r}(z)} \left| U^{p'}  - \big( U^{p'} \big)_{Q_{2r}(z)} \right|^{2} \, dx 
+  \| D^{m+1}u \|_{L^{\infty}(Q_{2r}(z))}^{2} + E^{2} \right]
\end{equation*}
in Proposition \ref{PMTS800}, we obtain that 
\begin{equation}\begin{aligned}\label{PMT900}
& \big[ U^{p'} \big]_{C^{\gamma}(Q_{r}(z))}^{2}  \\
& \quad \leq c \left[  \frac{1}{r^{2\gamma} }
\mint_{Q_{2r}(z)} \left| U^{p'}  - \big( U^{p'} \big)_{Q_{2r}(z)} \right|^{2} \, dx 
+  \| D^{m+1}u \|_{L^{\infty}(Q_{2r}(z))}^{2} + E^{2} \right]
\end{aligned}\end{equation}
Then from the following inequality
\begin{equation*}\label{}
\left\| U^{p'} \right\|_{L^{\infty}(Q_{r}(z))} 
\leq \left| \big( U^{p'} \big)_{Q_{r}(z)} \right| + c r^{\gamma} \left[ U^{p'} \right]_{C^{\gamma}(Q_{r}(z))},
\end{equation*}
we get that
\begin{equation}\label{PMT950}
\big\| U^{p'} \big\|_{L^{\infty}(Q_{r}(z))}^{2}
\leq c \bigg[ \mint_{Q_{2r}(z)} \big| U^{p'} \big|^{2} \, dx   + r^{2\gamma} \bigg( \| D^{m+1}u \|_{L^{\infty}(Q_{2r}(z))}^{2} + E^{2} \bigg) \bigg].
\end{equation}
Since $p' \geq 0'$ with $|p'|=m$ was arbitrary chosen, with \eqref{PMT150}, the lemma holds from \eqref{PMT900} and \eqref{PMT950}.
\end{proof}

\begin{lem}\label{PMTS1000}
For any $Q_{2r}(z) \subset Q_{5}$, we have that
\begin{equation*}
\left\| D^{m+1}u   \right\|_{L^{\infty}(Q_{r}(z))}^{2}
\leq c \left[ \mint_{Q_{2r}(z)} \big| D^{m+1}u  \big|^{2} \, dx  +  E^{2}  \right].
\end{equation*}
\end{lem}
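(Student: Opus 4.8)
The plan is to remove the term $c\,r^{2\gamma}\|D^{m+1}u\|_{L^{\infty}(Q_{2r}(z))}^{2}$ that appears on the right-hand side of the first estimate in Lemma \ref{PMTS850}, via a standard absorption-by-iteration argument performed on a family of geometrically shrinking concentric cubes. The only input is the estimate of Lemma \ref{PMTS850} in its center-free and radius-free form: for every cube $Q_{2\rho}(y)\subset Q_{5}$,
\begin{equation*}
\|D^{m+1}u\|_{L^{\infty}(Q_{\rho}(y))}^{2}
\leq c\left[\mint_{Q_{2\rho}(y)}|D^{m+1}u|^{2}\,dx
+\rho^{2\gamma}\left(\|D^{m+1}u\|_{L^{\infty}(Q_{2\rho}(y))}^{2}+E^{2}\right)\right].
\end{equation*}

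First I would fix $Q_{2r}(z)\subset Q_{5}$ and set $\Phi(t)=\|D^{m+1}u\|_{L^{\infty}(Q_{t}(z))}^{2}$ for $t\in[r,2r]$, which is finite thanks to the a priori regularity \eqref{PET170}. Given $r\leq s<t\leq 2r$ and any $y\in Q_{s}(z)$, I apply the displayed estimate with $\rho=(t-s)/4$; since $Q_{2\rho}(y)\subset Q_{(s+t)/2}(z)\subset Q_{t}(z)\subset Q_{2r}(z)\subset Q_{5}$ it is admissible, and one bounds $\mint_{Q_{2\rho}(y)}|D^{m+1}u|^{2}\,dx\leq (t-s)^{-n}\int_{Q_{2r}(z)}|D^{m+1}u|^{2}\,dx$, $\rho^{2\gamma}\leq (t-s)^{2\gamma}$, and $\|D^{m+1}u\|_{L^{\infty}(Q_{2\rho}(y))}^{2}\leq\Phi(t)$. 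Taking the supremum over $y\in Q_{s}(z)$ (every point of $Q_{s}(z)$ lies in some $Q_{\rho}(y)$ with $y\in Q_{s}(z)$) yields
\begin{equation*}
\Phi(s)\leq c_{0}\left[\frac{1}{(t-s)^{n}}\int_{Q_{2r}(z)}|D^{m+1}u|^{2}\,dx
+(t-s)^{2\gamma}\bigl(\Phi(t)+E^{2}\bigr)\right]
\qquad(r\leq s<t\leq 2r).
\end{equation*}

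Next I would iterate this inequality along the sequence $t_{j}=r(2-2^{-j})$, $j\geq 0$, so that $t_{0}=r$, $t_{j}\uparrow 2r$ and $t_{j+1}-t_{j}=r\,2^{-j-1}$. Writing $a_{j}=c_{0}(t_{j+1}-t_{j})^{2\gamma}$ and unrolling the recursion, the accumulated coefficient $\prod_{i=0}^{k-1}a_{i}$ is comparable to $r^{2\gamma k}2^{-\gamma k(k+1)}$, which decays faster than any exponential in $k$; hence the boundary residual $\bigl(\prod_{i=0}^{k}a_{i}\bigr)\Phi(t_{k+1})$ tends to $0$ (using only that $\Phi$ is bounded on $[r,2r]$), while the resulting series, whose general term carries the factor $\prod_{i=0}^{k-1}a_{i}$ against the source $c_{0}(t_{k+1}-t_{k})^{-n}\int_{Q_{2r}(z)}|D^{m+1}u|^{2}\,dx+a_{k}E^{2}$, converges, the super-exponential decay dominating both the geometric blow-up $2^{kn}$ of the source and the factors $r^{2\gamma k}$ (which are harmless since $Q_{2r}(z)\subset Q_{5}$ forces $r$ to be bounded). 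This produces $\Phi(r)\leq c\bigl(r^{-n}\int_{Q_{2r}(z)}|D^{m+1}u|^{2}\,dx+E^{2}\bigr)$ with $c$ universal, and since $r^{-n}\int_{Q_{2r}(z)}|D^{m+1}u|^{2}\,dx=4^{n}\mint_{Q_{2r}(z)}|D^{m+1}u|^{2}\,dx$ the lemma follows. The main difficulty is that the factor $c_{0}(t-s)^{2\gamma}$ in front of $\Phi(t)$ is not a fixed constant strictly below $1$, so the one-step ``filling the hole'' absorption is unavailable; the geometric refinement is precisely what turns the product of these factors into a super-exponentially small quantity that simultaneously absorbs the $L^{\infty}$-term and controls $(t-s)^{-n}$. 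Equivalently, one could first prove the estimate for $r$ below a universal threshold, where $c_{0}(t-s)^{2\gamma}<1$ holds uniformly and the classical absorption lemma applies, and then cover $Q_{r}(z)$ by boundedly many such small cubes.
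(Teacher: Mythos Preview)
Your proof is correct and follows essentially the same route as the paper: apply Lemma \ref{PMTS850} on small subcubes $Q_{2\rho}(y)$ with $y$ ranging over an intermediate cube, take a supremum to obtain a self-improving inequality for $\Phi(t)=\|D^{m+1}u\|_{L^{\infty}(Q_{t}(z))}^{2}$ on $[r,2r]$, and then absorb the $L^\infty$-term by iteration. The only cosmetic difference is that the paper first bounds the coefficient $(\tau-\rho)^{2\gamma}$ by $r^{2\gamma}$, restricts to $r\leq\bar r$ so that this constant is below $1/2$, and then invokes the standard Giaquinta-type iteration lemma (covering afterward for large $r$), whereas you carry out the iteration explicitly along $t_{j}=r(2-2^{-j})$ and exploit the super-exponential decay of the product $\prod a_{i}$ to avoid any smallness restriction on $r$; you also note the paper's variant as an equivalent alternative in your last sentence.
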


\begin{proof}
Fix $r \leq \rho < \tau \leq 2r$ and $p' \geq 0'$ with $|p'|=m$. Then by Lemma \ref{PMTS850},
\begin{equation*}\begin{aligned}\label{}
& \left\| D^{m+1}u \right\|_{L^{\infty} \left( Q_{\frac{\tau-\rho}{2}}(y) \right)}^{2} \\
& \quad \leq c \bigg[ \mint_{Q_{\tau - \rho}(y)} \big| D^{m+1}u  \big|^{2} \, dx 
 + (\tau - \rho)^{\gamma} \left[  \left\| D^{m+1}u   \right\|_{L^{\infty}( Q_{\tau - \rho}(y) )}^{2} +  E^{2} \right] \bigg],
\end{aligned}\end{equation*}
for any $y  \in Q_{\rho}(z)$. For any $y  \in Q_{\rho}(z)$, we have that $Q_{\tau-\rho}(y) \subset Q_{\tau}(z) \subset Q_{2r}(z)$. Thus
\begin{equation*}\begin{aligned}\label{}
& \left\| D^{m+1}u \right\|_{L^{\infty} ( Q_{\rho}(z) ) }^{2} \\
& \quad \leq  c \bigg[ r^{\gamma} \left(  \left\| D^{m+1}u   \right\|_{L^{\infty}( Q_{\tau}(z) )}^{2} +  E^{2} \right)  +  \frac{1}{(\tau-\rho)^{n}} \int_{Q_{2r}(z) } \left| D^{m+1}u \right|^{2} \, dx  
\bigg].
\end{aligned}\end{equation*}
Then there exists a small universal constant $\bar{r} \in (0,3]$ such that if $r \in (0,\bar{r}]$ then
\begin{equation*}\label{}
\left\| D^{m+1}u \right\|_{L^{\infty} ( Q_{\rho}(z) ) }^{2}
\leq \frac{1}{2} \left\| D^{m+1}u \right\|_{L^{\infty} ( Q_{\tau}(z) ) }^{2}
   + c \left[  \frac{ 1 }{(\tau-\rho)^{n}}  \int_{Q_{2r}(z) } \big| D^{m+1}u \big|^{2} \, dx  +  E^{2} \right].
\end{equation*}
Since $r \leq \rho \leq \tau \leq 2r$ was arbitrary chosen the lemma follows by using a technical argument  \cite[Lemma 4.3]{HQLF1} or \cite[Lemma 4.1]{GE1}, we obtain that
\begin{equation*}
\left\| D^{m+1}u   \right\|_{L^{\infty}(Q_{r}(z))}^{2}
\leq c \left[ \mint_{Q_{2r}(z)} \big| D^{m+1}u  \big|^{2} \, dx  +  E^{2}  \right],
\end{equation*}
for any $Q_{2r}(z) \subset Q_{5}$ with $r \in (0,\bar{r}]$. 

Also  there exists a universal constant $l \in \bn$ such that  if $\bar{r} \leq r \leq 7$ then $Q_{r}(z)$ can be covered with $l$ cubes $\left\{ Q_{\bar{r}}(z_{1}), \cdots, Q_{\bar{r}}(z_{l}) \right\}$ satisfying that $Q_{2\bar{r}}(z_{1}), \cdots, Q_{2\bar{r}}(z_{l}) \subset Q_{2r}(z)$. So the lemma even holds when $r \geq \bar{r}$.
\end{proof}

By using covering argument in \cite[Corollary 6.1]{GE1}, we obtain the following result.

\begin{lem}\label{PMTS1100}
For any $Q_{2r}(z) \subset Q_{5}$, we have that
\begin{equation*}\begin{aligned}\label{} 
\left\| D^{m+1}u \right\|_{L^{\infty}( Q_{ 2r } (z) )}^{2} \leq cr^{-2} E^{2}.
\end{aligned}\end{equation*}
\end{lem}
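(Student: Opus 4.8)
The plan is to combine the two preceding estimates — Lemma~\ref{PMTS1000}, which bounds $\|D^{m+1}u\|_{L^{\infty}(Q_{r}(z))}^{2}$ by the $L^{2}$-average of $D^{m+1}u$ on $Q_{2r}(z)$ plus $E^{2}$, and Lemma~\ref{PMTS200}, which bounds that $L^{2}$-average by a \emph{small} multiple $r^{2\gamma}$ of $\|D^{m+1}u\|_{L^{\infty}}$ on a doubled cube plus $r^{-2}E^{2}$ — into a single self-improving inequality, and then to run the covering/iteration argument of \cite[Corollary~6.1]{GE1}. Concretely, applying Lemma~\ref{PMTS200} with a cut-off $\phi$ equal to $1$ on $Q_{2\rho}(y)$, supported in $Q_{4\rho}(y)$, with $|D\phi|\le c\rho^{-1}$, and feeding the outcome into Lemma~\ref{PMTS1000}, one obtains for every $y$ and every $\rho\le 1$ with $Q_{8\rho}(y)\subset Q_{5}$ that
\begin{equation*}
\|D^{m+1}u\|_{L^{\infty}(Q_{\rho}(y))}^{2}
\le C\rho^{2\gamma}\,\|D^{m+1}u\|_{L^{\infty}(Q_{8\rho}(y))}^{2} + C\rho^{-2}E^{2},
\end{equation*}
with $C$ universal; the factor $\rho^{2\gamma}$ is the gain that drives the argument.

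To absorb the $L^{\infty}$-term on the right — which is taken over a strictly \emph{larger} cube, so a naive iteration does not converge — I would pass to a scale-weighted supremum. Fix $z$ and a small universal $R$ with $Q_{R}(z)\subset Q_{5}$, set $\delta(w)=R-\max_{1\le i\le n}|w^{i}-z^{i}|$ for $w\in Q_{R}(z)$, and let
\begin{equation*}
\Phi:=\sup_{w\in Q_{R}(z)}\ \delta(w)^{2}\,\|D^{m+1}u\|_{L^{\infty}(Q_{\delta(w)/16}(w))}^{2},
\end{equation*}
which is finite because the standing approximation hypothesis \eqref{PET170} gives $D^{m+1}u\in L^{\infty}(Q_{5})$. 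Applying the displayed inequality at each $w$ with $\rho=\delta(w)/16$, and using that $\delta(w')\ge\delta(w)/2$ for $w'\in Q_{\delta(w)/2}(w)$ to get $\|D^{m+1}u\|_{L^{\infty}(Q_{\delta(w)/2}(w))}^{2}\le 4\Phi/\delta(w)^{2}$, one finds $\delta(w)^{2}\|D^{m+1}u\|_{L^{\infty}(Q_{\delta(w)/16}(w))}^{2}\le C'R^{2\gamma}\Phi+C'E^{2}$; taking the supremum over $w$ and choosing $R$ small enough that $C'R^{2\gamma}\le\tfrac12$ yields $\Phi\le 2C'E^{2}$. Evaluating at $w=z$, where $\delta(z)=R$, gives $\|D^{m+1}u\|_{L^{\infty}(Q_{R/16}(z))}^{2}\le cR^{-2}E^{2}\le cE^{2}$ for every $z$ with $Q_{R}(z)\subset Q_{5}$. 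Since $R$ is universal, a finite covering upgrades this to $\|D^{m+1}u\|_{L^{\infty}(Q_{5-R})}^{2}\le cE^{2}$, and for a general $Q_{2r}(z)\subset Q_{5}$ the stated bound $\|D^{m+1}u\|_{L^{\infty}(Q_{2r}(z))}^{2}\le cr^{-2}E^{2}$ follows by covering $Q_{2r}(z)$ with finitely many cubes whose doubles lie in $Q_{5}$ and tracking the $r$-dependence through the factor $\rho^{-2}$, exactly as in the last paragraph of the proof of Lemma~\ref{PMTS1000}.

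The principal obstacle is precisely this absorption step: Lemma~\ref{PMTS200} only returns $\|D^{m+1}u\|_{L^{\infty}}$ on an enlarged cube, so nothing can be cancelled pointwise; the scale-weighted supremum $\Phi$ together with a universally small base scale $R$ (making $C'R^{2\gamma}<1$) is what closes the loop, and this is the content of the covering lemma \cite[Corollary~6.1]{GE1}. The remaining points are routine: keeping careful track of which cubes are compactly contained in $Q_{5}$ so that Lemmas~\ref{PMTS200} and \ref{PMTS1000} apply, and handling the regime of $r$ bounded below by a finite covering, again as in Lemma~\ref{PMTS1000}.
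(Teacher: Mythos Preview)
Your proposal is correct and follows the same overall strategy as the paper: combine Lemma~\ref{PMTS200} with Lemma~\ref{PMTS1000} to produce a self-improving inequality in which $\|D^{m+1}u\|_{L^\infty}$ on a small cube is controlled by a \emph{small} multiple of $\|D^{m+1}u\|_{L^\infty}$ on an enlarged cube plus $C\rho^{-2}E^{2}$, and then absorb. The only difference is the absorption device. The paper works with concentric cubes $Q_{\rho}(z)\subset Q_{\tau}(z)$ for $r\le\rho<\tau\le 2r$, obtains
\[
\|D^{m+1}u\|_{L^{\infty}(Q_{\rho}(z))}^{2}\le \tfrac12\,\|D^{m+1}u\|_{L^{\infty}(Q_{\tau}(z))}^{2}+c(\tau-\rho)^{-2}E^{2}
\]
for $r$ below a universal threshold, and closes via the standard iteration lemma \cite[Lemma~4.3]{HQLF1}/\cite[Lemma~4.1]{GE1}; this yields the factor $r^{-2}$ directly. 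You instead use the distance-weighted supremum $\Phi$ at a fixed small universal scale $R$ to get a uniform bound on $Q_{5-R}$, and defer the $r^{-2}$ dependence to a covering argument. That last step works, but note that it is cleanest if you simply rerun your $\Phi$ argument with $R$ replaced by $2r$ (any $R'\le R$ still satisfies $C'(R')^{2\gamma}\le\tfrac12$): then $\Phi\le cE^{2}$ gives $\delta(w)^{2}\|D^{m+1}u\|_{L^{\infty}(Q_{\delta(w)/16}(w))}^{2}\le cE^{2}$ for all $w\in Q_{2r}(z)$, hence $\|D^{m+1}u\|_{L^{\infty}(Q_{r}(z))}^{2}\le cr^{-2}E^{2}$ directly, matching the paper's claim \eqref{PMT1120}. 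Both routes are standard; the paper's iteration is marginally more direct for the $r^{-2}$ scaling, while your weighted supremum is the approach behind the cited \cite[Corollary~6.1]{GE1}.
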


\begin{proof}
We claim that 
\begin{equation}\label{PMT1120}
\left\| D^{m+1}u \right\|_{L^{\infty}( Q_{ r } (z) )}^{2} \leq cr^{-2} E^{2} 
\qquad (Q_{2r}(z) \subset Q_{5}).
\end{equation}
If we show \eqref{PMT1120} then the lemma holds by a covering argument. 

Fix $r \leq \rho < \tau \leq 2r$ and $ y \in Q_{\rho}$. Let $\phi \in C_{c}^{\infty}(Q_{ \frac{ \tau - \rho}{2} }(y))$ be cut-off function with 
\begin{equation*}\label{}
0 \leq \phi \leq  1,
\qquad 
\phi=1 \text{ in } Q_{ \frac{ \tau - \rho}{4} }(y)
\qquad \text{and} \qquad
|D\phi| \leq c(\tau-\rho)^{-1}.
\end{equation*}
Since $Q_{ \tau - \rho }(y) \subset Q_{5}$, from  Lemma \ref{PMTS200} and Lemma \ref{PMTS1000},  we have that
\begin{equation*}\begin{aligned}\label{} 
\left\| D^{m+1}u \right\|_{L^{\infty}( Q_{\frac{\tau - \rho}{2}}(y) )}^{2} 
& \leq c \left[ \mint_{Q_{\frac{ \tau -\rho}{2}}(y) } \left| D^{m+1}u \right|^{2} \, dx + E^{2} \right] \\
& \leq  c \bigg[   (\tau-\rho)^{\gamma} \left\| D^{m+1}u \right\|_{L^{\infty}( Q_{ \tau - \rho} (y) )}^{2} + (\tau-\rho)^{-2} E^{2}  \bigg].
\end{aligned}\end{equation*}
Since $y \in Q_{\rho}(z)$ was arbitrary chosen and $Q_{\tau - \rho}(y) \subset Q_{\tau}(z)$, we find that
\begin{equation*}\begin{aligned}\label{} 
\left\| D^{m+1}u \right\|_{L^{\infty}( Q_{\rho}(z) )}^{2} 
& \leq  c \bigg[   (\tau-\rho)^{\gamma} \left\| D^{m+1}u \right\|_{L^{\infty}( Q_{ \tau} (z) )}^{2} + (\tau-\rho)^{-2} E^{2}  \bigg] .
\end{aligned}\end{equation*}
For a sufficiently small universal constant $\bar{r} \in (0,1]$, if $ r \in (0,\bar{r}]$ then
\begin{equation}\begin{aligned}\label{} 
\left\| D^{m+1}u \right\|_{L^{\infty}( Q_{\rho}(z) )}^{2} 
& \leq  \frac{1}{2} \cdot  \left\| D^{m+1}u \right\|_{L^{\infty}( Q_{ \tau} (z) )}^{2} + c (\tau-\rho)^{-2} E^{2},
\end{aligned}\end{equation}
because of that $r \leq \rho < \tau \leq 2r$. Since $r \leq \rho < \tau \leq 2r$ was arbitrary chosen, by using a technical argument  \cite[Lemma 4.3]{HQLF1} or \cite[Lemma 4.1]{GE1},
\begin{equation}\begin{aligned}\label{PMT1170} 
\left\| D^{m+1}u \right\|_{L^{\infty}( Q_{ r } (z) )}^{2} 
\leq cr^{-2} E^{2}
\qquad 
\left( Q_{2r}(z) \subset Q_{5} \text{ with } r \in (0,\bar{r}] \right).
\end{aligned}\end{equation}
So the claim \eqref{PMT1120} holds when $ r \in (0,\bar{r}]$.

Since $\bar{r} \in (0,1]$ is a universal constant the claim \ref{PMT1120} when $r > \bar{r}$ can be proved by a covering argument with \eqref{PMT1170}  because $\bar{r} \in (0,1]$ is a universal constant.
\end{proof}

We are now ready to prove our main theroems.

\begin{proof}[Proof of Theorem \ref{main theorem of U}]
By using Proposition \ref{prop_compare}, we obtain from Lemma \ref{PMTS850}, Lemma \ref{PMTS1000} and Lemma \ref{PMTS1100} that
\begin{equation*}\begin{aligned}\label{} 
\big\| U^{p'} \big\|_{L^{\infty}(Q_{2r}(z))}^{2} \leq cr^{-2} E^{2},
\end{aligned}\end{equation*}
\begin{equation*}\label{}
\big\| U^{p'} \big\|_{L^{\infty}(Q_{r}(z))}^{2}
\leq c \left[ \mint_{Q_{2r}(z)} \big| U^{p'} \big|^{2} \, dx   + E^{2} \right]
\end{equation*}
and
\begin{equation*}\label{}
\big[ U^{p'} \big]_{C^{\gamma}(Q_{r}(z))}^{2} 
\leq c \left[  \frac{1}{r^{2\gamma} }
\mint_{Q_{2r}(z)} \left| U^{p'}  - \big( U^{p'} \big)_{Q_{2r}(z)} \right|^{2} dx 
+  \sum_{|q'|=m}  \big\| U^{q'} \big \|_{L^{\infty}(Q_{2r}(z))}^{2} + E^{2} \right].
\end{equation*}
for any $p' \geq 0'$ with $|p'|=m$ and $Q_{2r}(z) \subset Q_{5}$. So Theorem \ref{main theorem of U} holds. 
\end{proof}

\begin{proof}[Proof of Theorem \ref{main theorem}]
We find from Theorem \ref{main theorem of U} and Proposition \ref{prop_compare} that
\begin{equation*}\begin{aligned}\label{} 
\frac{1}{|Q_{r}|} \int_{Q_{ r }^{k}(z) }  \left| D^{m+1}u \right|^{2}  \, dx 
\leq cr^{-2} E^{2},
\end{aligned}\end{equation*}
\begin{equation*}\label{}
\left\| D^{m+1}u \right\|_{L^{\infty}(Q_{r}^{k}(z))}^{2}
\leq c \left[ \sum_{l \in K} \frac{1}{|Q_{2r}|} \int_{Q_{2r}^{l}(z)} \big| D^{m+1}u \big|^{2} \, dx   + E^{2} \right]
\end{equation*}
and 
\begin{equation*}\label{}
\big[ D^{m+1}u \big]_{C^{\gamma}(Q_{r}^{k}(z))}^{2} 
\leq \frac{c}{r^{2\gamma}}  \left[  \sum_{l \in K} \frac{1}{|Q_{2r}|}  \int_{Q_{2r}^{l}(z)} \big| D^{m+1}u \big|^{2} \, dx  
+ E^{2}  \right],
\end{equation*}
for any $Q_{2r}(z) \subset Q_{5}$ and $k \in K$. So Theorem \ref{main theorem} holds.
\end{proof}

\subsection*{Acknowledgement}
Y. Kim was supported by the National Research Foundation of Korea (NRF) grant funded by the Korea Government NRF-2020R1C1C1A01013363.

\bibliographystyle{amsplain}

\end{document}